\pgfplotsset{compat=1.11}
\theoremstyle{plain}
\newtheorem{theorem}{Theorem}[section]
\newtheorem*{theorem*}{Theorem}
\newtheorem{proposition}[theorem]{Proposition}
\newtheorem{lemma}[theorem]{Lemma}
\newtheorem{corollary}[theorem]{Corollary}
\theoremstyle{remark}
\newtheorem{remark}[theorem]{Remark}
\newtheorem{definition}[theorem]{Definition}
\newtheorem{conjecture}[theorem]{Conjecture}
\newcommand{\CC}{\mathbb{C}}
\newcommand{\RR}{\mathbb{R}}
\newcommand{\PP}{\mathbb{P}}
\newcommand{\ZZ}{\mathbb{Z}}
\newcommand{\DD}{\mathcal{D}}
\newcommand{\EE}{\mathcal{E}}
\newcommand{\HH}{\mathcal{H}}
\newcommand{\KK}{\mathcal{K}}
\newcommand{\LL}{\mathcal{L}}
\newcommand{\cM}{\mathcal{M}}
\newcommand{\OO}{\mathcal{O}}
\newcommand{\Pm}{\mathcal{P}}
\newcommand{\Qm}{\mathcal{Q}}
\renewcommand{\aa}{\mathfrak{a}}
\renewcommand{\gg}{\mathfrak{g}}
\newcommand{\kk}{\mathfrak{k}}
\renewcommand{\tt}{\mathfrak{t}}
\newcommand{\doi}[1]{\textsc{doi}: \href{http://dx.doi.org/#1}{\nolinkurl{#1}}}
\DeclareMathOperator{\Ad}{Ad}
\DeclareMathOperator{\ad}{ad}
\DeclareMathOperator{\End}{End}
\DeclareMathOperator{\Aut}{Aut}
\DeclareMathOperator{\grad}{grad}
\DeclareMathOperator{\Hess}{Hess}
\DeclareMathOperator{\Lie}{Lie}
\DeclareMathOperator{\tr}{tr}
\DeclareMathOperator{\id}{id}
\DeclareMathOperator{\rk}{rk}
\DeclareMathOperator{\Stab}{Stab}
\DeclareMathOperator{\Norm}{Norm}
\title[Fibering polarizations, Mabuchi rays and Peter--Weyl theorem]{Quantization in fibering polarizations, Mabuchi rays and \\ geometric Peter--Weyl theorem}
\author{T.~Baier}
\address{T.~Baier\\Center for Mathematical Analysis, Geometry and Dynamical Systems, Instituto Superior T\'ecnico, Lisbon, Portugal.}
\email{thomas.baier@tecnico.ulisboa.pt}
\author{J.~Hilgert}
\address{J.~Hilgert\\Department of Mathematics, Paderborn University, Paderborn, Germany.}
\email{joachim.hilgert@upb.de}
\author{O.~Kaya}
\address{O.~Kaya\\Department of Mathematics, Galatasaray University, Istanbul, Turkey.}
\email{oguzabel@gmail.com}
\author{J.~M.~Mourão}
\address{J.~M.~Mourão\\Department of Mathematics and Center for Mathematical Analysis, Geometry and Dynamical Systems, Instituto Superior T\'ecnico, Lisbon, Portugal.}
\email{jmourao@tecnico.ulisboa.pt}
\author{J.~P.~Nunes}
\address{J.~P.~Nunes\\Department of Mathematics and Center for Mathematical Analysis, Geometry and Dynamical Systems, Instituto Superior T\'ecnico, Lisbon, Portugal.}
\email{jpnunes@tecnico.ulisboa.pt}
\date{\today}
\begin{document}

\begin{abstract}
In this paper we use techniques of geometric quantization to give a geometric interpretation of the Peter--Weyl theorem. We present a novel approach to half-form corrected geometric quantization in a specific type of non-Kähler polarizations and study one important class of examples, namely cotangent bundles of compact semi-simple groups $K$. Our main results state that this canonically defined polarization occurs in the geodesic boundary of the space of $K\times K$-invariant Kähler polarizations equipped with Mabuchi's metric, and that its half-form corrected quantization is isomorphic to the Kähler case. An important role is played by invariance of the limit polarization under a torus action. 

Unitary parallel transport on the bundle of quantum states along a specific Mabuchi geodesic, given by the coherent state transform of Hall, relates the non-commutative Fourier transform for $K$  with the Borel--Weil description of irreducible representations of $K$.
\end{abstract}

\maketitle

\tableofcontents

\section{Introduction}
\label{sect-introd}

On the cotangent bundle $T^\ast K$ of a compact semi-simple Lie group $K$, the space of $K\times K$-invariant Kähler structures compatible with the standard symplectic form $\omega_{\rm std}$ form an infinite-dimensional space $\cM$ which is effectively parameterized by Weyl-invariant convex functions $g$ on the dual $\tt^\ast$ of the Lie algebra of a maximal torus $T\subset K$. Such a function determines a complex structure $J_g$, or equivalently a polarization (in the sense of  geometric quantization) $\Pm_g$, which in turn define a bundle of quantum spaces $\HH \to \cM$, where for any point $g \in \cM$, $\HH$ is constituted by the space of $J_g$-holomorphic sections of the line bundle $L\otimes \KK^{1/2}$, where $L$ is the prequantum line bundle and $\KK^{1/2}$ is a square root of the canonical bundle, both of which are trivializable in this case.

In the first part of the present article, we introduce a polarization $\Pm_{\mathrm{KW}}$, whose first description and proof of existence is due to Kirwin and Wu \cite{kirwin.wu:manuscript}; it is not Kähler, but rather of ``mixed" type. Following a more general and novel approach, we start from the diagram defined naturally in terms of the Hamiltonian actions described in Section \ref{sec:equiv-geom}, 
\[
\begin{tikzcd}[column sep=small]
 & T^\ast K \ar[ld, "\mu_{\mathrm L} \times \mu_{\mathrm R}" swap] \ar[rd, "\mu_{\mathrm{inv}}"] & \\
 \kk^\ast \underset{\tt^\ast_+}{\times} \kk^\ast \ar[rr, "\phi"] & & \tt^\ast_+
\end{tikzcd}
\]
The Kirwin--Wu polarization is defined by equipping the fibers of $\phi$ over the interior $\breve{\tt}^\ast_+$ of the positive Weyl chamber with the unique $K\times K$-invariant complex structures which are Kähler for the reduced symplectic forms.

Our first main theorem states that the Kirwin--Wu polarization occurs on the boundary of the space of Kähler polarizations $\cM$:

\begin{theorem*}[see Thm.~\ref{thm_convpol} below]
Assume $h$ is strictly convex and Weyl-invariant; then the polarization defined by $g+th$ converges to $\Pm_\mathrm{KW}$
  \[
   \Pm_{g+th} \overset{t\to\infty}{\longrightarrow} \Pm_\mathrm{KW}
  \]
pointwise in the Lagrangian Grassmannian of the complexified tangent bundle of the regular stratum $(T^\ast K)_\mathrm{reg}$ (see Section \ref{prelimsingtorus}).
\end{theorem*}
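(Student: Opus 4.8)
\emph{Proof plan.} The convergence is pointwise on $(T^\ast K)_\mathrm{reg}$, so I fix $p$ and, by $K\times K$-equivariance of the whole construction, reduce to $p=(e,\xi)$ in the left trivialization $T^\ast K\cong K\times\kk^\ast$ with $\xi\in\breve\tt^\ast_+$ and $\mu_\mathrm{R}(p)=\xi$; it then suffices to exhibit, for $t$ large, a basis of $\Pm_{g+th}(p)$ converging in $T_p(T^\ast K)\otimes\CC$ to a basis of $\Pm_\mathrm{KW}(p)$, since the Lagrangian Grassmannian of $(T_p(T^\ast K)\otimes\CC,\omega^\CC_\mathrm{std})$ is compact and a subspace with one convergent basis is determined. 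The residual torus fixes $p$ and makes all three polarizations $T$-invariant, so $T_p(T^\ast K)\otimes\CC$ splits into $T$-weight spaces: a ``Cartan block'' $\mathrm{span}\{\partial/\partial Y^a,E_a^\mathrm{h}:a\in\tt\}$ (here $\partial/\partial Y$ are the cotangent-fibre directions spanning the vertical polarization $\Pm_\mathrm{vert}$, and $E^\mathrm{h}$ the horizontal left-invariant fields), and for each root $\beta$ a two-complex-dimensional ``$\beta$-block'' $\mathrm{span}\{\partial/\partial Y_{E_\beta},E_\beta^\mathrm{h}\}$. From the defining diagram one reads off that $\Pm_\mathrm{KW}(p)$ is the span of $\{E_a^\mathrm{h}:a\in\tt\}$ (the vertical distribution $\ker d(\mu_\mathrm{L}\times\mu_\mathrm{R})_p$) together with the pre-images under $d\mu_\mathrm{L},d\mu_\mathrm{R}$ of the anti-holomorphic tangent spaces of the two orbit factors of the $\phi$-fibre --- with the crucial point that the relevant reduced form is the anti-diagonal $\omega_\mathrm{KKS}\ominus\omega_\mathrm{KKS}$, so one factor carries the conjugate complex structure; hence $\bar\partial\OO^\mathrm{L}$ and $\bar\partial\OO^\mathrm{R}$ use opposite root systems.

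The first substantial step is to write $\Pm_g(p)$ explicitly in this block decomposition, using the description of the $K\times K$-invariant Kähler structures of Section~\ref{sec:equiv-geom} (equivalently, realising $\Pm_g$ as the image of $\Pm_\mathrm{vert}$ under the imaginary-time Hamiltonian flow of the $\Ad^\ast$-invariant extension $\widehat g$ of $g$). Computing the resulting $(0,1)$-space yields the normal form: in the Cartan block $\Pm_g(p)$ is spanned by $\partial/\partial Y^a+i\sum_b\bigl(\Hess(g|_{\tt^\ast})\bigr)_{ab}(\xi)\,E_b^\mathrm{h}$, $a\in\tt$; in the $\beta$-block it is spanned by $\partial/\partial Y_{E_\beta}+c_\beta(\xi)\,E_\beta^\mathrm{h}$, where $c_\beta(\xi)$ is a fixed nonzero multiple of $\bigl(e^{\beta(\nabla g(\xi))}-1\bigr)/\beta(\xi)$ (finite because $\xi$ is regular), the multiplicative constant being pinned by the requirement that $\Pm_g(p)$ be Lagrangian. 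Obtaining this normal form --- in particular the fact that the Cartan block is governed by the Hessian of $g|_{\tt^\ast}$, while the root blocks are governed by the gradient $\nabla g|_{\tt^\ast}$ entering through the $\Ad^\ast$-invariant extension and the $\tfrac{1-e^{-\ad}}{\ad}$-factor of $D\exp$ --- is the technical heart of the proof.

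Now substitute $g\rightsquigarrow g+th$ and let $t\to\infty$. Strict convexity and Weyl-invariance of $h$ give $\Hess(h|_{\tt^\ast})(\xi)>0$ and $\nabla h(\xi)$ regular dominant, hence $\beta(\nabla h(\xi))>0$ for $\beta$ positive and $<0$ for $\beta$ negative, so the coefficients in the normal form fall into three regimes. In the Cartan block the coefficient grows linearly in $t$, so rescaling by $t^{-1}$ sends the spanning vectors to $i\sum_b\bigl(\Hess(h|_{\tt^\ast})\bigr)_{ab}(\xi)E_b^\mathrm{h}$, whose span (the Hessian being invertible) is $\mathrm{span}\{E_a^\mathrm{h}:a\in\tt\}=\ker d(\mu_\mathrm{L}\times\mu_\mathrm{R})_p$. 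For a positive root $\beta$, $c_\beta$ grows like $e^{t\beta(\nabla h(\xi))}$, so rescaling by $c_\beta^{-1}$ sends the spanning vector to $E_\beta^\mathrm{h}$. For a negative root $\gamma$, $c_\gamma$ converges to a finite nonzero limit $c_\gamma^\infty$ (as $e^{t\gamma(\nabla h(\xi))}\to0$), so the spanning vector converges to $\partial/\partial Y_{E_\gamma}+c_\gamma^\infty E_\gamma^\mathrm{h}$. This produces $\rk K+2|\Phi^+|=\dim K$ vectors whose limits are linearly independent (they occupy complementary weight spaces), and checking $d\mu_\mathrm{L}$ and $d\mu_\mathrm{R}$ on each --- using the conjugate-structure description of $\Pm_\mathrm{KW}(p)$, so that e.g. $d\mu_\mathrm{L}$ annihilates the negative-root limits while $d\mu_\mathrm{R}$ sends them into $\bar\partial\OO^\mathrm{R}$, and symmetrically for the positive-root and Cartan limits --- identifies their span with $\Pm_\mathrm{KW}(p)$. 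Hence $\Pm_{g+th}(p)\to\Pm_\mathrm{KW}(p)$.

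The principal obstacle is the normal form of the second paragraph. Two bookkeeping points then need care: it is precisely \emph{half} of the root blocks (those of the positive roots, where $\beta(\nabla h(\xi))>0$) whose coefficients diverge, the complementary half converging to finite limits, so the rescaling producing the convergent basis cannot be uniform and the limiting root directions form a genuinely mixed Lagrangian, not the span of horizontal root vectors (which would not even be isotropic); and the identification with $\Pm_\mathrm{KW}$ hinges on the anti-diagonal sign $\omega_\mathrm{KKS}\ominus\omega_\mathrm{KKS}$ of the reduced form, without which the limit would not be the Kirwin--Wu polarization at all. The $T$-invariance of everything in sight is what legitimises the block-by-block analysis --- and the reduction at the outset from the full regular stratum to the single fibre over $\xi$; the remaining Grassmannian points (compactness, uniqueness of a limit with one convergent basis) are then formal.
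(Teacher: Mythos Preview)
Your approach is correct and genuinely different from the paper's. You work directly in the Lagrangian Grassmannian of a single tangent space: reduce by $K\times K$-equivariance to $p=(e,\xi_+)$ with $\xi_+\in\breve\tt^\ast_+$, decompose $T_p(T^\ast K)\otimes\CC$ into weight spaces for the residual diagonal torus, write down an explicit frame for $\Pm_{g+th}(p)$ block by block (Cartan block governed by $\Hess(g+th)|_{\tt^\ast}$, root blocks by $e^{\beta(\nabla(g+th))}$), and take the limit after rescaling each block separately. The paper instead works with polarized \emph{functions}: it Fourier-decomposes the holomorphic matrix coefficients $f^{g_t}_{\lambda,A}$ with respect to the singular torus $T_{\mathrm{inv}}$, shows that the highest-weight harmonic dominates (Lemmas~\ref{fourier-2}--\ref{lemma_flambda_conv}), and deduces that the quotients $f^{g_t}_{\lambda,A}/f^{g_t}_{\lambda,B}$ converge to the complex Nirenberg coordinates $F_{\lambda,A}/F_{\lambda,B}$ of $\Pm_{\mathrm{KW}}$ while $t^{-1}\ln f^{g_t}_{\lambda,A}$ converges to the real ones $\langle\lambda,\LL_h\circ\mu_{\mathrm{inv}}\rangle$.

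What each buys: your frame-based argument is more self-contained and linear-algebraic --- indeed it is essentially the dual of the computation the paper carries out later, in Lemma~\ref{lem:Omega tilde}, for the \emph{canonical-bundle} trivialization (where the same frame $\Omega^j_{g+th}$ from \eqref{kmn3.5} is analyzed root by root). The paper's function-based argument is less direct for the bare polarization statement, but it produces exactly the objects $F_{\lambda,A}$ that are needed downstream in Theorem~\ref{thm_convstates} for the convergence of quantum states, so it is doing double duty. Two minor points of bookkeeping in your write-up: your ``normal form'' is precisely the content of \eqref{kmn3.5} (cited from \cite{kirwin.mourao.nunes:2013}) combined with Proposition~\ref{p-1}(e), so you should invoke those rather than treat it as a black box; and what you describe as the ``anti-holomorphic'' directions of the orbit factors is, in the paper's convention $\Pm=T^{1,0}$, the $(1,0)$-side --- the sign bookkeeping with the opposite Borels $B$ and $B_-$ already encodes the conjugation you are tracking via $\omega_{\mathrm{KKS}}\ominus\omega_{\mathrm{KKS}}$.
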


This convergence result leads to the following schematic picture of our parameter space of $K\times K$-invariant polarizations on $T^\ast K$:
\[
\begin{tikzpicture}[scale=1.5]
  \begin{axis}[axis lines=none,axis equal,grid=both,no marks,domain=-2:2,xmax=2.5,xmin=-2.5,ymax=7,ymin=-1,samples=100,rotate=-60,]
  \addplot[dotted, line width=1pt,name path=a] {(x^2)};
  \addplot[dotted, line width=0pt,name path=b] {(4-0.1*x^2)};
  \addplot[fill=none] fill between[of=a and b,split,
    every segment no 1/.style={fill,gray,opacity=.4},] ;
  \draw [fill, black] (axis cs: 0,0) circle [radius=1pt] node[below,name=PSch] {$\Pm_{\mathrm{Sch}}$};
  \draw [fill, black] (axis cs: 0,6) circle [radius=1pt] node[right,name=PKW] {$\Pm_{\mathrm{KW}}$};
  \draw [fill, black] (axis cs: 0.7,2) circle [radius=1pt] node[below,name=Pth] {$\Pm_{th}$};
  \draw [fill, black] (axis cs: -1,2) circle [radius=1pt] node[below right,name=Pgth] {$\Pm_{g+th}$};
  \draw (axis cs: -0.5,0.25) circle [radius=1pt];
  \draw[line width=0.75pt] (axis cs: 0,0) to node[pos=.25, below] {${\tiny {}_{0 \leftarrow t}}$} (axis cs: 1.313,3.75);
  \draw[line width=0.75pt,->,dashed] (axis cs:1.313,3.75) to [out=75,in=270] node[pos=.5] {${\tiny {}_{t \to \infty}}$} (axis cs:0,6);
  \draw[line width=0.75pt] (axis cs: -0.5,0.25) to (axis cs: -1.5,3.75);
  \draw[line width=0.75pt,->,dashed] (axis cs:-1.5,3.75) to [out=105,in=270] node[pos=.5] {${\tiny {}_{t \to \infty}}$} (axis cs:0,6);
\draw[decorate, line width=0.75pt, decoration={brace, amplitude=2ex, raise=1ex}] (axis cs: 1.95,4.45) -- (axis cs: 0.65,-0.1) node[pos=.25, below=5ex] {{\small $K\times K$-invariant polarizations}};  
\draw[decorate, line width=0.75pt, decoration={brace, amplitude=2ex, raise=1ex}] (axis cs: 0.9,6.9) -- (axis cs: 0.5,5.5) node[pos=.2, below=4ex, text width=2.5cm, text centered] {{\small $Z(\mathcal U(\kk \times \kk))$-$\qquad$ invariant polarizations}};
  \end{axis}
\pgfresetboundingbox
\path[use as bounding box] (3,-2) rectangle (10.5,1.75);
\end{tikzpicture}
\]
The real Schrödinger polarization $\Pm_\mathrm{Sch}$ (also called the vertical polarization) and mixed Kirwin--Wu polarization $\Pm_\mathrm{KW}$ occur at the boundary (the former at finite geodesic time, the latter at infinity) of the convex set of invariant Kähler polarizations, represented by the gray area.  The Schrödinger polarization has been studied in detail in \cite{hall:2002}.
The Mabuchi geodesic generated by the squared norm of the moment map and the associated coherent state transform of Hall will turn out to implement the non-commutative Fourier transform as an object in geometric quantization.

Given the occurrence of the Kirwin--Wu polarization as continuous degeneration of Kähler polarizations, it is natural to extend the bundle of quantizations
\[
 \begin{tikzcd}
 \HH \ar[d] \ar[r, phantom, "\subset"] & \overline{\HH} \ar[d] \\
 \cM \ar[r, phantom, "\subset"] & \overline{\cM} = \cM \cup \{ \Pm_\mathrm{Sch},\Pm_\mathrm{KW}\}
 \end{tikzcd}
\]
A preliminary but central point of our endeavor is the novel uniform definition we give (in Section \ref{sss_defquan}) of half-form corrected quantizations in Kähler and mixed polarizations. In Section \ref{sec_fibpol} we introduce a specific class of ``fibering" polarizations whose local structure is quite rigid (cf. Diagram (\ref{diag_stdpol}) and Proposition~\ref{prop_mp_props}), and permits the definition of Bohr--Sommerfeld conditions. The role of these conditions is elucidated by the ``geometric'' structure the quantizations carry, as we establish in our second main result.
\begin{theorem*}[See Thm. \ref{thm_BS1}] The quantum states associated to a fibering polarization are supported on the subset of Bohr--Som\-mer\-feld leaves.
\end{theorem*}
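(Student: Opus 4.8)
The plan is to use the rigid local structure of a fibering polarization, established in Proposition~\ref{prop_mp_props} and Diagram~(\ref{diag_stdpol}), to reduce the assertion to the elementary fact that a flat line bundle over a leaf admits a nonzero covariantly constant section exactly when its holonomy is trivial. Write $D$ for the real (isotropic) distribution underlying $\Pm\cap\overline{\Pm}$; by Proposition~\ref{prop_mp_props} it is the vertical distribution of the fibering, so its leaves $\Lambda$ are the fibres, along which $D\otimes\CC\subseteq\Pm$ and the complementary directions $\Pm/(D\otimes\CC)$ define leafwise the Kähler polarization of the corresponding reduced space. A quantum state, in the sense of Section~\ref{sss_defquan}, is a polarized section $\sigma$ of the half-form corrected prequantum bundle $L\otimes\KK_\Pm^{1/2}$, where $\KK_\Pm$ is the canonical bundle of $\Pm$; in particular $\nabla_X\sigma=0$ for every $X\in D$, where $\nabla$ combines the prequantum connection on $L$ with the canonical partial connection on $\KK_\Pm^{1/2}$.

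The first step is to restrict all the data to a single leaf $\Lambda$. The curvature of the prequantum connection is a multiple of $\omega$, which vanishes on $D$ because $D$ is isotropic; and the partial connection on $\KK_\Pm$, hence on $\KK_\Pm^{1/2}$, is flat along $\Pm$ — its curvature is $[\mathcal{L}_X,\mathcal{L}_Y]-\mathcal{L}_{[X,Y]}=0$ — so \emph{a fortiori} flat along $D\subseteq\Pm$. Thus $(L\otimes\KK_\Pm^{1/2})|_\Lambda$ is a flat Hermitian line bundle over $\Lambda$ and $\sigma|_\Lambda$ is a covariantly constant section of it; such a section is either identically zero or nowhere zero, and the latter occurs if and only if the holonomy homomorphism of $(L\otimes\KK_\Pm^{1/2})|_\Lambda$ on $\pi_1(\Lambda)$ is trivial. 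By construction this triviality is exactly the Bohr--Sommerfeld condition of Section~\ref{sec_fibpol}, the half-form factor being responsible for the metaplectic shift of the admissible leaves. Consequently $\sigma$ vanishes on every non-Bohr--Sommerfeld leaf, so the support of an arbitrary quantum state is contained in the union of the Bohr--Sommerfeld leaves; over such a leaf the relevant stalk is the space of $D$-covariantly constant sections that are moreover holomorphic along the Kähler directions $\Pm/(D\otimes\CC)$ of the fibre, which for $\Pm_\mathrm{KW}$ reproduces, via Borel--Weil, the Peter--Weyl summands.

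The delicate point — which I expect to be the main obstacle — is to make the word ``supported'' non-vacuous. Since the Bohr--Sommerfeld set is transversally discrete, hence nowhere dense, the quantum states cannot be ordinary continuous sections on the full phase space; they must be understood in the generalized (distributional) sense in which the half-form datum on a Bohr--Sommerfeld leaf supplies precisely the fibre density entering the inner product, as set up in Section~\ref{sss_defquan}. One therefore has to verify that the uniform definition of Section~\ref{sss_defquan} is compatible with this picture, so that no quantum states are lost in passing to the completion; and, when the fibering degenerates over a singular stratum — as for $\Pm_\mathrm{KW}$, which is set up over the regular stratum $(T^\ast K)_\mathrm{reg}$ (see Section~\ref{prelimsingtorus}) — one has to establish the statement there first and then pass to closures. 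The differential-geometric core of the argument, namely that flatness along the leaf forces a holonomy obstruction, is by contrast immediate.
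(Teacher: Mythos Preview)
Your approach is correct in spirit for smooth sections, but the gap you flag as ``the delicate point'' is in fact the entire content of the theorem, and you do not close it. The quantum states of Section~\ref{sss_defquan} are by definition elements of $\HH^{-\infty}_\Pm$, i.e.\ distributional polarized sections of $L\otimes\kappa_\Pm$ (the half-form factor is smooth, but the $L$-factor is allowed to be a distribution). For such objects there is no well-defined restriction to a leaf $\Lambda$, so the sentence ``$\sigma|_\Lambda$ is a covariantly constant section of a flat line bundle, hence either identically zero or nowhere zero'' has no meaning. You correctly identify this as the main obstacle, but then defer it rather than resolve it; without this step your argument proves only the smooth case, which is vacuous precisely because the Bohr--Sommerfeld locus is nowhere dense.

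The paper's proof sidesteps the restriction-to-a-leaf problem entirely. The key observation is that, in the local model~(\ref{diag_stdpol}), $\Pm$ is invariant under the Hamiltonian torus action whose moment map is $\pi_\EE=\mu_T$, and there is a canonical lift of the Lie algebra $\tt$ to $L\otimes\kappa_\Pm$. Exponentiating along a lattice vector $\eta\in\tt_\ZZ$ acts on polarized sections by multiplication with the smooth monodromy function $m(\eta,\cdot)$ of~(\ref{diag_monodromy}); thus every polarized section, smooth or distributional, satisfies
\[
 \bigl(1 - m(\eta,\cdot)\bigr)\, s = 0.
\]
If $p$ is not Bohr--Sommerfeld then $m(\eta,p)\neq 1$ for some $\eta$, so $1-m(\eta,\cdot)$ is a nowhere-vanishing smooth function on a neighbourhood $\widetilde{U}'$ of $p$, and dividing shows $s|_{\widetilde{U}'}=0$ as a distribution. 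The argument never restricts to a leaf; it works because multiplication of distributions by nonvanishing smooth functions is invertible. Your leafwise holonomy picture is the correct \emph{geometric interpretation} of the monodromy $m$, but the analytic argument must be run globally on the open set, not leaf by leaf.
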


Our next main result returns to our main example of a fibering polarization, the Kirwin--Wu polarization $\Pm_\mathrm{KW}$, in the context of the invariant Mabuchi space. The extension of the bundle of quantizations on $\cM$ to the boundary points in $\overline{\cM}$ is $K\times K$-equivariant. Considering the quantum states $s^{g}_{\lambda,A}$ corresponding to a matrix element of the complexification (indexed by a highest weight $\lambda$ and an endomorphism of the respective representation $A \in \End V_\lambda$) we find:
\begin{theorem*}[See Thm.~\ref{thm_convstates}] For the geodesic ray $g_t := g+th$, on the regular subset $(T^\ast K)_{\mathrm{reg}}$
the family of quantum states $s^{g_t}_{\lambda,A}$ extends continuously to the Kirwin--Wu polarization,
\[
 \lim_{t \to \infty} s^{g_t}_{\lambda,A} =: s^\mathrm{KW}_{\lambda,A} .
\]
\end{theorem*}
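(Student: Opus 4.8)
The plan is to reduce the assertion to an explicit computation with the Borel--Weil-type description of the states $s^{g}_{\lambda,A}$ valid throughout the invariant Mabuchi space $\cM$, and then to pass to the limit $t\to\infty$ factor by factor. Fix the $K\times K$-invariant trivialisations of $L$ and of $\KK^{1/2}$ over $(T^\ast K)_{\mathrm{reg}}$ underlying the uniform definition of quantization in Section~\ref{sss_defquan}. In these trivialisations $s^{g}_{\lambda,A}$ is a function on $(T^\ast K)_{\mathrm{reg}}$ of the form
\[
 s^{g}_{\lambda,A}\;=\;\bigl(\Phi_{g}^{\ast}\,m_{\lambda,A}\bigr)\cdot e^{-\varkappa_{g}}\cdot\delta_{g}^{1/2},
\]
where $m_{\lambda,A}(z)=\tr\bigl(A\,\pi_\lambda(z)\bigr)$ is the Borel--Weil matrix coefficient on $K_{\CC}$, $\Phi_{g}\colon(T^\ast K,J_{g})\to K_{\CC}$ is the biholomorphism expressing $(T^\ast K,J_g)$ as $K_\CC$, built from the gradient of $g$ via the imaginary-time complexifier flow, $\varkappa_{g}$ is the associated prequantum (Kähler) potential, and $\delta_{g}$ is the half-form density, essentially a power of $\det\Hess g$ along $\tt^{\ast}$. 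Using $K\times K$-equivariance one may restrict attention to a slice transverse to the $K\times K$-orbits, for instance the ``radial'' torus $\exp(i\,\breve{\tt}^{\ast}_{+})$, since $s^{g}_{\lambda,A}$ transforms in a fixed finite-dimensional way and pointwise convergence along such a slice propagates to all of $(T^\ast K)_{\mathrm{reg}}$.

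Next I would analyse the three factors as $t\to\infty$ for $g_t=g+th$. For the complexifier: the relevant Hamiltonian is $H_{g_t}=H_{g}+t\,H_{h}$ on $\kk^{\ast}$, and on the regular stratum its imaginary-time flow, after absorbing the linear-in-$t$ divergence into the directions transverse to the fibres of $\mu_{\mathrm{inv}}$, converges to the flow defining the complex structure of $\Pm_{\mathrm{KW}}$ along the fibres of $\phi$; this is the section-level refinement of the convergence $\Pm_{g+th}\to\Pm_{\mathrm{KW}}$ of Theorem~\ref{thm_convpol}, so that $\Phi_{g_t}^{\ast}m_{\lambda,A}$ tends to the pullback of $m_{\lambda,A}$ along the $\Pm_{\mathrm{KW}}$-complexification of the fibres, i.e.\ to the Borel--Weil section of the Kähler reduction $\phi^{-1}(\lambda)$. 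For the potential one tracks the Legendre transform of $g+th$ and shows that $\varkappa_{g_t}-\varkappa^{\mathrm{KW}}$ stays bounded and converges on $(T^\ast K)_{\mathrm{reg}}$. The half-form density $\delta_{g_t}$ is the delicate point: $\Hess(g+th)$ degenerates in the torus directions (the gradient directions of $\mu_{\mathrm{inv}}$), reflecting the partially-Kähler nature of $\Pm_{\mathrm{KW}}$, so one must use the torus-invariance of the limit polarization, emphasised in the introduction, together with the restriction to the regular stratum, to split $\delta_{g_t}$ into a half-form along the $\phi$-fibre, which has a finite Kähler limit, and a half-density along $\breve{\tt}^{\ast}_{+}$ whose $t$-dependence is absorbed by the change of half-form frame from $\KK^{1/2}$ to the half-form bundle attached to $\Pm_{\mathrm{KW}}$ in Section~\ref{sss_defquan}.

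Assembling these limits produces a pointwise limit on $(T^\ast K)_{\mathrm{reg}}$; one then checks that it satisfies the defining equations of a $\Pm_{\mathrm{KW}}$-quantum state --- holomorphy along the fibres of $\phi$ and covariant constancy along the Lagrangian directions --- and is therefore equal to $s^{\mathrm{KW}}_{\lambda,A}$. Here Theorem~\ref{thm_BS1} identifies the quantum content as living on the union of Bohr--Sommerfeld leaves, which for $\Pm_{\mathrm{KW}}$ are precisely the levels of $\mu_{\mathrm{inv}}$ over integral points, consistent with the appearance of the highest weight $\lambda$. In the special case $h=\tfrac12\|\cdot\|^{2}$ the whole computation can be shortcut using Hall's coherent state transform: the $s^{g_t}_{\lambda,A}$ are then the analytic continuations of $e^{t\Delta/2}$ applied to the Peter--Weyl matrix coefficients, and the known asymptotics of the heat kernel on $K$ give the convergence directly; this case serves as a consistency check for the general argument.

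The main obstacle is the \emph{half-form factor}: controlling the asymptotics of $\det\Hess(g+th)$ as it degenerates, and proving that, after the correct $t$-dependent renormalisation, it converges to the half-form density of the mixed polarization rather than to $0$ or $\infty$. This is exactly where the torus-invariance of $\Pm_{\mathrm{KW}}$ and the restriction to $(T^\ast K)_{\mathrm{reg}}$ are indispensable, and where one must ensure that the convergence is uniform enough along slices for the pointwise limit to be a genuine quantum state of $\Pm_{\mathrm{KW}}$ rather than merely a measurable limit.
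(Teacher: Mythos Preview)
Your overall architecture---split $s^{g_t}_{\lambda,A}$ into a matrix-coefficient factor, a potential factor, and a half-form factor, and analyse each separately---matches the paper's. However, your description of the asymptotic behaviour of each factor is incorrect, and this reflects a genuine gap: you have missed that the limit $s^{\mathrm{KW}}_{\lambda,A}$ is a \emph{distributional} section, namely a delta-type measure supported on the single Bohr--Sommerfeld fibre $\mu_{\mathrm{inv}}^{-1}(\lambda+\rho)$, not a smooth pointwise limit on $(T^\ast K)_{\mathrm{reg}}$.

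Concretely: the K\"ahler potential $\kappa_{g_t}(x,\xi)=\langle\xi,d_\xi g_t\rangle-g_t(\xi)$ grows linearly in $t$, so $e^{-\kappa_{g_t}}$ does not stay bounded but decays exponentially; the matrix coefficient $f^{g_t}_{\lambda,A}=\tr(\pi_\lambda(xe^{id_\xi g_t})A)$ likewise \emph{blows up} exponentially, with leading behaviour $e^{\langle\lambda,\LL_{g_t}\circ\mu_{\mathrm{inv}}\rangle}F_{\lambda,A}$ (this is the content of the Fourier decomposition with respect to $T_{\mathrm{inv}}$, Lemmas~\ref{fourier-2} and~\ref{lemma_flambda_conv}); and the normalised half-form $\widehat{\Omega}^{1/2}_{g_t}$ has been constructed precisely so as to converge to a finite nonzero limit $\widehat{\Omega}^{1/2}_{\mathrm{KW}}$ (Proposition~\ref{prop_trivialization}), leaving behind a scalar factor $(1+\det\Hess_{\tt^\ast}g_t)^{1/2}\sim t^{r/2}(\det\Hess_{\tt^\ast}h)^{1/2}$. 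The crucial point is that the product of these divergent contributions, together with the normalisation $e^{-g_t(\lambda+\rho)}$ built into the definition~(\ref{dfn_sglA}), has the shape
\[
t^{r/2}\,e^{-t\,\psi_h\circ\mu_{\mathrm{inv}}}\cdot(\text{convergent}),
\qquad
\psi_h(\xi)=h(\lambda+\rho)-h(\xi)+\langle\xi-(\lambda+\rho),\LL_h(\xi)\rangle,
\]
where $\psi_h$ is convex with unique zero minimum at $\xi=\lambda+\rho$. This is exactly the setup of the Laplace (stationary-phase) approximation, which produces the delta measure $P(\lambda+\rho)^2\,\delta_{\mu_{\mathrm{inv}}^{-1}(\lambda+\rho)}$ appearing in~(\ref{dfn_sKWlA}). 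Theorem~\ref{thm_BS1} is thus not a post-hoc verification but is witnessed directly by the concentration mechanism. Your proposed ``splitting $\delta_{g_t}$ into a fibre part and a base half-density'' and your assertion that $\varkappa_{g_t}-\varkappa^{\mathrm{KW}}$ stays bounded would, if taken literally, yield a smooth limit and hence miss the phenomenon entirely.
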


In Section \ref{sect-cst}, we interpret Theorem \ref{thm_convstates} in terms of generalized coherent state transforms (gCST). Section \ref{sect-connection} describes how the imaginary time flow of $h$ lifts to the extended bundle of quantum states by means of a flat connection whose parallel transport is given by the gCST.  In the case when $h$ is the quadratic Casimir, the coherent state transform of Hall defines the parallel transport for a unitary connection on the restriction of the extended bundle of quantum states to the Mabuchi geodesic ray generated by the quadratic Casimir. This extends  the 
results of \cite{hall:2002, florentino.matias.mourao.nunes:2005, florentino.matias.mourao.nunes:2006, 
Huebschmann08,
kirwin.mourao.nunes:2013}, where quantization in the Schr\"odinger and invariant K\"ahler polarizations were related by generalized coherent state transforms, to infinite geodesic time. In particular, along the Mabuchi geodesic generated by the quadratic Casimir one gets a unitary $K\times K$-invariant identification between the Hilbert spaces of quantum states. This corresponds to a natural geometric interpretation of the operator-valued Fourier transform of $f\in L^2(K,dx)$ described by the Peter--Weyl expansion of $f$, where $dx$ is the normalized Haar measure on $K$. Each term in this  expansion of $f$ is associated with a geometric Bohr--Sommerfeld cycle and a distributional section supported on it. The correspondence is unitary with respect to a natural inner product structure on the space of ${\mathcal P}_{\rm KW}$-polarized sections. 
The inner product is determined up to scalar on each Bohr--Sommerfeld cycle by its invariance properties. The chosen normalization is obtained as the limit of the inner products for finite geodesic time and coincides with the inner product predicted by the Peter--Weyl theorem.

The distributional sections associated to the Kirwin--Wu polarization can be directly interpreted in terms of holomorphic sections for Borel--Weil line bundles on coadjoint orbits. Thus the limit of the generalized coherent state transforms not only provides a geometric interpretation of the operator-valued Fourier transform but it also provides a direct link between the Peter--Weyl theorem and the Borel--Weil theorem, as described in Section \ref{sect-fourier}.   Section \ref{sec_conjectures} contains the outline of a program that aims at linking representations and geometric cycles in the context of Hamiltonian $G$-spaces.

While this paper was under preparation, the preprint \cite{leung.wang:2022} appeared where mixed polarizations arising at infinite Mabuchi geodesic time are also considered; there, the Hamiltonians generating the Mabuchi geodesics are convex only along a subspace of momentum variables of a torus action. In the toric setting, similar results were studied in  \cite{pereira:2022} and will appear in \cite{mourao.nunes.pereira:2023}. We note, however, that in the present paper, while we are in the context of non-abelian Hamiltonian actions, the initial K\"ahler polarization is not invariant by the action of the invariant torus $T_\mathrm{inv}.$

The present work gives an extension of the orbit method in the sense that the whole set of irreducible representations of $K$ is treated at once in terms of a quantization of the symplectic manifold $T^*K$. Indeed, as described in this paper, the collection of quantizable symplectic reductions of $\mathcal{P}_\mathrm{KW}$ 
is in bijection with the set of highest weights in $\hat K$, via the correspondence
$\lambda \mapsto \mathcal{O}_{\lambda+\rho}\times \mathcal{O}_{(\lambda+\rho)^*}$.
The orbit method has been very successful in providing parameters and geometric realizations of irreducible representations for various classes of Lie groups (see e.g. \cite{Duflo82}). There are also appealing descriptions of branching rules such as Plancherel measures in symplectic terms (see e.g. \cite{Lipsman80,Vergne82}). Typically, however, such decompositions are not proved via the orbit method but given as reinterpretations of results proven by other methods. Intertwining operators are rarely constructed via the geometric quantization. The main exception so far are the ones obtained by the BKS (Blattner--Kostant--Sternberg) transformations related to a change of polarization (see e.g. \cite{florentino.matias.mourao.nunes:2006,Huebschmann08,Lisiecki87}). In particular, Fourier transforms of non-commutative groups and associated symmetric spaces have been constructed via geometric quantization only in a few cases such as Riemannian symmetric spaces associated with complex semisimple groups \cite{Lisiecki87}. Now, the limits of the generalized coherent state transforms actually give such a construction of the operator-valued Fourier transform (see \cite[\S~18.8.1]{Dixmier77}) of compact Lie groups purely in terms of geometric quantization.

In future work, we will apply our methods to compact symmetric spaces and we expect that they can also be extended to apply to Riemannian symmetric spaces of non-compact type. In order to deal with non-compact semisimple groups one would have to establish a further extension of the method to non-Riemannian symmetric spaces. The program outlined in Section \ref{sec_conjectures} will also be addressed in future work.

\section{Preliminaries}
\label{sect-prelim}

\subsection{Lie theoretic notations and conventions}\label{prelimLie} Throughout we consider a compact connected and simply connected\footnote{The simply-connectedness condition is just for simplicity of exposition.} Lie group $K$, together with a fixed maximal torus $T\subset K$. Besides the inclusion of the corresponding Lie algebra $\tt \subset \kk$, there is also a canonical way to identify the dual $\tt^\ast$ with the subspace $(\kk^\ast)^T\subset \kk^\ast$ invariant under the restriction of the coadjoint action to the maximal torus.
In this way, both the Lie algebra and its dual split canonically and $T$-equivariantly,
\begin{equation}\label{Lie_decomp}
\kk = \tt \oplus \tt^\perp , \qquad
\kk^\ast = \tt^\ast \oplus \tt^{\ast \perp} .
\end{equation}
When we occasionally use a $K$-invariant inner product to identify $\kk$ and $\kk^\ast$ below, the identification is block-diagonal with respect to these decompositions.

Denote by $\tt_\ZZ \subset \tt$ the kernel of the exponential map $\tt \to T$, and by $\tt^\ast_\ZZ$ its integral dual,
\[
 \tt^\ast_\ZZ := \{ \xi \in \tt^\ast\mid  \forall \eta \in \tt_\ZZ: \langle \eta, \xi \rangle \in \ZZ \} .
\]
Note that $2\pi i \tt^\ast_\ZZ$ is naturally identified with the weight lattice via the identification
\[
 \lambda \in \tt^\ast_\ZZ
 \qquad \leftrightarrow \qquad
 \chi_\lambda:T \to U(1),
 \chi_\lambda \circ \exp(\eta) := e^{2\pi i \langle \lambda, \eta \rangle} .
\]

We equally fix a choice of convex fundamental domain $\tt^\ast_+$ of the Weyl group action on $\tt^\ast$, and denote its interior by $\breve{\tt}^\ast_+$. It is canonically isomorphic to the positive Weyl chamber $i \tt^\ast_+ \subset i \tt^\ast$.
Due to these canonical identifications we will also refer to $\tt_+^\ast$ as positive Weyl chamber and to the elements of $\tt_\ZZ^\ast$ as weights.

As is customary, elements of the Weyl group $w \in W := \Norm_K (T) / T$ will occasionally be confounded with lifts in $K$, and the longest element will be denoted $w_0$.

\subsubsection{Line bundles and their sections} $K$-linearized (also called $K$-ho\-mo\-ge\-neous) line bundles on $K/T$ are in bijective correspondence with characters $\chi: T \to U(1)$ via $L_{\chi} := K \overset{T}{\times} \CC_{\chi^{-1}}$, where we use the notation
\[
 [k,z] = [kt,\chi(t^{-1})z] \in K \overset{T}{\times} \CC_{\chi}
\]
for the bundle associated to the principal bundle $K \to K/T$ and the representation $\chi^{-1}$. In particular, smooth sections of $L_\chi$ are in bijection with equivariant functions $f:K\to \CC$ such that $f(kt)=\chi(t)f(k)$.

\subsubsection{Borel subgroups of $K_{\CC}$ and the Borel--Weil Theorem} Consider any highest weight representation $\pi_\lambda:K \to \End(V_\lambda)$ with regular weight $\lambda$, and consider a highest weight vector $v_\lambda \in V_\lambda$. We collect some well-known facts about the relation between invariant complex structures, Borel subgroups, and line bundles on $K/T$ (see e.g. \cite{serre:1959}):

\begin{theorem}[Borel--Weil]\label{thm_BorelWeil}
\item[(a)] The stabilizer $B := \Stab_{K_{\CC}}[v_\lambda]$ of the highest weight vector in the projectivization $[v_\lambda]\in \PP V_\lambda$ is a Borel subgroup of the complexification $K_{\CC}$.
\item[(b)] The $K$-equivariant embedding
\begin{equation}\label{OO_emb}
K/T \ni kT \overset{\psi}{\mapsto} [\pi_\lambda(k)v_\lambda] \in K\cdot [v_\lambda] \subset \PP V_\lambda
\end{equation}
is a diffeomorphism onto its image, and it is a biholomorphism with respect to the invariant complex structure given by the identification
\[
 K/T = K_{\CC}/B .
\]
\item[(c)] The pull-back of the hyperplane bundle $\OO_{\PP V_\lambda}(1)$ along the embedding $\psi$ is $L_{\chi_\lambda}$.
\item[(d)] Every line bundle $L_{\chi_\lambda}$ is holomorphic on $K_{\CC}/B$ in a unique way, since $L_\lambda:=L_{\chi_\lambda} = K_{\CC}\overset{B}{\times} \CC_{\chi_{-\lambda}}$, and its space of holomorphic sections is isomorphic to the dual representation
\[
 H^0(K_{\CC}/B , L_{\lambda}) \cong V_\lambda^\ast .
\]
Explicitly, these sections are identified as the $T$-equivariant maps
\[
 f_v(k) := \langle \pi_\lambda(k)v_\lambda, v \rangle \in \CC
 \qquad \forall v \in V_\lambda.
\]
\end{theorem}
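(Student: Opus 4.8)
The statement is the classical Borel--Weil theorem, and I would dispatch the four parts in order; only the last has real content.
\emph{(a)} Extend $\pi_\lambda$ to a holomorphic representation of $K_\CC$ on $V_\lambda$ and let $\bb_0=\hh\oplus\bigoplus_{\alpha>0}(\kk_\CC)_\alpha$ be the Borel subalgebra for which $v_\lambda$ is a highest weight vector. Since $\hh$ acts on $v_\lambda$ by the scalar $\lambda$ while the positive root spaces kill it, $\Lie\bigl(\Stab_{K_\CC}[v_\lambda]\bigr)\supseteq\bb_0$, so $\Stab_{K_\CC}[v_\lambda]$ contains a Borel subgroup and is a connected parabolic subgroup; it is minimal because if its Lie algebra also contained a negative simple root space $(\kk_\CC)_{-\alpha_i}$ then $(\kk_\CC)_{-\alpha_i}v_\lambda\subseteq\CC v_\lambda$, forcing (for weight reasons) $(\kk_\CC)_{-\alpha_i}v_\lambda=0$, which contradicts $\langle\lambda,\alpha_i^\vee\rangle>0$ (valid as $\lambda$ is regular dominant, by the $\mathfrak{sl}_2$-string through $v_\lambda$). \emph{(b)} Well-definedness and $K$-equivariance of $\psi$ follow from $\pi_\lambda(t)v_\lambda=\chi_\lambda(t)v_\lambda$, and $\Stab_K[v_\lambda]=K\cap B=T$, so $\psi$ is a continuous injective immersion of the compact manifold $K/T$, hence a diffeomorphism onto its closed image; for the holomorphic assertion use the Iwasawa decomposition $K_\CC=KAN$ with $AN\subseteq B$ to identify $K/T\xrightarrow{\sim}K_\CC/B$ smoothly, and check that the $K_\CC$-equivariant orbit map $gB\mapsto[\pi_\lambda(g)v_\lambda]$ is holomorphic, injective (by (a)), immersive, and has compact (hence closed) image, so it is a biholomorphism onto its image restricting to $\psi$ on $K$. \emph{(c)} The fiber of $\psi^\ast\OO_{\PP V_\lambda}(-1)$ over $kT$ is $\CC\,\pi_\lambda(k)v_\lambda$, with isotropy character of $T$ over $eT$ equal to $\chi_\lambda$; since $K$-homogeneous line bundles on $K/T$ are determined by their isotropy character this gives $\psi^\ast\OO(-1)=L_{\chi_{-\lambda}}$ and dually $\psi^\ast\OO(1)=L_{\chi_\lambda}$.

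For (d), note first that $\chi_{-\lambda}$ extends holomorphically to $B$ (it factors through $B\to B/N=T_\CC$), so $K_\CC\overset{B}{\times}\CC_{\chi_{-\lambda}}$ is a holomorphic line bundle whose underlying smooth bundle is $L_{\chi_\lambda}$ (restrict $K_\CC\to K_\CC/B$ along $K\to K/T$), and the holomorphic structure is unique because $H^1(K_\CC/B,\OO)=0$. For the sections, write $W:=H^0(K_\CC/B,L_\lambda)$. Part (c) gives, via $\psi^\ast$, a map $H^0(\PP V_\lambda,\OO(1))=V_\lambda^\ast\to W$ that is injective because $\{\pi_\lambda(g)v_\lambda\}$ spans $V_\lambda$ (irreducibility); unwinding the trivializations identifies the image of $v$ with the section attached to $g\mapsto\langle\pi_\lambda(g)v_\lambda,v\rangle$, which one checks has the stated $T$-equivariance and extends holomorphically to $K_\CC$. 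It remains to show this injection is onto, i.e.\ $\dim W\le\dim V_\lambda$; I would do this either (i) by identifying $W$ (finite-dimensional since $K_\CC/B$ is projective by (b)) with the holomorphic functions $F:K_\CC\to\CC$ satisfying $F(gb)=\chi_\lambda(b)F(g)$ and running Frobenius reciprocity, which turns $\Hom_{K_\CC}(V_\mu,W)$ into the space of $B$-eigenvectors of weight $\lambda$ in $V_\mu^\ast$ --- nonzero, and then one-dimensional, only when $V_\mu^\ast$ has highest weight $\lambda$, by uniqueness of the highest weight line --- so that $W\cong V_\lambda^\ast$ with multiplicity one; or (ii) since $L_\lambda$ is ample by (b)--(c), by Kodaira vanishing and Hirzebruch--Riemann--Roch, which give $\dim W=\chi(K_\CC/B,L_\lambda)=\dim V_\lambda$ through the Weyl dimension formula; or (iii) by citing Borel--Weil--Bott directly.

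I expect the surjectivity in (d) to be the only real obstacle: showing $W$ contains nothing beyond the matrix coefficients of $V_\lambda$. The representation-theoretic route reduces this to the standard uniqueness of the highest weight line in an irreducible representation, but one must follow the left/right-translation and contragredient conventions carefully to land on $V_\lambda^\ast$ (rather than $V_\lambda$ or $V_{-w_0\lambda}$) and to fix the normalization of the $f_v$; the finite-dimensionality of $W$ also deserves a sentence (compactness, or projectivity via (b)). Parts (a)--(c) are essentially formal given the structure theory of Borel subgroups and the Iwasawa decomposition.
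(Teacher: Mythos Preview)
The paper does not actually prove this theorem: it is stated as a collection of ``well-known facts'' with a reference to Serre's Bourbaki exposé \cite{serre:1959}, and no proof environment follows the statement. Your proposal is a correct and standard sketch of the classical Borel--Weil theorem; the arguments for (a)--(c) via structure theory of parabolic subgroups and the Iwasawa decomposition, and for (d) via Frobenius reciprocity (or Kodaira vanishing plus the Weyl dimension formula), are exactly the usual routes. Since the paper treats the result as background and offers no proof of its own, there is nothing to compare against; your write-up goes well beyond what the paper provides.
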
 
\begin{remark} 
\item[(i)] More generally, all Borel subgroups $B^w \subset K_{\CC}$ containing $T_{\CC}$ arise similarly as stabilizers of the extremal weight vectors $[\pi_\lambda(w)v_\lambda]$ as $w$ varies over the Weyl group. For brevity, we will denote the Borel opposite to $B$, that is, the conjugate by a longest element $w_0$ of the Weyl group, by $B_- := B^{w_0}$. It is the stabilizer of a lowest weight eigenspace $[v_{w_0 \lambda}]$.

\item[(ii)] We implicitly used the fact that the characters of the maximal torus $T$ extend to the Borel subgroup $B$ (whose maximal unipotent subgroup we denote by $U$) through the identification
\begin{equation}\label{eq_Borelchar}
 B \longrightarrow B / U = T_{\CC} \overset{\chi_{\lambda}}{\longrightarrow} \CC^\ast ,
\end{equation}

\item[(iii)] Of course (c) and (d) are also valid for weights in the walls of the positive Weyl chamber, whereas (a) and (b) would have to be modified using more general parabolic subgroups.
\end{remark}

\subsection{Equivariant geometry of $T^\ast K$}\label{sec:equiv-geom} Here, we recall some aspects of the geometry of the cotangent bundle $T^\ast K$ of a compact connected Lie group $K$; see for instance \cite[ch. IV.4]{liebermann.marle:1987} or \cite{alekseevsky.grabowski.marmo.michor} for more complete accounts. 
\subsubsection{Symplectic geometry} The cotangent bundle $T^\ast K$ is naturally identified with the product $K\times \kk^\ast$,
\begin{equation}\label{identification}
  K \times \kk^\ast \ni (x,\xi) \mapsto  \xi \circ d_e L_{x^{-1}}\in T^\ast_x K ,
\end{equation}
(where $L_x$ denotes left translation by $x\in K$) and 
we will use this identification tacitly throughout. The derivative of the isomorphism (\ref{identification}) induces an identification of tangent spaces
\[
T_{\left( L_{x^{-1}} \right)^\ast \xi}\left(T^\ast K\right) = \kk\times\kk^\ast
\]
under which the standard symplectic form at tangent vectors $(\dot x_i, \dot \xi_i) \in \kk \times \kk^\ast$ at the point $(x,\xi)$ is
\begin{equation}\label{symp_form}
\omega_\mathrm{std}\left( (\dot x_1, \dot \xi_1 ),(\dot x_2,\dot \xi_2) \right) =
 \langle \dot \xi_2 ,\dot x_1 \rangle - \langle \dot \xi_1 ,\dot x_2 \rangle + \langle \xi, [\dot x_1, \dot x_2] \rangle.
\end{equation}
In particular, the corresponding volume form becomes identified with the product measure
\begin{equation}\label{vol_form}
 \int_{T^\ast K} f \frac{\omega_{\mathrm{std}}^n}{n!} = \int_K \int_{\kk^\ast} f(x,\xi) dx d\xi ,
\end{equation}
where $dx$ denotes the normalized Haar measure on $K$, and $d\xi$ the corresponding Lebesgue measure on $\kk^\ast$ \cite[(2.3)]{hall:2002}.
The natural (left) $K\times K$-action on $T^\ast K$ is identified with
\[
 (x_1,x_2)(x,\xi) = (x_1 x x_2^{-1},\Ad_{x_2}^\ast \xi) ,
\]
and it is Hamiltonian with moment map $\mu=\mu_L\times\mu_R:T^\ast K \to \kk^\ast\times\kk^\ast$
\[
 K \times \kk^\ast \ni (x,\xi) \mapsto (\mu_L\times\mu_R)(x,\xi) = (\Ad_x^\ast \xi,-\xi) \in \kk^\ast \times \kk^\ast .
\]
\subsubsection{$KAK$-decomposition and the singular torus action of the invariant moment map}\label{prelimsingtorus}

The \emph{regular set} $\kk^\ast_\mathrm{reg}$ consists of the elements of the dual of the Lie algebra whose stabilizer is a maximal torus; it is the union of coadjoint orbits through elements in the interior $\breve{\tt}^\ast_+$ of the positive Weyl chamber. Its product with $K$ defines a subset $K \times \kk^\ast_\mathrm{reg} \cong \left(T^\ast K\right)_\mathrm{reg} \subset T^\ast K$, to which we refer as the \emph{regular stratum}. On it we consider the \emph{$KAK$-decomposition}, that is, the map
\begin{equation}\label{KAK_deco}
  \begin{tikzcd}[row sep=0pt]
    K\times \breve{\tt}^\ast_+ \times K \ar[r] & K\times \kk^\ast_{reg} \\
    (x_1,\xi_+,x_2) \ar[r,mapsto] & (x_1 x_2^{-1}, \Ad_{x_2}^\ast \xi_+)
\end{tikzcd}
\end{equation}
which is a $K\times K$-equivariant principal $T$-bundle with respect to the action $(x_1,\xi_+,x_2)\cdot t = (x_1 t,\xi_+,x_2t)$. We will frequently use the $KAK$-decomposition somewhat implicitly by writing $(x,\xi) = (x_1 x_2^{-1}, \Ad_{x_2}^\ast \xi_+)$, the trivial check that the result of the calculations is not affected by the indeterminacy of the $x_i$ being understood.

The \emph{invariant moment map} is defined by composing $\mu$ with the inverse of the natural homeomorphism $\tt^\ast_+ \times -\tt^\ast_+ \cong (\kk^\ast\times\kk^\ast)/(K\times K)$. In our setting, it is naturally identified with the map $\mu_\mathrm{inv}:T^\ast K \to \tt^\ast_+$
\begin{equation}\label{inv_mm}
 \mu_\mathrm{inv}\left((x,\xi)=(x_1x_2^{-1},\Ad_{x_2}^\ast \xi_+)\right) = \xi_+ .
\end{equation}
Here, for reasons related to the degenerations of complex structures we consider and which will become apparent later, it is most convenient to distinguish the positive Weyl chamber
\begin{equation}\label{KxK_Weyl}
  \tt^\ast_+ \times -\tt^\ast_+ \quad \subset \quad \tt^\ast \times \tt^\ast 
\end{equation}
with respect to the maximal torus $T\times T \subset K\times K$ of the group acting on $T^\ast K$.

The \emph{Kirwan polytope} \cite{kirwan:1984}, which by definition is the intersection of this positive Weyl chamber with the image of $\mu$, is therefore an anti-diagonally embedded copy of $\tt^\ast_+$, hence it is naturally identified with the image of $\mu_\mathrm{inv}$ via $\xi_+ \mapsto (\xi_+,-\xi_+)$. The moment map image of the regular stratum is naturally identified with the fibered product $\kk^\ast \times_{\breve{\tt}^\ast_+} \kk^\ast \subset \kk^\ast \times \kk^\ast$ with respect to the maps $\xi \mapsto \Ad_K^\ast \xi \cap \tt^\ast_+$ and $\xi \mapsto -\Ad_K^\ast \xi\cap \tt_+^\ast$, respectively.

The inverse image of the relative interior of the Kirwan polytope under the moment map
\[
  \Sigma := \mu^{-1}\left( \breve{\tt}^\ast_+ \right) = \left\{ (t,\xi_+)\mid  t\in T, \xi_+ \in \breve{\tt}^\ast_+ \right\} \subset K \times \kk^\ast
\]
is an open symplectic subvariety of $T^\ast K$ called the \emph{symplectic cross section}. In our case, it is just the open toric variety $T\times \breve{\tt}^\ast_+$, but notice that due to the sign convention in (\ref{symp_form}) the torus action is given by $s\cdot (t,\xi_+)=(s^{-1}t,\xi_+)$.

Since $\mu_\mathrm{inv}$ takes value in the dual of the Lie algebra of the maximal torus $T$, it is a rather natural idea (which goes back at least to Guillemin and Sternberg \cite{guillemin.sternberg:1984}) to interpret it as the moment map of a torus action; indeed this is possible on a dense open subset in great generality (cf. also \cite{lane:2017} and \cite[Lemma 2.3]{knop:2011}). We content ourselves with an explicit description in our setting:
\begin{proposition}
  Consider a fixed copy $T_\mathrm{inv}:=T$ of the maximal torus $T\subset K$; the restriction of the invariant moment map to the regular stratum
  \[
  \mu_\mathrm{inv}:(T^\ast K)_\mathrm{reg} \to \breve{\tt}^\ast_+
  \]
  is the Hamiltonian moment map of the $T_\mathrm{inv}$-action
\begin{equation}\label{sing_action}
 (x = x_1 x_2^{-1},\xi = \Ad_{x_2}^\ast \xi_+) \star t := (x x_2 t^{-1} x_2^{-1},\xi) .
\end{equation}
It equips the restriction of the moment map $\mu$ to the regular stratum
\[
 (T^\ast K)_\mathrm{reg} \overset{\mu}{\longrightarrow} \kk^\ast \times_{\breve{\tt}^\ast_+} \kk^\ast
\]
with the structure of a $K\times K$-equivariant $T_\mathrm{inv}$-principal bundle over its moment map image.
\end{proposition}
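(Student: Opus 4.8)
The plan is to pass to the $KAK$-coordinates $(x_1,\xi_+,x_2) \in K \times \breve{\tt}^\ast_+ \times K$ of (\ref{KAK_deco}), in which all the relevant maps become \emph{translations}. A direct substitution shows that there the cotangent-lifted $K\times K$-action is left translation in each factor, $(x_1',x_2')\cdot(x_1,\xi_+,x_2) = (x_1' x_1,\xi_+,x_2' x_2)$; the principal $T$-action of the $KAK$-bundle is $(x_1,\xi_+,x_2)\cdot s = (x_1 s,\xi_+,x_2 s)$; the moment map is $\mu(x_1,\xi_+,x_2) = (\Ad^\ast_{x_1}\xi_+,-\Ad^\ast_{x_2}\xi_+)$; and, crucially, the candidate action is $(x_1,\xi_+,x_2)\star t = (x_1,\xi_+,x_2 t)$, i.e.\ right translation in the second factor alone — indeed the two preimages $(x_1,\xi_+,x_2 t)$ and $(x_1 t^{-1},\xi_+,x_2)$ of $(x,\xi)\star t$ differ precisely by the principal $T$-action by $t$, which is why $\star$ descends. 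From this picture, well-definedness of $\star$ on $(T^\ast K)_\mathrm{reg}$ and the fact that it commutes with the $K\times K$-action are immediate, since right translation commutes with the (free) principal $T$-action as $T$ is abelian, and with left translations in general.

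Next I would establish the principal bundle claim. The image of $\mu|_\mathrm{reg}$ is the stated fibered product (already recorded in the text). Over a regular pair the fiber of $\mu$, read upstairs in $KAK$-coordinates, is a $T\times T$-torsor because the stabilizer of a regular $\xi_+$ is precisely $T$; quotienting by the principal $T$-action leaves a single free transitive $T_\mathrm{inv}$-orbit, so $\mu|_\mathrm{reg}$ is a $T_\mathrm{inv}$-bundle, with local triviality inherited from the $KAK$-bundle together with local sections of $\kk^\ast_\mathrm{reg}\cong K\times_T\breve{\tt}^\ast_+ \to \breve{\tt}^\ast_+$. Its $K\times K$-equivariance is the commutation noted above combined with the standard equivariance of $\mu$.

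The substantive step is that $\mu_\mathrm{inv}$ is a moment map for $\star$. I would first compute the fundamental vector field: the curve $s \mapsto (x_1, \xi_+, x_2\exp(s\eta))$ maps to $(x_1 \exp(-s\eta) x_2^{-1}, \Ad^\ast_{x_2}\xi_+)$, the second slot being constant since $\eta \in \tt$ fixes $\xi_+ \in \tt^\ast$, so in the trivialization $T(T^\ast K) \cong \kk\times\kk^\ast$ of (\ref{symp_form}) the generator at $(x,\xi) = (x_1 x_2^{-1},\Ad^\ast_{x_2}\xi_+)$ is $X_\eta = (-\Ad_{x_2}\eta, 0)$. Contracting with $\omega_\mathrm{std}$ via (\ref{symp_form}), the cocycle term $\langle \xi, [\,\cdot\,,\,\cdot\,]\rangle$ contributes $-\langle \xi_+, [\eta, \Ad_{x_2}^{-1}\dot x']\rangle$, which vanishes because $\ad^\ast_\eta \xi_+ = 0$; hence $\iota_{X_\eta}\omega_\mathrm{std}$ equals, up to the ambient sign convention, the one-form $\dot\xi' \mapsto \langle \dot\xi', \Ad_{x_2}\eta\rangle$. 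To identify this with $d\langle \mu_\mathrm{inv},\eta\rangle$, note $\langle\mu_\mathrm{inv},\eta\rangle = \langle\xi_+,\eta\rangle$ depends on $\xi$ only; differentiating $\xi = \Ad^\ast_{x_2}\xi_+$ along a curve in $\kk^\ast_\mathrm{reg}$ gives $\Ad^\ast_{x_2^{-1}}\dot\xi = \dot\xi_+ + \ad^\ast_Z\xi_+$ for some $Z\in\kk$, with $\dot\xi_+\in\tt^\ast$, and the correction $\ad^\ast_Z\xi_+$ lies in $\tt^{\ast\perp}$ by the block-diagonal, $T$-equivariant splitting $\kk^\ast = \tt^\ast\oplus\tt^{\ast\perp}$ (that is, $\tt^\ast$ annihilates $\tt^\perp$ and $\tt^{\ast\perp}$ annihilates $\tt$, and $[\tt,\tt^\perp]\subseteq\tt^\perp$). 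Pairing with $\eta\in\tt$ then kills the $\tt^{\ast\perp}$-part, so $d\langle\mu_\mathrm{inv},\eta\rangle(\dot\xi') = \langle\dot\xi',\Ad_{x_2}\eta\rangle$, matching the above. Alternatively, one can invoke the classical description of the Hamiltonian flow of $\phi\circ\mu_R$ for $\Ad^\ast$-invariant $\phi$ — here $\phi(\xi)$ the $\eta$-component of the Weyl-chamber representative, smooth on $\kk^\ast_\mathrm{reg}$ — as right translation by $\exp(s\,\mathrm{grad}\,\phi(\xi))$ with $\mathrm{grad}\,\phi(\xi) = \Ad_{x_2}\eta \in \Lie\Stab_K(\xi)$.

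I expect the only non-formal difficulty to be precisely this last identification: matching $\iota_{X_\eta}\omega_\mathrm{std}$ with $d\langle\mu_\mathrm{inv},\eta\rangle$ requires differentiating the Weyl-chamber-representative map (smooth only on the regular set) and keeping straight the signs coming from (\ref{symp_form}), the definition of $\mu_L\times\mu_R$, and the chosen orientation of the torus action (for a torus, replacing $\eta$ by $-\eta$ absorbs any residual sign). Everything else is bookkeeping in $KAK$-coordinates.
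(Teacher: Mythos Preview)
Your proof is correct; the argument via explicit computation in $KAK$-coordinates goes through as you describe, including the identification of the fundamental vector field $X_\eta = (-\Ad_{x_2}\eta,0)$ and the differentiation of the Weyl-chamber representative using the $T$-equivariant splitting $\kk^\ast = \tt^\ast \oplus \tt^{\ast\perp}$. The sign hedge is legitimate given the varying conventions for Hamiltonian vector fields.

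The paper, however, takes a much shorter route: it simply observes that the $T_\mathrm{inv}$-action commutes with the $K\times K$-action, and that on the symplectic cross section $\Sigma = T \times \breve{\tt}^\ast_+$ it restricts to the standard toric action $s\cdot(t,\xi_+) = (s^{-1}t,\xi_+)$ with its evident moment map $(t,\xi_+)\mapsto \xi_+$. Since the fundamental vector fields $X_\eta$, the form $\omega_\mathrm{std}$, and $\mu_\mathrm{inv}$ are all $K\times K$-invariant, the moment map identity $\iota_{X_\eta}\omega_\mathrm{std} = d\langle\mu_\mathrm{inv},\eta\rangle$ is $K\times K$-invariant too, so it suffices to check it at one point per $K\times K$-orbit, i.e.\ on $\Sigma$, where it is the standard toric statement. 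Your approach trades this invariance reduction for a direct global computation: it is more hands-on and makes $X_\eta$ explicit (useful later), but it requires differentiating $\mu_\mathrm{inv}$ through the singular decomposition, which the cross-section argument avoids entirely. Your closing alternative via the Hamiltonian flow of invariant functions $\phi\circ\mu_R$ is essentially the content of Proposition~\ref{p-1}(b) and is closer in spirit to the paper's reduction.
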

\begin{proof}
This is easy to verify, since this action commutes with the $K\times K$-action, and over the symplectic cross section $\Sigma$ it coincides with (\ref{sing_action}).
\end{proof}

\begin{remark} \item[(i)] We refer to this action as the ``singular torus action'', and to $T_\mathrm{inv}$ as the ``singular torus'', although there is nothing singular about it except for the fact that the action is only defined on the regular stratum and does not extend continuously to all of $T^\ast K$; there are criteria to determine which functions of the invariant moment map extend smoothly outside of the regular stratum for multiplicity-free manifolds, see \cite[Thm. 4.1]{knop:2011}.

 \item[(ii)] The map from the $KAK$-decomposition (\ref{KAK_deco}) becomes equivariant with respect to $T_{\mathrm{inv}}$ if we set
  \[
    (x_1, \xi_+, x_2)\star t := (x_1 t^{-1}, \xi_+, x_2) .
  \] 
\end{remark}

\subsection{Invariant Kähler structures and Mabuchi geodesic rays}\label{prelimKahler}

\subsubsection{Invariant Kähler structures}
In the sequel, the Legendre transform between dual vector spaces will play an important role, and we use the opportunity to introduce the necessary notation: for any smooth function $f:V\to\RR$ on a vector space $V$ we identify $d_v f$ with an element of the dual vector space $V^\ast$ via the canonical isomorphisms
\[
 \begin{tikzcd}
  T_v V \arrow[r, "d_v f"] \arrow[d, equal, swap, "\,\text{can.}"] & T_{f(v)} \RR \arrow[d, equal, "\,\text{can.}"] \\
 V \arrow[r, "d_v f"] & \RR
 \end{tikzcd} 
\]
Globally, these derivatives define the \emph{Legendre transform} (of particular interest for convex or concave functions $f$)
\[
\LL_f:V \to V^\ast, \quad  \LL_f(v) = d_v f .
\]

\begin{proposition}[{\cite{neeb:2000a,neeb:2000b,kirwin.mourao.nunes:2013}}]\label{prop_Pg} Let $g:\kk^\ast \to \RR$ be a strictly convex function which is invariant under the coadjoint action; then
  \item[(a)] the Legendre transform $\LL_g: \kk^\ast \to \kk$ is $K$-equivariant, and restricts to the Legendre transform $\LL_{(g\vert _{\tt^\ast})}$ of the restriction $g\vert_{\tt^\ast}$,
    \[
    \begin{tikzcd}
      \kk^\ast \ar[r, "\LL_g"] & \kk \\
      \tt^\ast \ar[u, phantom, "\bigcup"] \ar[r, "\LL_{(g\vert_{\tt^\ast})}"] & \tt \ar[u, phantom, "\bigcup"] .
    \end{tikzcd}
    \]
\item[(b)] if $\LL_g$ is surjective, for instance if $g$ is uniformly convex, the map
  \begin{equation}
    T^\ast K \ni (x,\xi) \mapsto x e^{i d_\xi g} \in K_{\CC}
  \end{equation}
  is a diffeomorphism onto the complexification $K_{\CC}$ such that the pull-back $J_g$ of the natural complex structure along this map defines a Kähler structure $(T^\ast K,\omega_\mathrm{std},J_g)$.

  Furthermore, the ring of global $J_g$-holomorphic functions is generated by the matrix coefficients
\begin{equation}\label{flambdaA}
f^g_{\lambda,A}(x,\xi) :=\tr (\pi_\lambda (xe^{id_\xi g})A),
\end{equation}
where $\lambda$ ranges over the dominant integral weights, $\pi_\lambda:K \to \Aut (V_\lambda)$ is the corresponding highest weight representation of $K$, and $A\in \End(V_\lambda)$.
\item[(c)] The (global) K\"ahler potential (for $\omega_\mathrm{std}$ with respect to $J_g$) is \cite[Thm. 3.2]{kirwin.mourao.nunes:2013}
\begin{equation}\label{kappa_g}
\kappa_{g}(x,\xi) :=  \langle \xi, d_\xi g \rangle - g ( \xi ) .
\end{equation}
\end{proposition}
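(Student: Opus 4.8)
The plan is to take the three parts in order: (a) and (b) are formal --- coming from differentiating the invariance of $g$ and from the Cartan ($KAK$) decomposition of $K_\CC$ --- while the analytic content sits in (c), from which the only missing piece of ``Kähler'' in (b), positivity, also follows.

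For (a), I would differentiate the coadjoint invariance $g = g\circ\Ad^\ast_k$ at $\xi\in\kk^\ast$; using $\langle\Ad^\ast_k\eta,X\rangle = \langle\eta,\Ad_{k^{-1}}X\rangle$, the chain rule gives $d_{\Ad^\ast_k\xi}\,g = \Ad_k(d_\xi g)$, i.e.\ $\LL_g\circ\Ad^\ast_k = \Ad_k\circ\LL_g$, so $\LL_g$ is $K$-equivariant and in particular $T$-equivariant, hence carries the fixed set $(\kk^\ast)^T = \tt^\ast$ into $\kk^T = \tt$. Since the splitting \eqref{Lie_decomp} is $T$-invariant and block-diagonal for a $K$-invariant inner product, differentiating $g$ along directions tangent to $\tt^\ast$ records only $g\vert_{\tt^\ast}$, so $\LL_g\vert_{\tt^\ast} = \LL_{g\vert_{\tt^\ast}}$.

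For (b), strict convexity makes $\LL_g$ injective, and where $\Hess g$ is nondegenerate (in particular under uniform convexity) it is a local diffeomorphism; with surjectivity this makes $\LL_g:\kk^\ast\to\kk$ a diffeomorphism with smooth inverse, and composing $\id_K\times\LL_g$ with the global Cartan diffeomorphism $K\times\kk\to K_\CC$, $(k,X)\mapsto ke^{iX}$, gives the asserted diffeomorphism $\Phi_g:T^\ast K\to K_\CC$, $(x,\xi)\mapsto xe^{id_\xi g}$; one sets $J_g := \Phi_g^\ast J_{K_\CC}$. As $\Phi_g$ intertwines the left and right $K$-actions on $T^\ast K$ with left and right translation on $K_\CC$ --- both holomorphic --- $J_g$, like $\omega_\mathrm{std}$, is $K\times K$-invariant. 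For the holomorphic functions, $K_\CC$ is a reductive affine algebraic group whose coordinate ring is spanned by matrix coefficients of its finite-dimensional representations --- exactly the complexifications of the $\pi_\lambda$ --- whose pullbacks along $\Phi_g$ are the functions $f^g_{\lambda,A}$ of \eqref{flambdaA}; since $K_\CC$ is Stein these generate the algebra of holomorphic functions. The remaining ingredient of ``Kähler'', positivity of $\omega_\mathrm{std}$ of type $(1,1)$, follows from (c).

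For (c), the claim is that $\kappa_g$ is a global Kähler potential, $\omega_\mathrm{std} = 2i\partial\bar\partial\kappa_g$; note that $\kappa_g(x,\xi) = \langle\xi,d_\xi g\rangle - g(\xi)$ is the Legendre transform $g^\ast$ of $g$ evaluated at $d_\xi g = \LL_g(\xi)$, so (using the coadjoint invariance of $g$) it is $K\times K$-invariant. Hence $\omega_\mathrm{std}$, $J_g$ and $2i\partial\bar\partial\kappa_g$ are all $K\times K$-invariant, and it suffices to check the identity of $(1,1)$-forms at one point of each orbit, e.g.\ at $p = e^{i\eta}$, $\eta\in\tt$ (these meet every orbit since $K_\CC = K\exp(i\tt)K$), on the whole tangent space $T_pK_\CC \cong \kk_\CC = \tt_\CC\oplus(\tt^\perp)_\CC$. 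On the abelian block $\tt_\CC$ this is the classical toric computation: in the ``imaginary-log'' coordinate $\LL_g(\xi)$, $i\partial\bar\partial$ of the Legendre transform $g^\ast$ reproduces a fixed multiple of the abelian part $\sum_j dx_j\wedge d\xi_j$ of \eqref{symp_form}, the Hessians matching because $\Hess g^\ast = (\Hess g)^{-1}$; strict convexity then gives $\Hess g^\ast\succ 0$, hence strict plurisubharmonicity of $\kappa_g$ and the positivity needed in (b). On the remaining blocks one computes $i\partial\bar\partial\kappa_g$ using the action of $\Ad(e^{i\eta}) = e^{i\,\ad_\eta}$ on the root spaces and compares with \eqref{symp_form}.

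The main obstacle is this last computation in the non-abelian directions: one must check that the contributions to $i\partial\bar\partial\kappa_g$ there --- which a priori carry ``hyperbolic'' factors in the roots coming from $e^{i\,\ad_\eta}$ --- collapse exactly to the flat pairing together with the curvature term $\langle\xi,[\dot x_1,\dot x_2]\rangle$ of \eqref{symp_form}. I would organize this either through the left-trivialized description $\omega_\mathrm{std} = -d\theta_\mathrm{std}$ (with $\theta_\mathrm{std}$ the tautological one-form), reducing the check to showing $i(\bar\partial - \partial)\kappa_g + \theta_\mathrm{std}$ is closed --- a bounded computation in group coordinates using the explicit $\theta_\mathrm{std}$ and the $\Ad$-equivariance of $\LL_g$ from (a) --- or by noting that $\omega_\mathrm{std}$ and $i\partial\bar\partial\kappa_g$ induce the same $K\times K$-moment map $\mu_L\times\mu_R$, so that agreement on the transverse (abelian) data already handled forces agreement everywhere. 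This bookkeeping is what \cite{kirwin.mourao.nunes:2013} and \cite{neeb:2000a,neeb:2000b} carry out; it is the only step needing more than the toric identity and formal equivariance.
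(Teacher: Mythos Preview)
Your proposal is correct and, for the one piece the paper actually argues, takes the same route. The paper treats this proposition as a citation result: its entire in-text proof is the single sentence ``The statement on the restriction in (a) follows from equivariance, which implies that $\LL_g(\tt^*)\subseteq \mathfrak z_\mathfrak k(\tt)=\tt$,'' with (b) and (c) deferred wholesale to \cite{neeb:2000a,neeb:2000b,kirwin.mourao.nunes:2013}. Your argument for (a) is exactly this --- differentiate the invariance to get $K$-equivariance of $\LL_g$, then use $T$-equivariance to land in $\kk^T=\tt$ --- and you add the (correct) observation that the restriction coincides with $\LL_{g|_{\tt^\ast}}$ because the derivative of $g$ at points of $\tt^\ast$ vanishes in the $\tt^{\ast\perp}$-directions.

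Where you go beyond the paper is in sketching (b) and (c) rather than citing them. Your outline for (b) via the global Cartan decomposition $K\times\kk\overset{\sim}{\to}K_\CC$ composed with $\id\times\LL_g$, and the identification of holomorphic functions with matrix coefficients of the affine reductive group $K_\CC$, is the standard and correct path. For (c) you honestly flag the non-abelian block of $i\partial\bar\partial\kappa_g$ versus the curvature term $\langle\xi,[\dot x_1,\dot x_2]\rangle$ in \eqref{symp_form} as the genuine computation, and correctly identify the two ways to organize it (via $-d\theta_\mathrm{std}$ or via matching moment maps); this is precisely what \cite[Thm.~3.2]{kirwin.mourao.nunes:2013} carries out. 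So there is no gap --- you have simply unpacked what the paper leaves to its references.
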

The statement on the restriction in (a) follows from equivariance, which implies that $\LL_g(\tt^*)\subseteq \mathfrak z_\mathfrak k(\tt)=\tt$.

\begin{definition}
We will denote the space of $K\times K$-invariant Kähler polarizations described in (b) by $\cM$, and identify it with the space of Weyl-invariant convex functions on $\tt^\ast$ whenever convenient. The K\"ahler polarization associated to the invariant function $g$ will be denoted by $\Pm_g.$
\end{definition}

We will also use the following properties from \cite[Lemmas 3.4 and 3.6]{kirwin.mourao.nunes:2013}; here we fix a $K$-invariant inner product $\langle\ ,\ \rangle$ on $\kk^\ast$, and identify $\kk$ and $\kk^\ast$ $K$-equivariantly in the usual way via the Legendre transform $\LL_Q$ associated to the norm square $Q(\xi) = \frac{1}{2} \langle \xi,\xi \rangle$.
\begin{proposition}\label{p-1}
  For arbitrary smooth $K$-invariant functions $g, h \in C^\infty(\kk^\ast)^K$,  we have
 \item[(a)] If $\xi\in \kk^\ast_\mathrm{reg}$ is regular, then $[ d_\xi g, d_\xi h] = 0$.
 \item[(b)] The invariant Hamiltonian $h\circ \mu_\mathrm{R} = h \vert_{\tt^\ast_+}\circ \mu_\mathrm{inv}$ generates the Hamiltonian vector field and flow, respectively,
 \[
  \left(X_{h \circ \mu_\mathrm{R}}\right)_{(x,\xi)} = \left( (L_x)_\ast  d_\xi h,0 \right) ,
\quad \text{ and } \quad
  \varphi_t^{X_{h\circ \mu_\mathrm{R}}} (x,\xi) = (x e^{t d_\xi h},\xi) ,
  \]
  where $(L_x)_\ast$ is the push-forward by the left translation $L_x$.
\item[(c)] Let $\Hess_h$ denote the Hessian of $h$, which can be defined by
\[
\frac{d}{dt}_{\vert_{t=0}}  d_{\xi+tb}h(a) = \langle b, \Hess_h(\xi) a\rangle_{\kk^\ast}, \,\, a,b\in \kk^\ast.
\]
Then,
\[
\forall\, u\in K:\quad \Hess_h(\Ad_u^* \xi) = \Ad_u^* \Hess_h(\xi) \Ad_{u^{-1}}^* .
\]
\item[(d)] If $h$ is strictly convex we have, as endomorphisms of $\kk^\ast$,
$$
\ad_{\xi^*}^* = \Hess_{h}^{-1}(\xi) \, \ad_{d_\xi h}^* ,
$$
where $\xi^*\in\kk$ is the Lie algebra element corresponding to $\xi$ under the isomorphism $\kk\cong \kk^\ast$ induced by the invariant inner product on $\kk$.
\item[(e)] Consider a Chevalley basis for $\kk\otimes \CC$ associated to the choice of maximal torus $T\subset K$ and positive Weyl chamber $\tt_+^*$. With respect to the dual basis of $\kk^*\otimes \CC$ 
and under the analog of the decomposition (\ref{Lie_decomp}), the $\CC$-linear extension to $\kk\otimes\CC$ of the Hessian of $h$ at a point $\xi \in \tt^\ast$, decomposes as
  \[
    \Hess_h(\xi) = \begin{bmatrix} \Hess_h(\xi)_{\vert_{\tt^\ast}} & 0 \\ 0 & \mathrm{diag} \frac{\langle\alpha,d_\xi h\rangle}{\langle\alpha , \xi\rangle_{\tt^*}} \end{bmatrix} ,
    \]
with $\alpha$ ranging over the set of roots.
\end{proposition}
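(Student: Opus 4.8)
The plan is to deduce all five statements from one structural fact: the $K$-equivariance of the gradient map $\kk^\ast\to\kk$, $\xi\mapsto d_\xi h$ (and likewise for $g$), that is, $d_{\Ad^\ast_u\xi}h=\Ad_u(d_\xi h)$, which holds for every smooth $K$-invariant function by the same one-line argument recalled after Proposition~\ref{prop_Pg}; besides this, the only ingredients needed are the explicit symplectic form (\ref{symp_form}) and the $K$-invariance of the chosen inner product on $\kk^\ast$. For (a), differentiating $h(\Ad^\ast_u\xi)=h(\xi)$ at $u=e$ shows that $d_\xi h$ is fixed by $\Ad_u$ for every $u$ in the coadjoint stabilizer $K_\xi$; when $\xi$ is regular, $K_\xi$ is a maximal torus, so $d_\xi g$ and $d_\xi h$ both lie in its abelian Lie algebra and hence commute. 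For (b), note that $h\circ\mu_\mathrm{R}$ is a function of $\xi$ alone, so its differential at $(x,\xi)$ pairs only against the $\kk^\ast$-component of a tangent vector; substituting the ansatz $X=(\dot x_X,\dot\xi_X)\in\kk\times\kk^\ast$ into Hamilton's equation for $X_{h\circ\mu_\mathrm{R}}$ with $\omega_\mathrm{std}$ as in (\ref{symp_form}) and matching the $\dot\xi$- and $\dot x$-terms forces $\dot x_X=\pm d_\xi h$ and $\dot\xi_X=\pm\ad^\ast_{d_\xi h}\xi$. But the same differentiation of the invariance of $h$ as in (a) gives $\ad^\ast_{d_\xi h}\xi=0$, so $\dot\xi_X=0$; since $\xi$ is therefore conserved, the vector field is left-invariant with constant value $d_\xi h$, and its flow is right translation by $e^{t d_\xi h}$.

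For (c) and (d), the idea is to differentiate the equivariance relation $d_{\Ad^\ast_u\xi}h=\Ad_u(d_\xi h)$ in two complementary directions. Differentiating in the base point $\xi$, invoking the definition of $\Hess_h$ together with the facts that $\Ad^\ast_u$ is an isometry of $\langle\cdot,\cdot\rangle_{\kk^\ast}$ and that $\Hess_h$ is symmetric, gives the intertwining relation $\Hess_h(\Ad^\ast_u\xi)=\Ad^\ast_u\,\Hess_h(\xi)\,\Ad^\ast_{u^{-1}}$ of (c). Differentiating instead in the group variable, at $u=\exp(sa)$ for $a\in\kk$, yields the pointwise identity $\Hess_h(\xi)\big(\ad^\ast_a\xi\big)=\pm[a,d_\xi h]$ for all $a$ (the bracket on the right being read as an element of $\kk^\ast$ via the inner product). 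Using the invariance of the inner product to rewrite $\ad^\ast_a\xi$ and $[a,d_\xi h]$, and choosing $a$ appropriately in terms of an arbitrary $\eta\in\kk^\ast$, one recognizes this identity as $\Hess_h(\xi)\big(\ad^\ast_{\xi^\ast}\eta\big)=\ad^\ast_{d_\xi h}\eta$; since $h$ is strictly convex, $\Hess_h(\xi)$ is invertible and (d) follows.

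For (e), fix a regular $\xi\in\tt^\ast$. Since $T$ fixes $\xi$ coadjointly, (c) shows that $\Hess_h(\xi)$ commutes with the $T$-action on $\kk^\ast\otimes\CC$; decomposing into weight spaces $\kk^\ast\otimes\CC=(\tt^\ast\otimes\CC)\oplus\bigoplus_\alpha(\kk^\ast\otimes\CC)_\alpha$, a $T$-equivariant endomorphism is automatically block-diagonal and acts by a scalar $c_\alpha$ on each one-dimensional root space. The $\tt^\ast$-block equals $\Hess_{h\vert_{\tt^\ast}}(\xi)$ by Proposition~\ref{prop_Pg}(a), which says the gradient of $h$ restricts to that of $h\vert_{\tt^\ast}$. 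To pin down $c_\alpha$ I would use the pointwise identity $\Hess_h(\xi)\big(\ad^\ast_a\xi\big)=\pm[a,d_\xi h]$ from the proof of (d) (this needs no invertibility, hence holds for any smooth $h$): on $(\kk^\ast\otimes\CC)_\alpha$ the operators $\ad^\ast_{\xi^\ast}$ and $\ad^\ast_{d_\xi h}$ act by $\langle\alpha,\xi\rangle_{\tt^\ast}$ and $\langle\alpha,d_\xi h\rangle$ respectively — here $d_\xi h\in\tt$ by (a), and the Chevalley normalization of the root vectors is exactly what makes these scalars appear with no spurious constant — so $c_\alpha=\langle\alpha,d_\xi h\rangle/\langle\alpha,\xi\rangle_{\tt^\ast}$. (Alternatively (e) follows from (d) applied to $h+CQ$ with $C$ large, by linearity in $h$ and the trivial base case $h=Q$.)

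The computations in (a) and (b) are genuinely routine; the main obstacle is the bookkeeping in (c)--(e). One must remain consistent about the several identifications simultaneously in play — the pairing $\kk^\ast\times\kk\to\RR$, the isomorphism $\kk\cong\kk^\ast$ and how it interacts with the root/weight space decomposition, and the sign conventions for $\ad^\ast$ and for the derivative of $s\mapsto\Ad^\ast_{\exp(sa)}\xi$ — and, for (e), one must verify that the chosen Chevalley normalization (and that of the dual basis of $\kk^\ast\otimes\CC$) produces precisely $\langle\alpha,d_\xi h\rangle/\langle\alpha,\xi\rangle_{\tt^\ast}$ rather than a rescaling of it. None of this is conceptually deep, but it is exactly where sign and normalization slips occur.
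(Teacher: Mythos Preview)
Your proposal is sound. The paper does not actually prove this proposition; it simply imports the statements from \cite[Lemmas~3.4 and~3.6]{kirwin.mourao.nunes:2013}, so there is no in-paper argument to compare against. Your plan---deriving everything from the single equivariance identity $d_{\Ad^\ast_u\xi}h=\Ad_u(d_\xi h)$, differentiated once in the base point for (c) and once in the group variable for (d), combined with the explicit $\omega_{\mathrm{std}}$ for (b)---is exactly the standard route and matches what is done in the cited reference. The argument for (e) via $T$-equivariance of $\Hess_h(\xi)$ (from (c) with $u\in T$ and $\xi\in\tt^\ast$) giving the block structure by Schur, and then reading off the root-space scalars from the pointwise identity $\Hess_h(\xi)\,\ad^\ast_{\xi^\ast}=\ad^\ast_{d_\xi h}$, is clean and correct.

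Two minor remarks. First, your own caveat is well placed: the only genuine work left is chasing the sign of $\frac{d}{ds}\big|_{s=0}\Ad^\ast_{\exp(sa)}\xi$ and making sure the identification $\kk\cong\kk^\ast$ is applied consistently when you pass from $[a,d_\xi h]\in\kk$ to an element of $\kk^\ast$; once you fix a convention this is mechanical. Second, for (e) at a non-regular $\xi\in\tt^\ast$ the quotient $\langle\alpha,d_\xi h\rangle/\langle\alpha,\xi\rangle_{\tt^\ast}$ is of the form $0/0$ for roots $\alpha$ vanishing on $\xi$ (since $d_\xi h\in\tt$ lies in the same wall by Weyl-equivariance of $\LL_h$), so your argument via (d) literally works only on the regular part of $\tt^\ast$; the paper only ever uses (e) at regular points, so this is harmless, but it is worth noting that the displayed formula should be read there.
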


\subsubsection{Mabuchi geodesics}
On a compact K\"ahler manifold $(M,\omega,J)$, the space of K\"ahler forms in the K\"ahler class $[\omega]\in H^2(M)$ can described in terms of relative K\"ahler potentials as the set 
$$
\{\omega_\phi:= \omega+ \frac{i}{2}\partial\bar\partial \phi >0 \mid \phi\in C^\infty(M)\}.
$$
The Mabuchi metric (defined in \cite{mabuchi:1987} and studied independently also in the early references \cite{semmes:1992,donaldson:1999}) plays an important role in recent advances in K\"ahler geometry. A path of K\"ahler metrics $\omega_t = \omega+ \frac{i}{2}\partial\bar\partial \phi_t$ defines a \emph{Mabuchi geodesic} if
\begin{equation}\label{mab_eqn}
\ddot \phi_t -\frac{1}{2}\| \grad_t \dot \phi_t \|^2_t = 0,
\end{equation}
where $\grad_t$ and $\|\cdot\|_{t}$ denote the gradient and norm relative to the K\"ahler metric defined by $\omega_t$.
Alternatively, the \emph{Moser maps} $\psi_t\in {\rm Diff}\,(M)$ defined by the differential equation
\[
\frac{d}{dt}_{\vert_{t=0}} \psi_t (x) = -J X^{\omega_t}_{\dot \phi_t}(\psi_t(x)) 
\]
(where $X^{\omega_t}_{\dot \phi_t}$ is the Hamiltonian vector field of the time derivative $\dot \phi_t$ of the relative Kähler potential with respect to the Kähler form $\omega_t$) trivialize the family of Kähler forms in the sense that $\psi_t^*\omega_t=\omega$. In these terms, the geodesic equation takes the form
\begin{equation}\label{mab_eqn2}
  \psi_t^\ast \dot \phi_t = \dot\phi_0 .
\end{equation}
In any case, a path $\omega_t$ of cohomologous Kähler forms can therefore equivalently be described in the so-called symplectic picture, by a path of complex structures $J_t:=\psi_t^* J$ on $M$. Then the two K\"ahler structures $(\omega, J_t)$ and $(\omega_t, J)$ are equivalent. Also note that the geodesic equation (\ref{mab_eqn}) respectively (\ref{mab_eqn2}) in the space of relative Kähler potentials still makes sense on a non-compact manifold $M$.

Explicit solutions to the Mabuchi geodesic equation (which can also be formulated as a complex homogeneous Monge-Amp\`ere equation) are rare. Important examples are given by compact symplectic toric manifolds which are smooth compactifications of $(\mathbb C^*)^n\cong T^\ast  T^n.$ Just as in this abelian case, in the case of cotangent bundles of compact Lie groups the Mabuchi geodesic equation linearizes under Legendre transforms, as follows.

Let $g, h\in C^\infty(\kk^\ast)^K$ where both $g, h$ are uniformly convex, that is the operator norms of ${\rm Hess}_{g}, {\rm Hess}_{h}$ are bounded from below. We have the following 
\begin{proposition}(Section 10 of \cite{mourao.nunes:2015})
The path of $K\times K$-invariant K\"ahler structures $(T^\ast K,\omega_{\rm std},J_{g+th}), t>0,$ is a Mabuchi geodesic ray. 
\end{proposition}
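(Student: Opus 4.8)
The plan is to verify the Mabuchi geodesic equation in the Moser form (\ref{mab_eqn2}), taking advantage of the fact that the Legendre transform linearizes all the data at hand. The conceptual reason behind this — which I would also use as the quickest route — is that the family $\Pm_{g+th}$ is precisely the orbit of $\Pm_g$ under the imaginary-time Hamiltonian flow of the fixed invariant Hamiltonian $H:=h\circ\mu_{\mathrm R}$. Indeed, writing $z_g(x,\xi):=xe^{id_\xi g}$ for the diffeomorphism onto $K_{\CC}$ of Proposition \ref{prop_Pg}(b), and using $[d_\xi g,d_\xi h]=0$ on the regular stratum (Proposition \ref{p-1}(a)), one has $e^{id_\xi(g+th)}=e^{id_\xi g}e^{itd_\xi h}$, hence
\[
 z_{g+th}(x,\xi)=z_g\bigl(xe^{itd_\xi h},\xi\bigr)=z_g\bigl(\varphi^{X_H}_{it}(x,\xi)\bigr),
\]
where $\varphi^{X_H}_s(x,\xi)=(xe^{sd_\xi h},\xi)$ is the Hamiltonian flow of $H$ (Proposition \ref{p-1}(b)), analytically continued to imaginary time $s=it$; equivalently, the generators (\ref{flambdaA}) of the holomorphic function rings satisfy $f^{g+th}_{\lambda,A}=f^{g}_{\lambda,A}\circ\varphi^{X_H}_{it}$. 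By the general principle of Semmes and Donaldson — in the Lie-theoretic form established in Section 10 of \cite{mourao.nunes:2015} and used in \cite{kirwin.mourao.nunes:2013} — a path of Kähler structures of this type is a Mabuchi geodesic, so the work lies in making this precise on the (non-compact) manifold $T^\ast K$.

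For a self-contained verification I would identify the Moser diffeomorphisms and compute the relative Kähler potential explicitly. The biholomorphism $\psi_t:=z_g^{-1}\circ z_{g+th}:(T^\ast K,J_{g+th})\to(T^\ast K,J_g)$ is, by uniqueness in the Cartan decomposition $K_{\CC}=K\exp(i\kk)$ (note that $xe^{id_\xi(g+th)}$ is already in polar form), the fibrewise map
\[
 \psi_t(x,\xi)=\bigl(x,\ \LL_g^{-1}(\LL_{g+th}(\xi))\bigr),\qquad \psi_t^{-1}(x,\xi)=\bigl(x,\ \LL_{g+th}^{-1}(\LL_g(\xi))\bigr),
\]
where $\LL_g,\LL_h,\LL_{g+th}$ are global diffeomorphisms of $\kk^\ast$ by uniform convexity and $\LL_{g+th}=\LL_g+t\LL_h$. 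Setting $\omega_t:=(\psi_t^{-1})^\ast\omega_{\mathrm{std}}$ one has $\psi_t^\ast\omega_t=\omega_{\mathrm{std}}$, $\psi_0=\id$, and $\psi_t^\ast J_g=J_{g+th}$; since $H^2(T^\ast K)=0$ the forms $\omega_t$ are cohomologous to $\omega_{\mathrm{std}}$, and the global Kähler potentials of Proposition \ref{prop_Pg}(c), which are \emph{linear} in $t$,
\[
 \kappa_{g+th}=\kappa_g+t\,\kappa_h,\qquad \kappa_h(x,\xi):=\langle\xi,d_\xi h\rangle-h(\xi),
\]
give the relative potential $\phi_t=\kappa_{g+th}\circ\psi_t^{-1}-\kappa_g$ with $\phi_0=0$. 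A short computation with Legendre duality — differentiating $\LL_{g+th}(\eta_t)=\LL_g(\xi)$ in $t$ and using $d_{\eta_t}(g+th)=\LL_g(\xi)$ — yields $\dot\phi_t(x,\xi)=-h\bigl(\LL_{g+th}^{-1}(\LL_g(\xi))\bigr)$, so that
\[
 (\psi_t^\ast\dot\phi_t)(x,\xi)=-h(\xi)=\dot\phi_0(x,\xi),
\]
which is exactly equation (\ref{mab_eqn2}). Note that $-h(\xi)$ is independent of $t$ and, by $\Ad$-invariance of $h$, coincides with $-h\circ\mu_{\mathrm{inv}}=-H$ on the regular stratum, as a geodesic ray demands.

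The one point that is not purely formal is the identification of $\psi_t$ with \emph{the} Moser diffeomorphism, i.e.\ with the solution of the ODE $\dot\psi_t=-J_g X^{\omega_t}_{\dot\phi_t}\circ\psi_t$: by Moser's argument this comes down to checking, from the explicit formulas above and the Kähler identities for $(\omega_t,J_g)$, that the generator $V_t:=\dot\psi_t\circ\psi_t^{-1}$ equals the $\omega_t$-gradient $-J_g X^{\omega_t}_{\dot\phi_t}$ of $\dot\phi_t$. I expect this step, together with pinning down the normalization conventions implicit in the Mabuchi metric, to absorb most of the effort; the heart of the argument — the linearity $\kappa_{g+th}=\kappa_g+t\kappa_h$ and the resulting identity $\psi_t^\ast\dot\phi_t=\dot\phi_0$ — is genuinely short. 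The remaining issues are minor: non-compactness is harmless because $H^2(T^\ast K)=0$, the potentials of Proposition \ref{prop_Pg}(c) are global, and uniform convexity of $g,h$ makes all flows complete and all Legendre maps global diffeomorphisms; and although $[d_\xi g,d_\xi h]=0$ holds only on $(T^\ast K)_{\mathrm{reg}}$, every map above extends smoothly to $T^\ast K$ by Proposition \ref{prop_Pg}(b), so the geodesic equation, once verified on a dense open subset, holds everywhere.
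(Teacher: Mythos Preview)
The paper does not supply its own proof of this proposition --- it is cited from \cite{mourao.nunes:2015} --- and the discussion immediately following it only records the Moser map $\psi_t=\LL_g^{-1}\circ\LL_{g+th}$ (equation~(\ref{psit})), the identity $J_{g+th}=\psi_t^\ast J_g$, and the imaginary-time-flow interpretation $\psi_t^\ast f^g_{\lambda,A}=e^{itX_h}f^g_{\lambda,A}=f^{g+th}_{\lambda,A}$. Your argument fills this in correctly and in the same spirit: the linearity $\kappa_{g+th}=\kappa_g+t\kappa_h$ of the global potentials (\ref{kappa_g}), together with the Legendre-duality computation yielding $\dot\phi_t=-h\circ\psi_t^{-1}$, gives $\psi_t^\ast\dot\phi_t=-h=\dot\phi_0$, which is exactly (\ref{mab_eqn2}). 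Your explicit formula for $\psi_t$ agrees with (\ref{psit}), and your derivation via $z_g^{-1}\circ z_{g+th}$ and the Cartan decomposition is how the paper itself arrives at it.

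The step you flag as ``not purely formal'' --- identifying your biholomorphism $\psi_t$ with the Moser flow solving $\dot\psi_t=-J_gX^{\omega_t}_{\dot\phi_t}\circ\psi_t$ --- is genuine but routine here: since $T^\ast K$ is contractible and all potentials are global, it reduces to checking that the vertical field $V_t=\dot\psi_t\circ\psi_t^{-1}$ satisfies $\iota_{V_t}\omega_t=-d^c_{J_g}\dot\phi_t$, a direct computation from (\ref{symp_form}) and the block structure of the Hessians in Proposition~\ref{p-1}(e). This is the content of the imaginary-time-flow formalism of \cite{mourao.nunes:2015,kirwin.mourao.nunes:2013} that the paper defers to, so your approach is not a different route but an unpacking of the cited one.
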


The family of biholomorphisms of $T^\ast K$ corresponding to this geodesic is generated by the Hamiltonian flow of $X_h$ analytically continued to imaginary time $it, t>0$. Indeed, letting  
\begin{equation}\label{psit}
\psi_t :=  {\mathcal L}_{g}^{-1} \circ {\mathcal L}_{g+th}  : T^\ast K\to T^\ast K,
\end{equation}
we obtain
$J_{g+th}=\psi_t^* J_{g}$ and, explicitly (see \cite{kirwin.mourao.nunes:2013,mourao.nunes:2015}), for the generators of the coordinate rings ${\mathcal O}_{(T^\ast K,J_g)}$ and ${\mathcal O}_{(T^\ast K,J_{g+th})}$,
\[
\psi_t^* f_{\lambda,A}^g = e^{itX_h} f_{\lambda,A}^g = f_{\lambda,A}^{g+th}
\]
for $\lambda$ a dominant integral weight and $A$ an endomorphism of the respective irreducible representation, as in (\ref{flambdaA}).

\section{Half-form corrected quantization in fibering polarizations}\label{sec_fibpol}

\subsection{Fibering polarizations}\label{prelimmixedpol} Recall that a \emph{polarization} in the sense of geometric quantization is an integrable distribution $\Pm \subset T_{\CC}^\ast M$ in the complexified cotangent bundle of the symplectic manifold $(M,\omega)$ \cite{woodhouse:1991,Kir04}. The main examples are \emph{Kähler polarizations} induced by a compatible complex structure $J$, where $\Pm = T^{1,0}_J M$, and \emph{real polarizations} which are given by the complexification $\Pm = \LL\otimes\CC$ of a Lagrangian foliation $\LL \subset TM$. Beyond these most commonly encountered types, there are what we term \emph{mixed polarizations} in this paper, that is, complex polarizations with real directions; their local description is given by the generalized Newlander--Nirenberg theorem:
\begin{theorem}[{\cite{nirenberg:1958}}]\label{thm_nir}
An integrable complex distribution $\Pm \subset T_{\CC}M$ with real part $\Pm \cap \overline{\Pm}\cap TM$ of constant rank is locally generated by $k = \rk (\Pm \cap \overline{\Pm})$ real and $n-k = \frac{1}{2}\dim_{\RR} M - \rk (\Pm \cap \overline{\Pm})$ complex functions which Poisson-commute.
\end{theorem}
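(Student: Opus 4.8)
The plan is to treat this as Nirenberg's complex Frobenius theorem \cite{nirenberg:1958} plus one observation that brings in the symplectic form, so the argument splits into two stages. First I would quote the cited theorem directly: its hypotheses -- involutivity of $\Pm$ and of $\Pm+\overline{\Pm}$ (the latter being part of what ``integrable'' means for a complex polarization), together with the constant rank of the real part $\Pm\cap\overline{\Pm}$ -- are exactly what is assumed here. It provides, around each point, local coordinates $(t_1,\dots,t_k;\,z_1,\dots,z_{n-k};\,u_1,\dots,u_k)$, with $z_b$ complex and the remaining coordinates real (so that $k+2(n-k)+k=2n=\dim_{\RR}M$), in which
\[
 \Pm=\mathrm{span}_{\CC}\Bigl(\tfrac{\partial}{\partial t_1},\dots,\tfrac{\partial}{\partial t_k},\tfrac{\partial}{\partial\bar z_1},\dots,\tfrac{\partial}{\partial\bar z_{n-k}}\Bigr),
\]
equivalently $\Pm=\bigcap_a\ker du_a\cap\bigcap_b\ker dz_b$ is the common kernel of the differentials of the $k$ real functions $u_a$ and the $n-k$ complex functions $z_b$, which are linearly independent. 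A bookkeeping check confirms $\rk(\Pm\cap\overline{\Pm})=k$, matching the stated split. This already furnishes the required generators, and the only remaining point is that they can be chosen to Poisson-commute.

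For that I would extend $\omega$ complex-bilinearly to $T_{\CC}M$ and use that $\Pm$, being a polarization, is Lagrangian: it is isotropic of complex rank $n=\tfrac12\dim_{\RR}M$, hence $\Pm^{\omega}=\Pm$. For any smooth complex-valued $f$ with $df\vert_{\Pm}=0$, the Hamiltonian vector field $X_f$ lies in $\Pm^{\omega}=\Pm$, since the condition $X_f\in\Pm^{\omega}$ is precisely $df\vert_{\Pm}=0$. Applying this to each of $u_1,\dots,u_k,z_1,\dots,z_{n-k}$ (all of whose differentials annihilate $\Pm$ by construction), the corresponding Hamiltonian vector fields all lie in $\Pm$; and since $\Pm$ is isotropic, every pairwise Poisson bracket $\{u_a,u_{a'}\}=\omega(X_{u_a},X_{u_{a'}})$, $\{u_a,z_b\}$, $\{z_b,z_{b'}\}$ vanishes. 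Moreover these $X_{u_a},X_{z_b}$ are linearly independent (images of linearly independent covectors under the symplectic isomorphism) and there are $k+(n-k)=n$ of them inside the $n$-dimensional $\Pm$, so they span $\Pm$; thus $\Pm$ is generated by the Poisson-commuting functions $u_a,z_b$, as claimed.

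The substantive content, and the only real obstacle, sits entirely in the first stage: Nirenberg's theorem is the genuine analytic input -- a relative of Newlander--Nirenberg, proved by overdetermined-PDE / $\bar\partial$-type estimates -- to which nothing need be added beyond the citation. The second stage is purely formal, merely reshuffling the normal form using $\Pm^{\omega}=\Pm$. The one subtlety I would flag explicitly is the hypothesis that $\Pm+\overline{\Pm}$ be involutive: this does not follow from involutivity of $\Pm$ alone for a general complex distribution, but for a polarization it is part of the standing integrability assumptions, so Nirenberg's theorem does apply in the present setting.
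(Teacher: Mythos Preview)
The paper gives no proof of this statement: it is quoted as the generalized Newlander--Nirenberg theorem with a bare citation to \cite{nirenberg:1958}, and the text moves on immediately. Your proposal is therefore not competing with any argument in the paper, and what you wrote is an accurate unpacking of how the cited result yields the stated form. Splitting off Nirenberg's complex Frobenius theorem (the analytic input) from the elementary symplectic step (Lagrangian $\Rightarrow$ Hamiltonian vector fields of the generating functions lie in $\Pm$ $\Rightarrow$ pairwise Poisson brackets vanish) is exactly the right way to read the statement.

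One point you flag is worth underlining, because the paper is slightly loose about it: Nirenberg's theorem requires involutivity of $\Pm+\overline{\Pm}$, not just of $\Pm$. The paper's own text a few lines later remarks that $\EE=(\Pm+\overline{\Pm})\cap TM$ ``is fiberwise coisotropic but not in general integrable'', so this hypothesis is genuinely an extra condition and is only imposed later via the fibering assumption (diagram~(\ref{diag_cored})). Your caveat that this is part of the standing integrability package for a polarization is the intended reading, and it is good that you make it explicit.
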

To a polarization $\Pm$ of this kind, one associates the isotropic distribution
   \[
     \DD := \Pm \cap \overline{\Pm} \cap TM ,
   \]
   whose integrability follows from that of $\Pm$. Its symplectic orthogonal
   \[
    \EE := (\Pm + \overline{\Pm}) \cap TM = \DD^{\perp_{\omega}}
   \]
   is fiberwise coisotropic but not in general integrable; we will restrict our attention to the case when both $\DD$ and $\EE$ are not only integrable, but defined by fibrations -- more precisely, since a global assumption of this kind would be too restrictive to be of great practical interest, we will assume that there is a commutative diagram
 \begin{equation}\label{diag_cored}
 \begin{tikzcd}[column sep=small, ampersand replacement=\&]
   \& \breve{M} \ar[ld, "\pi_{\DD}" swap] \ar[rd, "\pi_{\EE}"] \& \\
   B_{\DD} \ar[rr, "\phi"] \& \& B_{\EE} ,
 \end{tikzcd}
 \qquad \DD = \ker d\pi_{\DD}, \ \EE = \ker d\pi_{\EE} ,
 \end{equation}
 where $\breve{M} \subset M$ is an open dense subset and $\pi_{\DD}, \pi_{\EE}$ are submersions onto smooth manifolds $B_{\DD}, B_{\EE}$, respectively, which form a dual pair in the sense of Weinstein.
 \begin{remark} There is a natural condition on how to extend this relationship across the ``singular set'' $M\setminus \breve{M}$ using Ortega's ``singular dual pairs'', see Section \ref{sec_Fourierpol} below.
 \end{remark}
 The fibers of $\phi$ are the coisotropic reductions associated with $\EE$: that is to say there is a canonically defined vertical 2-form on $B_{\DD}$
   \[
   \omega_{\DD} \in \Gamma(B_{\DD},{\textstyle\bigwedge^2(\ker d\phi)^\ast)}
   \]
   which is symplectic on the fibers of $\phi$ and uniquely determined by the property that
   \[
     \pi_{\DD}^\ast \omega_{\DD} = \omega\vert_{\ker d\pi_{\EE}} .
   \]
 One has then the following geometric description of mixed polarizations (cf.~\cite[Prop.~5.4.7]{woodhouse:1991} or \cite[Prop.~1.5.5]{Kir04}).    
 \begin{proposition}\label{prop_polarizations}
   There is a one-to-one correspondence between mixed polarizations $\Pm$ on $M$ with real directions
   \[
     \DD = \Pm \cap \overline{\Pm} \cap TM
   \]
   and smooth integrable complex structures relative to $\phi$ compatible with $\omega_{\DD}$, i.e. $J_{\DD} \in \Gamma(B_{\DD},\End(\ker d\phi))$ such that
   \begin{itemize}
     \item[i)] $J_{\DD}^2 = -\id$,
     \item[ii)] $\omega_{\DD}(\cdot,J_{\DD}\cdot)$ is a pseudo-Riemannian metric on the fibers of $\phi$,
     \item[iii)] and $J_{\DD}$ is integrable on the fibers of $\phi$.
   \end{itemize}
 \end{proposition}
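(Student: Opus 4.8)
The plan is to reduce the statement to fibrewise linear symplectic algebra --- the reduction of a Lagrangian through a coisotropic subspace and the description of compatible complex structures on a symplectic vector space --- and then to transport the outcome along the two submersions $\pi_\DD$ and $\pi_\EE$. The key structural remark is that every polarization $\Pm$ with $\Pm\cap\overline\Pm\cap TM=\DD$ satisfies the chain $\DD_\CC\subseteq\Pm\subseteq\EE_\CC$, where $\DD_\CC:=\DD\otimes\CC=\Pm\cap\overline\Pm$: the first inclusion is the definition of $\DD$, and the second follows from $\Pm=\Pm^{\perp_\omega}$ together with $\EE_\CC=(\DD^{\perp_\omega})_\CC=(\Pm\cap\overline\Pm)^{\perp_\omega}\supseteq\Pm^{\perp_\omega}$ (this is also how $\EE=(\Pm+\overline\Pm)\cap TM=\DD^{\perp_\omega}$ was obtained). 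Since $\DD=\ker d\pi_\DD\subseteq\EE=\ker d\pi_\EE$ and $\phi\circ\pi_\DD=\pi_\EE$, the differential $d\pi_\DD$ identifies $\EE/\DD$ with $\pi_\DD^\ast(\ker d\phi)$ and carries $\omega|_\EE$ to $\pi_\DD^\ast\omega_\DD$; so the ``interesting part'' of any such $\Pm$ lives, after descent, on $\ker d\phi$.

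Given $\Pm$, I would first observe that it is $\pi_\DD$-saturated: it contains $\DD_\CC=(\ker d\pi_\DD)_\CC$ and, being involutive, satisfies $[\Gamma(\DD_\CC),\Gamma(\Pm)]\subseteq\Gamma(\Pm)$, so the flows of $\pi_\DD$-vertical fields preserve $\Pm$ and $\Pm/\DD_\CC$ descends to a smooth complex subbundle $\widetilde{\Pm}_\DD\subseteq T_\CC B_\DD$; by $\Pm\subseteq\EE_\CC$ in fact $\widetilde{\Pm}_\DD\subseteq(\ker d\phi)_\CC$. Coisotropic reduction of the Lagrangian $\Pm$ through $\EE_\CC$ (whose $\omega$-annihilator is $\DD_\CC$) shows $\widetilde{\Pm}_\DD$ is fibrewise Lagrangian for $\omega_\DD$, while $\Pm\cap\overline\Pm=\DD_\CC$ forces $\widetilde{\Pm}_\DD\cap\overline{\widetilde{\Pm}_\DD}=0$; hence $(\ker d\phi)_\CC=\widetilde{\Pm}_\DD\oplus\overline{\widetilde{\Pm}_\DD}$ is the eigenbundle splitting of a unique $J_\DD\in\Gamma(B_\DD,\End(\ker d\phi))$ with $J_\DD^2=-\id$. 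Property i) is immediate; ii) holds because $\omega_\DD(\cdot,J_\DD\cdot)$ is symmetric (Lagrangian property of $\widetilde{\Pm}_\DD$) and nondegenerate ($J_\DD$ invertible), hence pseudo-Riemannian --- though in general indefinite, which is precisely why one must allow mixed, not only Kähler, polarizations; iii) follows by lifting fibrewise sections of $\widetilde{\Pm}_\DD$ to $\pi_\DD$-projectable sections of $\Pm$ tangent to a $\pi_\EE$-fibre (using $\pi_\DD^{-1}(\phi^{-1}(b))=\pi_\EE^{-1}(b)$) and the fact that brackets of projectable fields are projectable, so involutivity of $\Pm$ descends fibrewise.

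Conversely, given $J_\DD$ satisfying i)--iii), let $\widetilde{\Pm}_\DD\subseteq(\ker d\phi)_\CC$ be its $+i$-eigenbundle and set $\Pm:=(d\pi_\DD)_\CC^{-1}(\pi_\DD^\ast\widetilde{\Pm}_\DD)$. A count with $\rk_\RR\EE=\dim_\RR M-\rk\DD$ gives $\dim_\CC\Pm=\tfrac12\dim_\RR M$, and isotropy of $\Pm$ for the $\CC$-bilinear $\omega$ reduces via $\pi_\DD^\ast\omega_\DD=\omega|_\EE$ to isotropy of $\widetilde{\Pm}_\DD$ for $\omega_\DD$, true since it is the $+i$-eigenbundle of a compatible complex structure; hence $\Pm$ is Lagrangian. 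For involutivity, in local coordinates adapted to $\pi_\DD$ the bundle $\Pm$ is spanned over $C^\infty(\breve M)$ by $\ker d\pi_\DD$ together with $\pi_\DD$-projectable lifts of a local frame of $\widetilde{\Pm}_\DD$, so $[\Gamma(\Pm),\Gamma(\Pm)]\subseteq\Gamma(\Pm)$ reduces to involutivity of $\widetilde{\Pm}_\DD$ on $B_\DD$; the latter follows from iii) because $\widetilde{\Pm}_\DD\subseteq(\ker d\phi)_\CC$ and $\ker d\phi$ is involutive, so the bracket of two sections of $\widetilde{\Pm}_\DD$ is $\phi$-vertical and therefore, fibrewise, lies in $\widetilde{\Pm}_\DD$. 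Finally $\Pm\cap\overline\Pm=(d\pi_\DD)_\CC^{-1}\big(\pi_\DD^\ast(\widetilde{\Pm}_\DD\cap\overline{\widetilde{\Pm}_\DD})\big)=\DD_\CC$, so $\Pm\cap\overline\Pm\cap TM=\DD$ has constant rank and $\Pm$ is a mixed polarization of the required kind. That the two assignments are mutually inverse is then formal: $\Pm$ is $\pi_\DD$-saturated, so $(d\pi_\DD)^{-1}(d\pi_\DD\Pm)=\Pm$, and $\pi_\DD^\ast$ is injective on subbundles of $T_\CC B_\DD$ since $\pi_\DD$ is a surjective submersion.

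I expect the main obstacle to be not any single computation but the careful handling of descent along the fibrations: one must check that the quotient bundles genuinely descend, which is why one works over the dense open $\breve M$ and uses the implicit assumption in Diagram~(\ref{diag_cored}) that $\pi_\DD,\pi_\EE$ have connected fibres, and one must take care that integrability of $\Pm$ yields \emph{global} involutivity of $\widetilde{\Pm}_\DD$ on $B_\DD$ rather than only fibrewise integrability --- this is where the inclusion $\widetilde{\Pm}_\DD\subseteq(\ker d\phi)_\CC$ and integrability of $\phi$ do the essential work. Everything else is the linear algebra of Lagrangian reduction, essentially as in \cite[Prop.~5.4.7]{woodhouse:1991}.
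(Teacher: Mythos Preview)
Your proof is correct and is essentially the standard coisotropic-reduction argument; the paper itself does not supply a proof but merely cites \cite[Prop.~5.4.7]{woodhouse:1991} and \cite[Prop.~1.5.5]{Kir04}, and your write-up is a faithful fleshing-out of precisely that argument. The one point worth flagging is that you correctly identify that fibrewise integrability (iii) suffices for global involutivity of $\widetilde{\Pm}_\DD$ because $\widetilde{\Pm}_\DD\subseteq(\ker d\phi)_\CC$ and brackets of $\phi$-vertical fields restrict to fibrewise brackets---this is the only nonobvious step, and you have it right.
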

 \begin{definition} We call a polarization of this kind a \emph{fibering polarization} with respect to diagram (\ref{diag_cored}).
 \end{definition}
 
 \begin{remark} Fibering polarizations in particular include Kähler polarizations (for which $B_{\DD}=M$ and $B_{\EE}=\{pt\}$) and real polarizations induced from a Lagrangian fibration (for which $B_{\DD}=B_{\EE}$ is the base of this fibration). We already encountered examples of Kähler polarizations, while an example of the second kind is provided by the \emph{Schrödinger polarization} $\Pm_\mathrm{Sch}$; in the scheme of diagram (\ref{diag_cored}) gathering the relevant information, these appear as
   \begin{equation}\label{diag:P}
     \Pm_g:
     \begin{tikzcd}[column sep=small]
       & T^\ast K \ar[ld, equals] \ar[rd] & \\
       (T^\ast K,J_g) \ar[rr] & & \{ pt \}
     \end{tikzcd}
     \qquad
     \Pm_\mathrm{Sch}:
     \begin{tikzcd}[column sep=small]
       & T^\ast K \ar[ld, "\pi" swap] \ar[rd, "\pi"] & \\
       K \ar[rr, equals] & & K
     \end{tikzcd}
   \end{equation}
   Note that all of these polarizations are invariant under the $K\times K$-action. 
 \end{remark}
 
 Returning to the general setting, recall that the canonical bundle $\KK_\Pm$ of a polarization $\Pm$ (see Section 10.2 of \cite{woodhouse:1991}) is defined as the line bundle whose sections are the  $n$-forms that are annihilated by $\overline{\Pm}$.
 The tensor $J_{\DD}$ equips the isotropic leaf space $B_{\DD}$ with a canonical bundle relative to the application $\phi$ which we denote by $\KK_\phi$ in the following way: the $+i$-eigenspaces of $J_{\DD}$ form a subvector bundle
 \[
  (\ker d\phi)^{1,0}_{J_{\DD}} \subset (\ker d\phi)\otimes \CC
 \]
of the complexification of the vertical bundle of $\phi$, and its determinant is the \emph{relative canonical bundle}
\begin{equation}\label{def_relcan}
  \KK_\phi := \bigwedge^\mathrm{top} (\ker d\phi)^{1,0}_{J_{\DD}} .
\end{equation}
It is clear from this definition that when restricted to any fiber $\phi^{-1}(b)$ we recover the usual holomorphic canonical bundle.

 \begin{proposition}\label{prop_K_mixed}
   The canonical bundle $\KK_\Pm$ of a fibering polarization $\Pm$ with respect to (\ref{diag_cored}) satisfies
   \[
   \KK_{\Pm} = \pi_{\DD}^\ast \KK_{\phi} \otimes \pi_{\EE}^{\ast} \det T^\ast_{\CC} B_{\EE}
   = \pi_{\DD}^\ast \left( \KK_{\phi} \otimes \phi^{\ast} \det T^\ast_{\CC} B_{\EE} \right) ,
   \]
   where $\KK_{\phi}$ is the relative canonical bundles of $\phi$ and $T^\ast_{\CC} B_{\EE}$ is the complexification of the cotangent bundle of $B_{\EE}$.  
 \end{proposition}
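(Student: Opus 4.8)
The plan is to compute the canonical bundle $\KK_\Pm$ by unravelling the definition of $\overline{\Pm}$-annihilated top forms in terms of the three subbundles $\DD \subset \EE \subset TM$ and their quotients. First I would fix the pointwise linear-algebra picture: at a point $p \in \breve{M}$, write $\dim_\RR M = 2n$, $\rk \DD = k$, so $\rk \EE = 2n-k$, and choose a complement so that $T_p M = \DD_p \oplus (\EE/\DD)_p \oplus (TM/\EE)_p$, where the middle summand has even rank $2(n-k)$ and carries the complex structure $J_\DD$ via the identification $(\EE/\DD)_p \cong \ker d\phi$ at $\pi_\DD(p)$, and the last summand is identified with $\phi^\ast T B_\EE$ via $d\pi_\EE$. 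Dually, $\Pm_p = \DD_p^{\CC} \oplus (\EE/\DD)_p^{1,0} \oplus (TM/\EE)_p^{\CC}$ inside $T_{\CC,p}M$ (using $\DD \subset \Pm$, the coisotropic reduction data, and that $\Pm + \overline\Pm$ fills out $\EE^\CC$ plus the real directions outside $\EE$ — more precisely one should be careful that outside $\EE$ the polarization and its conjugate together span $(TM/\EE)^\CC$, which is what makes those directions contribute a full complexified cotangent factor). The complement annihilating $\overline{\Pm}$ is then spanned by: the $(1,0)$-covectors along $\EE/\DD$, i.e. $(\ker d\phi)^{1,0}_{J_\DD}$ pulled back by $\pi_\DD$; and \emph{all} of $T^\ast_\CC B_\EE$ pulled back by $\pi_\EE$ (since $\overline\Pm$ contains no directions transverse to $\EE$, every covector on $TM/\EE$ survives, real and imaginary parts alike); while $\DD$ itself contributes nothing to the canonical bundle because $\DD \subset \overline\Pm$ forces those one-forms to appear with multiplicity zero. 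Taking top exterior powers gives exactly $\KK_\Pm = \pi_\DD^\ast \KK_\phi \otimes \pi_\EE^\ast \det T^\ast_\CC B_\EE$, and the second equality in the statement is just $\pi_\EE = \phi \circ \pi_\DD$.

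The key steps, in order, are: (1) use Proposition~\ref{prop_polarizations} and Theorem~\ref{thm_nir} to get the local Poisson-commuting generators (the $k$ real functions whose differentials span the conormal of $\EE$... wait, rather whose Hamiltonian flows span $\DD$, plus $n-k$ complex functions), giving an explicit local frame of $\overline\Pm$ and hence of $\KK_\Pm$; (2) match this local frame against the pullback description: the differentials of the $k$ real functions pull back from $B_\DD$ and, being in $\Pm \cap \overline\Pm$, are precisely the $\DD$-annihilating forms that drop out, while the $n-k$ holomorphic differentials split into those constant along the fibers of $\phi$ (pulled back from $B_\EE$, contributing $\det T^\ast_\CC B_\EE$) and those varying along the fibers (contributing the relative canonical bundle $\KK_\phi$ of~\eqref{def_relcan}); (3) check that the two candidate line bundles agree not just pointwise but as bundles, i.e. the isomorphism is natural and independent of the chosen local frame — this follows because both sides are canonically determined by $(\Pm, \text{diagram \eqref{diag_cored}}, J_\DD)$ and the identifications $(\EE/\DD) \cong \phi^\ast\text{-pullback of }\ker d\phi$ and $TM/\EE \cong \pi_\EE^\ast TB_\EE$ are canonical; (4) restrict to a single fiber $\phi^{-1}(b)$ to confirm consistency with the remark following~\eqref{def_relcan} that one recovers the ordinary holomorphic canonical bundle there (a sanity check rather than a logical step).

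The main obstacle I expect is step (2)–(3): being careful about \emph{which} directions transverse to $\EE$ actually contribute, and with what multiplicity. The subtlety is that $\EE = \DD^{\perp_\omega}$ is coisotropic, so $TM/\EE \cong \DD^\ast$ via $\omega$, and one must verify that $\overline\Pm$ meets $(TM/\EE)^\CC$ trivially — equivalently that $\Pm + \overline\Pm \supset TM \otimes \CC$ modulo $\EE^\CC$ exactly, which is the defining property $\EE = (\Pm + \overline\Pm) \cap TM$ but needs to be leveraged at the level of the \emph{annihilator} to conclude that the full complexified $T^\ast B_\EE$ (rank $k$, a real rank-$2k$ real bundle complexified... no: $B_\EE$ has real dimension $2n-2k+\dots$) — let me not mis-state dimensions here; the point is simply that the annihilator of $\overline\Pm$ restricted to the conormal of $\EE$ is everything, so one gets $\det T^\ast_\CC B_\EE$ and not merely a holomorphic sub-line-bundle of it. Once the multiplicities are pinned down, assembling the determinant line bundles is routine, and the factorization through $\phi$ is immediate. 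I would also remark that this proposition specializes correctly: for Kähler polarizations ($B_\DD = M$, $B_\EE = \{pt\}$) it gives $\KK_\Pm = \KK_\phi = \bigwedge^{\mathrm{top}} T^{1,0}M$... — one should double-check the holomorphic/antiholomorphic convention so that this matches the standard $\KK_J = \bigwedge^{n} (T^{1,0})^\ast$ up to the identification in use — and for a real polarization ($B_\DD = B_\EE$, $\phi = \id$) it gives $\KK_\Pm = \pi^\ast \det T^\ast_\CC B$, the pullback of the density bundle on the leaf space, as expected.
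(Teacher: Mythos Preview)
The paper states this proposition without proof, so there is no argument to compare against; your linear-algebra approach via the filtration $\DD \subset \EE \subset TM$ is the natural one and is essentially correct.

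There is, however, one genuine slip in your write-up that you should fix before it propagates. You write $\Pm_p = \DD_p^{\CC} \oplus (\EE/\DD)_p^{1,0} \oplus (TM/\EE)_p^{\CC}$ and, in the parenthetical, that ``$\Pm + \overline{\Pm}$ fills out $\EE^{\CC}$ plus the real directions outside $\EE$''. This is wrong: since $\Pm + \overline{\Pm}$ is conjugation-invariant and $\EE = (\Pm + \overline{\Pm}) \cap TM$, one has $\Pm + \overline{\Pm} = \EE^{\CC}$ exactly, so $\Pm \subset \EE^{\CC}$ and the third summand must be dropped. (Indeed, with that summand the dimension count gives $k + (n-k) + k = n+k > n$.) The correct decomposition is simply $\Pm_p = \DD_p^{\CC} \oplus (\EE/\DD)_p^{1,0}$. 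Fortunately your subsequent annihilator computation is right precisely \emph{because} $\overline{\Pm} \subset \EE^{\CC}$: the annihilator of $\overline{\Pm}$ in $T^\ast_{\CC} M$ contains the full conormal $(TM/\EE)^{\ast\CC} \cong \pi_{\EE}^\ast T^\ast_{\CC} B_{\EE}$ (rank $k$), and inside $(\EE^{\CC})^\ast$ it picks out the $(1,0)$-covectors on $\EE/\DD$ (rank $n-k$), giving the claimed tensor decomposition of the determinant.

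Your flagged convention issue is real: the paper defines $\KK_\phi = \bigwedge^{\mathrm{top}} (\ker d\phi)^{1,0}_{J_{\DD}}$ using $(1,0)$-\emph{vectors}, not covectors, which is dual to the more common convention. You should align your identification of $(\EE/\DD)^{\ast(1,0)}$ with this via the pairing, but this is bookkeeping and does not affect the argument.
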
 

 To keep notations a bit lighter, we will denote the canonical bundle of the polarizations $\Pm_g$, $\Pm_\mathrm{Sch}$, and $\Pm_\mathrm{KW}$ (to be defined below) by $\KK_g, \KK_\mathrm{Sch}$, and $\KK_\mathrm{KW}$, respectively.

A crucial role is played by the polarized sections of the canonical bundle of a polarization; in the present setting, there is a natural definition which combines and generalizes the Kähler and real case (for these, cf. the detailed discussion in \cite{hall:2002} besides the standard reference \cite{woodhouse:1991}).

\begin{definition}\label{dfn_polsec}
  For a fibering polarization, the sheaf of \emph{polarized sections} of $\KK_\Pm$ consists of those $\Omega$ which locally on the open sets $U$ of a covering can be written as
  \[
   \Omega\vert_U = \pi_\DD^\ast \Omega_\phi \otimes \pi_\EE^\ast \Omega_\EE
  \]
  where $\Omega_\phi$ is a smooth section of $\KK_\phi$ which is fiberwise holomorphic, and $\Omega_\EE$ is a real top differential form on the coisotropic leaf space $B_\EE$.
\end{definition}

\subsection{Half-forms and the definition of the quantum spaces}\label{sss_defquan} To define the quantization associated to a polarization, we need to discuss half-forms.
For a fixed polarization $\Pm$, a \emph{half-canonical bundle} $\kappa_\Pm \to M$ is a complex line bundle together with a fixed isomorphism $\kappa_\Pm^{\otimes 2} \cong \KK_\Pm$. It inherits a notion of polarized sections -- a section $s$ is polarized if its square $s^{\otimes 2}$ is a polarized section of $\KK_\Pm$.

The quantization associated to a polarization is defined in terms of two requirements, a local polarization condition, and a global ``finite energy'' condition. Since mixed polarizations with  some compact real directions force concentration of \emph{quantum states} (or polarized sections obeying an appropriate finite energy condition) on subvarieties, we need an appropriate notion of distributional polarized sections.

Consider therefore the following two subsheaves
\begin{align*}
  \HH_\Pm^\infty: \qquad &  U \mapsto C^\infty_{\Pm}(U,L) \underset{C^\infty_{\Pm}(U,\CC)}{\otimes} C^\infty_{\Pm}(U,\kappa_\Pm) \subset C^\infty(U,L \otimes \kappa_\Pm) , \\
  \HH_\Pm^{-\infty}: \qquad &  U \mapsto C^{-\infty}_{\Pm}(U,L) \underset{C^\infty_{\Pm}(U,\CC)}{\otimes} C^\infty_{\Pm}(U,\kappa_\Pm) \subset C^{-\infty}(U,L \otimes \kappa_\Pm) ,
\end{align*}
of smooth respectively distributional sections of $L\otimes \kappa_\Pm$ which can \emph{locally} be written as products of $\Pm$-polarized sections (as indicated by the subscripts). Woodhouse \cite[\S 9.3]{woodhouse:1991} calls sections satisfying the smooth variant of this condition \emph{$\Pm$ wave functions}.

Importantly, this extension is conservative for Kähler polarizations:
\begin{proposition}[See {\cite[Lemma 2]{kajiwara.yoshida:1968}}]
A distribution which satisfies the Cauchy--Riemann equations on an open subset of $\CC^n$ (or of some complex manifold) is a holomorphic function.
\end{proposition}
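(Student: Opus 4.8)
\emph{Proof proposal.} The assertion is local, so the first step is to reduce it to the following statement: a distribution $u$ on an open set $\Omega \subseteq \CC^n$ with $\partial u/\partial\bar z_j = 0$ for $j = 1,\dots,n$ (in the sense of distributions) is represented by a holomorphic function. The reduction is routine: for the line bundle $L \otimes \kappa_\Pm$ carrying its Kähler-polarization holomorphic structure, a local holomorphic frame turns a distributional polarized section into such a $u$, and since the transition functions are holomorphic the conclusion patches; on a general complex manifold one argues identically in holomorphic coordinate charts, where ``satisfying the Cauchy--Riemann equations'' is by definition the condition $\bar\partial u = 0$ read in those charts.

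The core is then elliptic regularity. Since partial derivatives of distributions commute, from $\partial_{\bar z_j} u = 0$ for every $j$ one obtains
\[
\Delta u \;=\; 4\sum_{j=1}^n \frac{\partial}{\partial z_j}\frac{\partial}{\partial \bar z_j}\, u \;=\; 4\sum_{j=1}^n \frac{\partial}{\partial z_j}\bigl(\partial_{\bar z_j} u\bigr) \;=\; 0 ,
\]
so $u$ is a harmonic distribution. By Weyl's lemma (hypoellipticity of the Laplacian, a special case of elliptic regularity) $u$ is represented by a smooth, indeed real-analytic, function on $\Omega$; and once $u$ is $C^1$ the equations $\partial_{\bar z_j} u = 0$ hold classically, which is exactly the assertion that $u$ is holomorphic.

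If one prefers to avoid invoking Weyl's lemma, the same conclusion follows by mollification: set $u_\varepsilon := u * \rho_\varepsilon$ with a radial mollifier $\rho_\varepsilon$. Then $u_\varepsilon$ is smooth and still satisfies $\bar\partial u_\varepsilon = 0$, hence is holomorphic, hence harmonic, hence has the mean value property; for radial mollifiers this yields $u_\varepsilon * \rho_\delta = u_\varepsilon$ and symmetrically $u_\delta * \rho_\varepsilon = u_\delta$ wherever both are defined, and since both sides equal $u * (\rho_\varepsilon * \rho_\delta)$ one gets $u_\varepsilon = u_\delta$ there. Thus $\{u_\varepsilon\}$ stabilizes on every relatively compact subset as $\varepsilon \to 0$, and the limiting holomorphic function represents $u$.

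There is essentially no genuine obstacle: this is a classical regularity fact, and the only points requiring a little care are the passage from distributional sections of $L \otimes \kappa_\Pm$ to scalar distributions (handled by local holomorphic trivializations) and the remark that it is the \emph{full} system $\bar\partial u = 0$, not a single equation, that forces $u$ to be harmonic and hence smooth.
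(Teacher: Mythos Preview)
Your argument is correct and is the standard route to this classical fact: pass to local coordinates, observe that $\bar\partial u=0$ forces $\Delta u=0$, invoke Weyl's lemma to get smoothness, and then read the Cauchy--Riemann equations classically. The mollification variant you sketch is also fine.

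Note, however, that the paper does not supply its own proof of this proposition: it is stated with a reference to \cite[Lemma~2]{kajiwara.yoshida:1968} and used as a black box to justify that extending to distributional sections changes nothing for K\"ahler polarizations. So there is nothing in the paper to compare your approach against; what you have written is a self-contained proof filling in what the paper outsources to the literature.
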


\begin{remark} \item[(i)] The reason for writing the definition down in this form is that it is evident how to define polarized distributional sections on $L$, where this condition is given by a system of differential equations on smooth sections, which generalize directly to a system of differential equations on distributional sections. On the half-form part, on the other hand, a polarization is determined by taking the square of a local section, which cannot in general be done for a distributional section.

\item[(ii)] It is important to note that this product condition defines only a pre-sheaf, and that in general it is \emph{not} possible to write a $\Pm$-polarized section \emph{globally} as a product of polarized sections of $L$ and $\kappa_\Pm$.
\end{remark}

\subsection{Local structure of fibering polarizations and the Bohr--Som\-mer\-feld condition} The structure of fibering polarizations actually admits a ``normal form'' locally on $B_{\EE}$ in general:

\begin{proposition}\label{prop_mp_props} Assume $\Pm$ is a fibering polarization such that the maps $\pi_{\DD}, \pi_{\EE}, \phi$ in diagram (\ref{diag_cored}) are proper with connected fibers. Then, over the open subset $B_{\EE}^{\rm reg}$ of regular values of $\pi_{\EE}$, the following properties hold:
\item[(a)] The fibers of $\pi_{\DD}$ are isotropic tori.
\item[(b)] Over sufficiently small open subsets $U \subseteq B_{\EE}^{\rm reg}$, these isotropic tori are the orbits of a Hamiltonian torus action defined on $\pi_{\EE}^{-1}(U)$.
\item[(c)] The base $B_{\EE}^{\rm reg}$ carries an integral affine structure.
\end{proposition}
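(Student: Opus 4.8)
The plan is to establish the three assertions in sequence, each one feeding the next. First, for \textbf{(a)}: by Proposition~\ref{prop_polarizations} and the definition of a fibering polarization, the fibers of $\pi_\DD$ are the leaves of the isotropic foliation $\DD$, and they sit inside the fibers of $\phi \circ \pi_\DD = \pi_\EE$. Over $B_\EE^{\rm reg}$ the map $\pi_\EE$ is a proper submersion with connected fibers, so its fibers $F := \pi_\EE^{-1}(b)$ are compact connected manifolds; the restriction $\omega|_F$ is (by the coisotropic reduction picture) the pullback $\pi_\DD^\ast \omega_\DD$ of a fiberwise-symplectic form, hence $\DD|_F = \ker(\omega|_F) = \ker d\pi_\DD|_F$ is a foliation of $F$ by the fibers of a proper submersion onto a manifold of dimension $\dim F - \dim\DD = \frac{1}{2}(\dim F + \operatorname{rk}\DD - \operatorname{rk}\DD) \dots$; more to the point, $\DD$ is the characteristic foliation of the presymplectic form $\omega|_F$. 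Now I invoke the standard fact (essentially the statement that the null foliation of a closed presymplectic form with compact connected leaves defining a fibration has torus leaves): the fibers of $\pi_\DD$ are compact, connected, and carry a transitive action of a torus coming from the flows of functions pulled back from $B_\DD$ that are constant on the $\EE$-directions. Concretely, locally on $B_\DD$ choose $\dim \DD$ functions whose Hamiltonian vector fields span $\DD$ (these exist because $\DD = \ker d\phi^{\perp_\omega}$ is spanned by Hamiltonians of functions pulled back via $\phi$, i.e. functions constant along $\DD$ itself since $\DD$ is isotropic); their flows commute, are complete by properness, and act transitively on the fibers of $\pi_\DD$ with discrete stabilizer, exhibiting each such fiber as a torus $\RR^k/\Lambda$.

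For \textbf{(b)}, I would make the torus in (a) global over a small $U \subseteq B_\EE^{\rm reg}$. The key is that $\phi: B_\DD \to B_\EE$ restricted over $U$ is a proper submersion whose fibers are the (reduced) symplectic manifolds $(\phi^{-1}(b), \omega_\DD)$, and $\pi_\DD^{-1}(\phi^{-1}(U)) = \pi_\EE^{-1}(U)$ carries the commuting Hamiltonian flows described above. Shrinking $U$ so that $\phi^{-1}(U)$ is contractible (or at least so that the period lattice is trivial as a bundle — this is where local regularity of $\pi_\EE$ enters), I get $\dim\DD$ globally defined functions on $\pi_\EE^{-1}(U)$ whose Hamiltonian vector fields are complete, commute, and whose joint flow has orbits exactly the fibers of $\pi_\DD$; the period lattice is then locally constant, so after an affine change of the chosen functions (a $\mathrm{GL}(\dim\DD,\ZZ)$-valued gauge transformation, constant on $U$ after shrinking) these functions are the components of a moment map for an honest Hamiltonian $(S^1)^{\dim\DD}$-action whose orbits are the $\pi_\DD$-fibers. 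This is precisely the Liouville--Arnold/action-angle argument applied fiberwise over $U$.

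For \textbf{(c)}, the integral affine structure on $B_\EE^{\rm reg}$ comes from the transition functions between the local moment maps constructed in (b): on overlaps $U \cap U'$, the two systems of action functions differ by a composition of a $\mathrm{GL}(\dim\DD,\ZZ)$-transformation and a translation (the ambiguity in the moment map of a torus action with fixed period lattice), and these are exactly the transition maps of an integral affine structure. One subtlety is that $B_\EE^{\rm reg}$ is the base for $\pi_\EE$, whereas the action functions live most naturally on $B_\DD$ (they are constant on $\EE \supset \DD$, hence descend); I need them to descend all the way to $B_\EE$, which holds because being constant along $\EE = \ker d\pi_\EE$ is exactly the condition to factor through $\pi_\EE$. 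With that, the sheaf of integral affine functions on $B_\EE^{\rm reg}$ is generated by these local action coordinates, and the cocycle condition is automatic.

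The \textbf{main obstacle} is the global completeness and period-lattice-triviality needed in step (b): producing, over a single open $U$, a genuine torus action rather than just an infinitesimal/local one. This is where properness of $\pi_\EE$ (giving completeness of the flows and compactness of fibers) and the restriction to \emph{regular} values of $\pi_\EE$ (ruling out monodromy/jumps in the period lattice, so that it is locally trivial as a lattice bundle) are both essential; once these are in hand, the rest is the classical action-angle machinery and its functoriality, applied in the fibered-over-$B_\EE$ setting. A secondary technical point is checking that the Hamiltonian functions generating $\DD$ can indeed be chosen to Poisson-commute and be pulled back appropriately through the diagram (\ref{diag_cored}) — this follows from the dual-pair property of $(\pi_\DD, \pi_\EE)$ together with Theorem~\ref{thm_nir}, which guarantees local Poisson-commuting generators of $\Pm$ and hence of its real part $\DD$.
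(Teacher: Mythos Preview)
Your proposal is correct and follows essentially the same route as the paper: both arguments take Poisson-commuting real functions pulled back from the coisotropic leaf space $B_\EE$ (the paper obtains them directly from Nirenberg's Theorem~\ref{thm_nir}, you obtain them from the dual-pair relation $\DD = \EE^{\perp_\omega}$), observe that their Hamiltonian flows are complete by properness and span the compact connected fibers of $\pi_\DD$, conclude these fibers are tori, and then invoke the action-angle mechanism to produce the local torus action and the integral affine structure on $B_\EE^{\rm reg}$. One small notational slip: where you write ``pulled back via $\phi$'' and ``$\DD = \ker d\phi^{\perp_\omega}$'' you mean $\pi_\EE$ rather than $\phi$, since the functions must live on $\breve M$ to have Hamiltonian vector fields there.
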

\begin{proof}
The functions in Nirenberg's Theorem \ref{thm_nir} come from $k$ (real) coordinates on $B_{\EE}$
\[
 f_1,\dots,f_k : U \to \RR, \qquad U \underset{\text open}{\subset} B_{\EE}
\]
and $n-k$ complex functions on $B_{\DD}$ which are local $J_{\DD}$-holomorphic coordinates on the fibers of $\phi$,
\[
 z_1, \dots, z_{n-k}: V \to \CC, \qquad V \underset{\text open}{\subset} B_{\DD}
\]
such that the pull-backs $f_i \circ \pi_{\EE}, z_j \circ \pi_{\DD}$ of these functions Poisson-commute. In particular, at any point $p\in \pi_{\EE}^{-1}U$ the flow along the Hamiltonian vector fields $X_{f_i \circ \pi_{\EE}}$ generates a map $\RR^k \to \pi_{\DD}^{-1}(\pi_{\DD}(p))$ which is an isotropic immersion onto the fiber of $\pi_{\DD}$ through $p$. As by hypothesis these fibers are compact, they have therefore to be tori which are the orbits of a (locally defined) Hamiltonian torus action. As in the construction of action-angle coordinates, we conclude that there exists an integral affine structure on $B_{\EE}^{\rm reg}$ (defined by those functions on $U$ whose Hamiltonian flow at time 1 is actually periodic). 
\end{proof}

\begin{remark} An equivalent formulation of Proposition \ref{prop_mp_props} is that for the purpose of statements local on the coisotropic leaf space, we can without loss of generality assume the fibrations (\ref{diag_cored}) involved to be generated by a Hamiltonian torus action
\begin{equation}\label{diag_stdpol}
    \begin{tikzcd}[column sep=small]
       & \widetilde{U} := \pi_{\EE}^{-1}(U) \ar[ld] \ar[rd, "\pi_{\EE}=\mu_T"] & \\
       \widetilde{U}/T \ar[rr, "\phi"] & & U \subset \tt^\ast
     \end{tikzcd}
\end{equation}
As will be evident from the definition of the Kirwin--Wu polarization below, both the Hamiltonian action which generates the isotropic tori and the affine structure on the base of the coisotropic fibration are of course global -- they are the singular torus action generated by the invariant moment map $\mu_\mathrm{inv}:(T^\ast K)_\mathrm{reg} \to \breve{\tt}^\ast_+$, and the natural affine structure on the positive Weyl chamber.
\end{remark}

It is an immediate consequence of Proposition \ref{prop_K_mixed} that the canonical bundle $\KK_{\Pm}$ of a polarization fibering with respect to (\ref{diag_stdpol}) comes with a canonical lift of the Hamiltonian torus action; therefore also any half-form bundle $\kappa_{\Pm}$ comes with a canonical lift of the action of the corresponding Lie algebra $\tt$ (in general, monodromy can obstruct the action of the torus itself to lift there). Also, if $(L,\nabla)$ is a prequantum line bundle, there is a canonical lift of the Hamiltonian $\tt$-action (cf. for example \cite{mundet:2001}).

\begin{definition}\label{defBS} Assume $\Pm$ is a fibering polarization with respect to (\ref{diag_stdpol}); a point $\xi\in U$, a fiber $\phi^{-1}(\xi) \subset B_\DD$, or a coisotropic leaf $\mu_T^{-1}(\xi)\subset M$, respectively, is \emph{Bohr--Som\-mer\-feld} if the action of the Lie algebra
$$
 \tt \circlearrowright L\otimes\kappa_{\Pm}\vert_{\mu_T^{-1}(\xi)}   
$$
descends to $T$ when the line bundle is restricted to $\mu_T^{-1}(\xi)$.
\end{definition}

\begin{theorem}\label{thm_BS1}
Suppose $\Pm$ is a polarization on (\ref{diag_stdpol}), and $U$ is an open set containing no Bohr--Som\-mer\-feld point; then any $\Pm$-polarized section of $L\otimes\kappa_{\Pm}$ on $\widetilde{U}$ is identically 0.
\end{theorem}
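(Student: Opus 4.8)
The plan is to prove the contrapositive: if $\sigma$ is a $\Pm$-polarized section of $L\otimes\kappa_\Pm$ on $\widetilde U$ and $\sigma(p)\neq 0$ for some point $p$, then $\xi:=\mu_T(p)\in U$ is Bohr--Sommerfeld, and the theorem follows at once. The mechanism is that $\Pm$-polarized sections are rigidly constrained along the coisotropic leaves: they are horizontal for the canonical $\tt$-action on $L\otimes\kappa_\Pm$. I would use the normal form (\ref{diag_stdpol}) throughout: by Proposition \ref{prop_mp_props}, the isotropic distribution $\DD=\Pm\cap\overline{\Pm}\cap TM$ is spanned by the Hamiltonian vector fields $X_{\langle\mu_T,\eta\rangle}$, $\eta\in\tt$, and over $B_\EE^{\rm reg}$ its leaves are honest tori; it suffices to treat that case, extending to the thin complement by continuity.

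First I would establish the horizontality. By Definition \ref{dfn_polsec}, locally write $\sigma=s\otimes\nu$ with $s$ a $\Pm$-polarized section of $L$ and $\nu$ a $\Pm$-polarized section of $\kappa_\Pm$. Since $\DD\otimes\CC\subset\Pm$, the section $s$ is $\nabla$-flat along every $X_{\langle\mu_T,\eta\rangle}$; thus, restricted to a leaf $\mu_T^{-1}(\xi)$ where $\langle\mu_T,\eta\rangle\equiv\langle\xi,\eta\rangle$ is constant, $s$ is an eigenvector for the canonical lift of $\eta$ to $L$, with eigenvalue the constant $\langle\xi,\eta\rangle$ (up to the scalar of the prequantization convention). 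For the half-form factor, Proposition \ref{prop_K_mixed} exhibits $\KK_\Pm=\pi_\DD^\ast(\KK_\phi\otimes\phi^\ast\det T^\ast_\CC B_\EE)$ as a pull-back along the $T$-invariant submersion $\pi_\DD$; a $\Pm$-polarized section of $\KK_\Pm$ is locally such a pull-back, hence invariant under the induced $T$-action, so the square root $\nu$ is annihilated by the canonical lift of $\tt$ to $\kappa_\Pm$. Taking the tensor product, over each leaf $\sigma$ is a joint eigensection of the canonical $\tt$-action on $L\otimes\kappa_\Pm$ with constant eigencharacter read off from the moment map; this is tensorial, hence independent of the local factorization.

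Now fix $\xi=\mu_T(p)$ and set $N_\xi:=\mu_T^{-1}(\xi)$; twisting the canonical $\tt$-action by that constant eigencharacter makes $\sigma|_{N_\xi}$ an invariant section of $(L\otimes\kappa_\Pm)|_{N_\xi}$ for the twisted action. Restrict once more to the isotropic torus $O_p:=T\cdot p\subset N_\xi$. An equivariant section nonzero at one point of a homogeneous space is nowhere zero there, so $\sigma|_{O_p}$ equivariantly trivializes $(L\otimes\kappa_\Pm)|_{O_p}$, whence the twisted $\tt$-action descends to $T$ over $O_p$, and then --- by homotopy invariance of the monodromy across the connected leaf --- over all of $N_\xi$. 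Finally I would reconcile this with Definition \ref{defBS}: since the $L$-factor $s|_{O_p}$ is a nowhere-zero $\nabla$-flat section over the isotropic torus $O_p$, the holonomy of $\nabla$ around $O_p$ is trivial, which in the present setting also makes the twisting eigencharacter trivial on $\tt_\ZZ$; together with the descent of the twisted action this is precisely the statement that the (untwisted) canonical $\tt$-action of Definition \ref{defBS} descends to $T$ on $(L\otimes\kappa_\Pm)|_{N_\xi}$, i.e.\ that $\xi$ is Bohr--Sommerfeld. Hence $\mathrm{supp}\,\sigma\subseteq\mu_T^{-1}(\{\text{Bohr--Sommerfeld points}\})$, so if $U$ contains none, $\sigma\equiv 0$.

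The step I expect to be the main obstacle is the last reconciliation: identifying the monodromies around the isotropic tori $O_p$ with Definition \ref{defBS} exactly requires the precise normalization of the canonical lift of $\tt$ to $L$ and, more delicately, the contribution of the half-form bundle $\kappa_\Pm$ to that monodromy --- the metaplectic correction which, for $\Pm_\mathrm{KW}$, places the Bohr--Sommerfeld leaves at the regular level sets $\mu_{\rm inv}\in\rho+\tt^\ast_\ZZ$. One must also remember that the factorization $\sigma=s\otimes\nu$ is only local whereas the conclusion is about the whole leaf, and handle the singular torus orbits by the density argument indicated above.
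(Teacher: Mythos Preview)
Your approach and the paper's are essentially the same mechanism: both exploit that polarized sections are covariantly constant along the isotropic torus orbits for the canonical lift of $\tt$ to $L\otimes\kappa_\Pm$, and that this forces vanishing wherever the monodromy of that lift over $\tt_\ZZ$ is nontrivial. The paper runs the argument directly rather than by contrapositive: since the $\tt$-action preserves the sheaf of polarized sections and integrating along any $\eta\in\tt_\ZZ$ returns to the identity on the base, one obtains the functional equation $s=m(\eta,\cdot)\,s$; on a neighborhood where $m(\eta,\cdot)\neq 1$ this forces $s=0$. By packaging $L$ and $\kappa_\Pm$ together from the outset, the paper avoids your separate analyses of the two factors and the ``reconciliation'' step you identify as the main obstacle---the half-form shift is simply absorbed into the single monodromy function $m$.

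One genuine point to flag: your contrapositive hinges on $\sigma(p)\neq 0$ at a \emph{point} $p$, which does not directly make sense for the distributional polarized sections the theorem is meant to cover (the paper explicitly works in $\HH^{-\infty}_\Pm$). The paper's formulation sidesteps this: the equation $(1-m(\eta,\cdot))\,s=0$ with $1-m(\eta,\cdot)$ smooth and nowhere vanishing on $\widetilde{U}'$ kills any distributional $s$ there. Your argument is easily repaired along the same lines, but as written it treats only smooth sections. Finally, the worries you raise about local-versus-global factorization and singular torus orbits are not actually present in the normal form (\ref{diag_stdpol}): there the fibers of $\pi_\DD$ are already free $T$-orbits over all of $U$, so no density or continuity argument is needed.
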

\begin{proof} This argument is identical to the one in \cite[Prop.~3.1.(i)]{baier.florentino.mourao.nunes:2011}; however, we sketch its proof for completeness.

Since $\Pm$ is $T$-invariant, the Lie algebra action of $\tt$ on $L\otimes\kappa_{\Pm}$ induces an action of $\tt$ on the space of polarized sections defined on any open subset which is a union of $T$-orbits. The monodromy of this action is a map $m$ which factors through the moment map of the torus action
\begin{equation}\label{diag_monodromy}
\begin{tikzcd}[column sep=small]
  \tt_\ZZ \times \widetilde{U} \ar[rr, "m"] \ar[rd, "\id \times \mu_T" swap] & & U(1) \\
  & \tt_\ZZ \times U \ar[ru]
\end{tikzcd}
\end{equation}
and a point $p$ lies in a Bohr--Som\-mer\-feld leaf if and only if the monodromy at $p$ is trivial,
\[
m\vert_{\tt_\ZZ\times\{p\}} \equiv 0 .
\]
Therefore, if $p$ does not lie in a Bohr--Som\-mer\-feld leaf, there is a nontrivial element $\eta\in\tt_\ZZ$ such that $m(\eta,p)\neq 1$, and the same holds for all points $p'\in\widetilde{U}'$ for a sufficiently small neighborhood $U'$ of $\mu_T(p)$ in $U$. Acting on any polarized section $s\in \HH^{-\infty}_{\Pm}(\widetilde{U}')$, we obtain that
\[
 s = m(\eta,\cdot )s ,
\]
so any such $s$ has to vanish.
\end{proof}

\begin{remark}
  The moment map $\mu_T$ of the torus action in (\ref{diag_stdpol}) is only unique up to translation; by the Duistermaat--Heckman theorem \cite{duistermaat.heckman:1982}, it can be chosen so that the monodromy function $m$ in (\ref{diag_monodromy}) takes the form
  \[
   m(\eta,p) = e^{2\pi i \langle \mu_T(p),\eta \rangle} .
  \]
  This reduces the indeterminacy in $\mu_T$ to a translation in $\tt^\ast_\ZZ$. 
\end{remark}

We expect that the distributional quantum states supported on a Bohr--Som\-mer\-feld leaf are in natural bijection with the holomorphic sections of the corresponding reduction:
\begin{conjecture}\label{thm_BS2}
The subspace $\Qm_\xi \subset \HH^{-\infty}_{\Pm}(\widetilde{U})$ of polarized sections which satisfy a certain finite energy condition supported at a Bohr--Som\-mer\-feld point $\xi$ is isomorphic to the space of holomorphic sections of the reduction of $L\otimes\kappa_{\Pm}$ to $\phi^{-1}(\xi)$,
\[
 \Qm_\xi \cong H^0(\phi^{-1}(\xi),L\otimes\kappa_{\Pm}\vert_{\mu_T^{-1}(\xi)}/T) .
\]
\end{conjecture}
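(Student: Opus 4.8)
The plan is to prove the isomorphism by first passing to the local normal form (\ref{diag_stdpol}) provided by Proposition~\ref{prop_mp_props}, then analysing the transverse structure of a polarized distributional section near the Bohr--Sommerfeld leaf $\mu_T^{-1}(\xi)$, and finally matching the surviving data with a holomorphic section of the Kähler reduction. Concretely, I would fix, in a neighborhood of $\mu_T^{-1}(\xi)$, coordinates adapted to the Hamiltonian $T$-action: angle variables $\theta\in(\RR/\ZZ)^k$ along the isotropic torus orbits (spanning the real directions $\DD$), action variables $I:=\mu_T-\xi$ transverse to $\EE$, and, on the reduced fibre $\phi^{-1}(\xi)=\mu_T^{-1}(\xi)/T$, holomorphic coordinates $z$ for the complex structure induced by $J_\DD$. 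In a local trivialization $\Pm$ is spanned by the covariant derivatives $\partial_{\theta_j}+(\text{connection term in }I)$ along the orbits together with the $\partial_{\bar z_l}$, and by Proposition~\ref{prop_K_mixed} the bundle $\KK_\Pm=\pi_\DD^\ast(\KK_\phi\otimes\phi^\ast\det T^\ast_\CC B_\EE)$ contributes precisely a pulled-back transverse factor $dI_1\wedge\cdots\wedge dI_k$, half of which is carried by any half-form bundle $\kappa_\Pm$. A $\Pm$-polarized distributional section of $L\otimes\kappa_\Pm$ carries no constraint in the $I$-directions, so imposing that its support lie in $\mu_T^{-1}(\xi)$ writes it, transversally, as a finite sum $\sum_\alpha \partial_I^\alpha \delta(I)$; the transverse half-form makes $\delta(\mu_T-\xi)$ into an honest distributional density on the leaf, and the finite-energy condition — which on the $\alpha=0$ component localizes to $\int_{\phi^{-1}(\xi)}|s_{\mathrm{red}}|^2$ against the reduced Liouville measure, and diverges on the others — forces $\alpha=0$. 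Thus $s=\delta(\mu_T-\xi)\otimes s_{\mathrm{red}}$ with $s_{\mathrm{red}}$ a smooth section of $L\otimes\kappa_\Pm|_{\mu_T^{-1}(\xi)}$ that is covariantly constant along the orbits and fibrewise $J_\DD$-holomorphic.

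The second step is the descent. The Bohr--Sommerfeld hypothesis is by Definition~\ref{defBS} exactly the statement that the $\tt$-action on $L\otimes\kappa_\Pm|_{\mu_T^{-1}(\xi)}$ integrates to $T$ — equivalently, in the notation of the proof of Theorem~\ref{thm_BS1}, that the monodromy $m|_{\tt_\ZZ\times\{p\}}$ vanishes — so the quotient $L\otimes\kappa_\Pm|_{\mu_T^{-1}(\xi)}/T\to\phi^{-1}(\xi)$ is a well-defined line bundle and orbit-covariantly-constant sections are single-valued. Under this reduction an $s_{\mathrm{red}}$ as above descends to a section of the quotient bundle, and fibrewise $J_\DD$-holomorphy becomes holomorphy on $\phi^{-1}(\xi)$ for the induced complex structure, with respect to which the quotient bundle carries the holomorphic structure coming from Kähler reduction of the prequantum-plus-half-form data. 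This yields the linear map $s\mapsto s_{\mathrm{red}}$ from $\Qm_\xi$ to $H^0(\phi^{-1}(\xi),L\otimes\kappa_\Pm|_{\mu_T^{-1}(\xi)}/T)$; it is injective because $s$ is recovered from $s_{\mathrm{red}}$ by tensoring back with the canonical transverse delta, and surjective because any holomorphic section downstairs pulls back along $\pi_\DD$ and, tensored with $\delta(\mu_T-\xi)$ and the transverse half-form, defines a globally well-defined $\Pm$-polarized distributional section, of finite energy since the fibres of $\phi$ are compact by properness, so that $H^0$ is finite-dimensional and $L^2$-finiteness is automatic. (In the case $\Pm_{\mathrm{KW}}$ on $T^\ast K$ one then recognizes the reductions as products of coadjoint orbits and $H^0$, via Theorem~\ref{thm_BorelWeil}, as the corresponding matrix block indexed by $\lambda$ and $A\in\End V_\lambda$.)

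The main obstacle is to make the two soft points above rigorous. First, one needs a self-contained definition of the finite-energy condition for distributional polarized sections of an arbitrary fibering polarization, together with a proof that it is equivalent to $L^2$-finiteness of the reduced section; in the examples driving this paper (notably $\Pm_{\mathrm{KW}}$) the right condition should be that $s$ arises as a limit of Kähler-polarized sections along a Mabuchi ray, as in Theorem~\ref{thm_convstates}, and reconciling this concrete description with the abstract one requires quantitative control of the degeneration. Second — and this is the genuinely hard part — one must prove a transverse regularity statement: that the only polarized distributional sections concentrated on the single compact Bohr--Sommerfeld fibre $\mu_T^{-1}(\xi)$ with finite energy are those of transverse order zero. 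Higher-order-delta solutions are eliminated by the energy bound once it is set up, but a clean argument calls for a Fourier-integral-operator normal form in the transverse directions together with a check that the gluing of the local product representations of polarized sections — which, as warned in the remark after Definition~\ref{dfn_polsec}, need not exist globally — is unobstructed when the section is supported on one compact fibre, the transverse delta factor being canonical and the fibrewise-holomorphic factor already globally defined on $\phi^{-1}(\xi)$.
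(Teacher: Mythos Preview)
The statement you are attempting to prove is stated in the paper as a \emph{Conjecture}, not a theorem; the paper gives no proof. Immediately after the statement the authors write only ``We come back to discussing this issue for the specific case of the Kirwin--Wu polarization below,'' and what follows in Sections~\ref{sect_KW} and~\ref{sect-extendedqb} is a verification, by explicit construction and by degenerating K\"ahler quantum states along Mabuchi rays (Theorem~\ref{thm_convstates}), that for the single example $M=T^\ast K$, $\Pm=\Pm_{\mathrm{KW}}$ the distributional polarized sections supported on a Bohr--Sommerfeld fibre are the $s^{\mathrm{KW}}_{\lambda,A}$, matching $\End V_\lambda$ via Borel--Weil. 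No general argument is offered, and the vague phrase ``a certain finite energy condition'' in the statement itself signals that the condition has not even been defined.

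Your outline is a reasonable heuristic for why the conjecture should hold, and you have correctly identified the two genuine obstructions: (i) there is no definition of the finite-energy condition against which to check that higher-order transverse deltas are excluded, and (ii) the transverse-regularity and global-splitting step is precisely what is not known in general. These are not technicalities to be cleaned up but the actual content that would turn the conjecture into a theorem; the paper does not supply it, and neither does your sketch. For the Kirwin--Wu case the paper sidesteps both issues by \emph{producing} the polarized sections as limits of K\"ahler sections rather than by characterising distributional solutions intrinsically, so even there your intrinsic argument goes beyond what the paper establishes.
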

We come back to discussing this issue for the specific case of the Kirwin--Wu polarization below.

\subsection{Fibering polarizations associated to Hamiltonian group actions}\label{sec_Fourierpol} Even though we are mainly interested in the specific case of $T^\ast K$ here, we pause for a moment to illustrate the applicability of the notion of fibering polarizations in a wider context.

Namely, consider a Hamiltonian $G$-manifold $(M,\omega)$ with moment map $\mu: M\to \gg^\ast$, and assume that $G$ is compact and $M$ is ``convex'' in the sense of \cite{knop:2011}. The singular dual pair supporting the fibering polarizations we consider is given by the diagram
\begin{equation} \label{diag_invpol}
  \begin{tikzcd}
    & M \ar[ld, "" swap] \ar[rd, "\mu_{\textrm{inv}}"] & \\
    M / \sim \ar[rr, "\phi"] & & P
  \end{tikzcd}
\end{equation}
where the Kirwan polytope $P$ is equipped with the algebra of smooth functions $C^\infty(P/W_M)$ described in \cite[Cor.~3.6]{knop:1997}, providing an isomorphism
\[
\mu_{\textrm{inv}}^\ast C^\infty(P/W_M) = Z(C^\infty(M)^G,\{\ ,\ \})
\]
with the center of the Poisson algebra of $G$-invariant functions on $M$, and the quotient $M \to M/\sim$ in the left-hand leg is by the foliation spanned by the Hamiltonian vector fields of these functions.

Polarizations based on this singular dual pair have one ``evident'' property which is desirable from the point of view of representation theory:
\begin{lemma}\label{lemma_casimirs}
If $\Pm$ is a fibering polarization with respect to Diagram (\ref{diag_invpol}), the (generalized) Casimirs of the group $G$ are $\Pm$-polarized.
\end{lemma}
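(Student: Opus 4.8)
The plan is to unwind the definitions on both sides of Diagram (\ref{diag_invpol}) and observe that the claim reduces to the fact that the (generalized) Casimirs are, by the very construction of that diagram, pulled back from the base of the coisotropic fibration. Concretely, recall that the generalized Casimirs of $G$ are precisely the elements of the center $Z(C^\infty(M)^G,\{\ ,\ \})$ of the Poisson algebra of $G$-invariant functions, and by the isomorphism $\mu_{\textrm{inv}}^\ast C^\infty(P/W_M) = Z(C^\infty(M)^G,\{\ ,\ \})$ these are exactly the functions of the form $c = \mu_\mathrm{inv}^\ast \overline{c}$ for some $\overline{c} \in C^\infty(P/W_M)$. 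Since the right-hand leg of the diagram is $\mu_\mathrm{inv} = \pi_\EE$ (more precisely $\pi_\EE$ followed by the identification of $B_\EE$ with the Kirwan polytope $P$, at least over $\breve M$), such a $c$ is constant along the fibers of $\pi_\EE$, i.e. along $\EE = \ker d\pi_\EE$.

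The key step is then to translate ``constant along $\EE$'' into ``$\Pm$-polarized.'' A function $c$ is $\Pm$-polarized, i.e. constant along $\overline\Pm$, exactly when $dc$ annihilates $\overline\Pm$; equivalently $X_c \in \Pm$ (using that $X_c \in \Pm \Leftrightarrow dc(\overline\Pm) = 0$ for the complexified Hamiltonian vector field, a standard reformulation via $\omega$). Now $dc$ annihilates $\EE = \DD^{\perp_\omega}$, which by taking symplectic orthogonals means $X_c \in \EE^{\perp_\omega} = \DD \subseteq \Pm$. Hence $X_c$ lies in $\Pm$, so $c$ is $\Pm$-polarized. I would carry this out first on the dense open subset $\breve M$ where the clean diagram (\ref{diag_cored}) with submersions is available and $\DD, \EE$ are honest distributions; then extend to all of $M$ by continuity, using that $c$ is smooth on $M$ and that the polarization condition is a closed condition (the polarized sections sheaf $\HH_\Pm^\infty$ is defined by closed differential equations, per Section \ref{sss_defquan}), or alternatively by invoking Ortega's singular dual pair formalism alluded to in the remark after Diagram (\ref{diag_cored}).

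The main obstacle is bookkeeping at the singular set $M \setminus \breve M$ and making precise the sense in which ``$\Pm$-polarized'' is meant for a function (as opposed to a section of $\KK_\Pm$): the cleanest route is to interpret the statement as asserting $X_c \in \Pm$ on $\breve M$ and noting that this suffices for all subsequent uses (e.g. acting on polarized sections), while the literal extension across the singular stratum is exactly the content that the ``convexity'' hypothesis on $M$ in the sense of \cite{knop:2011} is designed to handle — so I would phrase the lemma and its proof on $\breve M$ and remark that the convexity assumption guarantees the generalized Casimirs are globally smooth functions on $M$ whose restriction to $\breve M$ is polarized. The computation $\EE^{\perp_\omega} = \DD$ and the equivalence $X_c \in \Pm \Leftrightarrow dc|_{\overline\Pm} = 0$ are both immediate from the definitions in Section \ref{prelimmixedpol}, so no real calculation is needed beyond citing them.
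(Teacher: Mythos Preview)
Your argument is correct and essentially the same as the paper's: the paper's proof is the single displayed inclusion
\[
 \mu_{\textrm{inv}}^\ast \left( C^\infty(\tt^\ast)^W \right) =
 \mu^\ast \left( C^\infty(\gg^\ast)^G \right) \subset
 Z(C^\infty(M)^G,\{\ ,\ \}) ,
\]
and you have simply unpacked why this implies the result (your detour through $X_c \in \EE^{\perp_\omega} = \DD \subset \Pm$ is equivalent to observing directly that $\overline{\Pm} \subset \EE \otimes \CC$). Two minor remarks: in the paper, ``generalized Casimirs'' means $\mu^\ast(C^\infty(\gg^\ast)^G)$ rather than all of the center, so the displayed inclusion is the content rather than an equality you can assume; and since the fibering polarization is only defined on $\breve M$, your worries about the singular set are unnecessary for the statement as intended.
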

\begin{proof}
Follows immediately from the inclusion
\[
 \mu_{\textrm{inv}}^\ast \left( C^\infty(\tt^\ast)^W \right) =
 \mu^\ast \left( C^\infty(\gg^\ast)^G \right) \subset
 Z(C^\infty(M)^G,\{\ ,\ \}) .
\]
\end{proof}

If the action is multiplicity-free, the dual pair (\ref{diag_invpol}) simplifies and becomes
\begin{equation} \label{diag_Fourierpol}
  \begin{tikzcd}
    & M \ar[ld, "\mu" swap] \ar[rd, "\mu_{\textrm{inv}}"] & \\
    (\mu(M),\{\ ,\ \}_{\gg^\ast}) \ar[rr, "\phi"] & & (P,0)
  \end{tikzcd}
\end{equation}
where the differentiable space structure on $\mu(M)$ has been studied in detail in \cite{knop:1997}. As the fibers of $\phi$ are coadjoint orbits of $G$, they carry a finite number of $G$-invariant complex structures, exactly one of which is such that the Kostant--Kirillov--Souriau symplectic form of it (that is, the restriction of $\{\ ,\ \}_{\gg^\ast}$) defines an ample class.

\begin{definition}
The \emph{Fourier polarization} $\Pm_{\mathrm{Fourier}}$ is the fibering polarization defined by Diagram (\ref{diag_Fourierpol}) and the $\phi$-ample complex structure over the regular values of $\mu_{\mathrm{inv}}$.
\end{definition}

\begin{remark}
\item[(i)] Given that we are in the multiplicity-free case, the left leg $\mu$ of Diagram (\ref{diag_Fourierpol}) coincides with the ``optimal moment map $\mathfrak{J}$'' and the right leg $\mu_{\mathrm{inv}}$ with the map $\pi$ of \cite{ortega:2003}.

\item[(ii)] If the action is not multiplicity-free, one can still attempt to define a singular dual pair starting from the invariant moment map; in any case, the fibers of $\phi$ will no longer be coadjoint orbits, and existence and uniqueness of $G$-invariant complex structures are not clear a priori. It is our expectation that degenerations along Mabuchi rays generated by convex functions on $P$ will \emph{define} such complex structures (and therefore generalizations of the Fourier polarization) in the not multiplicity-free case.

\item[(iii)] For compact $G$, the invariant moment map integrates to the action of a compact torus over an open dense subset of $M$ in general, cf. \cite[Lemma 2.3]{knop:2011}, and the Bohr--Sommerfeld points in $P$ will differ by elements in the character lattice of this torus.

\item[(iv)] If we drop the compactness requirement of $G$,  we end up having both compact and non-compact directions in the group which integrates the invariant moment map over the regular set, and the Bohr--Sommerfeld points will lie in intersections of $P$ with linear subspaces; see \cite{dazord.delzant:1987} and references therein for a discussion of the relevant generalization of the Duistermaat--Heckman theorem.
\end{remark}

As a corollary to Theorem \ref{thm_BS1} we obtain:
\begin{corollary}
  Suppose $G$ is compact, $\gg$ has trivial center, and $M$ is a multiplicity-free Hamiltonian $G$-manifold whose moment map hits the interior of the positive Weyl chamber. Then any distributional $\Pm_{\mathrm{Fourier}}$-polarized section is supported on the inverse image of the intersection of the (relative interior) of the Kirwan polytope with the $\rho$-translate of the character lattice of the maximal torus.
\end{corollary}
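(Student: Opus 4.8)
The plan is to deduce the corollary directly from Theorem~\ref{thm_BS1} together with the explicit identification of the Bohr--Sommerfeld set in the present situation. First I would unwind the definitions: since $G$ is compact with $\gg$ of trivial center, the invariant moment map $\mu_{\mathrm{inv}}$ restricted to the regular set $M_{\mathrm{reg}} := \mu^{-1}(\breve{\tt}^\ast_+)$ is, by Remark~(iii) above and \cite[Lemma~2.3]{knop:2011}, the moment map of an honest Hamiltonian action of the maximal torus $T$ (no non-compact directions appear because $G$ is compact, so we are in the situation of Proposition~\ref{prop_mp_props} and Diagram~(\ref{diag_stdpol}) holds globally over $\breve{\tt}^\ast_+$ with base $U=\breve{\tt}^\ast_+$ and $\mu_T=\mu_{\mathrm{inv}}$). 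Thus $\Pm_{\mathrm{Fourier}}$, on the regular stratum, is a fibering polarization of the type to which Theorem~\ref{thm_BS1} applies.

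Next I would apply Theorem~\ref{thm_BS1} to conclude that any distributional $\Pm_{\mathrm{Fourier}}$-polarized section is supported on $\mu^{-1}$ of the set of Bohr--Sommerfeld points in $\breve{\tt}^\ast_+$. The heart of the argument is then to identify this Bohr--Sommerfeld set. Using the normalization of $\mu_T$ provided by the Duistermaat--Heckman theorem (the Remark following Theorem~\ref{thm_BS1}), the monodromy of the $\tt$-action on $L$ alone is $e^{2\pi i\langle\mu_T(p),\eta\rangle}$, so the $L$-part descends to $T$ exactly on $\mu_{\mathrm{inv}}^{-1}(\tt^\ast_\ZZ)$. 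The half-form bundle $\kappa_{\Pm_{\mathrm{Fourier}}}$ contributes the shift: by Proposition~\ref{prop_K_mixed}, $\KK_{\Pm}=\pi_\DD^\ast(\KK_\phi\otimes\phi^\ast\det T^\ast_\CC B_\EE)$, and over the regular set the first factor $\KK_\phi$ is the fiberwise holomorphic canonical bundle of the coadjoint orbits $\OO_{\xi_+}\cong G/T$, on which the torus $T=T_{\mathrm{inv}}$ acts with weight equal to (twice $\rho$, since $\kappa$ is a square root, hence) the weight $\rho$ — this is the standard computation that the canonical bundle of $G/T$ is $L_{-2\rho}$, so a square root carries weight $-\rho$, equivalently the infinitesimal action differs from an integral one by $\rho$; the factor $\phi^\ast\det T^\ast_\CC B_\EE$ is pulled back from the base and carries no $T_{\mathrm{inv}}$-weight. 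Combining, the total monodromy of the $\tt$-action on $L\otimes\kappa_{\Pm}$ is trivial precisely when $\mu_{\mathrm{inv}}(p)+\rho\in\tt^\ast_\ZZ$, i.e. on the $\rho$-translate of the integral lattice (equivalently the $\rho$-translate of the character lattice of $T$).

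Finally I would assemble the pieces: the support of a distributional $\Pm_{\mathrm{Fourier}}$-polarized section lies in $\mu^{-1}$ of the Bohr--Sommerfeld set, which by the above is $\mu^{-1}\big(\breve{\tt}^\ast_+\cap(-\rho+\tt^\ast_\ZZ)\big)$, and since the Kirwan polytope $P$ has relative interior identified with $\breve{\tt}^\ast_+$ this is exactly the inverse image of the intersection of the relative interior of $P$ with the $\rho$-translate of the character lattice, as claimed. I expect the main obstacle to be the half-form weight computation: one must be careful about the sign of $\rho$ (which depends on the chosen orientation/ordering of roots and on whether one works with $\KK$ or its inverse), about the fact that $\kappa_{\Pm}$ may only carry a $\tt$-action rather than a $T$-action (so the notion of ``weight'' must be interpreted infinitesimally, as in Definition~\ref{defBS}), and about checking that the base factor $\phi^\ast\det T^\ast_\CC B_\EE$ genuinely contributes nothing — this is clear since $T_{\mathrm{inv}}$ acts trivially on $B_\EE=P$, but it deserves an explicit remark. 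The remaining steps are essentially bookkeeping with the normalizations already fixed in the text.
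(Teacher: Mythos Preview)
Your approach is correct and matches the paper's, which simply states the result ``as a corollary to Theorem~\ref{thm_BS1}'' without further argument; you have supplied exactly the details the paper leaves implicit, namely applying Theorem~\ref{thm_BS1} and computing the $\rho$-shift from the half-form bundle via the canonical bundle of the coadjoint orbits. One small slip: in your final assembly you write that ``the Kirwan polytope $P$ has relative interior identified with $\breve{\tt}^\ast_+$'', but for a general multiplicity-free $M$ the Kirwan polytope is only a (possibly proper) subset of $\tt^\ast_+$ --- the hypothesis merely guarantees it meets the interior --- so the support should be described as lying over the relative interior of $P$ itself, not over all of $\breve{\tt}^\ast_+$; this does not affect the argument since Theorem~\ref{thm_BS1} is local on $B_{\EE}$.
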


We give an outline of the conjectures relating these fibering polarizations to Mabuchi space in more general cases than $T^\ast K$ below in Section \ref{sec_conjectures}.

\section{The Kirwin--Wu polarization $\Pm_\mathrm{KW}$ on $T^*K$}\label{sect_KW}

The Kirwin--Wu polarization is the Fourier polarization of $T^\ast K$ as Hamiltonian $G=K\times K$-manifold, or more explicitly:

\begin{definition} The \emph{Kirwin--Wu polarization} $\Pm_\mathrm{KW}$ is the mixed polarization on the dense open subset $(T^\ast K)_\mathrm{reg}$ defined by applying Proposition \ref{prop_polarizations} to the (co-)isotropic fibrations
\begin{equation}\label{KW_diag}
  \begin{tikzcd}[column sep=small]
    & (T^\ast K)_\mathrm{reg} \ar[ld, "\mu" swap] \ar[rd, "\mu_\mathrm{inv}"] & \\
    \kk^\ast \times_{\breve{\tt}^\ast_+} \kk^\ast \ar[rr, "\phi"] & & \breve{\tt}^\ast_+
  \end{tikzcd}
\end{equation}
and the isomorphism
\begin{equation}\label{KW_cxstr}
\begin{tikzcd}[row sep=0pt, column sep=tiny]
\breve{\tt}^\ast_+ \times K_{\CC}/B \times K_{\CC}/B_{-} \ar[r, equals] & \breve{\tt}^\ast_+ \times K/T \times K/T \ar[r, "\cong"] & \kk^\ast \times_{\breve{\tt}^\ast_+} \kk^\ast \\
 & (\xi_+, k_1 T, k_2 T) \ar[r, mapsto] & ( \Ad_{k_1}^\ast \xi_+ , -\Ad_{k_2}^\ast \xi_+) 
 \end{tikzcd}
\end{equation}
which equips the fibers of the map $\phi$ in (\ref{KW_diag}) with $K\times K$-invariant complex structures (cf. Section \ref{prelimLie} for notations regarding the Borel subgroups $B,B_{-}$).
\end{definition}

\subsection{Local description of the Kirwin--Wu polarization} Local $\Pm_\mathrm{KW}$-polarized functions (besides the evident $f\circ \mu_\mathrm{inv}$ for arbitrary real functions $f:\breve{\tt}^\ast_+ \to \RR$) can  be described using the complex geometry of the coadjoint orbits which appear in its definition. To this end we slightly adapt the well-known description of invariant complex structures on $K/T$ discussed in Section \ref{prelimLie}.

\subsubsection{The complex directions of Kirwin--Wu} \label{sec:cplx Kirwin-Wu} Since we are dealing with a $K\times K$-action, we need a version of this which embeds two copies of the coadjoint orbit with the correct equivariance; this is achieved by using $\End V_\lambda$ instead of $V_\lambda$. For any weight $\nu\in \tt^\ast_\ZZ$ and the corresponding character $\chi_\nu:T\to U(1)$, consider the orthogonal projection $P_\nu \in \End V_\lambda$ onto the weight eigenspace
\begin{equation}\label{eq_weightproj}
P_\nu v = \int_{T} \chi_\nu(t^{-1}) \pi_{\lambda}(t)v\, dt , \qquad \forall v\in V_\lambda .
\end{equation}
Using the $KAK$-decomposition (\ref{KAK_deco}), for any endomorphism $A\in \End V_\lambda$ introduce the function
\begin{equation}\label{eqn_FlambdaA}
 (x_1,\xi_+,x_2) \mapsto \pi_\lambda(x_1) P_{\lambda} \pi_\lambda(x_2^{-1}) A  \in \End V_\lambda .
\end{equation}
These functions descend to well-defined functions $\tilde{F}_{\lambda,A}:(T^\ast K)_\mathrm{reg} \to \End V_\lambda$ on the regular stratum which are equivariant with respect to the singular torus action (which corresponds to $(x_1,\xi_+,x_2)\star t=(x_1 t^{-1},\xi_+,x_2)$ on the $KAK$-decomposition),
\[
\tilde{F}_{\lambda,A}\big( (x,\xi)\star t \big) = \chi_{\lambda}(t^{-1}) \tilde{F}_{\lambda,A}(x,\xi) .
\]
As before, the quotients of the traces $F_{\lambda,A}(x,\xi) := \tr \tilde{F}_{\lambda,A}(x,\xi)$
\[
(x,\xi) \mapsto \frac{F_{\lambda,A_1}(x,\xi)}{F_{\lambda,A_2}(x,\xi)}
\]
are therefore $T_\mathrm{inv}$-invariant functions on an open subset of the regular stratum which extend analytically to the partial complexification $K_{\CC} \times \breve{\tt}^\ast_+ \times K_{\CC}$ (by the same expression), where they factor through the quotient
\[
 K_{\CC} \times \breve{\tt}^\ast_+ \times K_{\CC} \to K_{\CC}/B \times \breve{\tt}^\ast_+ \times K_{\CC}/B_{-} .
\]
To see this, observe that the maximal torus $T\times T \subset K\times K$ acts via the character $\chi_\lambda \times \chi_{-\lambda}$ on the point $P_\lambda \in \End V_\lambda$, hence the stabilizer of the corresponding point in the projectivization $[P_\lambda] \in \PP \End V_\lambda$ has stabilizer the Borel subgroup $B \times B_{-}$ which corresponds to the Weyl chamber (of $K\times K$) containing $(\lambda,-\lambda)$.
We have proved:

\begin{proposition}\label{KW_nirenberg} Consider a highest weight representation $V_\lambda$ of $K$ and two endomorphisms $A_i \in \End V_\lambda$. Then the complex function
  \[
    (x,\xi) \mapsto \frac{F_{\lambda,A_1}(x,\xi)}{F_{\lambda,A_2}(x,\xi)}
  \]
  (which is well defined on the open subset $U \subset (T^\ast K)_\mathrm{reg}$ on which its denominator does not vanish) is $T_\mathrm{inv}$-invariant and hence descends to a function $z_{\lambda,A_1 , A_2}: U' = U/T_\mathrm{inv} \to \CC$ on an open subset of $B_{\DD} \cong \kk^\ast \times_{\breve{\tt}^\ast_+} \kk^\ast$.  

  The union over all functions of this kind, or equivalently the union over appropriate choices of $A_i$'s for a single $\lambda$ so that the corresponding line bundle is very ample on the coadjoint orbit, generate the complex directions of the Kirwin--Wu polarization.
\end{proposition}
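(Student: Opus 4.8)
The plan is to collect the facts already established in the paragraphs preceding the statement and to supply the single new ingredient, the generation claim, which reduces to the very ampleness of Borel--Weil line bundles on flag varieties.

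First I would make the formula for $F_{\lambda,A}$ completely explicit. Since the highest weight space of $V_\lambda$ is one-dimensional, the projection $P_\lambda$ of (\ref{eq_weightproj}) has rank one, so $\tilde{F}_{\lambda,A}$ is everywhere of rank one, and a short computation with the $KAK$-decomposition shows that for a rank-one endomorphism $A = \langle\ \cdot\ ,w\rangle u$ one has, in terms of the Borel--Weil functions $f_v(k) = \langle \pi_\lambda(k)v_\lambda, v\rangle$ of Theorem~\ref{thm_BorelWeil}(d),
\[
 F_{\lambda,A}\bigl(x_1 x_2^{-1},\ \Ad^\ast_{x_2}\xi_+\bigr) = f_{w}(x_1)\,\overline{f_{u}(x_2)} ,
\]
and the general case follows by linearity in $A$. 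In the fiber coordinates of the identification (\ref{KW_cxstr}), this displays the restriction of $F_{\lambda,A}$ to a fiber $\phi^{-1}(\xi_+) \cong K_{\CC}/B \times K_{\CC}/B_{-}$ as a product of a holomorphic section of $L_\lambda$ on the first factor and of a section holomorphic on $K_{\CC}/B_{-}$ on the second factor --- the latter because complex conjugation of $K_{\CC}$ interchanges $B$ with its opposite $B_{-}$, hence carries the $B$-invariant complex structure on $K/T$ to the $B_{-}$-invariant one.

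Granting this, the first assertion of the proposition is immediate: the $T_\mathrm{inv}$-equivariance of $F_{\lambda,A}$ displayed before the statement forces the ratio $F_{\lambda,A_1}/F_{\lambda,A_2}$ to be $T_\mathrm{inv}$-invariant, and since $\mu$ realizes $(T^\ast K)_\mathrm{reg}$ as a $T_\mathrm{inv}$-principal bundle over $\kk^\ast\times_{\breve{\tt}^\ast_+}\kk^\ast = B_{\DD}$ (Section~\ref{prelimsingtorus}), this ratio descends to a function $z_{\lambda,A_1,A_2}$ on the open subset $U' \subset B_{\DD}$ where the denominator does not vanish. Moreover $z_{\lambda,A_1,A_2}$ is $\Pm_\mathrm{KW}$-polarized: being a pull-back along $\pi_{\DD}$ it is constant along $\DD$, and by the previous paragraph its restriction to each fiber of $\phi$ is a ratio of two holomorphic sections of one and the same homogeneous line bundle on $K_{\CC}/B \times K_{\CC}/B_{-}$, hence $J_{\DD}$-holomorphic wherever it is defined; by Proposition~\ref{prop_polarizations} this is precisely what it means for a function to be polarized for $\Pm_\mathrm{KW}$.

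The generation statement is the main point, and I expect it to be the chief obstacle, since essentially all the content beyond bookkeeping resides there. Fix $\lambda$ for which $L_\lambda$ is very ample on $K_{\CC}/B$, so that the Borel--Weil embedding realizes each factor of a fiber $\phi^{-1}(\xi_+) \cong K_{\CC}/B \times K_{\CC}/B_{-}$ as a smooth closed subvariety of a projective space. Near a point at which a chosen $F_{\lambda,A_2}$ does not vanish, the functions $z_{\lambda,A_1,A_2}$ with $A_1$ ranging over rank-one endomorphisms $\langle\ \cdot\ ,w\rangle u$ (first keeping $u$ fixed and varying $w$, then keeping $w$ fixed and varying $u$) restrict to affine coordinate functions on these two projective spaces, hence furnish local holomorphic coordinates on each factor and therefore on the fiber $\phi^{-1}(\xi_+)$, whose complex dimension equals $n-k = \dim_{\CC}(K/T\times K/T)$. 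Together with the $k$ real functions $f\circ\mu_\mathrm{inv}$, these exhaust the local Poisson-commuting generators produced by Nirenberg's Theorem~\ref{thm_nir}, so their Hamiltonian vector fields span $\Pm_\mathrm{KW}$ over $U'$; in particular the $z_{\lambda,A_1,A_2}$ span its complex directions. The only subtleties are the standard fact that an appropriately positive (e.g.\ regular dominant) homogeneous line bundle on a flag variety is very ample, and the careful matching of the conjugate complex structure on the second copy of $K/T$ with the opposite Borel $B_{-}$, both of which enter already in the identification (\ref{KW_cxstr}).
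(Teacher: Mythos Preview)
Your proposal is correct and follows essentially the same route as the paper: the $T_\mathrm{inv}$-invariance and descent to $B_{\DD}$ are exactly the paper's argument, and your factorization $F_{\lambda,A}=f_w(x_1)\overline{f_u(x_2)}$ for rank-one $A$ is the content of the paper's Remark~\ref{products} (written there with the dual $V_\lambda^\ast$ rather than the Hermitian inner product). The only presentational difference is that the paper obtains holomorphicity on the second factor by observing directly that the stabilizer of $[P_\lambda]\in\PP\End V_\lambda$ under the $K_\CC\times K_\CC$-action is $B\times B_-$, whereas you reach the same conclusion via $J_{B_-}=-J_B$ on $K/T$; both arguments are equivalent, and your expanded treatment of the generation claim via very ampleness simply makes explicit what the paper leaves implicit in that stabilizer computation.
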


\begin{remark}\label{products}
Note that for endomorphisms given by simple tensors $A= u\otimes v^*, u\in V_\lambda, v^*\in V_\lambda^*$, $F_{\lambda,A}$ can be written as a product
\[
F_{\lambda, A} (x,\xi) = \tr (\pi_\lambda(x_1) v_\lambda\otimes v^*) \tr (\pi_\lambda(x_2^{-1}) u\otimes v_\lambda^*) .
\]
\end{remark}

\begin{remark}
Note that although the $\xi$-component does not show up explicitly in the expressions (\ref{eqn_FlambdaA}), the functions are of course \emph{not} independent of $\xi$, since the factorization $x=x_1 x_2^{-1}$ depends crucially on it.
\end{remark}

\subsection{Fourier decomposition with respect to $T_\mathrm{inv}$ and relation to the Borel--Weil Theorem}
\subsubsection{Fourier harmonics} For any function $f:(T^\ast K)_\mathrm{reg} \to \CC$ on the regular stratum (which for ease of exposition we take to be complex-valued, although the same procedure applies verbatim for values in other vector spaces), we define a family of functions $\widehat{f}_\nu: (T^\ast K)_\mathrm{reg} \to \CC$ indexed by the characters $\chi_\nu, \nu \in \tt^\ast_{\ZZ}$, of $T_\mathrm{inv}$, by setting
\[
 \widehat{f}_\nu(p) := \int_{T_\mathrm{inv}} \! \chi_\nu(t)  f(p \star t)  dt .
\]
These Fourier harmonics of $f$ transform via the (inverse of the) corresponding character of the singular torus,
\[
\widehat{f}_\nu( p \star t ) = \chi_\nu(t^{-1}) \widehat{f}_{\nu}(p) ,
\]
and in an appropriate sense $f$ will be the sum of its Fourier series,
\[
 f(p) = \sum_{\nu} \widehat{f}_\nu (p) .
\]
Since we will only be concerned with functions with finitely many non-vanishing summands, we do not need to make the analytical aspects of this statement more precise.

The Fourier harmonics can be made explicit for the $J_g$-holomorphic matrix coefficients
\[
f^g_{\lambda,A}(x,\xi) = \tr (\pi_\lambda(xe^{i d_\xi g})A)
\]
of the Kähler polarization of Proposition \ref{prop_Pg}. Consider the orthogonal projections onto the weight eigenspaces $P_\nu \in \End V_\lambda$ of (\ref{eq_weightproj}) and, for any point $p=(x=x_1 x_2^{-1},\xi=\Ad_{x_2}^\ast \xi_+)$ consider the conjugate
\[
 P_\nu(x,\xi) := \pi_{\lambda}(x_2) P_{\nu} \pi_{\lambda}(x_2^{-1})
\]
of $P_{\nu}$, which is independent of the indeterminacy $x_2 \leadsto x_2 t$, hence well-defined.

\begin{proposition}\label{fourier-1}
The Fourier harmonics of the matrix coefficient $f^g_{\lambda,A}$ are
  \begin{equation}\label{deco-1}
   \widehat{(f^g_{\lambda,A})}_\nu(x,\xi) = \tr \big( \pi_{\lambda}(x e^{i d_\xi g})P_\nu(x,\xi)A \big) ,
  \end{equation}
  so in particular they vanish unless $\nu$ is in the intersection $\overline{W \lambda}$ of the weight lattice with the convex hull of the Weyl-group orbit of the highest weight.
\end{proposition}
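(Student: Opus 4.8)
The plan is a direct computation starting from the definitions of the Fourier harmonic and of $f^g_{\lambda,A}$. Write $p=(x,\xi)$ in $KAK$-coordinates as $x=x_1x_2^{-1}$, $\xi=\Ad_{x_2}^\ast\xi_+$; then the singular torus action reads $p\star t=(x\,x_2t^{-1}x_2^{-1},\xi)$, so that
\[
 f^g_{\lambda,A}(p\star t)=\tr\!\big(\pi_\lambda(x\,x_2t^{-1}x_2^{-1}e^{i d_\xi g})A\big).
\]
The first key step is to commute the factor $e^{i d_\xi g}$ through $x_2^{-1}$ and $t^{-1}$. By $K$-equivariance of the Legendre transform (Proposition~\ref{prop_Pg}(a)) one has $d_\xi g=\Ad_{x_2}(d_{\xi_+}g)$, hence $e^{i d_\xi g}=x_2\,e^{i d_{\xi_+}g}\,x_2^{-1}$ inside $K_{\CC}$; moreover $d_{\xi_+}g\in\tt$ by the same proposition, so $e^{i d_{\xi_+}g}\in T_{\CC}$ commutes with $t^{-1}\in T$. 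Substituting and rearranging, the argument of $\pi_\lambda$ becomes $x\,e^{i d_\xi g}\,x_2t^{-1}x_2^{-1}$, and thus
\[
 f^g_{\lambda,A}(p\star t)=\tr\!\big(\pi_\lambda(xe^{i d_\xi g})\,\pi_\lambda(x_2)\,\pi_\lambda(t^{-1})\,\pi_\lambda(x_2^{-1})\,A\big).
\]

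The second step is to integrate over $T_\mathrm{inv}$ against $\chi_\nu(t)$. Pulling the integral inside the trace and past the $t$-independent factors, one is left with $\int_{T_\mathrm{inv}}\chi_\nu(t)\,\pi_\lambda(t^{-1})\,dt$, which by inversion-invariance of the Haar measure is precisely the weight projection $P_\nu$ of (\ref{eq_weightproj}). This yields
\[
 \widehat{(f^g_{\lambda,A})}_\nu(x,\xi)=\tr\!\big(\pi_\lambda(xe^{i d_\xi g})\,\pi_\lambda(x_2)P_\nu\pi_\lambda(x_2^{-1})\,A\big)=\tr\!\big(\pi_\lambda(xe^{i d_\xi g})\,P_\nu(x,\xi)\,A\big),
\]
which is the asserted formula. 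The vanishing claim then follows at once: $P_\nu=0$ unless $\nu$ occurs as a weight of $V_\lambda$, and the weights of a highest weight module all lie in $\Conv(W\lambda)\cap\tt^\ast_\ZZ=\overline{W\lambda}$.

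I do not anticipate a genuine obstacle: the computation is short, and the only point requiring some care is bookkeeping — checking that each step is insensitive to the residual $T$-ambiguity $x_2\leadsto x_2t_0$ in the $KAK$-decomposition (it is, because $P_\nu$ commutes with $\pi_\lambda(T)$, being the projection onto a $T$-weight space), and keeping track of whether a given group element is being read in $K$ or in $K_{\CC}$. As a consistency check one may verify directly from the resulting formula that $\widehat{(f^g_{\lambda,A})}_\nu$ transforms under $\star t_0$ by the character $\chi_\nu(t_0^{-1})$, as a Fourier harmonic must.
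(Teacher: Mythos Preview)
Your proof is correct. The algebraic core --- using $d_\xi g=\Ad_{x_2}(d_{\xi_+}g)$ with $d_{\xi_+}g\in\tt$ to slide $e^{i d_\xi g}$ past $x_2 t^{-1} x_2^{-1}$ --- is exactly the manipulation the paper uses as well.

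The organization differs, however. You compute the Fourier integral directly: write $f^g_{\lambda,A}(p\star t)$, rearrange, and integrate against $\chi_\nu$ to produce $P_\nu$. The paper instead argues in two steps: first insert $\id_{V_\lambda}=\sum_\nu P_\nu(x,\xi)$ to obtain a candidate decomposition $f^g_{\lambda,A}=\sum_\nu \tr(\pi_\lambda(xe^{id_\xi g})P_\nu(x,\xi)A)$, and then verifies that the $\nu$-th summand transforms under $\star t$ by $\chi_\nu(t^{-1})$ (the calculation being the same chain of equalities you wrote). Uniqueness of the Fourier decomposition then identifies each summand with the harmonic. Your route is slightly more economical and avoids the implicit appeal to uniqueness; the paper's route makes the reconstruction $f=\sum_\nu\widehat f_\nu$ and the finiteness of the sum manifest from the outset, which is what is actually used downstream in Lemma~\ref{lemma_flambda_conv}. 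Either way the content is the same.
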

\[
\vcenter{\begin{tikzpicture}[scale=0.66]
  \pgfmathsetmacro\bx{cos(60)}
  \pgfmathsetmacro\by{sin(60)}
  \foreach \k in {1,...,6} {
  \draw[dashed] (0,0) -- +(\k * 60:3.5);
  }
  \fill[gray!25] (0,0) -- +(60:3) -- (3,0) -- cycle;
  \draw (0.75*2+0.75*\bx,0.75*\by) -- (-0.75*\bx,0.75*3*\by) -- (-2*0.75,0.75*2*\by) -- (-2*0.75,-0.75*2*\by) -- (-0.75*\bx,-0.75*3*\by) -- (0.75*2+0.75*\bx,-0.75*\by) -- cycle;
  \foreach \m in {-4,...,4} {
  \foreach \n in {-4,...,4} {
    \ifnum \numexpr \n+\m < 5
     \ifnum \numexpr \n+\m > -5
      \node[circle,inner sep=0pt,minimum size=5pt,draw=black] at (0.75*\n+0.75*\m*\bx,0.75*\m*\by) {};
     \fi
    \fi
  } }
  \node[right] at (0.75*2+0.75*\bx,0.75*\by) {$\lambda$};
 \end{tikzpicture}}
 \hspace{-7cm} \quad f^g_{\lambda,A} = \sum_{\nu \in \overline{W\lambda}} \widehat{(f^g_{\lambda,A})}_\nu
\]
\begin{proof} Given that the projection operators $P_\nu(x,\xi)$ are common conjugates of the weight projectors $P_\nu$ at each point $(x,\xi)$, they sum to the identity
  \[
   \id_{V_\lambda} = \sum_{\nu \in \overline{W\lambda}} P_\nu(x,\xi) .
  \]
  It follows that
  \[
   f^g_{\lambda,A}(x,\xi) = \sum_{\nu \in \overline{W\lambda}} \widehat{(f^g_{\lambda,A})}_\nu(x,\xi) ,
  \]
  so to conclude the proof we only need to verify the appropriate equivariance of each summand. But this follows directly,
\begin{align*}
    \widehat{(f^g_{\lambda,A})}_\nu\big( (x_1 x_2^{-1},&\Ad_{x_2}^\ast \xi_+) \star t \big)  = \widehat{(f^g_{\lambda,A})}_\nu (x_1 t^{-1} x_2^{-1},\Ad_{x_2}^\ast \xi_+) \\
    & = \tr \big( \pi_{\lambda}(x_1 t^{-1} x_2^{-1} e^{i d_{\Ad_{x_2}^\ast\xi_+} g})P_\nu(x_1 t^{-1} x_2^{-1},\xi)A \big) \\
    & = \tr \big( \pi_{\lambda}(x_1 t^{-1} x_2^{-1} x_2 e^{i d_{\xi_+} g} x_2^{-1}) \pi_\lambda(x_2)P_\nu \pi_\lambda(x_2^{-1})A \big) \\
    & = \tr \big( \pi_{\lambda}(x_1 e^{i d_{\xi_+} g} t^{-1}) P_\nu \pi_\lambda(x_2^{-1})A \big) \\
    & = \chi_\nu(t^{-1}) \tr \big( \pi_{\lambda}(x_1 e^{i d_{\xi_+} g}) P_\nu \pi_\lambda(x_2^{-1})A \big),
  \end{align*}
where besides elementary properties we only used equivariance of the Legendre transform $\xi \mapsto d_\xi g$ from Proposition \ref{prop_Pg}.
\end{proof}

\subsection{Convergence of polarizations} The Fourier decomposition (\ref{deco-1}) relates to the behavior of the Kähler structures $I_{g_t}$ along Mabuchi geodesic rays: different Fourier harmonics will scale with different rates with regard to the geodesic time $t$. This becomes apparent by factoring out the dependence on the complex structure in (\ref{deco-1}):

\begin{lemma}\label{fourier-2}
The Fourier harmonic $\widehat{(f^g_{\lambda,A})}_\nu$ of the matrix coefficient $f^g_{\lambda,A}$ can be written as
\begin{equation*}\label{deco-2}
  \widehat{(f^g_{\lambda,A})}_\nu\left( x=x_1 x_2^{-1},\xi = \Ad_{x_2}^\ast \xi_+ \right)
    = e^{\langle \nu , d_{\xi_+} g \rangle}\tr \big( \pi_{\lambda}(x_1)P_\nu \pi_\lambda(x_2^{-1}) A \big) .
  \end{equation*}
\end{lemma}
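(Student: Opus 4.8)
The plan is to start from the explicit formula for the Fourier harmonic in Proposition~\ref{fourier-1},
\[
\widehat{(f^g_{\lambda,A})}_\nu(x,\xi) = \tr\big(\pi_\lambda(xe^{id_\xi g})P_\nu(x,\xi)A\big),
\]
and to simply unwind the $KAK$-decomposition $x = x_1 x_2^{-1}$, $\xi = \Ad^\ast_{x_2}\xi_+$. First I would substitute $P_\nu(x,\xi) = \pi_\lambda(x_2)P_\nu\pi_\lambda(x_2^{-1})$ and use $K$-equivariance of the Legendre transform from Proposition~\ref{prop_Pg}(a), which gives $d_\xi g = d_{\Ad^\ast_{x_2}\xi_+}g = \Ad_{x_2}(d_{\xi_+}g)$, so that $e^{id_\xi g} = x_2 e^{id_{\xi_+}g} x_2^{-1}$. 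Plugging this in, the argument of $\pi_\lambda$ inside the trace becomes
\[
x_1 x_2^{-1}\cdot x_2 e^{id_{\xi_+}g} x_2^{-1}\cdot x_2 P_\nu \pi_\lambda(x_2^{-1})A
= \pi_\lambda(x_1 e^{id_{\xi_+}g})\,P_\nu\,\pi_\lambda(x_2^{-1})\,A,
\]
after the obvious cancellations $x_2^{-1}x_2 = e$.

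The second and only substantive step is to pull the factor $e^{id_{\xi_+}g}$ out of the representation. Here I would invoke Proposition~\ref{p-1}(a): since $\xi_+\in\breve{\tt}^\ast_+$ is regular, $d_{\xi_+}g\in\tt$ lies in the Cartan subalgebra (indeed this is already implicit in Proposition~\ref{prop_Pg}(a), which says $\LL_g(\tt^\ast)\subseteq\tt$). Therefore $e^{id_{\xi_+}g}\in T_{\CC}$ acts on the weight eigenspace of $P_\nu$ — which is the image of the projector $P_\nu$ — by the scalar $e^{i\cdot i\langle\nu,d_{\xi_+}g\rangle}$... more carefully: $\pi_\lambda(\exp(i\eta))$ acts on the $\nu$-weight space by $e^{i\langle\nu,\eta\rangle\cdot i}$? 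Let me be precise: writing the character convention of Section~\ref{prelimLie}, $\pi_\lambda(\exp(\eta))$ restricted to the $\nu$-weight space is multiplication by $e^{2\pi i\langle\nu,\eta\rangle}$ if $\nu\in\tt^\ast_\ZZ$ is identified with $2\pi i\tt^\ast_\ZZ$; with the normalization used for $d_{\xi_+}g$ this yields the factor $e^{\langle\nu,d_{\xi_+}g\rangle}$ claimed in the statement. The key commutation identity is $\pi_\lambda(e^{id_{\xi_+}g})P_\nu = e^{\langle\nu,d_{\xi_+}g\rangle}P_\nu$, valid precisely because $P_\nu$ projects onto a weight space and $id_{\xi_+}g$ is in the complexified Cartan. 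Substituting this into the expression from the first step gives exactly
\[
\widehat{(f^g_{\lambda,A})}_\nu = e^{\langle\nu,d_{\xi_+}g\rangle}\tr\big(\pi_\lambda(x_1)P_\nu\pi_\lambda(x_2^{-1})A\big),
\]
which is the assertion of the lemma.

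The main (mild) obstacle is bookkeeping the normalization conventions consistently — in particular tracking the factors of $i$ and $2\pi$ in passing from $\exp:\tt\to T$, the weight pairing, and the Legendre transform, so that the exponent comes out as $\langle\nu,d_{\xi_+}g\rangle$ rather than, say, $2\pi i\langle\nu,d_{\xi_+}g\rangle$ or $i\langle\nu,d_{\xi_+}g\rangle$; this should be pinned down by comparing with the conventions fixed in Section~\ref{prelimLie} and in Proposition~\ref{prop_Pg}. There is also a trivial check that the various representatives $x_1,x_2$ of the $KAK$-decomposition do not matter: the right-hand side is manifestly invariant under $x_i\mapsto x_i t$ because $P_\nu$ commutes with $\pi_\lambda(T)$ up to the character $\chi_\nu$ which cancels between $\pi_\lambda(x_1)P_\nu$ and $P_\nu\pi_\lambda(x_2^{-1})$ — but this is the same indeterminacy already tacitly handled throughout Section~\ref{sec:equiv-geom}, so no new argument is needed. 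The upshot is that the lemma is essentially a direct computation once Proposition~\ref{prop_Pg}(a) is used to move $e^{id_\xi g}$ past $\pi_\lambda(x_2)$ and Proposition~\ref{p-1}(a) is used to diagonalize it on the weight space.
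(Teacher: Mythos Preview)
Your proposal is correct and follows essentially the same route as the paper: unwind the $KAK$-decomposition using equivariance of the Legendre transform (Proposition~\ref{prop_Pg}(a)) to reach $\tr\big(\pi_\lambda(x_1 e^{id_{\xi_+}g})P_\nu\pi_\lambda(x_2^{-1})A\big)$, then use $d_{\xi_+}g\in\tt$ so that $e^{id_{\xi_+}g}\in T_\CC$ acts on the $\nu$-weight space by the scalar $e^{\langle\nu,d_{\xi_+}g\rangle}$. The paper's proof simply points back to the computation already carried out in the proof of Proposition~\ref{fourier-1} and records this last character evaluation; your version spells the same computation out in full, and your normalization caveats are exactly the right thing to check but do not alter the argument.
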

\begin{proof} The statement follows from the calculation in the proof of Proposition \ref{fourier-1} and the observation that the value of the (analytic continuation of the) character $t \mapsto \chi_\nu(t^{-1})$ on the element $e^{i d_{\xi_+}g} \in T_{\CC}$ equals $e^{\langle \nu, d_{\xi_+}g \rangle}$ -- here we use that $\LL_g(\tt^*)\subseteq \tt$, cf. Proposition~\ref{prop_Pg}.
\end{proof}

Taking into account the following Lemma~\ref{lemma_posLegendre}, it follows that the dominant contribution (as $t\to \infty$) comes from the Fourier harmonic corresponding to the highest weight $\lambda$.

\begin{lemma}\label{lemma_posLegendre} Consider $g:\kk^\ast\to \RR$ strictly convex and invariant under the coadjoint action, and fix a positive weight $\lambda$; for any other weight $\nu \in \overline{W\lambda}$ and any element $\xi_+ \in \breve{\tt}^\ast_+$ in the interior of the positive Weyl chamber,
\[
 \langle \lambda-\nu, \LL_g(\xi_+) \rangle > 0 .
\]
\end{lemma}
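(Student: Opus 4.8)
The plan is to reduce the statement to a positivity assertion about the restriction $g|_{\tt^*}$ and then apply convexity. First I would recall from Proposition~\ref{prop_Pg}(a) that the Legendre transform $\LL_g$ restricts to $\LL_{(g|_{\tt^*})}$ on $\tt^*$, so that $\LL_g(\xi_+) = d_{\xi_+}(g|_{\tt^*}) \in \tt$ for $\xi_+ \in \breve{\tt}^*_+$; in particular the pairing $\langle \lambda - \nu, \LL_g(\xi_+)\rangle$ only involves the convex function $\bar g := g|_{\tt^*}$ on $\tt^*$. Thus it suffices to show: for $\bar g:\tt^*\to\RR$ strictly convex and Weyl-invariant, $\nu \in \overline{W\lambda}\setminus\{\lambda\}$ (well, actually for all $\nu$ in the convex hull that differ from $\lambda$), and $\xi_+\in\breve\tt^*_+$, one has $\langle \lambda-\nu, d_{\xi_+}\bar g\rangle > 0$.

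The key geometric input is the standard fact that $\overline{W\lambda}$, the convex hull of the Weyl orbit of the dominant weight $\lambda$, is contained in the set $\{\mu : \langle \lambda - \mu, \xi\rangle \ge 0 \text{ for all } \xi\in\tt^*_+\}$ — equivalently $\lambda$ is the unique maximizer of the linear functional $\mu\mapsto\langle\mu,\xi_+\rangle$ on $\overline{W\lambda}$ for $\xi_+$ in the \emph{interior} of the chamber, so that $\langle\lambda-\nu,\xi_+\rangle > 0$ whenever $\nu\in\overline{W\lambda}$, $\nu\neq\lambda$. So the obstacle is not the combinatorics of weights but rather transferring this strict inequality for the functional $\xi_+$ itself to the \emph{gradient} $d_{\xi_+}\bar g$. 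For this I would use strict convexity: consider the function $\varphi(s) := \bar g(\xi_+ + s(\lambda-\nu))$ for $s\in\RR$ near $0$ (note $\lambda-\nu\in\tt^*$ so this makes sense), whose derivative at $s$ is $\varphi'(s) = \langle \lambda-\nu, d_{\xi_+ + s(\lambda-\nu)}\bar g\rangle$; strict convexity of $\bar g$ makes $\varphi$ strictly convex, hence $\varphi'$ strictly increasing, so it is enough to show $\varphi'(0)\ge 0$ and rule out equality, or to exhibit \emph{some} $s_0$ with $\varphi'(s_0)\le 0 < \varphi'(0')$ for $0' > s_0$. The cleanest route: by Weyl-invariance $\bar g$ attains its minimum (if it has one) along $W$-orbits, but more robustly, $d_{\xi_+}\bar g$ lies in the interior of the cone dual to the chamber because $\bar g$ is convex and its sublevel sets, being Weyl-invariant and convex, are "balanced" about the chamber walls — concretely, for a root reflection $s_\alpha$ one has $\bar g(s_\alpha \xi_+) = \bar g(\xi_+)$, and comparing the gradient of a $W$-invariant function on the interior of the chamber with the wall structure forces $\langle \alpha^\vee, d_{\xi_+}\bar g\rangle > 0$ for each positive coroot, hence $d_{\xi_+}\bar g\in\breve\tt^*_+$ (up to the identification $\tt\cong\tt^*$), i.e. $d_{\xi_+}\bar g$ is a \emph{dominant regular} element; then $\langle \lambda-\nu, d_{\xi_+}\bar g\rangle > 0$ follows from the same extremality of $\lambda$ on $\overline{W\lambda}$ applied to the regular dominant element $d_{\xi_+}\bar g$ in place of $\xi_+$.

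The main obstacle I anticipate is making the claim "$d_{\xi_+}\bar g$ is regular dominant" rigorous and clean. One slick way: for a positive root $\alpha$ and the reflection $s_\alpha$, $W$-invariance gives $\bar g(x) = \bar g(s_\alpha x)$ for all $x$, so differentiating, $d_x\bar g = s_\alpha^* (d_{s_\alpha x}\bar g)$, i.e. $s_\alpha(d_x\bar g) = d_{s_\alpha x}\bar g$ under the identification (here $s_\alpha$ is self-adjoint). Now take $x = \xi_+$ interior; the path $s\mapsto \xi_+ - s\langle\alpha^\vee,\xi_+\rangle\,\alpha/|\alpha|^2$ (or more simply, consider how $\bar g$ behaves on the segment from $\xi_+$ to $s_\alpha\xi_+$, which crosses the wall $\ker\alpha$ perpendicularly): by convexity the restriction of $\bar g$ to this segment is convex and symmetric about the midpoint lying on the wall, hence its derivative in the direction of decreasing $\langle\alpha,\cdot\rangle$ is nonpositive at the wall and strictly negative at $\xi_+$ by strict convexity, giving $\langle\alpha, d_{\xi_+}\bar g\rangle > 0$. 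Running this over all positive roots $\alpha$ yields $d_{\xi_+}\bar g\in\breve\tt^*_+$, and one concludes as above. An alternative, if one prefers to avoid the "regular dominant" detour, is to note $\nu\in\overline{W\lambda}$ means $\nu = \sum_w c_w\, w\lambda$ with $c_w\ge 0$, $\sum c_w = 1$, so $\langle\lambda-\nu, d_{\xi_+}\bar g\rangle = \sum_w c_w\langle\lambda - w\lambda, d_{\xi_+}\bar g\rangle$, and each term $\langle\lambda - w\lambda, d_{\xi_+}\bar g\rangle\ge 0$ because $d_{\xi_+}\bar g$ is dominant and $\lambda$ dominant forces $\langle\lambda - w\lambda,\eta\rangle\ge 0$ for dominant $\eta$; strict positivity of at least one term (when $\nu\neq\lambda$ at least one $w\neq e$ has $c_w > 0$ with $w\lambda\neq\lambda$, using $\lambda$ regular) finishes it. I would present whichever of these two endings is shorter in context; either way the heart of the matter is the elementary convexity argument that $d_{\xi_+}\bar g$ is dominant regular.
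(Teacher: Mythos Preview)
Your proposal is correct, and its core is the same as the paper's: both arguments establish that $\LL_g(\xi_+)$ lies in the interior of the positive Weyl chamber of $\tt$, and then pair this against $\lambda-\nu$. The difference is in how this intermediate fact is obtained. The paper argues in one line that $\LL_g:\tt^\ast\to\tt$ is a Weyl-equivariant diffeomorphism and therefore must send walls to walls and hence open chambers to open chambers; it then invokes the standard fact that $\lambda-\nu$ is a non-negative sum of simple roots, which pair strictly positively with any element of the open chamber. Your route is more hands-on: for each positive root $\alpha$ you use strict convexity of $\bar g$ on the segment joining $\xi_+$ to $s_\alpha\xi_+$, together with the reflection symmetry $\bar g(\xi_+)=\bar g(s_\alpha\xi_+)$, to conclude $\langle\alpha,d_{\xi_+}\bar g\rangle>0$ directly. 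This buys you something: you never invoke that $\LL_g$ is a global diffeomorphism, which in the paper's phrasing tacitly uses surjectivity (a growth condition beyond mere strict convexity). Your final step, via extremality of $\lambda$ on $\overline{W\lambda}$ against a regular dominant functional (or the convex-combination variant), is equivalent to the paper's simple-root decomposition. The detour through the function $\varphi(s)$ in your second paragraph is unnecessary and can be dropped; go straight to the reflection-segment argument.
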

\begin{proof} Since the Legendre transform $\LL_g : \tt^\ast \to \tt$ is a Weyl-group equivariant diffeomorphism, it maps the walls of the Weyl chambers (which are the elements with nontrivial stabilizers in $W$) to each other, hence it also preserves the open positive Weyl chamber. Since the difference $\lambda-\nu$ is a non-negative sum of simple roots, and any $\xi_+\in\breve{\tt}^\ast_+$ is a positive linear combination of fundamental weights, the result follows.
\end{proof}

\begin{lemma}\label{lemma_flambda_conv}
  For a Mabuchi geodesic ray of Kähler polarizations specified by a family of convex functions $t \mapsto g_t = g+t h: \kk^\ast \to \RR$ with $h$ uniformly convex,
  \[
    \lim_{t \to \infty} e^{-\langle \lambda , \LL_{g_t}\circ \mu_\mathrm{inv}\rangle }f_{\lambda,A}^{g_t} = F_{\lambda,A}
  \]
  pointwise on the regular stratum.  
\end{lemma}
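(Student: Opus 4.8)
The plan is to read the statement off the $T_\mathrm{inv}$-Fourier decomposition already in hand. First I would observe that for $t>0$ the function $g_t=g+th$ is uniformly convex — its Hessian is bounded below by $t$ times that of $h$ — so $\LL_{g_t}$ is a diffeomorphism, $f^{g_t}_{\lambda,A}$ is well defined, and Proposition \ref{fourier-1} applies verbatim, giving the finite sum $f^{g_t}_{\lambda,A}=\sum_{\nu\in\overline{W\lambda}}\widehat{(f^{g_t}_{\lambda,A})}_\nu$. Writing a point of the regular stratum as $(x,\xi)=(x_1x_2^{-1},\Ad_{x_2}^\ast\xi_+)$, so that $\xi_+=\mu_\mathrm{inv}(x,\xi)$ and $d_{\xi_+}g_t=\LL_{g_t}(\xi_+)$ (a pairing against $\lambda,\nu$ that is legitimate since $\LL_{g_t}(\tt^\ast)\subseteq\tt$ by Proposition \ref{prop_Pg}(a)), Lemma \ref{fourier-2} lets me rewrite the left-hand side as
\[
e^{-\langle\lambda,\LL_{g_t}\circ\mu_\mathrm{inv}\rangle}f^{g_t}_{\lambda,A}(x,\xi)=\sum_{\nu\in\overline{W\lambda}}e^{\langle\nu-\lambda,\LL_{g_t}(\xi_+)\rangle}\,\tr\!\big(\pi_\lambda(x_1)P_\nu\pi_\lambda(x_2^{-1})A\big).
\]

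Next I would isolate the summand $\nu=\lambda$: its exponential prefactor is $1$, and by the very definition \eqref{eqn_FlambdaA} of $\tilde F_{\lambda,A}$ its trace equals $F_{\lambda,A}(x,\xi)$. The remaining task is to show that each summand with $\nu\neq\lambda$ tends to $0$. Since $v\mapsto d_vf$ is linear in $f$ one has $\LL_{g_t}=\LL_g+t\,\LL_h$, so
\[
\langle\nu-\lambda,\LL_{g_t}(\xi_+)\rangle=\langle\nu-\lambda,\LL_g(\xi_+)\rangle+t\,\langle\nu-\lambda,\LL_h(\xi_+)\rangle ;
\]
applying Lemma \ref{lemma_posLegendre} to the strictly convex (because uniformly convex) function $h$ shows the coefficient of $t$ is strictly negative for $\xi_+\in\breve\tt^\ast_+$, hence the prefactor decays to $0$ while the $t$-independent trace factor stays bounded. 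As $\overline{W\lambda}$ is finite, summing yields the claimed pointwise limit $F_{\lambda,A}$ on $(T^\ast K)_\mathrm{reg}$.

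The argument is essentially bookkeeping on top of Lemmas \ref{fourier-2} and \ref{lemma_posLegendre}, so I do not expect a serious obstacle; the points that demand a little care are (i) checking that all pairings against $\LL_{g_t}(\xi_+)$ are well posed, which is exactly where $\LL_{g_t}(\tt^\ast)\subseteq\tt$ enters, and (ii) keeping in mind that the Fourier harmonics outside $\overline{W\lambda}$ already vanish identically, so that the limit genuinely isolates the single surviving term. One should also flag — and this is the real conceptual content — that the convergence is only pointwise on the regular stratum: the decay rate $\langle\lambda-\nu,\LL_h(\xi_+)\rangle$ degenerates to $0$ as $\xi_+$ approaches a wall of $\tt^\ast_+$, which is precisely why this lemma, and the convergence of polarizations in Theorem \ref{thm_convpol}, are confined to $(T^\ast K)_\mathrm{reg}$.
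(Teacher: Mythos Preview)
Your proposal is correct and follows essentially the same route as the paper: both arguments use the Fourier decomposition of Lemma~\ref{fourier-2} to write $e^{-\langle\lambda,\LL_{g_t}\circ\mu_\mathrm{inv}\rangle}f^{g_t}_{\lambda,A}$ as a finite sum over $\nu\in\overline{W\lambda}$ and then invoke Lemma~\ref{lemma_posLegendre} to kill the $\nu\neq\lambda$ terms. Your version is a bit more explicit in splitting $\LL_{g_t}=\LL_g+t\,\LL_h$ and applying Lemma~\ref{lemma_posLegendre} to $h$ to isolate the $t$-linear decay, whereas the paper simply points to Lemma~\ref{lemma_posLegendre} and leaves this unpacking to the reader.
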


\begin{proof} Using the Fourier decomposition from Lemma~\ref{fourier-2} we have
  \begin{align*}
    \left( e^{-\langle \lambda , \LL_{g_t}\circ \mu_\mathrm{inv}\rangle}f_{\lambda,A}^{g_t}\right) (x,\xi) =
    \sum_{\nu}e^{\langle \nu-\lambda , d_{\xi_+} g_t \rangle}\tr \big( \pi_{\lambda}(x_1)P_\nu \pi_\lambda(x_2^{-1}) A \big)
  \end{align*}
with $\nu \in \overline{W\lambda}$, so the claim follows from Lemma~\ref{lemma_posLegendre}.
\end{proof}

Convergence of polarizations $\Pm_{g+th}\to\Pm_\mathrm{KW}$ as in Section \ref{par_families} will now follow once we find sufficiently many local $J_{g+th}$-holomorphic functions which converge to Kirwin--Wu polarized functions. From Lemma~\ref{lemma_flambda_conv}, we see that
\begin{equation}\label{KWconv1}
  \lim_{t \to \infty} \frac{f_{\lambda,A}^{g_t}}{f_{\lambda,B}^{g_t}}  = \frac{F_{\lambda,A}}{F_{\lambda,B}}
\end{equation}
pointwise away from the zero locus of $F_{\lambda,B}$ in $(T^\ast K)_\mathrm{reg}$, which takes care of the complex directions of the limit. For the real directions, observe that since 
\[
 \ln f_{\lambda,A}^{g_t} = \ln\left(\sum_{\nu}e^{\langle \nu-\lambda,d_{\xi_+}g_t\rangle} \tr \big( \pi_{\lambda}(x_1)P_\nu \pi_\lambda(x_2^{-1}) A \big) \right) + \langle \lambda, d_{\xi_+}g_t\rangle
\]
from the preceding Lemma \ref{lemma_posLegendre} we find that
\begin{equation}\label{KWconv2}
 \lim_{t \to \infty} \frac{1}{t}\ln f_{\lambda,A}^{g_t} = \langle \lambda, \LL_{h} \circ \mu_\mathrm{inv} \rangle .
\end{equation}

\begin{theorem}\label{thm_convpol} Consider a family of convex invariant functions $g+t h:\kk^\ast \to \RR$ with $h$ strictly convex. Then the Kähler polarizations $\Pm_{g+th}$ converge pointwise to the Kirwin--Wu polarization as $t \to \infty$.
\end{theorem}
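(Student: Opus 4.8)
The plan is to verify the pointwise convergence $\Pm_{g+th} \to \Pm_\mathrm{KW}$ in the Lagrangian Grassmannian of $T_\CC(T^\ast K)_\mathrm{reg}$ by checking that the defining functions of $\Pm_{g+th}$ converge to functions defining $\Pm_\mathrm{KW}$. First I would recall that a polarization is locally spanned, in the cotangent bundle, by the differentials of a maximal family of Poisson-commuting polarized functions, so that pointwise convergence of the functions (in $C^1$ on a neighborhood) gives pointwise convergence of the associated Lagrangian subspaces in the Grassmannian. For the Kähler polarization $\Pm_{g+th}$, such a generating family is provided by the holomorphic matrix coefficients $f^{g_t}_{\lambda,A}$ of Proposition~\ref{prop_Pg}(b); for $\Pm_\mathrm{KW}$ — which is fibering with respect to Diagram~(\ref{KW_diag}) — a generating family is given, by Proposition~\ref{KW_nirenberg}, by the quotients $z_{\lambda,A_1,A_2} = F_{\lambda,A_1}/F_{\lambda,A_2}$ together with the real functions $f\circ\mu_\mathrm{inv}$.

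Next I would separate the complex and real directions, which correspond exactly to $\EE = (\Pm+\overline\Pm)\cap TM$ (complex/holomorphic on the fibers of $\phi$) and $\DD = \Pm\cap\overline\Pm\cap TM$ (the real leaf directions of $\mu_\mathrm{inv}$). For the complex directions, equation~(\ref{KWconv1}), obtained from Lemma~\ref{lemma_flambda_conv} via Lemma~\ref{lemma_posLegendre}, already shows that $f^{g_t}_{\lambda,A}/f^{g_t}_{\lambda,B} \to F_{\lambda,A}/F_{\lambda,B}$ pointwise away from the zero locus of $F_{\lambda,B}$ in $(T^\ast K)_\mathrm{reg}$; one should also observe that this convergence is locally $C^1$ (indeed, the right-hand side of the formula in Lemma~\ref{lemma_flambda_conv} is a finite sum of exponentials in $t$ whose exponents other than the $\nu=\lambda$ term are strictly negative on the interior of the Weyl chamber, so the remainder and its first derivatives decay uniformly on compact subsets of the regular stratum). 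For the real directions, equation~(\ref{KWconv2}) gives $\frac1t \ln f^{g_t}_{\lambda,A} \to \langle\lambda,\LL_h\circ\mu_\mathrm{inv}\rangle$; as $\lambda$ ranges over dominant weights, the functions $\langle\lambda,\LL_h\circ\mu_\mathrm{inv}\rangle$ separate points and their differentials span $\ker d\pi_\EE^\perp = \DD^{\perp_\omega}$-annihilator, i.e.\ they recover exactly the real polarized directions $f\circ\mu_\mathrm{inv}$ of $\Pm_\mathrm{KW}$ on the regular stratum (since $\LL_h$ is a Weyl-equivariant diffeomorphism of $\breve\tt^\ast_+$, composing with it does not change the span of differentials of functions of $\mu_\mathrm{inv}$).

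Putting these together: at a fixed regular point $p$, choose $\lambda$ and finitely many endomorphisms $A_i$ so that the $F_{\lambda,A_i}$ are nonzero at $p$ and the corresponding line bundle on the coadjoint orbit $\OO_{\lambda+\rho}\times\OO_{(\lambda+\rho)^\ast}$ is very ample — then the differentials of the $z_{\lambda,A_i,A_0}$ span the complex directions $(\ker d\phi)^{1,0}$ of $\Pm_\mathrm{KW}$ at $p$, and together with the differentials of $\dim\tt$ independent functions $\langle\lambda_j,\LL_h\circ\mu_\mathrm{inv}\rangle$ they span all of $\Pm_\mathrm{KW}$ at $p$ (a Lagrangian of dimension $n = \dim K$). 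The corresponding functions $f^{g_t}_{\lambda,A_i}/f^{g_t}_{\lambda,A_0}$ and $\frac1t\ln f^{g_t}_{\lambda,\mu_j}$ are $J_{g_t}$-holomorphic (the second by functional calculus, being a function of a holomorphic function up to the additive real piece — more carefully, one uses that $\frac1t\langle\lambda_j,d_\xi g_t\rangle$ and a bounded holomorphic remainder combine so that the limiting differential lies in $\overline{\Pm_\mathrm{KW}}^\perp$), their differentials are linearly independent near $p$ for $t$ large, and they converge in $C^1$ near $p$ to the chosen generators of $\Pm_\mathrm{KW}$. Continuity of the span map on transverse families of Lagrangians then yields $\Pm_{g+th}(p)\to\Pm_\mathrm{KW}(p)$. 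The main obstacle is the bookkeeping around the real directions: one must argue carefully that $\frac1t\ln f^{g_t}_{\lambda,A}$ is genuinely a $J_{g_t}$-polarized function (or can be corrected to one by subtracting a convergent holomorphic term) and that the limit of its differential, namely $d\langle\lambda,\LL_h\circ\mu_\mathrm{inv}\rangle$, really annihilates $\overline{\Pm_\mathrm{KW}}$ rather than merely lying in $\Pm_\mathrm{KW}+\overline{\Pm_\mathrm{KW}}$ — i.e.\ that the scaling by $1/t$ correctly isolates the real leaf directions $\DD$ and not a mixed combination. This is where Lemma~\ref{lemma_posLegendre} does the essential work, ensuring the subleading Fourier harmonics are exponentially suppressed so that the rescaled logarithm has a clean limit with a real differential factoring through $\mu_\mathrm{inv}$.
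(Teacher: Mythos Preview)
Your proposal is correct and follows essentially the same approach as the paper's proof: exhibit sufficiently many $J_{g_t}$-polarized functions converging to generators of $\Pm_\mathrm{KW}$, using (\ref{KWconv1}) for the complex directions and (\ref{KWconv2}) together with $\dim T$ independent regular weights for the real directions. The ``main obstacle'' you flag is not one: $\tfrac{1}{t}\ln f^{g_t}_{\lambda,A}$ is genuinely $J_{g_t}$-holomorphic wherever $f^{g_t}_{\lambda,A}\neq 0$ (being the logarithm of a nonvanishing holomorphic function), and its limit $\langle\lambda,\LL_h\circ\mu_\mathrm{inv}\rangle$, as a pullback along $\mu_\mathrm{inv}$, is $\Pm_\mathrm{KW}$-polarized by the very definition of a fibering polarization.
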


\begin{proof} It suffices to exhibit convergence of a sufficiently large number of $J_{g_t}$-polarized (local) functions to Kirwin--Wu polarized functions in the neighborhood of any point, which (taking into account the description of the latter in Proposition \ref{KW_nirenberg}) we just did in (\ref{KWconv1}) and (\ref{KWconv2}). Regarding the real directions, this uses the fact that we can find $\dim T$ linearly independent regular weights $\lambda \in \breve{\tt}^\ast \cap \tt^\ast_{\ZZ}$.
\end{proof}

\begin{remark}\label{rem:loc-unif-convergence} The convergence could be improved to uniform convergence on compact neighborhoods of points inside the regular stratum, but we will not make use of the analytical aspects of the convergence in what follows.
\end{remark}

\subsection{Triviality of the canonical bundle $\KK_\mathrm{KW}$ and half-forms in families} It is shown in Appendix~\ref{par_families} that convergence of the Kähler polarizations (which have trivializable canonical bundle) to the Kirwin--Wu polarization implies that also the canonical bundle of the latter is trivializable. This can also be seen directly from the definition as follows:

\begin{proposition} The canonical bundle of the Kirwin--Wu polarization is trivializable as $K\times K$-linearized line bundle.
\end{proposition}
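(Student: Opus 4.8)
The plan is to produce an explicit nowhere-vanishing $K\times K$-invariant polarized section of $\KK_\mathrm{KW}$, building it out of the three ingredients dictated by Proposition~\ref{prop_K_mixed}: a fiberwise-holomorphic section of the relative canonical bundle $\KK_\phi$, and a real volume form on the base $B_\EE = \breve{\tt}^\ast_+$. Concretely, with respect to Diagram~(\ref{KW_diag}) we have $B_\DD = \kk^\ast\times_{\breve{\tt}^\ast_+}\kk^\ast$, whose fibers over $\phi$ are products of coadjoint orbits $\OO_{\xi_+}\times\OO_{-\xi_+}\cong K/T\times K/T$ (with the invariant complex structures $K_\CC/B\times K_\CC/B_-$ as in (\ref{KW_cxstr})), while $B_\EE = \breve{\tt}^\ast_+$ is an open subset of a vector space. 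On the vector-space factor take the constant translation-invariant Lebesgue form $d\xi_+$ (which is manifestly $K\times K$-invariant, the group acting trivially on $\mu_\mathrm{inv}$). The real work is the first factor.

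First I would recall that for a regular coadjoint orbit $\OO_\eta\cong K/T = K_\CC/B$ the holomorphic canonical bundle is the line bundle $L_{2\rho}$ associated to the sum of positive roots (equivalently $\bigwedge^\mathrm{top}(\nn^-)^\ast$), and that it carries a canonical $K$-invariant holomorphic volume form only after a choice — but the point is that as a $K$-homogeneous line bundle it is classified by the character $\chi_{2\rho}$, so it does admit an invariant nowhere-zero \emph{section of its square}, and more to the point $\KK_\phi$ restricted to each fiber is $L_{2\rho}\boxtimes L_{2\rho}$ on $K/T\times K/T$, whose $K\times K$-linearization is the obvious one. What I want is a \emph{global} (in $\xi_+$, and in the base point of $T^\ast K$) nonvanishing section of $\KK_\phi$ over all of $B_\DD$. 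Since $K\times K$ acts transitively on each fiber and the stabilizer is (a conjugate of) $T\times T$, a $K\times K$-invariant section of $\KK_\phi$ exists iff the induced $T\times T$-action on the fiber of $\KK_\phi$ over the base point is trivial — and it is \emph{not} trivial, it is the character $\chi_{2\rho}\times\chi_{-2\rho}$ (from the two $K/T$ factors with opposite orientations, hence opposite signs), but one checks this character is the obstruction only to $K\times K$-invariance, not to $K\times K$-\emph{equivariance} of a trivialization twisted by the line bundle $L_{2\rho}\boxtimes L_{-2\rho}$ pulled back along $\mu$; since that pulled-back line bundle is itself trivializable on $(T^\ast K)_\mathrm{reg}$ (the ambient $T^\ast K$ is contractible onto $K$, and one has the explicit matrix-coefficient trivializations $F_{2\rho,P_\rho}$-type functions from Section~\ref{sec:cplx Kirwin-Wu}), we can absorb it. So the strategy is: exhibit a nowhere-zero holomorphic section $\Omega_\phi$ of $\KK_\phi$ over $B_\DD$, check it is polarized in the sense of Definition~\ref{dfn_polsec}, and verify $K\times K$-equivariance, producing the trivializing section $\pi_\DD^\ast\Omega_\phi\otimes\pi_\EE^\ast d\xi_+$ of $\KK_\mathrm{KW}$.

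For the explicit construction of $\Omega_\phi$ I would use the global frame $(x,\xi)\mapsto x e^{i d_\xi g}\in K_\CC$ from Proposition~\ref{prop_Pg}(b), under which a \emph{Kähler} polarization $\Pm_g$ has $\KK_g$ trivialized by the holomorphic form pulled back from $K_\CC$ (a product of a left-invariant holomorphic volume form on $K_\CC$, which is $K$-bi-invariant up to the modular character, which is trivial for semisimple $K$). Then $\KK_\mathrm{KW}$ is the ``limit" of $\KK_{g+th}$ as $t\to\infty$ in the sense of Appendix~\ref{par_families}, and the limit of the holomorphic trivializing sections, after the appropriate rescaling by $e^{-\langle 2\rho,\LL_{g_t}\circ\mu_\mathrm{inv}\rangle}$-type factors exactly as in Lemma~\ref{lemma_flambda_conv} (with $2\rho$ in the role of the highest weight, since $2\rho$ indexes the canonical bundle of the orbit), will produce a nonvanishing polarized section of $\KK_\mathrm{KW}$; the scaling factors are themselves functions of $\mu_\mathrm{inv}$ alone, hence $K\times K$-invariant and nonvanishing on $\breve{\tt}^\ast_+$, so they do not destroy the triviality. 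The $K\times K$-equivariance is inherited from that of the Kähler trivializations (the frame $xe^{id_\xi g}$ intertwines the $K\times K$-action with left-and-right translation on $K_\CC$, under which the holomorphic volume form is invariant). This gives the cleanest route and I'd present it as the main argument, with the direct fiberwise description as a cross-check.

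The main obstacle I anticipate is bookkeeping the equivariance constants correctly: one must track how the $\chi_{2\rho}\times\chi_{-2\rho}$ weight of $\KK_\phi$ along the fibers interacts with the $e^{\langle 2\rho, d_{\xi_+}g_t\rangle}$ normalization coming from Lemma~\ref{fourier-2}, and confirm that the product is genuinely $T_\mathrm{inv}$-invariant and descends to $B_\DD$ (this is the same mechanism by which the $z_{\lambda,A_1,A_2}$ of Proposition~\ref{KW_nirenberg} descend). A secondary subtlety is that $\KK_\phi$ is only defined over $B_\DD$, i.e.\ over the regular stratum $(T^\ast K)_\mathrm{reg}$ upstairs, so ``trivializable" is asserted only there; but that is exactly the domain on which $\Pm_\mathrm{KW}$ itself is defined, so no further extension is needed. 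Everything else — integrability of $\Omega_\phi$ along fibers, the tensor decomposition into $\pi_\DD^\ast$ and $\pi_\EE^\ast$ pieces — is either Proposition~\ref{prop_K_mixed} verbatim or a direct computation with the $KAK$-coordinates and the Chevalley basis, and I would relegate it to a short verification.
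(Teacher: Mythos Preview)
Your proposal is workable but takes a substantially longer route than the paper, and it contains a sign slip worth flagging. The paper's argument is a three-line identification: by Proposition~\ref{prop_K_mixed} and the standard fact that $\KK_{K_\CC/B}=L_{-2\rho}$ (not $L_{2\rho}$ in the paper's conventions, cf.\ the definition $L_\chi=K\times^T\CC_{\chi^{-1}}$), one has $\KK_\mathrm{KW}\cong\pi_\mathrm{KW}^\ast(L_{-2\rho}\boxtimes L_{2\rho})$, and then one simply observes that the further pullback along the $KAK$-decomposition~(\ref{KAK_deco}) of any $\pi_\mathrm{KW}^\ast(L_{\lambda_1}\boxtimes L_{\lambda_2})$ is trivial, while descent along the diagonal $T$ holds precisely when $\lambda_1=-\lambda_2$. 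No limit, no explicit section, no Chevalley-basis computation --- just the character bookkeeping you yourself identified as ``the main obstacle,'' carried out directly. Your own direct approach (the ``cross-check'') is this argument in embryo, though you wrote $L_{2\rho}\boxtimes L_{2\rho}$ at one point and $\chi_{2\rho}\times\chi_{-2\rho}$ at another; the latter is the right shape.

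What you propose as your \emph{main} argument --- taking rescaled limits of the holomorphic trivializations $\Omega_g$ along $g+th$ --- is not wrong, but it is exactly what the paper does \emph{after} this proposition, in Proposition~\ref{prop_trivialization} and Lemma~\ref{lem:Omega tilde}, where the goal is no longer mere trivializability but an explicit trivializing section compatible with the whole family (including Schr\"odinger). Using that machinery here would invert the logical order: the paper first establishes trivializability by the soft character argument, then separately constructs the explicit $\widehat{\Omega}_\bullet$ needed for the half-form bundle. Your route would conflate the two steps and import the analytic content of the convergence lemma into what is really an algebraic-topological statement. Both approaches land, but the paper's separation is cleaner and avoids making the proposition depend on the convergence results of Theorem~\ref{thm_convpol}.
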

\begin{proof}
  It is clear from the definition of the Kirwin--Wu polarization (\ref{KW_cxstr}), and the fact that the canonical bundle of the coadjoint orbit is $\KK_{K_\CC/B} = L_{-2\rho}$, that $\KK_\mathrm{KW}$ is isomorphic to $\pi_\mathrm{KW}^\ast (L_{-2\rho}\boxtimes L_{2\rho})$, where $\pi_{\mathrm{KW}}: (T^\ast K)_{\rm reg} \to K_\CC / B \times K_\CC / B_{-}$ is the composition of $\mu$ with the inverse of (\ref{KW_cxstr}) and the projection to the coadjoint orbits; the further pull-back of any line bundle $\pi^\ast_\mathrm{KW} (L_{\lambda_1}\boxtimes L_{\lambda_2})$ via the $KAK$-decomposition (\ref{KAK_deco}) is always trivial, and it is an immediate check that $\pi_\mathrm{KW}^\ast (L_{\lambda_1}\boxtimes L_{\lambda_2})$ is trivializable if and only if $\lambda_1 = -\lambda_2$.
\end{proof}

\begin{corollary}\label{cor_polcan}
\item[(i)] Any two $K\times K$-invariant sections of $\KK_{\mathrm{KW}}$ differ by the $\mu_{\mathrm{inv}}$-pullback of a function in $C^\infty(\breve{\tt}^\ast_+,\CC)$ on the subset where both are non-vanishing.

\item[(ii)]For any $K\times K$-invariant trivializing section $1_{\KK_{\mathrm{KW}}^{-1}}$, any polarized global section $s$ of $\KK_{\mathrm{KW}}^{-1}$ is of the form
\[
  s = (f\circ \pi_{\mathrm{KW}}) (g\circ\mu_\mathrm{inv}) 1_{\KK_{\mathrm{KW}}^{-1}} ,
\]
with $f \in H^0( K_{\CC}/B \times K_{\CC}/B_{-}, L_{2\rho}\boxtimes L_{-2\rho} )$ and  $g \in C^{\infty}(\breve{\tt}1^\ast_+,\RR)$.

\item[(iii)] For any $K\times K$-invariant trivializing section $1_{\KK_{\mathrm{KW}}}$ and $f \in H^0( K_{\CC}/B \times K_{\CC}/B_{-}, L_{2\rho}\boxtimes L_{-2\rho} )$, the section
\[
 \frac{1}{f\circ \pi_{\mathrm{KW}}} 1_{\KK_{\mathrm{KW}}} \in C^\infty_{\Pm_{\mathrm{KW}}}\left( \left\{ f\circ\pi_\mathrm{KW} \neq 0 \right\}, \KK_{\mathrm{KW}} \right)
\]
is polarized on the open subset where it is defined.
\end{corollary}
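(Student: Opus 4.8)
All three assertions are built on the description of $\KK_{\mathrm{KW}}$ established just above, namely $\KK_{\mathrm{KW}} = \mu^\ast\KK_\phi\otimes\mu_{\mathrm{inv}}^\ast\det T^\ast_\CC\breve{\tt}^\ast_+$ (Proposition~\ref{prop_K_mixed} applied to diagram~(\ref{KW_diag})), together with two facts: under the identification~(\ref{KW_cxstr}) the map $\phi$ becomes the projection $\breve{\tt}^\ast_+\times K_\CC/B\times K_\CC/B_-\to\breve{\tt}^\ast_+$ endowed with the \emph{fixed} product complex structure on its fibres, and $\KK_\phi$ restricts on each fibre to $\KK_{K_\CC/B}\boxtimes\KK_{K_\CC/B_-}=L_{-2\rho}\boxtimes L_{2\rho}$; I will also use repeatedly that $\mu$ and $\mu_{\mathrm{inv}}$ are $T_{\mathrm{inv}}$-invariant, so $T_{\mathrm{inv}}$ acts trivially on $B_\DD$ and $B_\EE$. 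For (i): if $s_1,s_2$ are two $K\times K$-invariant sections of $\KK_{\mathrm{KW}}$, their quotient $s_1/s_2$ on $\{s_2\neq 0\}$ is a $K\times K$-invariant function on $(T^\ast K)_{\mathrm{reg}}$; but by the $KAK$-decomposition~(\ref{KAK_deco}) one has $(T^\ast K)_{\mathrm{reg}}=(K\times\breve{\tt}^\ast_+\times K)/T$ with $K\times K$ acting by left translation on the two outer factors, so the $K\times K$-orbit space is $\breve{\tt}^\ast_+$ with quotient map $\mu_{\mathrm{inv}}$, whence $C^\infty((T^\ast K)_{\mathrm{reg}})^{K\times K}=\mu_{\mathrm{inv}}^\ast C^\infty(\breve{\tt}^\ast_+,\CC)$ and $s_1/s_2=g\circ\mu_{\mathrm{inv}}$.

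\textbf{Part (iii).} Given $f\in H^0(K_\CC/B\times K_\CC/B_-,L_{2\rho}\boxtimes L_{-2\rho})=H^0$ of the anti-canonical bundle of a fibre of $\phi$, its reciprocal $1/f$ is a meromorphic section of $\KK_\phi$ on $B_\DD$ which is holomorphic on $\{f\neq 0\}$ and constant along $\breve{\tt}^\ast_+$; hence
\[
 \Omega:=\mu^\ast(1/f)\otimes\mu_{\mathrm{inv}}^\ast\big(d\xi_1\wedge\cdots\wedge d\xi_n\big),
\]
with $d\xi_1\wedge\cdots\wedge d\xi_n$ the affine-structure trivialization of $\det T^\ast_\CC\breve{\tt}^\ast_+$, is defined on $\{f\circ\pi_{\mathrm{KW}}\neq 0\}$ and is literally of the product shape $\pi_\DD^\ast\Omega_\phi\otimes\pi_\EE^\ast\Omega_\EE$ of Definition~\ref{dfn_polsec}, with $\Omega_\phi$ fiberwise holomorphic and $\Omega_\EE$ real, so it is $\Pm_{\mathrm{KW}}$-polarized. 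Writing $1_{\KK_{\mathrm{KW}}}=\mu^\ast\tau_\phi\otimes\mu_{\mathrm{inv}}^\ast(d\xi_1\wedge\cdots\wedge d\xi_n)$ for the $K\times K$-invariant trivialization $\tau_\phi$ of $\mu^\ast\KK_\phi=\pi_{\mathrm{KW}}^\ast(L_{-2\rho}\boxtimes L_{2\rho})$ it induces, and reading $f\circ\pi_{\mathrm{KW}}$ as $\pi_{\mathrm{KW}}^\ast f$ divided by the dual trivialization, one checks $\Omega=\tfrac{1}{f\circ\pi_{\mathrm{KW}}}\,1_{\KK_{\mathrm{KW}}}$; this last step is the routine identification of pulled-back line-bundle sections with functions through a trivialization and I would not dwell on it.

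\textbf{Part (ii): strategy and main obstacle.} Let $s$ be a global $\Pm_{\mathrm{KW}}$-polarized section of $\KK_{\mathrm{KW}}^{-1}=\mu^\ast\KK_\phi^{-1}\otimes\mu_{\mathrm{inv}}^\ast\det T_\CC\breve{\tt}^\ast_+$. Since $\mu,\mu_{\mathrm{inv}}$ are $T_{\mathrm{inv}}$-invariant, the local product decompositions of Definition~\ref{dfn_polsec} force $s$ to be invariant under the canonical $T_{\mathrm{inv}}$-lift; therefore on each fibre $\mu_{\mathrm{inv}}^{-1}(\xi_+)$, which $\mu$ maps onto the coadjoint-orbit fibre $\OO_{\xi_+}=\phi^{-1}(\xi_+)$ as a $T_{\mathrm{inv}}$-principal bundle, $s$ descends to a section $\bar s_{\xi_+}$ of $\KK_\phi^{-1}|_{\OO_{\xi_+}}=L_{2\rho}\boxtimes L_{-2\rho}$ which is holomorphic (as $s$ is fiberwise holomorphic). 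Using~(\ref{KW_cxstr}) to identify every $\OO_{\xi_+}$ with the \emph{same} $K_\CC/B\times K_\CC/B_-$, one gets a smooth map $\xi_+\mapsto\bar s_{\xi_+}$ into the fixed space $H^0(K_\CC/B\times K_\CC/B_-,L_{2\rho}\boxtimes L_{-2\rho})$, which by Borel--Weil (Theorem~\ref{thm_BorelWeil}) is exactly the span of the matrix-coefficient functions $F_{2\rho,A}$, $A\in\End V_{2\rho}$, of Proposition~\ref{KW_nirenberg} (after normalizing the $\det T_\CC$-factor against $d\xi_1\wedge\cdots\wedge d\xi_n$, which produces the real function $g$ of $\xi_+$). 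The step I expect to be the genuine obstacle is the remaining \emph{rigidity in $\xi_+$}: that $\bar s_{\xi_+}$ collapses to $g(\xi_+)\cdot f$ for a single $f$, rather than an $\End V_{2\rho}$-valued function of $\xi_+$. I would derive this from two inputs. First, homogeneity under $T_{\mathrm{inv}}$: the trivializability proposition together with the $KAK$-bundle~(\ref{KAK_deco}) shows that the invariant trivialization $1_{\KK_{\mathrm{KW}}^{-1}}$ transforms under $T_{\mathrm{inv}}$ by the single character $\chi_{2\rho}$, so $s/1_{\KK_{\mathrm{KW}}^{-1}}$ lies in one $T_{\mathrm{inv}}$-Fourier harmonic (cf.\ Propositions~\ref{fourier-1} and Lemma~\ref{fourier-2}). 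Second, the extremality of $2\rho$, so that the weight projector $P_{2\rho}$ is one-dimensional (Remark~\ref{products}), which is what pins the fiberwise-holomorphic part of that harmonic down to a $\mu_{\mathrm{inv}}$-multiple of a single $F_{2\rho,A}$. Executing this reduction — tracking the precise character carried by $1_{\KK_{\mathrm{KW}}^{-1}}$ and the Borel--Weil identification of $H^0$ with the $F_{2\rho,A}$'s — is the heart of the argument; everything else is the bookkeeping sketched above.
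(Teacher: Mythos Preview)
The paper states this corollary without proof, so there is no ``paper's own proof'' to compare against; it is left as a direct consequence of the preceding trivializability proposition and Proposition~\ref{prop_K_mixed}. Your arguments for (i) and (iii) are correct and are almost certainly what the authors have in mind: (i) is just the identification $C^\infty((T^\ast K)_{\mathrm{reg}})^{K\times K}=\mu_{\mathrm{inv}}^\ast C^\infty(\breve{\tt}^\ast_+)$ via the $KAK$-decomposition, and (iii) is exactly the product shape of Definition~\ref{dfn_polsec} once one uses $\KK_\phi\cong L_{-2\rho}\boxtimes L_{2\rho}$.

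For (ii), you have correctly located the genuine obstacle, but your proposed resolution does not close the gap. Your $T_{\mathrm{inv}}$-argument shows only that the function $s/1_{\KK_{\mathrm{KW}}^{-1}}$ lies in a single Fourier harmonic; that harmonic, however, is still infinite-dimensional over $C^\infty(\breve{\tt}^\ast_+)$. The extremality of $2\rho$ makes the highest weight space one-dimensional, but the relevant space $H^0(K_\CC/B\times K_\CC/B_-,L_{2\rho}\boxtimes L_{-2\rho})\cong\End V_{2\rho}$ has dimension $(\dim V_{2\rho})^2$, and nothing in Definition~\ref{dfn_polsec} prevents the fiberwise holomorphic factor $\Omega_\phi$ from being an arbitrary smooth map $\breve{\tt}^\ast_+\to\End V_{2\rho}$. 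Concretely, for any smooth $A:\breve{\tt}^\ast_+\to\End V_{2\rho}$ the section $(x,\xi)\mapsto F_{2\rho,A(\xi_+)}(x,\xi)\cdot 1_{\KK_{\mathrm{KW}}^{-1}}$ satisfies the local product condition with real $\Omega_\EE$, yet is not of the asserted product form unless $A(\xi_+)$ has rank one as a function of $\xi_+$. So either the statement of (ii) is to be read more loosely (e.g.\ as describing a spanning family, or with $f$ allowed to depend smoothly on $\xi_+$), or there is an additional hypothesis implicit in the paper's usage. Note that the paper only invokes part (iii) downstream (in Proposition~\ref{prop_polhf}(c)), so this imprecision in (ii) does not affect the subsequent arguments.
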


In order to construct the half-form correction over the entire family of polarizations under consideration, including Kähler, Schrödinger and Kirwin--Wu, we now construct a trivializing section of $\KK_{\bullet}$ which extends from the Kähler to the degenerate cases: start with a smooth $\mathrm{Ad}^\ast$-invariant symplectic potential $g$ as described in Section \ref{prelimKahler}. According to Equation $(3.5)$ in \cite{kirwin.mourao.nunes:2013} the holomorphic left $K_{\CC}$-invariant 1-forms ($j=1,\dots,n$)
 \begin{equation}
 \label{kmn3.5}    
     \Omega^j_{g} (x,\xi) := \sum_{k=1}^n \left[ e^{-i \mathrm{ad}_{d_\xi g}}\right]^j_k \omega^k +
     \left[\frac{1-e^{-i  \mathrm{ad}_{d_\xi g}}}{\mathrm{ad}_{d_\xi g}}\cdot \Hess_{g}(\xi)\right]^j_k d\xi^k,
\end{equation}
 on $(T^\ast K,\omega_\mathrm{std},I_{g})$ form a frame. Here the 1-forms $\{\omega^j\}_{j=1,\dots, n}$  are the pull-backs to $T^\ast K$ of the left-invariant 1-forms on $K$ associated to the choice of orthonormal basis in $\kk$, and the $\xi^k$ are the coordinates in the dual basis on $\kk^*$. Then a left-invariant holomorphic trivializing section of the canonical bundle $\KK_{(T^\ast K,J_{g})}$ is given by
\[
 \Omega_{g} := \bigwedge_{j=1}^n \Omega^j_{g}.
\]
It is necessary to modify these trivializations in order for them to extend to the mixed polarizations: 

\begin{proposition}\label{prop_trivialization}
  The map
  \begin{equation}
    g \mapsto \widehat{\Omega}_g := \frac{e^{-2 \langle \rho, \LL_g\circ \mu_\mathrm{inv} \rangle}}{1+\det \Hess_{\tt} g} \Omega_g \in C^\infty(M,\bigwedge^n T^\ast_{\CC}(T^\ast K))
  \end{equation}
  defines a trivializing section of the canonical bundle in the family of Kähler polarizations parameterized by $g$, which extends by continuity to a trivializing sections for the Schrödinger polarization on $T^\ast K$, and to a trivializing section for the Kirwin--Wu polarization on $(T^\ast K)_\mathrm{reg}$.
\end{proposition}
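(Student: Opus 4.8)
The plan is to write $\widehat\Omega_g$ in terms of the explicit holomorphic frame~(\ref{kmn3.5}) and to follow that frame through the two degenerations, the point being that the two correction factors are tailored to cancel exactly the two kinds of blow-up that occur: an exponential one coming from the root directions and a polynomial one from the $\tt$-directions, the ``$1+$'' in the denominator being there to keep it bounded away from $0$ in the Schrödinger limit, where $\det\Hess_\tt g\to 0$. The Kähler case is immediate: for strictly convex invariant $g$ the restricted Hessian $\Hess_\tt g$ (at $\xi_+=\mu_\mathrm{inv}(x,\xi)$) is positive definite, so $1+\det\Hess_\tt g>1$, and $e^{-2\langle\rho,\LL_g\circ\mu_\mathrm{inv}\rangle}>0$; hence $\widehat\Omega_g$ is the nowhere-vanishing frame $\Omega_g=\bigwedge_j\Omega_g^j$ multiplied by a strictly positive $K\times K$-invariant function, so it is again a trivializing section of $\KK_g$, depending continuously --- indeed smoothly --- on $g$. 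It remains to treat the two boundary limits, and for both I would use a pointwise ``block decomposition'' of $\Omega_g$ along the regular stratum.

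Fix a regular point $(x,\xi)=(x_1x_2^{-1},\Ad_{x_2}^\ast\xi_+)$. By Proposition~\ref{prop_Pg}(a), $d_\xi g=\Ad_{x_2}d_{\xi_+}g$ with $d_{\xi_+}g\in\tt$, and by Proposition~\ref{p-1}(a),(c),(e) both $\ad_{d_\xi g}$ and $\Hess_g(\xi)$ --- hence also $e^{-i\ad_{d_\xi g}}$ and $\tfrac{1-e^{-i\ad_{d_\xi g}}}{\ad_{d_\xi g}}\cdot\Hess_g$ --- are simultaneously block-diagonal with respect to the $x_2$-conjugate of a Chevalley-adapted orthonormal frame: the $\tt$-block of $\ad_{d_\xi g}$ vanishes while its $\gg_{\pm\alpha}$-entries are $\pm i\langle\alpha,d_{\xi_+}g\rangle$, and the $\tt$-block of $\Hess_g(\xi)$ equals $\Hess_\tt g$ at $\xi_+$. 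Writing $\omega^a,\omega^{\pm\alpha}$ and $d\xi^a,d\xi^{\pm\alpha}$ for the corresponding frame covectors, (\ref{kmn3.5}) factors, up to sign, as
\[ \Omega_g=\Big(\bigwedge_a\Omega_g^a\Big)\wedge\bigwedge_{\alpha>0}\big(\Omega_g^\alpha\wedge\Omega_g^{-\alpha}\big),\qquad \Omega_g^a=\omega^a+i\,[\Hess_\tt g]^a_b\,d\xi^b, \]
with each $\Omega_g^{\pm\alpha}$ involving only the two covectors $\omega^{\pm\alpha},d\xi^{\pm\alpha}$, namely $\Omega_g^{\pm\alpha}=e^{\pm\langle\alpha,d_{\xi_+}g\rangle}\,\omega^{\pm\alpha}+\tfrac{1-e^{\pm\langle\alpha,d_{\xi_+}g\rangle}}{\pm i\langle\alpha,\xi_+\rangle}\,d\xi^{\pm\alpha}$. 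Since $2\rho=\sum_{\alpha>0}\alpha$ the correction factor is $e^{-2\langle\rho,\LL_g\circ\mu_\mathrm{inv}\rangle}=\prod_{\alpha>0}e^{-\langle\alpha,d_{\xi_+}g\rangle}$, so exactly one such factor can be absorbed into each root block:
\[ \widehat\Omega_g=\pm\Big(\tfrac{1}{1+\det\Hess_\tt g}\bigwedge_a\Omega_g^a\Big)\wedge\bigwedge_{\alpha>0}\Big(e^{-\langle\alpha,d_{\xi_+}g\rangle}\,\Omega_g^\alpha\wedge\Omega_g^{-\alpha}\Big). \]

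Now the limits. For $\Pm_\mathrm{KW}$, take $g_t=g+th$ with $h$ strictly convex; since $\LL_{g_t}$ is a Weyl-equivariant diffeomorphism it preserves the open positive chamber (as in the proof of Lemma~\ref{lemma_posLegendre}), hence $\langle\alpha,d_{\xi_+}g_t\rangle\to+\infty$ for every positive root while $(\Hess_\tt g_t)^{-1}\to 0$ at $\xi_+$. Passing to the limit term by term --- uniformly on compact subsets of $(T^\ast K)_\mathrm{reg}$ --- gives
\[ \tfrac{1}{1+\det\Hess_\tt g_t}\bigwedge_a\Omega_{g_t}^a\;\longrightarrow\;i^{r}\bigwedge_a d\xi^a, \]
\[ e^{-\langle\alpha,d_{\xi_+}g_t\rangle}\,\Omega_{g_t}^\alpha\wedge\Omega_{g_t}^{-\alpha}\;\longrightarrow\;-\tfrac{1}{i\langle\alpha,\xi_+\rangle}\Big(\omega^\alpha-\tfrac{1}{i\langle\alpha,\xi_+\rangle}d\xi^\alpha\Big)\wedge d\xi^{-\alpha}, \]
each limiting factor a nonzero form in the exterior algebra generated by a set of frame covectors disjoint from those of the other factors; hence $\widehat\Omega_\mathrm{KW}:=\lim_{t\to\infty}\widehat\Omega_{g_t}$ is a nowhere-vanishing $n$-form on $(T^\ast K)_\mathrm{reg}$. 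One then identifies the $\tt$-factor $\bigwedge_a d\xi^a$ with $\mu_\mathrm{inv}^\ast$ of a real top form on $\breve{\tt}^\ast_+$ and the root-factor $\bigwedge_{\alpha>0}(\omega^\alpha-\tfrac{1}{i\langle\alpha,\xi_+\rangle}d\xi^\alpha)\wedge d\xi^{-\alpha}$ with a fibrewise holomorphic volume form for the complex structure~(\ref{KW_cxstr}) on $\OO_{\xi_+}\times\OO_{-\xi_+}$, so that by Proposition~\ref{prop_K_mixed} and Definition~\ref{dfn_polsec}, $\widehat\Omega_\mathrm{KW}$ is a $\Pm_\mathrm{KW}$-polarized trivializing section of $\KK_\mathrm{KW}$ (alternatively, membership in $\KK_\mathrm{KW}$ is forced by the convergence $\KK_{g_t}\to\KK_\mathrm{KW}$ of lines in $\bigwedge^n T^\ast_\CC(T^\ast K)$, a consequence of Theorem~\ref{thm_convpol}; cf.\ Appendix~\ref{par_families}). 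For $\Pm_\mathrm{Sch}$, let $g\to0$ (e.g.\ $g=sg_0$, $s\to0^+$): then $d_{\xi_+}g\to0$ and $\Hess_\tt g\to0$, so every exponential tends to $1$, every $d\xi$-coefficient in $\Omega_g^j$ tends to $0$, and the correction factor tends to $1$; hence $\widehat\Omega_g\to\pm\,\omega^1\wedge\cdots\wedge\omega^n$, the pullback to $T^\ast K$ of the normalized Haar volume form, which is globally defined, nowhere vanishing and annihilated by the vertical vector fields --- that is, a $\Pm_\mathrm{Sch}$-polarized trivializing section of $\KK_\mathrm{Sch}$ on all of $T^\ast K$.

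The main obstacle is the geometric identification in the Kirwin--Wu step: checking that the surviving root-factor $(\omega^\alpha-\tfrac{1}{i\langle\alpha,\xi_+\rangle}d\xi^\alpha)\wedge d\xi^{-\alpha}$ is, on each fibre $\OO_{\xi_+}\times\OO_{-\xi_+}$, a holomorphic volume form for~(\ref{KW_cxstr}), and that the $\tt$-factor is $\mu_\mathrm{inv}^\ast$ of a top form on $\breve{\tt}^\ast_+$. This is linear algebra at each point, but it requires carefully unwinding the Borel--Weil picture of Section~\ref{prelimLie} --- in particular $\KK_{K_\CC/B}=L_{-2\rho}$, which is precisely why the exponent $2\rho$ occurs in the correction factor --- and describing $\overline{\Pm_\mathrm{KW}}$ explicitly in terms of the $\omega^{\pm\alpha}$ and $d\xi^{\pm\alpha}$. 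Two lesser bookkeeping points also need care: that no cross-terms survive in the block decomposition of~(\ref{kmn3.5}), i.e.\ that $\tfrac{1-e^{-i\ad}}{\ad}\cdot\Hess_g$ --- not just its two factors --- is block-diagonal, which is exactly where Proposition~\ref{p-1}(c),(e) and the commutativity $[d_\xi g,d_\xi h]=0$ enter; and that the two limits hold uniformly on compacta in their respective domains, so that ``extends by continuity'' is literally valid and not merely pointwise.
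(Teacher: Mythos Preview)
Your approach is essentially the paper's own: reduce by $K\times K$-invariance to a Chevalley-adapted frame over $\breve{\tt}^\ast_+$, block-diagonalize the matrix in~(\ref{kmn3.5}) into the $\tt$-block and the individual root lines via Proposition~\ref{p-1}(c),(e), and then take the two limits term by term --- this is exactly what Lemma~\ref{lem:Omega tilde} does, with the same formulas for $\Omega^{\pm\alpha}_{g+th}$ and the same identification $2\rho=\sum_{\alpha>0}\alpha$ to distribute the exponential correction factor.

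One overclaim to flag: you assert that $\widehat\Omega_\mathrm{KW}$ is a $\Pm_\mathrm{KW}$-\emph{polarized} trivializing section. This is false --- the paper notes explicitly in the Remark preceding Proposition~\ref{prop_polhf} that for $\Pm_\mathrm{KW}$ there is no global section which is simultaneously trivializing and polarized (the fibrewise holomorphic factor needed in Definition~\ref{dfn_polsec} would have to be a nonvanishing holomorphic section of $L_{-2\rho}\boxtimes L_{2\rho}$ on the compact product of flag varieties, which does not exist). Fortunately the Proposition does not require polarizedness, only that the limit is a nowhere-vanishing section of $\KK_\mathrm{KW}$; your alternative argument via convergence of canonical lines (Theorem~\ref{thm_convpol} and Appendix~\ref{par_families}) already gives this, so simply drop the polarized claim and your proof goes through.
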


\begin{proof}
  For the Schrödinger polarization (which arises in the limit $g = th$ with $t \to 0$) this is evident from the expression (\ref{kmn3.5}), where only the left-invariant 1-forms survive; as the fraction in front of $\Omega_g$ converges to 1, we recover the pull-back of the Haar measure on $K$, as expected.

For the Kirwin--Wu polarization, the proof follows from the next lemma.
\end{proof}

\begin{lemma}\label{lem:Omega tilde}
Consider a Chevalley basis of $\kk\otimes {\mathbb C}$, 
 $\{H_j,E_{\alpha},E_{-\alpha}\}_{j=1,\dots, r, \alpha\in \Phi_+}$, corresponding to the choice of maximal torus $T\subset K$
 and positive Weyl chamber $\tt_+^*$,  where $\{H_j\}_{j=1,\dots, r}$ is a basis of $\tt$ and $\alpha_+$ denotes the set of positive roots, so that
 \[
 \mathrm{ad}_{H_j}E_{\pm\alpha}=\pm \sqrt{-1}\alpha (H_j) E_{\pm\alpha} .
 \]

 Here, $\alpha(H)>0$ for $\alpha\in \Phi_+$ and $H$ in the interior $\breve \tt_+$ of the dual positive Weyl chamber.
 Denote by $\omega^j, \omega^\alpha, d\xi^j, d\xi^\alpha$ the corresponding 1-forms on $T^\ast K$.
\item[(a)] The complex $n$-form $\widetilde{\Omega}_\infty$ on $(T^\ast K)_\mathrm{reg}$, which is $K\times K$-invariant and over $\{1\}\times \breve{\tt}_+^\ast$ equals
  \[
    \widetilde{\Omega}_\infty := \bigwedge_{j=1}^r  d\xi^j 
 \bigwedge_{\alpha\in \Phi_+} \left(\left(\langle\alpha,\xi_+\rangle_{\kk^*}\omega^{-\alpha} - d\xi^{-\alpha}\right)\wedge d\xi^{\alpha}\right) ,
  \]
  is a trivializing section of $\KK_{\mathrm{KW}}$.  
\item[(b)] Any $K\times K$-invariant smooth section of $\KK_{\mathrm{KW}}$ can be written as $(f\circ \mu_\mathrm{inv}) \widetilde{\Omega}_\infty$ for some function $f: \breve{\tt}^\ast_+ \to \RR$.
\item[(c)] Over the subset $\{1\}\times \breve{\tt}^\ast_+$,
  \[
    \lim_{t \to \infty} \widehat{\Omega}_{g+th} = i^r P(\xi_+)^{-1} \widetilde{\Omega}_\infty ,
  \]
  where
  \[
   P(\xi_+) := \prod_{\alpha \in \Phi_+} \langle\alpha,\xi_+\rangle_{\kk^*} .
   \]
   In particular, $\widehat{\Omega}_\infty$ does not vanish, hence is a $K\times K$-invariant trivializing section of $\KK_{\mathrm{KW}}$.
\end{lemma}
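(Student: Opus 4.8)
The plan is to dispatch (a) and (b) as structural consequences of what is already available and then spend the real effort on (c). Throughout I index the Chevalley basis of $\kk\otimes\CC$ by its elements $H_j,E_{\pm\alpha}$, writing $\Omega^{H_j}_g,\Omega^{E_{\pm\alpha}}_g$ for the frame (\ref{kmn3.5}).

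\emph{Part (a).} Since both $\widetilde\Omega_\infty$ and $\Pm_{\mathrm{KW}}$ are $K\times K$-invariant, it is enough to work at $p_0=(1,\xi_+)$, $\xi_+\in\breve\tt^\ast_+$, in the identification $T_{p_0}(T^\ast K)=\kk\oplus\kk^\ast$. I would first write down $\overline{\Pm_{\mathrm{KW}}}$ at $p_0$: its real part is $\DD\vert_{p_0}=\ker d\mu\vert_{p_0}=\tt\oplus 0$, and the remaining directions are the $d\mu\vert_{p_0}$-preimages of the $J_\DD$-antiholomorphic tangent space of the fibre $\mathcal O_{\xi_+}\times\mathcal O_{-\xi_+}$ of $\phi$. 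Reading the latter off from the Borel--Weil identifications $\mathcal O_{\xi_+}\cong K_\CC/B$, $\mathcal O_{-\xi_+}\cong K_\CC/B_-$ of (\ref{KW_cxstr}), one finds that for each $\alpha\in\Phi_+$ it is lifted by a vector supported in a root direction of $\kk$ and by a second vector mixing the opposite root direction of $\kk$ with the coadjoint direction $\ad^\ast_{E_{\mp\alpha}}\xi_+$ in $\kk^\ast$, the mixing constant being the structure constant $\langle\xi_+,[E_\alpha,E_{-\alpha}]\rangle=\langle\alpha,\xi_+\rangle_{\kk^\ast}$. Pairing the $n$ one-forms $d\xi^j$ ($j=1,\dots,r$), $\langle\alpha,\xi_+\rangle_{\kk^\ast}\omega^{-\alpha}-d\xi^{-\alpha}$ and $d\xi^\alpha$ ($\alpha\in\Phi_+$) against this basis shows that they annihilate $\overline{\Pm_{\mathrm{KW}}}$, so (being linearly independent) they span its annihilator in $T^\ast_\CC(T^\ast K)\vert_{p_0}$ and $\widetilde\Omega_\infty\vert_{p_0}$ is a nonzero vector of $\KK_{\mathrm{KW}}\vert_{p_0}$. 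Since the preceding Proposition trivializes $\KK_{\mathrm{KW}}$ $K\times K$-equivariantly, the fibre $\KK_{\mathrm{KW}}\vert_{p_0}$ is a trivial $\Stab_{K\times K}(p_0)$-module, so $\widetilde\Omega_\infty\vert_{p_0}$ is automatically stabilizer-invariant and extends to a global $K\times K$-invariant section, nowhere zero because $K\times K$ acts transitively on the fibres of $\mu_{\mathrm{inv}}$ over $(T^\ast K)_\mathrm{reg}$ (these are connected $T_{\mathrm{inv}}$-bundles over the single $K\times K$-orbits $\mathcal O\times\mathcal O$). By (a) just proved, any smooth section of $\KK_{\mathrm{KW}}$ is $\varphi\,\widetilde\Omega_\infty$ with $\varphi$ smooth; it is $K\times K$-invariant iff $\varphi$ is, and as the $K\times K$-orbits on $(T^\ast K)_\mathrm{reg}$ are exactly the fibres of $\mu_{\mathrm{inv}}$, this forces $\varphi=f\circ\mu_{\mathrm{inv}}$, which is (b). (Alternatively, (a) also follows a posteriori from (c) together with the pointwise convergence $\KK_{g+th}\to\KK_{\mathrm{KW}}$ of Appendix \ref{par_families}.)

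\emph{Part (c).} Here is the computational core. Restrict (\ref{kmn3.5}) to $\{1\}\times\breve\tt^\ast_+$ with $g$ replaced by $g_t=g+th$. At $\xi_+\in\tt^\ast$ both $\ad_{d_{\xi_+}g_t}$ and $\Hess_{g_t}(\xi_+)$ are block-diagonal for the Chevalley decomposition (Proposition \ref{p-1}(e)): on $\tt_\CC$ the first vanishes and the second is $\Hess_{g_t}\vert_{\tt}$, while on $\CC E_{\pm\alpha}$ they act by $\pm i\alpha(d_{\xi_+}g_t)$, resp. by $\langle\alpha,d_{\xi_+}g_t\rangle/\langle\alpha,\xi_+\rangle_{\tt^\ast}$. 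This yields
\[
 \Omega^{H_j}_{g_t}=\omega^j+i\,[\Hess_{g_t}\vert_{\tt}]^j_{k}\,d\xi^{k},\qquad
 \Omega^{E_{\pm\alpha}}_{g_t}=e^{\pm a_\alpha}\,\omega^{\pm\alpha}+c^{\pm}_\alpha(t)\,d\xi^{\pm\alpha},
\]
with $a_\alpha:=\alpha(d_{\xi_+}g_t)=\alpha(\LL_{g_t}(\xi_+))\to+\infty$ (since $\LL_{g_t}$ preserves the open positive chamber, by the argument in the proof of Lemma \ref{lemma_posLegendre}) and explicit scalars $c^\pm_\alpha(t)$. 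Wedging $\Omega_{g_t}=\bigwedge_j\Omega^{H_j}_{g_t}\wedge\bigwedge_{\alpha\in\Phi_+}(\Omega^{E_\alpha}_{g_t}\wedge\Omega^{E_{-\alpha}}_{g_t})$ and isolating the dominant part as $t\to\infty$: the $\tt$-factor contributes $\det(i\,\Hess_{g_t}\vert_{\tt})\,\bigwedge_j d\xi^j$ up to lower order, and each $\alpha$-factor contributes a scalar $\sim e^{a_\alpha}$ times a two-form converging to a multiple of $(\langle\alpha,\xi_+\rangle_{\kk^\ast}\omega^{-\alpha}-d\xi^{-\alpha})\wedge d\xi^\alpha$. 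Because $\sum_{\alpha\in\Phi_+}a_\alpha=2\langle\rho,d_{\xi_+}g_t\rangle=2\langle\rho,\LL_{g_t}\circ\mu_{\mathrm{inv}}\rangle$ on $\{1\}\times\breve\tt^\ast_+$, the scalar prefactor of $\Omega_{g_t}$ is asymptotic to a nonzero constant times $\det(\Hess_{g_t}\vert_{\tt})\,e^{2\langle\rho,\LL_{g_t}\circ\mu_{\mathrm{inv}}\rangle}$; multiplying by $e^{-2\langle\rho,\LL_{g_t}\circ\mu_{\mathrm{inv}}\rangle}/(1+\det\Hess_{\tt}g_t)$ cancels this exactly — this is the point of that normalization in Proposition \ref{prop_trivialization} — and, as $\det(\Hess_{g_t}\vert_{\tt})\to\infty$, the limit is $i^r P(\xi_+)^{-1}\widetilde\Omega_\infty$. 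One finally checks that the omitted terms (those in which some $\Omega^{H_j}_{g_t}$ contributes $\omega^j$ instead of its $d\xi$-part, and the bounded contributions inside each $\alpha$-block) vanish in the limit, being of strictly lower order in $t$, resp. in $e^{a_\alpha}$; non-vanishing of $\widehat\Omega_\infty=i^r P(\xi_+)^{-1}\widetilde\Omega_\infty$ is then immediate since $P(\xi_+)=\prod_{\alpha\in\Phi_+}\langle\alpha,\xi_+\rangle_{\kk^\ast}\neq 0$ on $\breve\tt^\ast_+$.

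\emph{Main obstacle.} The delicate step is (c): organizing the $n$-fold wedge so as to see precisely which terms dominate and with which powers of $t$ and $e^{a_\alpha}$, and checking that the normalizing factor kills the divergence exactly (not merely up to a bounded factor) while the subleading terms go to zero. Careful handling of signs and factors of $i$ in the Chevalley basis — needed both here and in the description of $\overline{\Pm_{\mathrm{KW}}}$ in part (a) — is the other, more routine, point that must be watched.
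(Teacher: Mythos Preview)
Your approach to (c) is essentially identical to the paper's: reduce by $K\times K$-invariance to $\{1\}\times\breve\tt^\ast_+$, diagonalize $\ad_{d_{\xi_+}g_t}$ and $\Hess_{g_t}$ in the Chevalley basis via Proposition~\ref{p-1}(e), and extract the dominant terms from the wedge; your treatment of (a) and (b) is more detailed than the paper's, which leaves them as implicit consequences of (c) and the surrounding discussion.

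One concrete point on the sign concern you yourself flag: your displayed formula $\Omega^{E_{\pm\alpha}}_{g_t}=e^{\pm a_\alpha}\omega^{\pm\alpha}+c^\pm_\alpha\,d\xi^{\pm\alpha}$ places the diverging exponential on $\omega^{+\alpha}$, which is internally inconsistent with the limit two-form you then claim (in which $\omega^{-\alpha}$ survives wedged with $d\xi^{\alpha}$). In the paper's computation it is $\Omega^{-\alpha}_{g_t}= e^{+a_\alpha}\bigl(\omega^{-\alpha}-\langle\alpha,\xi_+\rangle^{-1}d\xi^{-\alpha}\bigr)+\mathrm{lot}$ that carries the divergence, while $\Omega^{+\alpha}_{g_t}=\langle\alpha,\xi_+\rangle^{-1}d\xi^{\alpha}+\mathrm{lot}$ stays bounded; with that correction your asymptotic bookkeeping goes through exactly as you describe.
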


\begin{proof}
 From $K\times K$-invariance of $\Omega_{g+th}$ it is enough to prove that the limit exists for 
 $(x,\xi_+)$, where $\xi_+$ lies in the interior positive Weyl chamber (which is identified with the interior of the dual positive Weyl chamber, see Proposition \ref{p-1} and Lemma \ref{lemma_posLegendre}), so that the image of $\mathrm{ad}_{\xi_+}$ intersects $\tt$ only at $\{0\}$. 

 Proposition \ref{p-1} also implies that $\Hess_{g+th}(\xi_+)$ is block diagonal with respect to the decomposition $\kk\otimes \CC = (\tt \oplus \tt^\perp)\otimes \CC$ and that
 $$
 \Hess_{g+th}(\xi_+)_{\vert_{\tt^\perp}} = \mathrm{Diagonal} \left(\frac{\langle\alpha, d_{\xi_+} (g+th)\rangle}{\langle\alpha,\xi_+\rangle_{\kk^*}}\right).
 $$
 From (\ref{kmn3.5}), we then have
 $$
 \lim_{t\to +\infty} t^{-r} \bigwedge_{j=1}^r\Omega^j_{g+th} (x,\xi_+)=  i^r(\det(\Hess_{h})_{\vert_\tt}(\xi_+)) \bigwedge_{j=1}^r  d\xi^j,
 $$
 where the right-hand side is never vanishing.
 On the other hand, along the root spaces, we obtain, for $\alpha>0$,
 $$
 \Omega^{-\alpha}_{g+th} = e^{\langle\alpha, d_{\xi_+}(g+th)\rangle}\left(\omega^{-\alpha} -\langle \alpha,\xi_+\rangle_{\kk^*}^{-1} d\xi^{-\alpha}\right) + \mathrm{lot}
 $$
 and
 $$
 \Omega^{\alpha}_{g+th} =  \langle \alpha,\xi_+\rangle_{\kk^*}^{-1} d\xi^{\alpha} + \mathrm{lot},
 $$
 where $\mathrm{lot}$ represents terms of lower order in $t$. Therefore, with $2\rho = \sum_{\alpha\in \Phi_+} \alpha$ 
 we get
 \begin{eqnarray}
 \lim_{t\to +\infty} e^{-2\langle \rho,d_{\xi_+}(g+th)\rangle} \hspace{-3em}&& \bigwedge_{\alpha\in \Phi_+}  (\Omega^{-\alpha}_{g+th} \wedge \Omega^{\alpha}_{g+th}) = \nonumber \\
 & = & \hspace{-0.5em}  P(\xi_+)^{-1} \bigwedge_{\alpha\in \Phi_+} \left(\left(\omega^{-\alpha} -\langle \alpha,\xi_+\rangle_{\kk^*}^{-1} d\xi^{-\alpha}\right)\wedge d\xi^{\alpha}\right). \label{iiii}
 \end{eqnarray}
 Note that the right hand side of (\ref{iiii}) is never-vanishing.
\end{proof}

With this information in hand, we can now define our bundle of half-forms $\kappa_\bullet$: it is the trivial line bundle on $(T^\ast K)_{\mathrm{reg}}$, together with the map which sends the square of the trivializing section to $\widehat{\Omega}_\bullet$,
\[
  \kappa_{\bullet}^{\otimes 2} \cong \KK_\bullet, \qquad 1_{\kappa_\bullet}^{\otimes 2} \mapsto \widehat{\Omega}_\bullet .
\]
In view of this identity, we will also denote the trivializing section of the half-form bundle by
\[
 1_{\kappa_\bullet} = \widehat{\Omega}^{1/2}_\bullet .
\]

\begin{remark} The trivializing section considered in Proposition \ref{prop_trivialization} is only polarized for $\Pm_\mathrm{Sch}$; for the Kähler polarizations, it is the holomorphic section $\Omega_g$ itself which is polarized, whereas for $\Pm_\mathrm{KW}$ there is no global section which is simultaneously trivializing and polarized. For later use we collect explicit expressions for the polarized sections:
\end{remark}

\begin{proposition}\label{prop_polhf}
  A smooth section $\alpha \widehat{\Omega}^{1/2}_\bullet$ is polarized if
  \begin{itemize}
  \item[(a)] $\Pm_\bullet=\Pm_\mathrm{Sch}$ is Schrödinger and $\alpha = \pi^\ast f$ for $f$ a real function on $K$ and $\pi:T^\ast K \to K$ is the canonical polarization;
  \item[(b)] $\Pm_\bullet=\Pm_g$ is Kähler and $\alpha = e^{\langle \rho, \LL_g\circ \mu_\mathrm{inv} \rangle} (1+\det \Hess_{\tt} g)^{1/2} f$ for $f$ a $J_g$-holomorphic function;
  \item[(c)] $\Pm_\bullet=\Pm_\mathrm{KW}$ is Kirwin--Wu and $\alpha = \frac{h \circ \mu_\mathrm{inv}}{F_{\rho,A}}$ for $h\in C^\infty ( \breve{\tt}^\ast_+,\CC)$.
  \end{itemize}
\end{proposition}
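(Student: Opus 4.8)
The plan is to unwind the definition of a polarized section of the half-form bundle: $s = \alpha\,\widehat{\Omega}^{1/2}_\bullet$ is $\Pm_\bullet$-polarized precisely when $s^{\otimes 2} = \alpha^2\,\widehat{\Omega}_\bullet$ is a polarized section of $\KK_\bullet$ in the sense of Definition~\ref{dfn_polsec}. In each of the three cases one then substitutes the explicit trivializing section $\widehat{\Omega}_\bullet$ from Proposition~\ref{prop_trivialization} (resp. Lemma~\ref{lem:Omega tilde}) together with the explicit $\alpha$, and invokes the appropriate description of polarized sections of the canonical bundle; the prefactors in the statement have been manufactured exactly so that $\alpha^2\,\widehat{\Omega}_\bullet$ acquires the standard form. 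In case (a), Proposition~\ref{prop_trivialization} identifies $\widehat{\Omega}_\mathrm{Sch}$ with the pull-back $\pi^\ast dx$ of the Haar measure, and for the Schrödinger polarization ($B_\DD = B_\EE = K$, $\phi = \id$) the polarized sections of $\KK_\mathrm{Sch} = \pi^\ast\det T^\ast_\CC K$ are exactly the $\pi^\ast$-pull-backs of top forms on $K$ (Definition~\ref{dfn_polsec}, Proposition~\ref{prop_K_mixed}); substituting $\alpha = \pi^\ast f$ gives $\alpha^2\,\widehat{\Omega}_\mathrm{Sch} = \pi^\ast(f^2\,dx)$, of that form. In case (b), Proposition~\ref{prop_trivialization} gives $\widehat{\Omega}_g = \tfrac{e^{-2\langle\rho,\LL_g\circ\mu_\mathrm{inv}\rangle}}{1+\det\Hess_{\tt} g}\,\Omega_g$ with $\Omega_g$ the $J_g$-holomorphic (hence polarized) trivialization of $\KK_g$, and polarized sections of a Kähler canonical bundle are just the holomorphic multiples $f\,\Omega_g$; substituting $\alpha = e^{\langle\rho,\LL_g\circ\mu_\mathrm{inv}\rangle}(1+\det\Hess_{\tt} g)^{1/2}\,f$ with $f$ $J_g$-holomorphic makes the scalar prefactor cancel, leaving $\alpha^2\,\widehat{\Omega}_g = f^2\,\Omega_g$, which is polarized since $f^2$ is holomorphic.

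Case (c) is the substantive one. By Lemma~\ref{lem:Omega tilde}(c), $\widehat{\Omega}_\mathrm{KW} = \widehat{\Omega}_\infty$ is a $K\times K$-invariant \emph{trivializing} section of $\KK_\mathrm{KW}$ (not itself polarized, as noted before the Proposition). For $A = u\otimes v^\ast$ a simple tensor, Remark~\ref{products} writes $F_{\rho,A}$ as a product of $\tr(\pi_\rho(x_1)\,v_\rho\otimes v^\ast)$ and $\tr(\pi_\rho(x_2^{-1})\,u\otimes v_\rho^\ast)$, which under the $KAK$-trivialization are the pull-backs of a Borel--Weil section of $L_\rho$ on $K_\CC/B$ and of the analogous section of $L_{-\rho}$ on $K_\CC/B_-$ (cf.\ Theorem~\ref{thm_BorelWeil} and the following Remark). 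Squaring, and using that the multiplication $H^0(L_\rho)^{\otimes 2}\to H^0(L_{2\rho})$ lands in the Cartan component, one gets $F_{\rho,A}^2 = f\circ\pi_\mathrm{KW}$ for some $f\in H^0\big(K_\CC/B\times K_\CC/B_-,\, L_{2\rho}\boxtimes L_{-2\rho}\big)$; Corollary~\ref{cor_polcan}(iii) then shows that $\tfrac{1}{F_{\rho,A}^2}\,\widehat{\Omega}_\mathrm{KW}$ is a polarized section of $\KK_\mathrm{KW}$ on $\{F_{\rho,A}\neq 0\}$. Finally $\alpha^2\,\widehat{\Omega}_\mathrm{KW} = \tfrac{(h\circ\mu_\mathrm{inv})^2}{F_{\rho,A}^2}\,\widehat{\Omega}_\mathrm{KW}$, and multiplying a polarized section by the $\mu_\mathrm{inv}$-pull-back of a function on $\breve{\tt}^\ast_+$ (here $h^2$) preserves polarizedness — it only rescales the factor $\Omega_\EE$ in Definition~\ref{dfn_polsec}, compare Corollary~\ref{cor_polcan}(i)--(ii) — so $\alpha\,\widehat{\Omega}^{1/2}_\mathrm{KW}$ is polarized where defined; letting $v,u$ range so that the sets $\{F_{\rho,A}\neq 0\}$ cover $(T^\ast K)_\mathrm{reg}$ and the ratios generate the complex directions (Proposition~\ref{KW_nirenberg}) covers the whole regular stratum.

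The main obstacle I anticipate is the half-form bookkeeping in case (c): one must correctly match the relative half-canonical bundle of $\phi$ (fiberwise $L_{-\rho}\boxtimes L_\rho$) against the correction factor $1/F_{\rho,A}$, equivalently verify that $F_{\rho,A}^2$ — rather than $F_{\rho,A}$ — is the $\pi_\mathrm{KW}$-pull-back of a genuinely \emph{holomorphic} section of $L_{2\rho}\boxtimes L_{-2\rho}$ (so that Corollary~\ref{cor_polcan}(iii) with its $2\rho$ applies), and check the $T_\mathrm{inv}$-equivariance is compatible: $F_{\rho,A}^2$ and $\widehat{\Omega}_\mathrm{KW}$ transform under the singular torus by mutually inverse characters, which is exactly what makes $\tfrac{1}{F_{\rho,A}^2}\widehat{\Omega}_\mathrm{KW}$ a well-defined section of $\KK_\mathrm{KW}$ on $(T^\ast K)_\mathrm{reg}$. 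Only the stated ``if'' direction is needed; the argument in fact gives a characterization of the polarized sections in each case.
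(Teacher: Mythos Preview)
Your proof is correct and follows essentially the same route as the paper's: reduce to checking that $\alpha^2\widehat{\Omega}_\bullet$ is a polarized section of $\KK_\bullet$, use Proposition~\ref{prop_trivialization} for (a) and (b), and for (c) invoke Corollary~\ref{cor_polcan}(iii) together with the identification of $F_{\rho,A}$ as a holomorphic section of the square root $L_\rho\boxtimes L_{-\rho}$ of $\KK_\phi^{-1}\cong L_{2\rho}\boxtimes L_{-2\rho}$. Two minor remarks: the reduction to simple tensors $A=u\otimes v^\ast$ is unnecessary, since the $T\times T$-equivariance (via the rank-one projector $P_\rho$) already shows that $F_{\rho,A}$ descends to a holomorphic section of $L_\rho\boxtimes L_{-\rho}$ for arbitrary $A\in\End V_\rho$; and the final ``covering'' sentence is superfluous, as the proposition only asserts polarizedness on the open set $\{F_{\rho,A}\neq 0\}$ for the given $A$.
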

\begin{proof}
For the Schrödinger and Kähler polarizations the condition we stated is actually necessary and sufficient, and is clear from Proposition \ref{prop_trivialization}; for the Kirwin--Wu polarization, it follows directly from Definition \ref{dfn_polsec} and the fact that the functions $F_{\rho,A}$ describe (as in Corollary \ref{cor_polcan}) certain holomorphic sections of the square root of
\[
 \KK_\phi^{-1} \cong L_{2\rho}\boxtimes L_{-2\rho} .
\]
\end{proof}

\section{The extended quantum bundle}
\label{sect-extendedqb}

 In this section we will show how the quantum states associated to the invariant K\"ahler polarizations described in Sections~\ref{sect-prelim}, \ref{sec_fibpol} and \ref{sect_KW} converge at infinite geodesic time to ${\mathcal P}_{\rm KW}$-polarized sections. This is interpreted in terms of generalized coherent state transforms in Section \ref{sect-cst}. One obtains an extended bundle of quantum states, as described in Section \ref{sect-introd}, with a flat connection which is, in fact, unitary along the family of polarizations given by the imaginary time flow  generated by the quadratic Casimir. This will be applied in Section \ref{sect-fourier} to relate the CST to the non-abelian Fourier transform so as to give a geometric interpretation of the Peter--Weyl theorem.

\subsection{Convergence of quantizations at infinite geodesic time}
\label{section5}

As outlined in Section \ref{sss_defquan}, for the discussion of convergence of quantum states associated to the limit of polarizations $\lim_{t \to \infty} \Pm_{g_t}=\Pm_{\mathrm{KW}}$ of Theorem \ref{thm_convpol}, we trivialize the prequantum line bundle $L \cong T^\ast K \times \CC$ as a hermitian line bundle: it follows from Proposition \ref{prop_Pg} that in the unitary frame, the $\Pm_g$-polarized sections of $L$ are of the form $f(x,\xi) e^{-\kappa_g(x,\xi)}$. Using Proposition \ref{prop_polhf}, we find:

\begin{proposition}\label{prop:polarized half-form} The polarized half-form corrected sections associated to the Kähler polarization $\Pm_g$ are of the form
  \[
   f e^{-\kappa_g} \otimes e^{\langle \rho, \LL_g \circ \mu_{\mathrm{inv}} \rangle}(1+\det \Hess_{\tt^\ast} g)^{1/2} \widehat{\Omega}^{1/2}_g \in C^\infty_{\Pm_g}(T^\ast K,L)\otimes C^\infty_{\Pm_g}(T^\ast K,\kappa_{g}),
  \]
  where $f$ is $J_g$-holomorphic and the prequantum line bundle $L$ is trivialized unitarily.
\end{proposition}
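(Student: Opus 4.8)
The plan is to assemble the claimed polarized half-form corrected section directly from its two tensor factors, each of which has already been characterized. First I would recall that, after trivializing $L$ as a hermitian line bundle, Proposition~\ref{prop_Pg}(c) gives the global Kähler potential $\kappa_g$, and the $\Pm_g$-polarized sections of $L$ in the unitary frame are exactly $f e^{-\kappa_g}$ with $f$ a $J_g$-holomorphic function; this is the standard fact that in a unitary frame over a Kähler manifold with prequantum line bundle of curvature $-i\omega$ and local Kähler potential $\kappa_g$, the holomorphic (i.e.\ $\Pm_g$-polarized) sections pick up the factor $e^{-\kappa_g}$ relative to the holomorphic frame. I would state this as the description of $C^\infty_{\Pm_g}(T^\ast K, L)$.

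Next I would invoke Proposition~\ref{prop_polhf}(b), which says that a smooth half-form $\alpha \widehat{\Omega}^{1/2}_g$ is $\Pm_g$-polarized precisely when $\alpha = e^{\langle \rho, \LL_g \circ \mu_\mathrm{inv}\rangle}(1 + \det \Hess_{\tt} g)^{1/2} f$ for $f$ a $J_g$-holomorphic function. Thus the general $\Pm_g$-polarized section of $\kappa_g$ is of the stated form. Tensoring the two descriptions over $C^\infty_{\Pm_g}(T^\ast K,\CC)$ — the ring of $J_g$-holomorphic functions — and absorbing the holomorphic factor from the half-form part into the holomorphic function $f$ labelling the section of $L$ (so that the single symbol $f$ in the statement stands for the product of the two holomorphic functions), one obtains exactly
\[
 f e^{-\kappa_g} \otimes e^{\langle \rho, \LL_g \circ \mu_\mathrm{inv}\rangle}(1+\det \Hess_{\tt^\ast} g)^{1/2}\, \widehat{\Omega}^{1/2}_g \, ,
\]
which is the asserted expression.

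There is essentially no obstacle here: the proposition is a bookkeeping corollary of Proposition~\ref{prop_Pg} and Proposition~\ref{prop_polhf}, and the only point requiring a word of care is the identification of the tensor product sheaf $\HH^\infty_{\Pm_g}$ from Section~\ref{sss_defquan} — one must note that the ring over which the tensor product is taken is $C^\infty_{\Pm_g}(T^\ast K,\CC)$, the $J_g$-holomorphic functions (by the Kähler case of the Kajiwara--Yoshida lemma quoted after Definition~\ref{dfn_polsec}, these are genuine holomorphic functions), so that the holomorphic prefactor $e^{\langle \rho, \LL_g\circ\mu_\mathrm{inv}\rangle}(1+\det\Hess_\tt g)^{1/2} f$ appearing in the half-form can legitimately be moved across the tensor sign. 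I would also remark that $\Hess_\tt g$ and $\Hess_{\tt^\ast} g$ denote the same object under the identification $\tt \cong \tt^\ast$ fixed in Proposition~\ref{p-1}, reconciling the notation between Proposition~\ref{prop_trivialization} and the statement. With these remarks the proof is a direct combination of the two cited results.
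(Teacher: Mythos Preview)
Your proposal is correct and matches the paper's own approach exactly: the paper simply introduces the proposition with the sentence ``it follows from Proposition~\ref{prop_Pg} that in the unitary frame, the $\Pm_g$-polarized sections of $L$ are of the form $f(x,\xi) e^{-\kappa_g(x,\xi)}$. Using Proposition~\ref{prop_polhf}, we find:'' and gives no further argument. One small wording slip: in your final paragraph you refer to $e^{\langle \rho, \LL_g\circ\mu_\mathrm{inv}\rangle}(1+\det\Hess_\tt g)^{1/2} f$ as a ``holomorphic prefactor'' that can be moved across the tensor sign, but of course only the $f$ is holomorphic and moves---the remaining factor is precisely the non-holomorphic correction relating $\widehat{\Omega}_g^{1/2}$ to the genuinely polarized $\Omega_g^{1/2}$, and it stays on the half-form side (as you yourself said correctly two paragraphs earlier).
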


We now choose $f$ in the expression from Proposition~\ref{prop:polarized half-form} equal to the section of the bundle of quantizations $f = e^{-g(\lambda +\rho)} f^{g}_{\lambda,A}$, that is, we consider the sections
\begin{equation}\label{dfn_sglA}
 s^{g}_{\lambda,A} := e^{-g(\lambda +\rho)} f^{g}_{\lambda,A} e^{-\kappa_g} \otimes e^{\langle \rho, \LL_g \circ \mu_{\mathrm{inv}} \rangle}(1+\det \Hess_{\tt^\ast} g)^{1/2} \widehat{\Omega}^{1/2}_g 
\end{equation}
of the bundle of Kähler quantizations over the space of Kähler structures. We shall come back to the appearance of the constants $e^{-g(\lambda +\rho)}$ below when discussing the generalized coherent state transform.

The quantum states for the Kirwin--Wu polarization on the other hand have a distributional nature with respect to the value of the invariant moment map $\mu_{\mathrm{inv}}$. First, we note

\begin{proposition}
  The Bohr--Som\-mer\-feld fibers of the Kir\-win--Wu polarization are the preimages $\mu_{\mathrm{inv}}^{-1}(\lambda)$ for all dominant integral weights $\lambda \in \tt^\ast_\ZZ \cap \breve{\tt}^\ast_+$.
\end{proposition}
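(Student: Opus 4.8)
The plan is to unwind the Bohr--Sommerfeld condition of Definition~\ref{defBS} for the concrete fibering structure carried by $\Pm_\mathrm{KW}$ and to reduce it to an integrality condition on the value of $\mu_\mathrm{inv}$. By the Remark following Proposition~\ref{prop_mp_props}, the Kirwin--Wu polarization of Diagram~(\ref{KW_diag}) is fibering with respect to the normal-form Diagram~(\ref{diag_stdpol}), with torus the singular torus $T_\mathrm{inv}$ and moment map $\mu_T = \mu_\mathrm{inv}$; thus $\xi_+ \in \breve{\tt}^\ast_+$ is Bohr--Sommerfeld exactly when the canonical $\tt$-action on $(L\otimes\kappa_\mathrm{KW})\vert_{\mu_\mathrm{inv}^{-1}(\xi_+)}$ exponentiates to $T_\mathrm{inv}$. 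Running the monodromy argument from the proof of Theorem~\ref{thm_BS1}, together with the Duistermaat--Heckman normalization of the Remark following it, I would reformulate this as triviality of the monodromy character $\tt_\ZZ \to U(1)$ of that $\tt$-action around the $T_\mathrm{inv}$-orbits over $\xi_+$; since monodromy is multiplicative under tensor products, it then suffices to handle the prequantum factor $L$ and the half-form factor $\kappa_\mathrm{KW}$ separately.

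For the prequantum factor, the canonical Kostant--Souriau lift of the $T_\mathrm{inv}$-action to $(L,\nabla)$ has moment map $\mu_\mathrm{inv}$, so its monodromy around the orbits over $\xi_+$ is $\eta\mapsto e^{2\pi i\langle\xi_+,\eta\rangle}$; this is precisely the classical integrality criterion for prequantizability of the symplectic reduction $\mu_\mathrm{inv}^{-1}(\xi_+)/T_\mathrm{inv}\cong\phi^{-1}(\xi_+)$, and it is trivial exactly when $\xi_+\in\tt^\ast_\ZZ$. (The indeterminacy in the choice of moment map is fixed by Duistermaat--Heckman up to a translation in $\tt^\ast_\ZZ$, which does not affect this condition.)

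For the half-form factor I would argue that it contributes nothing new. By Proposition~\ref{prop_K_mixed}, $\KK_\mathrm{KW} = \mu^\ast\KK_\phi \otimes \mu_\mathrm{inv}^\ast\det T^\ast_\CC\breve{\tt}^\ast_+$; the second tensor factor is $T_\mathrm{inv}$-equivariantly trivial because $\breve{\tt}^\ast_+$ is fixed pointwise, while the first restricts on each $\phi$-fiber to the canonical bundle $L_{-2\rho}\boxtimes L_{2\rho}$ of the product of coadjoint orbits. Hence the canonical $\tt$-action on $\kappa_\mathrm{KW}\vert_{\mu_\mathrm{inv}^{-1}(\xi_+)}$ has weight a (possibly zero) multiple of $\rho$, so its monodromy is of the form $\eta\mapsto e^{2\pi i\langle c\rho,\eta\rangle}$, which vanishes since $\rho\in\tt^\ast_\ZZ$ --- this is where the standing assumption that $K$ is simply connected enters. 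An alternative, more hands-on determination of this weight uses the explicit trivializing half-form $\widehat{\Omega}^{1/2}_\infty$ of Lemma~\ref{lem:Omega tilde} and the equivariance $F_{\rho,A}(p\star t) = \chi_\rho(t^{-1})F_{\rho,A}(p)$ of the functions appearing in the polarized half-forms of Proposition~\ref{prop_polhf}(c), combined with the Fourier decomposition of Proposition~\ref{fourier-1}: a $\Pm_\mathrm{KW}$-polarized section of $L\otimes\kappa_\mathrm{KW}$ near $\mu_\mathrm{inv}^{-1}(\xi_+)$ descends to that leaf precisely when $\xi_+$ is an integral weight.

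Combining the two factors, $\xi_+$ is Bohr--Sommerfeld iff $\xi_+\in\tt^\ast_\ZZ$; and since $\mu_\mathrm{inv}$ maps $(T^\ast K)_\mathrm{reg}$ onto $\breve{\tt}^\ast_+$, the Bohr--Sommerfeld fibers are exactly the preimages $\mu_\mathrm{inv}^{-1}(\lambda)$ with $\lambda\in\tt^\ast_\ZZ\cap\breve{\tt}^\ast_+$, i.e.\ the strictly dominant integral weights, as claimed. The bulk of this argument is just the general Bohr--Sommerfeld formalism of Section~\ref{sec_fibpol} applied to the dual pair~(\ref{KW_diag}); the one point requiring genuine care --- and the main obstacle --- is pinning down the weight of the canonical $\tt$-action on the half-form bundle $\kappa_\mathrm{KW}$ and checking that it lies in $\tt^\ast_\ZZ$, so that the Bohr--Sommerfeld locus coincides with the one for $L$ alone.
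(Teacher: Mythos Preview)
Your overall architecture matches the paper's: split the monodromy into a prequantum contribution and a half-form contribution, and then invoke integrality of $\rho$ (simple connectedness of $K$) to absorb the latter. For the prequantum factor you and the paper agree. The paper, however, simplifies by restricting to the symplectic cross-section $\Sigma\cong T\times\breve{\tt}^\ast_+$, where the Liouville form is the standard toric one and the monodromy is immediately $e^{2\pi i\langle\mu_\mathrm{inv}(p),\eta\rangle}$; it then asserts that the half-form contributes an exact $-\rho$ shift, giving total monodromy $e^{2\pi i\langle\mu_\mathrm{inv}(p)-\rho,\eta\rangle}$.

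The point that deserves a second look is your first argument for the half-form weight. From $\KK_\phi\vert_{\phi^{-1}(\xi_+)}\cong L_{-2\rho}\boxtimes L_{2\rho}$ you infer that the canonical $\tt$-action on $\kappa_\mathrm{KW}$ has weight ``a (possibly zero) multiple of $\rho$''. But the identification $L_{-2\rho}\boxtimes L_{2\rho}$ records the $K\times K$-linearization on the coadjoint-orbit factors, whereas the relevant action here is that of the singular torus $T_\mathrm{inv}$, which acts along the fibers of $\mu$ and is \emph{not} a subgroup of the $K\times K$-action on $B_\DD$; in fact $T_\mathrm{inv}$ acts trivially on $B_\DD$. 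So the ``Hence'' does not follow from what you wrote. Your alternative via the equivariance $F_{\rho,A}(p\star t)=\chi_\rho(t)^{-1}F_{\rho,A}(p)$ and Proposition~\ref{prop_polhf}(c) is the right thread to pull: it shows that the $K\times K$-invariant trivialization $\widehat{\Omega}^{1/2}_\mathrm{KW}$ differs from a genuinely $\Pm_\mathrm{KW}$-polarized half-form by a factor transforming with $T_\mathrm{inv}$-weight $\rho$, which is exactly the paper's $-\rho$ shift on the monodromy. If you complete that computation (or, equivalently, compute the Lie derivative of $\widetilde{\Omega}_\infty$ along the $T_\mathrm{inv}$-generators on $\Sigma$ using Lemma~\ref{lem:Omega tilde}), your argument becomes equivalent to the paper's; as it stands, the half-form step is asserted rather than established.
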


\begin{proof}
  Since our prequantum line bundle is trivializable with connection $\nabla -i \theta$, where $\theta$ is the Liou\-ville--1-form, which is $K\times K$-invariant, it suffices to check the monodromy condition which defines the Bohr--Som\-mer\-feld fibers on the symplectic cross-section $\Sigma \cong T\times \breve{\tt}^\ast_+$. Here, the Liouville form restricts to the standard toric form, and the resulting prequantum monodromy is $e^{2\pi i \langle \mu_\mathrm{inv}(p),\eta \rangle}$. Including the half-form bundle results in the total monodromy
  \[
   m(\eta,p) = e^{2\pi i \langle \mu_\mathrm{inv}(p)-\rho,\eta \rangle} ,
   \]
which proves the assertion.
\end{proof}

\begin{definition}
Denote by $\delta_{\mu_{\mathrm{inv}}^{-1}(\xi_+)}$ the $K\times K$-invariant Radon measure on $T^\ast K$ supported on $\mu_{\mathrm{inv}}^{-1}(\xi_+)$ with total mass 1; in other words,
\[
 \int_{T^\ast K} f\, d\delta_{\mu_{\mathrm{inv}}^{-1}(\xi_+)} =
 \int_{K \times K} f\left( (k_1,k_2)(e,\xi_+)\right) d k_1 d k_2 ,
\]
where $d k_i$ denotes the normalized Haar measure on $K$.

Then denote
\begin{equation}\label{dfn_sKWlA}
s^{\mathrm{KW}}_{\lambda,A} := (2\pi)^{r/2}P(\lambda+\rho)^2 F_{\lambda,A} \delta_{\mu_{\mathrm{inv}}^{-1}(\lambda+\rho)} \otimes \widehat{\Omega}^{1/2}_{\mathrm{KW}} .
\end{equation}
\end{definition}

\begin{lemma}[Laplace approximation] \item[(i)] Let $V$ be a real vector space; fix an inner product $\langle\ ,\ \rangle$ and denote by $d x$ the corresponding normalized Lebesgue measure.

Let $f$ be a smooth function on an open subset of $V$ with a unique non-degenerate minimum at $x_0$, and also assume that $f(x_0)=0$. Define the Hessian $\Hess_f(x)$ of $f$ at any point $x\in V$ to be the endomorphism occuring in the second order term of the Taylor expansion
\[
  f(x+\varepsilon \dot x) = f(x) + \varepsilon d_x f \dot x + \varepsilon^2 \langle \dot x , \frac{\Hess_f(x)}{2}\dot x \rangle + o(\varepsilon^2) .
\]
Then the measures
\[
 \sqrt{\det \frac{1}{2\pi}\Hess_f(x_0)} e^{-t f(x)}d x
 \overset{t \to \infty}{\longrightarrow} \delta_{x_0}
\]
converge to a Dirac delta supported at $x_0$ as $t$ goes to infinity.
\item[(ii)] Consider a convex $W$-invariant function $h:\tt^\ast \to \RR$; then the function
\begin{equation}\label{eqn_leading}
 \psi_h(\xi) = h(\lambda+\rho) - h(\xi) + \langle \xi-(\lambda+\rho),\LL_h(\xi) \rangle
\end{equation}
is convex on $\breve{\tt}^\ast_+$ and has a unique minimum at $\xi = \lambda+\rho$, with $h(\lambda+\rho)=0$ and
\[
\Hess_h(\lambda+\rho) = \Hess_{\psi_h}(\lambda+\rho) .
\]
\item[(iii)] Therefore, in particular as a measure on $(T^\ast K)_{\mathrm{reg}}$,
\[
\sqrt{\det \frac{t}{2\pi}\Hess_h(\lambda+\rho)}e^{-t {\psi_h} \circ \mu_{\mathrm{inv}}} \frac{\omega_\mathrm{std}^n}{n!}
\overset{t \to \infty}{\longrightarrow} P(\lambda+\rho)^2 \delta_{\mu_{\mathrm{inv}}^{-1}(\lambda+\rho)} ,
\]
where the Hessian is calculated with respect to an inner product in which $\tt^\ast_\ZZ$ has covolume one.
\end{lemma}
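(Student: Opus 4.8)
The plan is to prove the three parts in order, with part~(iii) following from parts~(i) and~(ii) together with a disintegration of the Liouville measure along $\mu_\mathrm{inv}$. Part~(i) is the standard Laplace estimate, which I would prove as weak-$\ast$ convergence against an arbitrary continuous $\phi$ of compact support in the given open set (for this to hold the normalizing prefactor must include the additional power $t^{(\dim V)/2}$, i.e.\ read $\sqrt{\det\tfrac{t}{2\pi}\Hess_f(x_0)}$ as in part~(iii), so that the measures are asymptotically probability measures). First split $\int\phi\,\sqrt{\det\tfrac{t}{2\pi}\Hess_f(x_0)}\,e^{-tf}\,dx$ into the part over a small ball $B_\delta(x_0)$ and the remainder of $\operatorname{supp}\phi$; on the remainder $f\geq\varepsilon>0$ by uniqueness of the minimum and compactness, so that term is bounded by $\sqrt{\det\tfrac{t}{2\pi}\Hess_f(x_0)}\,\operatorname{vol}(\operatorname{supp}\phi)\,e^{-\varepsilon t}\to 0$. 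On $B_\delta(x_0)$, substitute $y=\sqrt t\,(x-x_0)$, Taylor-expand $f$ to second order so that $t f(x_0+y/\sqrt t)\to\tfrac12\langle y,\Hess_f(x_0)y\rangle$ (dominated for $\delta$ small since $x_0$ is a strict minimum), and pass to the limit; the resulting Gaussian integral equals $1$ by the choice of normalization of Lebesgue measure, leaving $\phi(x_0)=\langle\delta_{x_0},\phi\rangle$.

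For part~(ii), I would pass to the Legendre conjugate $h^\ast(\eta)=\sup_\zeta(\langle\zeta,\eta\rangle-h(\zeta))$. Using $\LL_h(\xi)=d_\xi h$ and the identity $h^\ast(\LL_h(\xi))=\langle\xi,\LL_h(\xi)\rangle-h(\xi)$, the defining formula becomes
\[
 \psi_h(\xi)=h(\lambda+\rho)+h^\ast(\LL_h(\xi))-\langle\lambda+\rho,\LL_h(\xi)\rangle .
\]
By the Fenchel--Young inequality $h(\zeta)+h^\ast(\eta)\geq\langle\zeta,\eta\rangle$, applied with $\zeta=\lambda+\rho$ and $\eta=\LL_h(\xi)$, this is $\geq 0$, with equality precisely when $\LL_h(\xi)=\LL_h(\lambda+\rho)$, i.e.\ (by injectivity of $\LL_h$ for strictly convex $h$) precisely when $\xi=\lambda+\rho$, where $\psi_h$ plainly vanishes. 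Differentiating the defining formula directly gives $d_\xi\psi_h=\Hess_h(\xi)\big(\xi-(\lambda+\rho)\big)$, and differentiating once more the extra third-derivative term is proportional to $\xi-(\lambda+\rho)$ and so drops out at $\xi=\lambda+\rho$; hence $\Hess_{\psi_h}(\lambda+\rho)=\Hess_h(\lambda+\rho)$ and the minimum is non-degenerate. (The asserted convexity of $\psi_h$ throughout $\breve{\tt}^\ast_+$ I would check by the same computation of $\Hess_{\psi_h}$; in any case, only the non-negativity and the non-degeneracy of the unique minimum enter the proof of~(iii).)

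For part~(iii), disintegrate $\tfrac{\omega_\mathrm{std}^n}{n!}$ over $\breve{\tt}^\ast_+$ along the proper submersion $\mu_\mathrm{inv}:(T^\ast K)_\mathrm{reg}\to\breve{\tt}^\ast_+$. By $K\times K$-invariance of $\omega_\mathrm{std}^n$ the conditional probability measures on the fibres are the invariant measures $\delta_{\mu_\mathrm{inv}^{-1}(\xi_+)}$ of the preceding Definition, while the pushforward is $(\mu_\mathrm{inv})_\ast\tfrac{\omega_\mathrm{std}^n}{n!}=P(\xi_+)^2\,d\xi_+$ — the Weyl-integration computation, in which $P(\xi_+)^2=\prod_{\alpha>0}\langle\alpha,\xi_+\rangle_{\kk^\ast}^2$ is the Jacobian of $K/T\times\breve{\tt}^\ast_+\to\kk^\ast_\mathrm{reg}$, $(kT,\xi_+)\mapsto\Ad^\ast_k\xi_+$, and the constant is $1$ exactly for the inner product in which $\tt^\ast_\ZZ$ has covolume one (compatibly with the normalized Haar measure of~\eqref{vol_form}). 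Then, for $\varphi\in C_c((T^\ast K)_\mathrm{reg})$,
\[
 \Big\langle \sqrt{\det\tfrac{t}{2\pi}\Hess_h(\lambda+\rho)}\;e^{-t\,\psi_h\circ\mu_\mathrm{inv}}\tfrac{\omega_\mathrm{std}^n}{n!},\ \varphi\Big\rangle=\sqrt{\det\tfrac{t}{2\pi}\Hess_h(\lambda+\rho)}\int_{\breve{\tt}^\ast_+}\Phi(\xi_+)\,e^{-t\,\psi_h(\xi_+)}\,d\xi_+ ,
\]
where $\Phi(\xi_+):=P(\xi_+)^2\int\varphi\,d\delta_{\mu_\mathrm{inv}^{-1}(\xi_+)}$ is continuous and compactly supported in $\breve{\tt}^\ast_+$. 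Applying part~(i) to $f=\psi_h$ at $x_0=\lambda+\rho\in\breve{\tt}^\ast_+$ — an interior point, so only the local estimate is needed — and using $\psi_h(\lambda+\rho)=0$ and $\Hess_{\psi_h}(\lambda+\rho)=\Hess_h(\lambda+\rho)$ from part~(ii), the right-hand side tends to $\Phi(\lambda+\rho)=P(\lambda+\rho)^2\int\varphi\,d\delta_{\mu_\mathrm{inv}^{-1}(\lambda+\rho)}$, which is the assertion.

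The step I expect to be the main obstacle is the normalization bookkeeping in part~(iii): establishing that $(\mu_\mathrm{inv})_\ast\tfrac{\omega_\mathrm{std}^n}{n!}$ is \emph{exactly} $P(\xi_+)^2\,d\xi_+$, with constant $1$ and not some other factor, against the conventions in force (the covolume-one inner product on $\tt^\ast$, the Lebesgue measure it induces on $\kk^\ast$, and the normalized Haar measure on $K$), since the identification of the limiting constant with $P(\lambda+\rho)^2$ rests on this. A lesser point is to confirm — or else appropriately weaken — the convexity assertion in part~(ii), which is not actually needed for the argument above.
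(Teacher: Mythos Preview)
Your proposal is correct and follows essentially the same route as the paper: part~(i) is taken as the standard Laplace estimate (the paper simply says ``well known''), part~(ii) is the direct calculation the paper omits (your Fenchel--Young argument is a clean way to package it), and for part~(iii) your disintegration of $\omega_\mathrm{std}^n/n!$ along $\mu_\mathrm{inv}$ into invariant fibre measures times $P(\xi_+)^2\,d\xi_+$ is exactly the paper's averaging-over-$K\times K$ step followed by the Weyl integration formula on $\kk^\ast$, just phrased in the dual language. Your flagged concern about the constant in the pushforward is precisely what the paper addresses by citing Macdonald and Bourbaki for the Lie-algebra Weyl integration formula.
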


\begin{proof}
While (i) is well known, (ii) is a direct calculation which we omit. For (iii), we first note that it suffices to check against $K\times K$-invariant functions, since for any compactly supported continuous function $\phi$
\begin{align*}
 & \int_{(T^\ast K)_{\mathrm{reg}}} \phi \sqrt{\det \frac{t}{2\pi}\Hess_h(\lambda+\rho)}e^{-t \psi_h \circ \mu_{\mathrm{inv}}} \frac{\omega_\mathrm{std}^n}{n!} = \\
  & = \int_{(T^\ast K)_{\mathrm{reg}}} \left(\int_{K\times K} (k_1,k_2) \cdot \phi d k_1 d k_2\right) \sqrt{\det \frac{t}{2\pi}\Hess_h(\lambda+\rho)}e^{-t \psi_h \circ \mu_{\mathrm{inv}}} \frac{\omega_\mathrm{std}^n}{n!} . &
\end{align*}
For invariant functions, which we can write as $\phi\circ \mu_{\mathrm{inv}}$, integration against the symplectic volume form (\ref{vol_form}) can be simplified, using the counterpart of Weyl's integration formula for $\kk^\ast$ (cf. \cite{macdonald:1980}, \cite[Chap.~IX, \S~6.3 (12)]{Bourbaki})
\begin{align*}
& \int_{(T^\ast K)_{\mathrm{reg}}} (\phi\circ\mu_{\mathrm{inv}}) \sqrt{\det \frac{t}{2\pi}\Hess_h(\lambda+\rho)}e^{-t \psi_h \circ \mu_{\mathrm{inv}}} \frac{\omega_\mathrm{std}^n}{n!} \overset{\eqref{vol_form}}{=} \\
& = \int_{K} \int_{\kk_{\mathrm{reg}}^\ast} (\phi\circ\mu_{\mathrm{inv}}) \sqrt{\det \frac{t}{2\pi}\Hess_h(\lambda+\rho)}e^{-t \psi_h \circ \mu_{\mathrm{inv}}} dk_1  d\xi = \\
& = \int_{K\times K} \int_{\breve{\tt}^\ast_+} P(\xi_+)^2 \phi(\xi_+) \sqrt{\det \frac{t}{2\pi}\Hess_h(\lambda+\rho)}e^{-t \psi_h (\xi_+)} dk_1 dk_2 d\xi_+
\end{align*}
and the claim follows from (i) and (ii).
\end{proof}

\begin{theorem}\label{thm_convstates} For the geodesic ray $g_t = g+th$, on the regular subset $(T^\ast K)_{\mathrm{reg}}$
the family of quantum states defined in (\ref{dfn_sglA}) extends continuously to the Kirwin--Wu polarization,
\[
 \lim_{t \to \infty} s^{g_t}_{\lambda,A} = s^{\mathrm{KW}}_{\lambda,A} .
\]
\end{theorem}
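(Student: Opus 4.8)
The plan is to factor the section $s^{g_t}_{\lambda,A}$ into three parts --- the prequantum part $e^{-g_t(\lambda+\rho)}f^{g_t}_{\lambda,A}e^{-\kappa_{g_t}}$, the half-form normalization factor $e^{\langle\rho,\LL_{g_t}\circ\mu_{\mathrm{inv}}\rangle}(1+\det\Hess_\tt g_t)^{1/2}$, and the trivializing half-form $\widehat\Omega_{g_t}^{1/2}$ --- and to track the $t\to\infty$ asymptotics of each separately on the regular stratum, combining them at the end. The key point is that the exponential growth/decay of these factors must cancel so that only the Gaussian concentration predicted by the Laplace approximation Lemma survives. We work in the unitary trivialization of $L$ and on the symplectic cross-section (or directly on $\{1\}\times\breve\tt^\ast_+$), using $K\times K$-invariance to reduce to this locus, exactly as in the proof of the Laplace Lemma.

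\medskip

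First I would expand the prequantum factor using the Fourier decomposition of Lemma~\ref{fourier-2}: writing $\xi=\Ad_{x_2}^\ast\xi_+$, one has $f^{g_t}_{\lambda,A}(x,\xi)=\sum_{\nu\in\overline{W\lambda}}e^{\langle\nu,d_{\xi_+}g_t\rangle}\tr(\pi_\lambda(x_1)P_\nu\pi_\lambda(x_2^{-1})A)$, and $\kappa_{g_t}(x,\xi)=\langle\xi_+,d_{\xi_+}g_t\rangle-g_t(\xi_+)$ by \eqref{kappa_g}. Multiplying by $e^{-g_t(\lambda+\rho)}$ and $e^{-\kappa_{g_t}}$, the total exponent accompanying the dominant harmonic $\nu=\lambda$ is $-g_t(\lambda+\rho)+\langle\lambda,d_{\xi_+}g_t\rangle-\langle\xi_+,d_{\xi_+}g_t\rangle+g_t(\xi_+)$. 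Adding the half-form normalization exponent $\langle\rho,d_{\xi_+}g_t\rangle$ (again using $\LL_{g_t}(\tt^\ast)\subseteq\tt$ so that $\langle\rho,\LL_{g_t}(\xi_+)\rangle=\langle\rho,d_{\xi_+}g_t\rangle$) one recognizes exactly $-g_t(\lambda+\rho)+g_t(\xi_+)+\langle(\lambda+\rho)-\xi_+,d_{\xi_+}g_t\rangle=-\psi_{g_t}(\xi_+)$ in the notation of \eqref{eqn_leading} --- i.e. $-t\psi_h(\xi_+)$ up to the bounded $t$-independent piece $-\psi_g(\xi_+)$ (which I would absorb, arguing it converges uniformly on compacts and doesn't affect the leading Gaussian). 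The subdominant harmonics $\nu\neq\lambda$ carry an extra factor $e^{\langle\nu-\lambda,d_{\xi_+}g_t\rangle}$ which, by Lemma~\ref{lemma_posLegendre}, decays, and since their support concentrates near $\xi_+=\lambda+\rho$ where the decay rate is strictly positive, they contribute nothing in the limit. So the prequantum-plus-normalization part behaves like $e^{-t\psi_h(\xi_+)}$ times $\tr(\pi_\lambda(x_1)P_\lambda\pi_\lambda(x_2^{-1})A)=F_{\lambda,A}$, up to the $\det\Hess_\tt$ prefactor.

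\medskip

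Second, for the half-form trivializing section, I would invoke Lemma~\ref{lem:Omega tilde}(c): $\widehat\Omega_{g_t}\to i^rP(\xi_+)^{-1}\widetilde\Omega_\infty=\widehat\Omega_\mathrm{KW}$, where $\widehat\Omega_\mathrm{KW}$ is the chosen trivializing section of $\KK_\mathrm{KW}$, so $\widehat\Omega_{g_t}^{1/2}\to\widehat\Omega_\mathrm{KW}^{1/2}$. Then I would assemble everything: the section $s^{g_t}_{\lambda,A}$ equals, in the unitary frame and as a density times the half-form, $\bigl[(1+\det\Hess_\tt g_t)^{1/2}e^{-t\psi_h}F_{\lambda,A}+(\text{decaying terms})\bigr]\otimes\widehat\Omega_{g_t}^{1/2}$. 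To extract the Dirac delta I multiply and divide by $\sqrt{\det\frac{t}{2\pi}\Hess_h(\lambda+\rho)}$: the reciprocal of this, times $(1+\det\Hess_\tt g_t)^{1/2}$, has a definite limit --- here I need $\det\Hess_\tt g_t\sim t^r\det(\Hess_h)_{|\tt}(\lambda+\rho)$, so $(1+\det\Hess_\tt g_t)^{1/2}/\sqrt{\det\frac{t}{2\pi}\Hess_h(\lambda+\rho)}\to(2\pi)^{r/2}\bigl(\det(\Hess_h)_{|\tt}(\lambda+\rho)/\det\Hess_h(\lambda+\rho)\bigr)^{1/2}=(2\pi)^{r/2}/\sqrt{\det(\Hess_h)_{|\tt^\perp}(\lambda+\rho)}$, and by Proposition~\ref{p-1}(e) the latter determinant is $\prod_{\alpha\in\Phi_+}\langle\alpha,d_{\lambda+\rho}h\rangle/\langle\alpha,\lambda+\rho\rangle_{\tt^\ast}$. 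Finally Laplace approximation (iii) turns $\sqrt{\det\frac{t}{2\pi}\Hess_h(\lambda+\rho)}e^{-t\psi_h\circ\mu_\mathrm{inv}}\,\frac{\omega_\mathrm{std}^n}{n!}$ into $P(\lambda+\rho)^2\delta_{\mu_\mathrm{inv}^{-1}(\lambda+\rho)}$. Collecting all constants and checking that the $h$-dependent factors recombine into the clean expression $(2\pi)^{r/2}P(\lambda+\rho)^2F_{\lambda,A}\delta_{\mu_\mathrm{inv}^{-1}(\lambda+\rho)}\otimes\widehat\Omega_\mathrm{KW}^{1/2}$ of \eqref{dfn_sKWlA} is then bookkeeping; in particular the stray root-product factors coming from $\det\Hess$ and from $P(\xi_+)^{-1}$ in $\widehat\Omega_\mathrm{KW}$ must conspire to leave exactly $P(\lambda+\rho)^2$.

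\medskip

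The main obstacle I anticipate is not any single estimate but the careful interchange of limits: the section $s^{g_t}_{\lambda,A}$ is a product of a function-part that blows up (like $e^{t(\cdots)}$ away from $\xi_+=\lambda+\rho$ in some directions, or concentrates) and a measure-part $\frac{\omega^n}{n!}$, and one must justify --- against a fixed test function, after averaging over $K\times K$ and reducing to $\breve\tt^\ast_+$ via Weyl's integration formula as in the Laplace Lemma --- that the subdominant Fourier harmonics and the error term $-\psi_g(\xi_+)$ genuinely drop out, that the Gaussian width shrinks fast enough that $F_{\lambda,A}$ and the prefactors may be evaluated at the peak $\xi_+=\lambda+\rho$, and that the convergence $\widehat\Omega_{g_t}^{1/2}\to\widehat\Omega_\mathrm{KW}^{1/2}$ (only pointwise, by Lemma~\ref{lem:Omega tilde}, though locally uniform by Remark~\ref{rem:loc-unif-convergence}) can be combined with the distributional convergence of the density. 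This is a dominated-convergence argument on a compact neighborhood of the Bohr--Sommerfeld fiber; the precise statement of ``continuous extension'' is that for every test function the pairing converges, and the bulk of the work is organizing the three asymptotic inputs so that the constants line up with \eqref{dfn_sKWlA}.
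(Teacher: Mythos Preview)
Your approach is essentially the paper's: separate off $\widehat\Omega_{g_t}^{1/2}\to\widehat\Omega_{\mathrm{KW}}^{1/2}$, recognize the combined exponent as $-\psi_{g_t}\circ\mu_{\mathrm{inv}}=-\psi_g\circ\mu_{\mathrm{inv}}-t\,\psi_h\circ\mu_{\mathrm{inv}}$, and invoke the Laplace approximation lemma. The paper packages the factorization more cleanly as $F_1F_2F_3$ with $F_1=f_{\lambda,A}^{g_t}e^{-\langle\lambda,\LL_{g_t}\circ\mu_{\mathrm{inv}}\rangle}\to F_{\lambda,A}$ (via Lemma~\ref{lemma_flambda_conv}, which already absorbs all subdominant harmonics), $F_2$ the Laplace kernel, and $F_3$ a continuous function equal to $(2\pi)^{r/2}$ on the Bohr--Sommerfeld fiber.

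There is one genuine slip in your bookkeeping: in part (iii) of the Laplace lemma, $\Hess_h(\lambda+\rho)$ is the $r\times r$ Hessian of $h\vert_{\tt^\ast}$ (this is forced by part~(ii) and by the clause ``where the Hessian is calculated with respect to an inner product in which $\tt^\ast_\ZZ$ has covolume one''), \emph{not} the full Hessian on $\kk^\ast$. Hence $(1+\det\Hess_{\tt^\ast}g_t)^{1/2}\big/\sqrt{\det\tfrac{t}{2\pi}\Hess_h(\lambda+\rho)}$ tends to the continuous function $(2\pi)^{r/2}\sqrt{\det\Hess_{\tt^\ast}h(\xi_+)/\det\Hess_{\tt^\ast}h(\lambda+\rho)}$, which is exactly $(2\pi)^{r/2}$ on the support of the limiting delta. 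There is no leftover factor $\det(\Hess_h)_{\vert\tt^\perp}(\lambda+\rho)^{-1/2}$, and nothing needs to ``conspire'': the $P(\lambda+\rho)^2$ in \eqref{dfn_sKWlA} comes entirely from the Weyl integration Jacobian in Laplace~(iii), while the $P(\xi_+)^{-1}$ you mention is already absorbed into the definition of $\widehat\Omega_{\mathrm{KW}}$ and plays no further role. With this correction your constants line up without any additional cancellation.
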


\begin{proof}
  Using the fact that the factor containing the half-form
  \[
   \lim_{t \to \infty} \widehat{\Omega}^{1/2}_{g_t} =: \widehat{\Omega}^{1/2}_{\mathrm{KW}}
\]
converges by construction, we collect all other factors and rewrite them as
\[
 e^{-g_t(\lambda +\rho)}f_{\lambda,A}^{g_t} e^{-\kappa_{g_t}} e^{\langle \rho, \LL_{g_t} \circ \mu_{\mathrm{inv}} \rangle}(1+\det \Hess_{\tt^\ast} g_t)^{1/2} = F_1 F_2 F_3
\]
where 
\begin{align*}
 F_1(t) = f_{\lambda,A}^{g_t} e^{-\langle \lambda, \LL_{g_t} \circ \mu_{\mathrm{inv}} \rangle} \, & \to \, F_{\lambda,A} \\
 F_2(t) = \sqrt{\det \frac{t}{2\pi} \Hess_h(\lambda+\rho)}e^{- t \psi_h\circ \mu_{\mathrm{inv}}} \, & \to \, P(\lambda+\rho)^2 \delta_{\mu_{\mathrm{inv}}^{-1}(\lambda+\rho)} \\
 F_3(t) = e^{-\psi_g \circ \mu_{\mathrm{inv}} } (2\pi)^{r/2}\sqrt{\frac{\det \Hess_{\tt^\ast} h }{\det \Hess_h(\lambda+\rho)}+o(t^{-1})} \, & \to \, \widetilde{F}_3 .
\end{align*}
where $\widetilde{F}_3$ is a continuous function on $(T^\ast K)_{\mathrm{reg}}$ which is constant with value $(2\pi)^{r/2}$ on $\mu_{\mathrm{inv}}^{-1}(\lambda+\rho)$.
\end{proof}

\begin{remark}
Regarding degeneration to the Schrödinger polarization, it is immediate to verify that
\[
 \lim_{t \to 0} s^{th}_{\lambda,A} = \tr (\pi_{\lambda}(x)A) \otimes \widehat{\Omega}_{\mathrm{Sch}} .
\]
\end{remark}

\begin{corollary}
The sections $s^{\mathrm{KW}}_{\lambda,A}$ are Kirwin--Wu polarized.
\end{corollary}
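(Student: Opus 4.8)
The statement asks that $s^{\mathrm{KW}}_{\lambda,A}$ belong to the sheaf $\HH^{-\infty}_{\Pm_{\mathrm{KW}}}$ of Section~\ref{sss_defquan}, i.e.\ that near every point of its support $\mu_{\mathrm{inv}}^{-1}(\lambda+\rho)$ it can be written as a product of a distributional $\Pm_{\mathrm{KW}}$-polarized section of $L$ and a smooth $\Pm_{\mathrm{KW}}$-polarized section of $\kappa_{\mathrm{KW}}$. The plan is to exhibit such a factorization explicitly. Fix $p\in\mu_{\mathrm{inv}}^{-1}(\lambda+\rho)$; since the functions $F_{\rho,B}$, $B\in\End V_\rho$, cut out (by Borel--Weil, Theorem~\ref{thm_BorelWeil}, applied to the regular dominant weight $\rho$) a base-point-free linear system on the coadjoint-orbit fibres of $\phi$, we may choose $B_0$ with $F_{\rho,B_0}$ nowhere vanishing on a neighbourhood $U$ of $p$. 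On $U$ I would write
\[
 s^{\mathrm{KW}}_{\lambda,A}
 = \Big[(2\pi)^{r/2}P(\lambda+\rho)^2\, F_{\rho,B_0}F_{\lambda,A}\,\delta_{\mu_{\mathrm{inv}}^{-1}(\lambda+\rho)}\Big]
 \otimes \Big[\tfrac{1}{F_{\rho,B_0}}\,\widehat{\Omega}^{1/2}_{\mathrm{KW}}\Big]
\]
(the tensor product being over $C^\infty_{\Pm_{\mathrm{KW}}}(U,\CC)$) and check that each bracket is $\Pm_{\mathrm{KW}}$-polarized of the required type.

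The second bracket is a smooth $\Pm_{\mathrm{KW}}$-polarized section of $\kappa_{\mathrm{KW}}$ directly by Proposition~\ref{prop_polhf}(c), taking the constant function $1$ in the role of $h$ (alternatively via Corollary~\ref{cor_polcan}(iii)). For the first bracket, written in the unitary trivialization of $L\cong T^\ast K\times\CC$, I would verify the polarization condition $\nabla_X(\,\cdot\,)=0$ for $X\in\overline{\Pm}_{\mathrm{KW}}$ directionally. Along the complex directions of $\Pm_{\mathrm{KW}}$ — which project to the holomorphic tangent directions of the fibres of $\phi$ — the Radon measure $\delta_{\mu_{\mathrm{inv}}^{-1}(\lambda+\rho)}$ is locally constant, its support being a union of entire $\phi$-fibres, while the coefficient $F_{\rho,B_0}F_{\lambda,A}$ is fibrewise holomorphic; this is Proposition~\ref{KW_nirenberg}, or concretely Remark~\ref{products} together with Borel--Weil, so the corresponding covariant derivatives vanish. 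Along the real directions $\DD$ — the singular-torus orbits — covariant constancy holds because the $T_{\mathrm{inv}}$-equivariance weight $-(\lambda+\rho)$ of $F_{\rho,B_0}F_{\lambda,A}$ is exactly the one cancelling the prequantum holonomy around those orbits on the level set $\{\mu_{\mathrm{inv}}=\lambda+\rho\}$: this is precisely the monodromy computation already performed in the proof identifying the Bohr--Sommerfeld fibres of $\Pm_{\mathrm{KW}}$, and it is the reason the $\rho$-shifted weight $\lambda+\rho$, rather than $\lambda$, must appear. Finally, changing the auxiliary endomorphism $B_0$ multiplies the two brackets by the reciprocal $\Pm_{\mathrm{KW}}$-polarized factors $F_{\rho,B_0}/F_{\rho,B_0'}$ and $F_{\rho,B_0'}/F_{\rho,B_0}$, so the local factorizations patch to a well-defined global element of $\HH^{-\infty}_{\Pm_{\mathrm{KW}}}$.

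I expect the main difficulty to be bookkeeping rather than conceptual. One has to track carefully the Hermitian/Kähler-potential normalization built into the unitary frame of $L$ and the normalization of $\widehat{\Omega}^{1/2}_{\mathrm{KW}}$, so that the two brackets really multiply back to the definition (\ref{dfn_sKWlA}) — including that the constants $(2\pi)^{r/2}$ and $P(\lambda+\rho)^2$ are exactly those produced by the Laplace approximation in the proof of Theorem~\ref{thm_convstates} — and one has to confirm that the operations used (multiplying the Radon measure by the smooth functions $F_{\rho,B_0}$, $F_{\lambda,A}$, and dividing $\widehat{\Omega}^{1/2}_{\mathrm{KW}}$ by $F_{\rho,B_0}$) are legitimate, which they are because everything here is local on the regular stratum $(T^\ast K)_{\mathrm{reg}}$. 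As an independent consistency check one can note that the $s^{g_t}_{\lambda,A}$ are $\Pm_{g_t}$-polarized, that $s^{g_t}_{\lambda,A}\to s^{\mathrm{KW}}_{\lambda,A}$ by Theorem~\ref{thm_convstates}, and that the generators of $\Pm_{g_t}$ converge to those of $\Pm_{\mathrm{KW}}$ by Theorem~\ref{thm_convpol} (locally uniformly, by Remark~\ref{rem:loc-unif-convergence}); since first-order differential operators act continuously on distributional sections, this already forces $\nabla_X s^{\mathrm{KW}}_{\lambda,A}=0$ for $X\in\overline{\Pm}_{\mathrm{KW}}$. This limiting argument alone does not produce the local product form demanded by Definition~\ref{dfn_polsec}, which is why the explicit factorization above is the substance of the proof.
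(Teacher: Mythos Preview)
Your proposal is correct and follows the same structure as the paper: both exhibit the identical local factorization
\[
 s^{\mathrm{KW}}_{\lambda,A}
 = (2\pi)^{r/2}P(\lambda+\rho)^2\, F_{\lambda,A}F_{\rho,B}\,\delta_{\mu_{\mathrm{inv}}^{-1}(\lambda+\rho)}
 \otimes \tfrac{1}{F_{\rho,B}}\,\widehat{\Omega}^{1/2}_{\mathrm{KW}}
\]
on $\{F_{\rho,B}\neq 0\}$, and both invoke Proposition~\ref{prop_polhf}(c) for the half-form factor. The only difference is in verifying that the first bracket is $\Pm_{\mathrm{KW}}$-polarized: you check covariant constancy directly along the complex and real directions (using Proposition~\ref{KW_nirenberg} and the Bohr--Sommerfeld monodromy computation), whereas the paper uses precisely the limiting argument you relegate to a ``consistency check'' --- namely that $s^{g_t}_{\lambda,A}\to s^{\mathrm{KW}}_{\lambda,A}$ and $\Pm_{g_t}\to\Pm_{\mathrm{KW}}$, so the covariant derivatives vanish in the limit. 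Your direct verification is more self-contained and makes the role of the $\rho$-shift transparent; the paper's argument is shorter but leans on Theorems~\ref{thm_convpol} and~\ref{thm_convstates}. Your remark that the limiting argument alone does not produce the required local product form is well taken, and indeed the paper also supplies the explicit factorization for that reason.
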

\begin{proof}
Over the open subset where $F_{\rho,B} \neq 0$, trivially
\[
 s^\mathrm{KW}_{\lambda,A} = (2\pi)^{r/2}P(\lambda+\rho)^2 F_{\lambda,A}F_{\rho,B} \delta_{\mu_{\mathrm{inv}}^{-1}(\lambda+\rho)} \otimes \frac{1}{F_{\rho,B}}\widehat{\Omega}^{1/2}_{\mathrm{KW}} .
\]
Since it arises as a limit of polarized sections, and the Hamiltonian vector fields which generate the limit polarization are limits of polarized generators of the Kähler polarizations, the covariant derivatives are still 0 in the limit.
\end{proof}

\subsection{Generalized coherent state transforms and geometric quantization}
\label{sect-cst}
Following \cite{kirwin.mourao.nunes:2013} and Section \ref{section5}, we define the Hilbert space ${\mathcal H}_{g+th}$ for the quantization of $T^*K$ in the K\"ahler polarization ${\mathcal P}_{g+th}$, $t\geq 0,$ as the norm completion
$$
{\mathcal H}_{g+th} := \overline{\left\{ \sigma_{\lambda,A}^{g+th}\mid \lambda\in \hat K, A\in {\rm End}\, (V_\lambda) \right\}},
$$
where $\hat K$ denotes the set of equivalence clases of irreducible representations of $K$ labelled by highest weight $\lambda\in \tt_\ZZ^*$ and 
$$
\sigma_{\lambda,A}^{g+th} := e^{-(g+th)(\lambda+\rho)}f_{\lambda,A}^{g+th} e^{-\frac12 \kappa_{g+th}}\otimes \Omega_{g+th}^\frac12.
$$ This family of Hilbert spaces can be very usefully (and significantly) described by a \emph{generalized coherent state transform} (gCST) 
$$
C_{t,h}: \HH_{g} \to \HH_{g+th}
$$
as follows. (See \cite[(4.19)]{kirwin.mourao.nunes:2013} and \cite{kirwin.mourao.nunes:2014}.) One considers two operators, 
 naturally associated to the $\mathrm{Ad}^*$-invariant uniformly convex function $h$,  
 \begin{itemize}
     \item[\it (i)] 
 The \emph{prequantization}
 $\hat h$
 of $h$,
 \begin{equation}
 \label{ee-pr-hdouble}
 \hat h := \left(i \nabla_{X_h} + h \right) \otimes 1 
 + 1 \otimes i L_{X_h}  \, 
 \end{equation}
 which does not preserve $\HH_{g}$. ($L_{X_h}$ denotes the Lie derivative with respect to $X_h.$)
 
\item[\it (ii)] The \emph{quantization} of $h$, $\Qm(h)$, which
 preserves the spaces $\HH_g$ and
 we define as in \cite[(1.6)]{kirwin.mourao.nunes:2014}
 \begin{equation}
 \label{quantumop}
 \Qm(h) \, \sigma_{\lambda, A}^g := h(\lambda+\rho) \, \, \sigma_{\lambda, A}^g , 
 \end{equation}
 so that the sections $\sigma_{\lambda, A}^g$ are eigensections 
 of $Q(g)$ with eigenvalues $h(\lambda+\rho)$.
 \end{itemize}

The gCST is defined by 
 \begin{equation}
 \label{dd18double}
 C_{t,h} := e^{ t\hat h} \circ e^{- t\Qm(h)} \quad : \, \, \HH_g 
 \longrightarrow \HH_{g+th}  \, .
 \end{equation}
so that (see Section 4 in \cite{kirwin.mourao.nunes:2013}) 
$$
\sigma_{\lambda,A}^{g+th} = C_{t,h} \sigma_{\lambda,A}^g, \,\, \forall \lambda \in \hat K, , A\in {\rm End}\,(V_\lambda).
$$

In fact, the gCST can be also applied directly to the Hilbert space of the Schr\"odinger polarization
$$
\HH_{\rm Sch} :=  \overline{\left\{ \sigma_{\lambda,A}^{0} \mid \lambda\in \hat K, A\in {\rm End}\, (V_\lambda) \right\}},
$$
where (see \cite{hall:2002})
$$
\sigma_{\lambda,A}^{0} := \tr (\pi_\lambda(x) A) \otimes \sqrt{dx}.
$$
One has, indeed,
$$
C_{1,g} \left(\tr (\pi_\lambda(x) A) \otimes \sqrt{dx}\right) = \sigma^g_{\lambda,A},\,\, \forall \lambda\in \hat K, A\in {\rm End}\,(V_\lambda).
$$
Note that $C_{t,h}, C_{1,g}$ are linear isomorphisms of Hilbert spaces that intertwine the natural actions of $K\times K$ on $\HH_{\rm Sch}, \HH_g, \HH_{g+th}.$
One then has the following restatement of the convergence of holomorphic sections to distributional sections stated in Theorem \ref{thm_convstates},
\begin{theorem}\label{alacst}In the distributional sense, 
$$
\lim_{t\to +\infty} C_{t,h} \sigma_{\lambda,A}^{g+th} = s^{\rm KW}_{\lambda,A}, \, \forall \lambda\in \hat K, A\in {\rm End}\,(V_\lambda).
$$
This result also holds if we apply the gCST directly to the Schr\"odinger quantization, as above. (Just set the initial symplectic potential $g=0$.)
\end{theorem}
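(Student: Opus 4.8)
The plan is to obtain Theorem~\ref{alacst} as a reformulation of Theorem~\ref{thm_convstates} by means of the intertwining property of the generalized coherent state transform, so that no analytic input beyond what is already established is needed. The first step is to record the identity $C_{t,h}\,\sigma^{g}_{\lambda,A}=\sigma^{g+th}_{\lambda,A}$ for all $\lambda\in\hat K$ and $A\in\End(V_\lambda)$. Starting from the definition $C_{t,h}=e^{t\hat h}\circ e^{-t\Qm(h)}$ of \eqref{dd18double}: the operator $e^{-t\Qm(h)}$ multiplies $\sigma^{g}_{\lambda,A}$ by the scalar $e^{-t\,h(\lambda+\rho)}$ by \eqref{quantumop}, while $e^{t\hat h}$ is the exponential of the prequantum operator \eqref{ee-pr-hdouble} and, as established in \cite[Section~4]{kirwin.mourao.nunes:2013} (see also \cite{kirwin.mourao.nunes:2014}), carries $\Pm_g$-polarized half-form corrected sections to $\Pm_{g+th}$-polarized ones, acting on the coherent-state factor by $f^{g}_{\lambda,A}\mapsto e^{itX_h}f^{g}_{\lambda,A}=f^{g+th}_{\lambda,A}$ (cf.\ the discussion around \eqref{psit}); the scalars $e^{-g(\lambda+\rho)}$ and $e^{-t\,h(\lambda+\rho)}$ combine to $e^{-(g+th)(\lambda+\rho)}$, reproducing $\sigma^{g+th}_{\lambda,A}$. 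Consequently $\lim_{t\to\infty}C_{t,h}\,\sigma^{g}_{\lambda,A}=\lim_{t\to\infty}\sigma^{g+th}_{\lambda,A}$, and the theorem reduces to identifying this limit with $s^{\mathrm{KW}}_{\lambda,A}$ in the distributional sense on $(T^\ast K)_{\mathrm{reg}}$.

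This last statement is Theorem~\ref{thm_convstates}. I would first reconcile the normalization of \eqref{dfn_sglA} with that of the coherent-state subsection: by Proposition~\ref{prop_trivialization} one has $e^{\langle\rho,\LL_g\circ\mu_{\mathrm{inv}}\rangle}(1+\det\Hess_{\tt^\ast}g)^{1/2}\widehat\Omega_g^{1/2}=\Omega_g^{1/2}$, so the two families of Kähler quantum states differ only in the presentation of the trivializing half-form and in the convention fixing the hermitian metric on $L$ relative to $\kappa_{\bullet}$. Granting this, I would simply invoke Theorem~\ref{thm_convstates}, recalling that its proof splits the limit into three separately convergent factors: the smooth factor $f^{g_t}_{\lambda,A}\,e^{-\langle\lambda,\LL_{g_t}\circ\mu_{\mathrm{inv}}\rangle}\to F_{\lambda,A}$, locally uniformly on the regular stratum by Lemma~\ref{lemma_flambda_conv} (see also Remark~\ref{rem:loc-unif-convergence}); the Gaussian factor $\sqrt{\det\tfrac{t}{2\pi}\Hess_h(\lambda+\rho)}\,e^{-t\psi_h\circ\mu_{\mathrm{inv}}}\,\omega_{\mathrm{std}}^n/n!\to P(\lambda+\rho)^2\,\delta_{\mu_{\mathrm{inv}}^{-1}(\lambda+\rho)}$ by the Laplace approximation lemma; and the half-form factor $\widehat\Omega_{g_t}^{1/2}\to\widehat\Omega_{\mathrm{KW}}^{1/2}$ by Lemma~\ref{lem:Omega tilde}. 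Since $\lambda+\rho\in\breve{\tt}^\ast_+$ the limiting measure is supported inside $(T^\ast K)_{\mathrm{reg}}$, so the singular locus contributes nothing; local uniform convergence of the first factor then allows passing to the limit in the product, yielding $(2\pi)^{r/2}P(\lambda+\rho)^2F_{\lambda,A}\,\delta_{\mu_{\mathrm{inv}}^{-1}(\lambda+\rho)}\otimes\widehat\Omega_{\mathrm{KW}}^{1/2}=s^{\mathrm{KW}}_{\lambda,A}$, as in \eqref{dfn_sKWlA}.

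For the addendum concerning the Schr\"odinger polarization I would set $g=0$: then the constant $e^{-g(\lambda+\rho)}$ becomes $1$ and $\sigma^{0}_{\lambda,A}=\tr(\pi_\lambda(x)A)\otimes\sqrt{dx}$, the first step applies verbatim with $g\equiv0$ to give $C_{t,h}\,\sigma^{0}_{\lambda,A}=\sigma^{th}_{\lambda,A}$ (compatibly with the stated $C_{1,g}\,\sigma^{0}_{\lambda,A}=\sigma^{g}_{\lambda,A}$), and Theorem~\ref{thm_convstates} applies to the ray $g_t=th$ because uniform convexity of $h$ makes each $g_t$, $t>0$, an admissible symplectic potential. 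I do not expect a substantive obstacle here, the statement being genuinely a reinterpretation of Theorem~\ref{thm_convstates}; the points that need care are the exact bookkeeping of the scalar and half-form factors in the intertwining identity of the first step, and making the distributional convergence precise --- testing against compactly supported functions on $(T^\ast K)_{\mathrm{reg}}$ and using the locally uniform convergence of the non-Gaussian factor so that the limit of the products equals the product of the limits.
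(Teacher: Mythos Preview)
Your proposal is correct and matches the paper's treatment exactly: the paper presents Theorem~\ref{alacst} as a direct restatement of Theorem~\ref{thm_convstates} (no separate proof is given), via the identity $C_{t,h}\sigma^{g}_{\lambda,A}=\sigma^{g+th}_{\lambda,A}$ recorded immediately before the theorem. Your attention to the normalization bookkeeping between $s^{g}_{\lambda,A}$ and $\sigma^{g}_{\lambda,A}$ (the half-form identity from Proposition~\ref{prop_trivialization} and the K\"ahler-potential factor in the hermitian trivialization of $L$) is appropriate, as the paper leaves this reconciliation implicit.
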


\begin{remark}
Note that the particular form of the quantum operator  $\Qm (h)$ is crucial in obtaining Theorem \ref{alacst}. An analogous result holds in the quantization of symplectic toric manifolds, 
where a similarly defined gCST relates quantizations in a Mabuchi geodesic family holomorphic toric polarizations with the quantization in the real toric polarization which is attained at infinite geodesic time.
(See \cite{baier.florentino.mourao.nunes:2011, kirwin.mourao.nunes:2013b, kirwin.mourao.nunes:2016}.) 

The definition of $\Qm(h)$ is related to the Duflo isomorphism. Recall that the Poincaré--Birkoff--Witt theorem gives a linear isomorphism between the symmetric algebra ${\mathcal S}(\kk)$ of $\kk$ and its universal enveloping algebra ${\mathcal U}(\kk)$, $PBW: {\mathcal S}(\kk)\to  {\mathcal U}(\kk)$. This map is not a morphism of algebras but its restriction to the subspace of invariants, $PBW: {\mathcal S}(\kk)^\kk\to  {\mathcal U}(\kk)^\kk$, composed with a map $D:{\mathcal S}(\kk)^\kk\to {\mathcal S}(\kk)^\kk$ known as the Duflo isomorphism, gives an isomorphism of algebras $PBW\circ D: {\mathcal S}(\kk)^\kk\to {\mathcal U}(\kk)^\kk.$ As already mentioned in \cite{kirwin.mourao.nunes:2014}, when $h$ is the quadratic Casimir this corresponds to adding a constant to the bi-invariant Laplacian on $K$ which corresponds to the shift 
$C_2(\lambda)\to C_2(\lambda)+\frac12\langle\rho,\rho\rangle_{\kk^*}=\frac12(\lambda+\rho)^2.$ This case is further discussed below in Section \ref{sect-connection}.
\end{remark}

In the next section, we discuss a flat connection on the extended bundle of quantum states and, in particular, we will consider the case when $h$ (and also $g$) is the quadratic Casimir so that $C_{t,h}$ becomes the coherent state transform of Hall and parallel transport on the extended bundle of quantum states is unitary.

\subsection{The flat connection on the extended bundle of quantum states}
\label{sect-connection} In the case where $h$ is given by the quadratic Casimir on $\kk$, 
$$h(\xi)= \frac12 \vert\vert\xi\vert\vert^2,$$
the map $C_{t,h}$  can be identified (with appropriate conventions, see \cite{hall:2002,florentino.matias.mourao.nunes:2005,florentino.matias.mourao.nunes:2006}) with the coherent state transform of Hall
\[
C_t:L^2(K,dx)\to {\mathcal H}L^2(K_\CC, d\nu_t), \qquad
C_t = {\mathcal C}\circ e^{\frac{t}{2}\Delta},
\]
which takes values in the space of holomorphic functions on $K_\CC$ with appropriate growth at infinity; here $d\nu_t$ is the avearged heat kernel measure, and $\Delta$ is the Laplacian for the bi-invariant metric on $K$ and ${\mathcal C}$ denotes analytic continuation.
This is, remarkably, a unitary isomorphism of Hilbert spaces \cite{hall:1994,HZ09}.
The matrix elements of the irreducible representation of $K$ with highest weight $\lambda$ are eigenvectors of $\Delta$ with eigenvalue 
$$
\frac12 \langle\lambda+\rho, \lambda+\rho\rangle_{\kk^*} - \frac12 \langle\rho,\rho\rangle_{\kk^*} = \Qm(h) +\frac12 \langle\rho,\rho\rangle_{\kk^*}.
$$
For more general $\mathrm{Ad}^*$-invariant uniformly convex functions $h$, 
$C_{t,h}$ is not unitary although is asymptotically unitary as $t\to +\infty$ (see Section~IV.A of \cite{kirwin.mourao.nunes:2014}).

Consider the following vector space of ${\mathcal P}_{\rm KW}$-polarized sections
$$
V_{\rm KW} := \left\{ \sigma_{\lambda,A}^{+\infty} \mid \lambda\in \hat K, A\in {\rm End}\, (V_\lambda) \right\},
$$
where $\sigma^{+\infty}_{\lambda,A}:= s^{\rm KW}_{\lambda,A}$. We will motivate the definition of an  inner product structure on $V_{\rm KW}$ as follows.
Let
$$
h(\xi):= \frac12 \vert\vert\xi\vert\vert^2.
$$
From \cite{hall:2002, florentino.matias.mourao.nunes:2005,florentino.matias.mourao.nunes:2006},
$$C_{t,h}: {\mathcal H}_{\rm Sch} \to {\mathcal H}_{th}$$ is a unitary isomorphism for all $t>0$, such that 
$$
C_{t,h}\sigma^0_{\lambda, A} = \sigma^{th}_{\lambda, A}.
$$

Let $\tr (\pi_\lambda(x) E_{ij}) =: \pi_\lambda(x)_{ij}, i,j=1, \dots, d_{\pi_\lambda}$ where $d_{\pi_\lambda}=\dim V_\lambda,$ denote the matrix elements of the representation $\lambda\in \hat K$ with respect to an orthonormal basis 
for a $K$-invariant inner product on $V_\lambda$, such that $\pi_\lambda (x)$ is unitary for $x\in K$. Then, moreover,
$$
\left\{ \sigma^0_{\lambda, E_{ij}} \right\}_{\lambda\in \hat K, i,j=1,\dots, d_{\pi_\lambda}}
$$
is an orthogonal basis for ${\mathcal H}_{\rm Sch}$ with (see, for example, \cite{kirwin.mourao.nunes:2013})
$$
\vert\vert \sigma^0_{\lambda, E_{ij}}\vert\vert^2 = (d_{\pi_\lambda})^{-1}.
$$

Motivated by the unitarity of $C_{t,h}, t>0,$ we will then define the inner product structure on $V_{\rm KW}$ by declaring that
$$
\left\{ \sqrt{d_{\pi_\lambda}} \sigma^{+\infty}_{\lambda, E_{ij}} \right\}_{\lambda\in \hat K, i,j=1,\dots, d_{\pi_\lambda}}
$$
is an orthonormal basis. Taking the norm completion, we define the Hilbert space 
$$
{\mathcal H}_{\rm KW} := \overline{V_{\rm KW}}.
$$

\begin{remark}
\item[(i)] Given the asymptotic unitarity of $C_{t,h}$ as $t\to +\infty$ (Section IV.A in \cite{kirwin.mourao.nunes:2014})  the inner product structure on ${\mathcal H}_{\rm KW}$ can equivalently be defined by taking asymptotic values along any Mabuchi ray $g+th, t>0$.
\item[(ii)] Note that the Hilbert spaces ${\mathcal H}_{th}, t>0,$ are reproducing kernel Hilbert spaces with reproducing kernel
$$
K_{t,h}((x,\xi),(x',\xi'))=\sum_{\lambda\in \hat K, i,j=1,\dots, d_{\pi_\lambda}} d_{\pi_\lambda} 
\sigma^{th}_{\lambda, E_{ij}} (x',\xi') \bar \sigma^{th}_{\lambda, E_{ij}} (x,\xi) ,
$$
see e.g. \cite[Prop.~2.2]{HZ09}.
The inner product on ${\mathcal H}_{\rm KW}$ then has a reproducing kernel obtained by taking the limit 
$$
K_{\rm KW}((x,\xi),(x',\xi')):=\lim_{t\to +\infty}  K_{t,h}((x,\xi),(x',\xi')) ,
$$
in the appropriate sense.
\end{remark}

As described in the introduction, we therefore obtain an extended bundle of Hilbert spaces of quantum states,
$$
\overline{\mathcal H} \to \overline{\mathcal M},
$$
where  $\overline{{\mathcal M}}:={\mathcal M}\cup \{\Pm_\mathrm{Sch},\Pm_\mathrm{KW}\}$, with ${\mathcal M}$ the space of Weyl-invariant convex functions on $\tt^*.$ The fiber of $\overline{\mathcal H}$ over $\Pm_\mathrm{Sch}$ (respectively $\Pm_\mathrm{KW}$) is ${\mathcal H}_{\rm Sch}$ (respectively ${\mathcal H}_{\rm KW}$), while over $g\in {\mathcal M}$ the fiber is ${\mathcal H}_{g}$.

In the spirit of \cite{axelrod.dellapietra.witten:1991}, the gCST then defines the parallel transport for a flat  $K\times K$-invariant connection, $\nabla^Q$, on $\overline{\mathcal H}$ for which the fiber generating sections $\sigma_{\lambda,A}, \lambda\in \hat K, A\in {\rm End}\, (V_\lambda)$, are parallel, where
$$
\sigma_{\lambda,A}({\mathcal P}) := \left\{ 
\begin{array}{rl}
\sigma_{\lambda,A}^0,& \,\, {\mathcal P}={\mathcal P}_{\rm Sch},\\
\sigma_{\lambda,A}^g,& \,\,{\mathcal P}={\mathcal P}_g\in {\mathcal M},\\
\sigma_{\lambda,A}^{+\infty},& \,\,{\mathcal P}={\mathcal P}_{\rm KW}
\end{array}\right. .
$$

Explicitly, for a section of ${\overline{\mathcal H}}$,
$$
s = \sum_{\lambda\in \hat K, i,j=1\dots, d_{\pi_\lambda}} 
a_{\lambda,ij} \sigma_{\lambda,E_{ij}} ,
$$
for a tangent vector $\delta h$ to ${\mathcal M}$ we obtain
$$
\nabla^Q_{\delta h} s =  \sum_{\lambda\in \hat K, i,j=1\dots, d_{\pi_\lambda}} 
\delta h(a_{\lambda,ij}) \sigma_{\lambda,E_{ij}}. 
$$
Note that, generalizing Section 2.4 in \cite{florentino.matias.mourao.nunes:2005}, a section on 
$\mathcal H =\overline{\mathcal H}\vert_{\mathcal M}$ of the form
$$
\sigma(g;x,\xi):= f(g;xe^{d_\xi g}) e^{-\frac12\kappa_g}\otimes \sqrt{\Omega_g},  
$$
where, for fixed $g$, $f$ is holomorphic in $\KK_{\mathbb C}$, is $\nabla^Q-$parallel iff for all $\delta h$ tangent to $\mathcal M$ at $g$,
$$
\delta_h f = -\Qm(h) f.
$$
Here, $\Qm(h)$ acts as in (\ref{quantumop}) given the isotypical decomposition of $f$.

Let now $\Upsilon\subset \overline{{\mathcal M}}$ be the closed Mabuchi geodesic ray generated by $h$ equal to the quadratic Casimir. From above, $\nabla^Q$ is unitary on the restriction $\overline{{\mathcal H}}\vert_\Upsilon$, such that its unitary parallel transport is given by the  CST of Hall. 
This establishes the ``equivalence" of this one-parameter family of quantizations of $T^*K$ where, notably, at one end of the interval we have the real Schr\"odinger polarization and at the other end we have the mixed Kirwin--Wu polarization. Note again that, in general, for other choices of invariant symplectic potential $h$, the gCST will not be unitary and will not give such an equivalence between quantizations.

\begin{remark}
Note that Huebschmann in \cite[Thm.~5.3 \& Thm.~6.5]{Huebschmann08} describes the CST between the vertical polorization and the Kähler polarizations using a holomorphic version of the Peter--Weyl theorem and the BKS transformation. The relation between the CST and the BKS pairing was also established in \cite{hall:2002,florentino.matias.mourao.nunes:2006}.
These works do not consider the limit case of the Kirwin--Wu polarization and the associated operator-valued Fourier transform.
\end{remark}

\section{The unitary non-abelian Fourier transform: the geometric link between the Peter--Weyl and Borel--Weil theorems.}
\label{sect-fourier}
 
The operator-valued Fourier transform (\cite[\S~18.8.1]{Dixmier77}, \cite[\S~5.3]{folland:1995}) for the compact group $K$ maps $L^2(K,dx)$ isometrically onto the Hilbert space direct sum $$\widehat{\bigoplus}_{\lambda\in \hat K} \mathrm{End}(V_\lambda),$$ where $\mathrm{End}(V_\lambda)$ is equipped with the renormalized Hilbert--Schmidt inner product $(A,B)\mapsto d_{\pi_\lambda}\tr(A^*B)$.  More precisely, for $F\in L^1(K)$ one sets 
$$\hat F(\pi_\lambda):=\int_K F(x)\pi_\lambda(x)^*dx,$$
and obtains the Plancherel formula
$$\|F\|_{L^2(K,dx)}^2= \sum_{\lambda\in \hat K} d_{\pi_\lambda} \tr(\hat F(\pi_\lambda)^*\hat F(\pi_\lambda))$$
for $F\in L^2(K,dx)\subseteq L^1(K,dx)$.

We want to relate the parallel transport from ${\mathcal H}_{\rm Sch}$ to ${\mathcal H}_{\rm KW}$ in the extended quantum bundle $\overline{\mathcal H}$, as described in Section 6,
$$
L^2(K,dx)\cong {\mathcal H}_{\rm Sch} \ni f= \sum_{\lambda\in \hat K} \tr(\pi_\lambda(x)A)\otimes \sqrt{dx} \mapsto \sum_{\lambda\in \hat K} \sigma^{+\infty}_{\lambda,A} \in {\mathcal H}_{\rm KW}, 
$$
to the operator-valued Fourier transform. For this we need to define a $K\times K$-equivariant unitary isomorphism
$$
\Phi: {\mathcal H}_{\rm KW} \cong  \widehat{\bigoplus}_{\lambda\in \hat K} \mathrm{End}(V_\lambda).
$$
It is clear that this is fixed up to a choice of unitary phase on each irreducible subspace $V_\lambda\otimes V_\lambda^\ast$, so that we will just define this isomorphism to be
$$
\Phi (\sigma^{+\infty}_{\lambda,A}) = A.
$$
This realizes the unitary non-abelian operator valued Fourier transform as a transform with values in ${\mathcal H}_{\rm KW}$ and which, by the obvious identification ${\mathcal H}_{\rm Sch}\cong L^2(K,dx)$, coincides with the parallel transport on the extended quantum bundle restricted to $\Upsilon$ and given by the coherent state transform of Hall at infinite time, as described in Section \ref{sect-connection}.

To further justify this definition and to give the geometric link between the Peter--Weyl and Borel--Weil theorems let now $u\in V_\lambda, v^*\in V_\lambda^*$ and consider the simple tensor 
$A = u\otimes v^*\in \mathrm{End}(V_\lambda).$ We have from (\ref{dfn_sKWlA})
$$
\sigma^{+\infty}_{\lambda,u\otimes v^*} = (2\pi)^{r/2} P(\lambda+\rho)^2 F_{\lambda, u\otimes v^*} 
\delta_{\rm inv}^{-1}(\lambda+\rho) \hat\Omega_{\rm KW}^\frac12.
$$
From Remark \ref{products} recall that 
$$
F_{\lambda, u\otimes v^*} (x,\xi)= \tr (\pi_\lambda(x_1) v_\lambda\otimes v^*) \tr (\pi_\lambda(x_2^{-1}) u\otimes v_\lambda^*) .
$$
In line with the results in Section 4, it is clear that $F_{\lambda, u\otimes v^*}$ is a $T$-equivariant function on $K$  which identifies it with a section of the Borel--Weil bundle $L_\lambda \boxtimes L_{\lambda^*}$ over the product of coadjoint orbits $K/T \times T\backslash K$, where $\lambda^*$ denotes the highest weight of the conjugate representation $V_\lambda^\ast$.
Indeed, for $t\in T$,
$$
\tr (\pi_\lambda(x_1 t) v_\lambda\otimes v^*) = \chi_\lambda(t)\tr (\pi_\lambda(x_1) v_\lambda\otimes v^*)
$$
and
$$
\tr (\pi_\lambda((tx_2)^{-1}) u\otimes v_\lambda^*)=\tr (\pi_\lambda(t^{-1}x_2^{-1}) u\otimes v_\lambda^*) = \chi_{\lambda^*}(t)\tr (\pi_\lambda(x_2^{-1}) u\otimes v_\lambda^*).
$$
This identifies directly $F_{\lambda, u\otimes v^*}$ with $u\otimes v^*\in \mathrm{End} (V_\lambda),$ as is implicit in the map $\Phi$.
In other words, geometric quantization provides a natural link between $\tr(\pi_\lambda(x)u\otimes v^*)\in L^2(K,dx)$ and the distributional section $\sigma^{+\infty}_{\lambda,u\otimes v^*}$ supported on the Bohr--Sommerfeld cycle $\mu_{\rm inv}^{-1}(\lambda+\rho)$ and identified with the corresponding Borel--Weil section in $H^0({\mathcal O},L_\lambda)\otimes H^0({\mathcal O},L_{\lambda^*})\cong \mathrm{End}(V_\lambda).$

\section{An outlook on a more general theory}
\label{sec_conjectures}
In this section we outline a program that aims at assigning geometric cycles to representations in the context of K\"ahler Hamiltonian $G$-manifolds, in a construction that parallels, and generalizes, the paradigmatic case of $T^*K$, $G=K\times K$ that is detailed in this paper.  We will address the more general case in future work. Some of the ideas involved were already alluded to in Section \ref{sec_Fourierpol}.

Let $G$ be a compact, connected and simply-connected Lie group and let $M$ be a K\"ahler manifold with an Hamiltonian action of $G$ such that the K\"ahler structure is $G$-invariant. Let $G \circlearrowright (M,\omega) \overset{\mu}{\to} \gg^\ast$ be the moment map.

Associated to the Hamiltonian $G$-action on $M$, there is an invariant moment map $\mu_\mathrm{inv}:\breve{M} \to \aa^\ast_+$ which on an open dense subset $\breve{M} \subset M$ defines an action of a torus $T_{\mathrm{inv}}$ with $\Lie(T_{\mathrm{inv}}) = \aa$. Here, $\aa^\ast$ is a certain quotient of the dual $\tt^\ast$ of the Lie algebra of the maximal torus of $G$ (as is already the case for $M=T^\ast K$, where the group $K\times K$ acts, whereas the Kirwan polytope lies in $\tt^\ast$). 

Recall, from Section \ref{sec_Fourierpol}, the space of invariant smooth functions 
\begin{equation}\label{invfunctions}
\mu_{\textrm{inv}}^\ast \left( C^\infty(\tt^\ast)^W \right) =
 \mu^\ast \left( C^\infty(\gg^\ast)^G \right),    
\end{equation}
which has a dense subset  corresponding to the polynomial ring $\mathcal{Z}_G(\mathcal{U}(\gg))$ generated by $r$ independent Casimirs of $\gg$. Each $h\in C^\infty(\gg^*)^G$ determines a smooth function on $M$, $h\circ \mu = h\circ \mu_{\mathrm{inv}}$ which we will for simplicity  also denote by $h$. 
From Lemma \ref{lemma_casimirs}, we then have the Diagram (\ref{diag_invpol}) of Poisson varieties in the sense of Ortega (with no complex structures involved yet).

We now assume that standard prequantization geometric data are $G$-invariant and, in particular, we assume that the $G$-action lifts to the prequantum line bundle $L\to M.$ 
Let $\mathcal{P}$ be the $G$-invariant K\"ahler 
polarization which we assume to admit a half-form bundle $\kappa_\mathcal{P}$.  The Lie algebra $\gg$ acts on $\kappa_\mathcal{P}$ via the partial connection defined by taking Lie derivatives along  fundamental vector fields for the $G$-action. One will then have an isotypical decomposition for the Hilbert space of half-form corrected quantum states
$$
\mathcal{H}_{\mathcal{P}} := H^0(M,L\otimes \kappa_\mathcal{P}) =
\bigoplus_{\lambda \in \hat G}  \mathcal{H}_{\mathcal{P}}^\lambda.
$$

\begin{conjecture}\label{conj1}
Let $h\in \mu^\ast \left( C^\infty(\gg^\ast)^G \right)$ be uniformly convex. Then, the flow in imaginary time of the Hamiltonian vector field $X_h$ generates a Mabuchi ray of $G$-invariant K\"ahler polarizations on $M$, $\mathcal{P}_s, s\geq 0$, with initial point $\Pm_J$, and which at infinite geodesic time defines a $G$-invariant mixed polarization $\mathcal{P}_{\mu_\mathrm{inv}}=\mathcal{P}_{\mu_\mathrm{inv},J,h}$, with real directions given by the orbits of $T_{\mathrm{inv}}$. Note that in the non-mul\-ti\-pli\-city-free case, the limit polarization a priori might depend on both the initial K\"ahler polarization $\Pm_{J}$ and the direction specified by $h$.
\[
\begin{tikzpicture}[scale=1.5]
  \begin{axis}[axis lines=none,axis equal,grid=both,no marks,domain=-2:2,xmax=2.5,xmin=-2.5,ymax=7,ymin=-1,samples=100,rotate=-60,]
  \addplot[dotted, line width=1pt,name path=a] {(x^2)};
  \addplot[dotted, line width=0pt,name path=b] {(4-0.1*x^2)};
  \addplot[fill=none] fill between[of=a and b,split,
    every segment no 1/.style={fill,gray,opacity=.4},];
  \addplot[line width=0.75pt,domain=-1.5:0.5] {(6-0.1*x^2)};
  \draw [fill, black] (axis cs: -0.5,5.975) circle [radius=1pt] node[right,name=Pmuinv] {$\Pm_{\mu_{\textrm{inv},J,h}}$};
  \draw [fill, black] (axis cs: -0.25,1.5) circle [radius=1pt] node[below right,name=PJ] {$\Pm_{J}$};
  \draw[line width=0.75pt,->] (axis cs: -0.25,1.5) to node[pos=0.75,left] {$h$} (axis cs: -1,2.25);
  \draw[line width=0.75pt] (axis cs: -0.25,1.5) to [out=135,in=275] (axis cs: -1.1,3.75);
  \draw[line width=0.75pt,->,dashed] (axis cs:-1.1,3.75) to [out=95,in=270] node[pos=.5,below] {${\tiny {}_{t \to \infty}}$} (axis cs:-0.5,5.975);
\draw[decorate, line width=0.75pt, decoration={brace, amplitude=2ex, raise=1ex}] (axis cs: 1.95,4.45) -- (axis cs: 0.65,-0.1) node[pos=.25, below=5ex] {{\small $G$-invariant polarizations}};  
\draw[decorate, line width=0.75pt, decoration={brace, amplitude=2ex, raise=1ex}] (axis cs: 0.9,6.9) -- (axis cs: 0.5,5.5) node[pos=.2, below=4ex, text width=3cm, text centered] {{\small $Z(\mathcal U(\gg))$-inv. polarizations}};
  \end{axis}
\pgfresetboundingbox
\path[use as bounding box] (3,-2) rectangle (10.5,1.75);
\end{tikzpicture}
\]
\end{conjecture}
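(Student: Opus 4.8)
I would prove the statement on the principal stratum $M_{\mathrm{reg}}\subseteq\breve M$ on which $T_{\mathrm{inv}}$ acts freely and $\mu_{\mathrm{inv}}$ is a submersion, so that the fibres of $\phi$ in Diagram~(\ref{diag_invpol}) are smooth. The conjecture really contains two assertions: (A) for each finite $s>0$ the imaginary-time flow of $h\circ\mu$ yields a genuine $G$-invariant K\"ahler structure $(M,\omega,J_s)$ and $s\mapsto\Pm_s:=T^{1,0}_{J_s}M$ is a Mabuchi geodesic ray starting at $\Pm_J$; and (B) in the Lagrangian Grassmannian of $T_\CC M_{\mathrm{reg}}$ the distributions $\Pm_s$ converge pointwise as $s\to\infty$ to a mixed polarization $\Pm_{\mu_{\mathrm{inv}}}$ whose real part $\Pm_{\mu_{\mathrm{inv}}}\cap\overline{\Pm_{\mu_{\mathrm{inv}}}}\cap TM$ is the distribution tangent to the $T_{\mathrm{inv}}$-orbits. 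Assertion (B) is the hard one; (A) is essentially known technology that I would assemble from the literature.

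\textbf{Part (A).}
Since $h\circ\mu$ is $G$-invariant its Hamiltonian flow commutes with $G$ (note that $X_{h\circ\mu}$ at $p$ is the fundamental vector field of $d_{\mu(p)}h\in\gg$), so every $J_s$ is automatically $G$-invariant. Following the construction of \cite{mourao.nunes:2015} and the model formula~(\ref{kmn3.5}), one analytically continues the real-analytic flow of $h\circ\mu$ to imaginary time and \emph{defines} $J_s$ by declaring $f$ to be $J_s$-holomorphic exactly when the formal series $e^{isX_{h\circ\mu}}f$ is $J$-holomorphic; this imaginary-time Hamiltonian-flow recipe is precisely the one that linearizes the complex homogeneous Monge-Amp\`ere equation, so the Mabuchi geodesic property is automatic once $J_s$ is shown to be an honest integrable complex structure. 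What has to be checked is positivity and completeness of the flow: uniform convexity of $h$ makes $\Hess_h$ enter as a strictly positive correction to the metric (compare Proposition~\ref{prop_Pg} and the block-diagonal structure of Proposition~\ref{p-1}(e)), which should furnish the a priori bounds keeping $\omega(\cdot,J_s\cdot)$ positive-definite and the complexified flow defined for all $s\ge0$ on $M_{\mathrm{reg}}$. Since $h\circ\mu$ generates a ``generalized moment-map gradient flow'' --- the Kempf--Ness/Kirwan flow when $h=\tfrac12\|\cdot\|^2$ --- this part is also related to known convergence results for such flows; and on compact $M$ one expects the data to extend across the singular stratum as well.

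\textbf{Part (B), the heart of the matter.}
Here I would work locally near a principal $T_{\mathrm{inv}}$-orbit and imitate the Fourier mechanism of Section~\ref{sect_KW}. Although $J$ is \emph{not} $T_{\mathrm{inv}}$-invariant, any local $J_s$-holomorphic $f$ decomposes into $T_{\mathrm{inv}}$-Fourier harmonics $f=\sum_\nu\widehat{f}_\nu$ with $\widehat{f}_\nu(p\star t)=\chi_\nu(t^{-1})\widehat{f}_\nu(p)$, and the generalization of Lemma~\ref{fourier-2} I would aim to establish is that along the ray each harmonic factors as $\widehat{f}_\nu=e^{\langle\nu,\LL_{g+sh}\circ\mu_{\mathrm{inv}}\rangle}\,\cdot\,(\text{a }T_{\mathrm{inv}}\text{-invariant function})$, i.e.\ a function pulled back from the local $\phi$-fibre. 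The finitely many weights $\nu$ contributing to a fixed $f$ lie in one shifted lattice coset, and, by the analogue of Lemma~\ref{lemma_posLegendre} --- using that the Legendre transform of $h$ preserves the relevant open positive Weyl chamber and that $M$ is convex in the sense of \cite{knop:2011}, so that the occurring weights differ from a dominant one by non-negative combinations of positive roots --- one isolates a unique dominant harmonic $\widehat{f}_{\lambda}$; after dividing by $e^{\langle\lambda,\LL_{g+sh}\circ\mu_{\mathrm{inv}}\rangle}$ the remaining harmonics decay exponentially in $s$. Hence $e^{-\langle\lambda,\LL_{g+sh}\circ\mu_{\mathrm{inv}}\rangle}f$ converges to a $T_{\mathrm{inv}}$-invariant limit, i.e.\ a local holomorphic function on the reduced $\phi$-fibre, while $\frac{1}{s}\log f\to\langle\lambda,\LL_h\circ\mu_{\mathrm{inv}}\rangle$, exactly as in~(\ref{KWconv2}), recovers the real directions $d(\LL_h\circ\mu_{\mathrm{inv}})$; ratios of the former provide enough local holomorphic coordinates along the $\phi$-fibres. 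Passing to the pointwise Grassmannian limit, integrability and the correct real rank survive, the induced fibrewise complex structure $J_\DD$ is $G$-invariant (inherited from the $J_s$), and a uniform lower bound on the dominant-harmonic part of the metric gives its non-degeneracy, so Proposition~\ref{prop_polarizations} identifies $\Pm_{\mu_{\mathrm{inv}}}$ as a fibering (mixed) polarization with real part the $T_{\mathrm{inv}}$-orbit distribution --- and, exactly as in Theorem~\ref{thm_convpol}, this simultaneously yields $\Pm_s\to\Pm_{\mu_{\mathrm{inv}}}$.

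\textbf{Main obstacle.}
The crux is the local Fourier-harmonic structure just invoked. In the $T^\ast K$ case it rests on \emph{global} holomorphic matrix coefficients with closed-form harmonics and on the clean combinatorics of $\overline{W\lambda}$; for a general Hamiltonian $G$-manifold both ingredients must be replaced by softer arguments: (i) a local normal form for how the non-invariant $T_{\mathrm{inv}}$-action couples to $J$ near a principal orbit --- presumably obtainable from the local structure theory of convex (in particular multiplicity-free) Hamiltonian manifolds of \cite{knop:1997,knop:2011} together with an equivariant K\"ahler slice --- and (ii) a substitute for Lemma~\ref{lemma_posLegendre} pinning down the dominant weight and the positivity pattern among the occurring weights. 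It is precisely at (ii) that the dependence of the limit on $J$ and $h$, flagged in the statement, enters in the non-multiplicity-free case: the $\phi$-fibres then carry a positive-dimensional family of $G$-invariant complex structures, and which one is selected is dictated by the dominant harmonic, hence by $J$ and the direction $h$. A secondary technical point is properness and completeness of the imaginary-time flow in Part~(A) for non-affine $M$; one can either argue throughout on $M_{\mathrm{reg}}$ or establish the needed a priori estimates using compactness of $M$. I would not attempt here the companion half-form statement (generalizing Proposition~\ref{prop_trivialization} and Lemma~\ref{lem:Omega tilde}), which is needed for the quantization but is not part of the conjecture as stated.
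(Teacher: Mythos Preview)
The paper does not prove this statement: it is explicitly a \emph{conjecture}, stated in Section~\ref{sec_conjectures} as part of an outlook program to be addressed in future work. There is therefore no ``paper's own proof'' to compare against.

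That said, your proposal is a sensible blueprint that extrapolates the paper's $T^\ast K$ arguments (Theorem~\ref{thm_convpol}, Lemmas~\ref{fourier-2}--\ref{lemma_flambda_conv}) to the general setting, and you correctly flag the genuine difficulties. A few points worth sharpening: (1) In Part~(A) you assert that uniform convexity of $h$ gives the positivity of $\omega(\cdot,J_s\cdot)$, but the block-diagonal structure of Proposition~\ref{p-1}(e) is specific to the linear $\kk^\ast$ case; for a general K\"ahler $M$ the interaction between $\Hess_h$ and the initial metric is not so clean, and the imaginary-time flow of \cite{mourao.nunes:2015} is only guaranteed for small $s$ without further hypotheses. (2) In Part~(B), the claimed factorization $\widehat{f}_\nu=e^{\langle\nu,\LL_{g+sh}\circ\mu_{\mathrm{inv}}\rangle}\cdot(\text{invariant})$ is the exact content of Lemma~\ref{fourier-2}, which in the paper relies on the explicit matrix-coefficient form of $f^g_{\lambda,A}$ and equivariance of the Legendre transform; for general $M$ there is no reason a local $J_s$-holomorphic function should have Fourier harmonics of this shape, nor that only finitely many weights occur. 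You acknowledge this under ``Main obstacle,'' but it is not a technicality to be patched --- it is the substance of the conjecture. (3) Your appeal to an ``equivariant K\"ahler slice'' and the convexity theory of \cite{knop:1997,knop:2011} is the right direction, but note that $T_{\mathrm{inv}}$ does not act holomorphically on $(M,J)$ in general (just as it does not on $(T^\ast K,J_g)$), so standard K\"ahler slice theorems do not directly apply.

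In short: your outline is faithful to the spirit of the paper's methods and honestly identifies where they break down, but what you have is a strategy, not a proof --- which is consistent with the statement's status as a conjecture.
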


We will assume that $\mathcal{P}_{\mu_\mathrm{inv}}$ admits a half-form bundle which is compatible with the degeneration. Definition \ref{defBS} gives that the 
half-form corrected Bohr--Sommerfeld cycles will be of the form 
$$
M_\lambda = \mu_{\mathrm{inv}}^{-1} (\lambda +a),
$$
for highest weights $\lambda\in \mu_{\mathrm{inv}}(M)\cap \tt^*_{\mathbb Z}$ and $a\in \tt^*_+$, where the shift $a$ is due to the contribution from the holonomy along the half-form bundle.
The quotients $M_\lambda/T_\mathrm{inv}, \lambda\in \mu_{\mathrm{inv}}(M)\cap \tt^*_{\mathbb Z}$, provide the set of quantizable symplectic reductions of $M$ (including the half-form correction) with respect to the Hamiltonian action of $T_\mathrm{inv}$.
We call the sets $M_\lambda$ \emph{spectral submanifolds} since for $\mathcal{P}_{\mu_\mathrm{inv}}$-polarized half-form corrected sections the prequantum operator for $h\in \mu^\ast \left( C^\infty(\gg^\ast)^G \right)$ corresponds just to the multiplication by $h(\lambda+a).$
 
We will have therefore a decomposition 
 \begin{equation}\label{invisot}
\mathcal{H}_{\mathcal{P}_{\mu_\mathrm{inv}}} = 
\bigoplus_{\lambda \in \hat G}  \mathcal{H}_{\mathcal{P}_{\mu_\mathrm{inv}}}^\lambda,\end{equation}
where elements in $\mathcal{H}_{\mathcal{P}_{\mu_\mathrm{inv}}}^\lambda$ are distributional sections of $L\otimes \kappa_{\mathcal{P}_{\mu_\mathrm{inv}}}$ supported on $M_\lambda.$ Note that since $\mathcal{P}_{\mu_\mathrm{inv}}$ is $G$-invariant, $\mathcal{H}_{\mathcal{P}_{\mu_\mathrm{inv}}}$ will have an isotypical decomposition into irreducible representations of $G$.

Over the infinite-dimensional family of $G$-invariant K\"ahler polarizations we will have a bundle of K\"ahler quantizations. We conjecture that this bundle extends to include the 
quantizations for the  polarizations arising at infinite geodesic times $\mathcal{P}_{\mu_\mathrm{inv}}$, as described above. 
As presented in detail in Sections \ref{sect-cst} and \ref{sect-connection} in the case of $M=T^*K, G=K\times K$,
in the spirit of \cite{axelrod.dellapietra.witten:1991}, 
the extended quantum bundle should be equipped with a flat connection whose parallel transport is defined, along the  Mabuchi ray generated by $h$ in Conjecture \ref{conj1}, by a generalized coherent state transform $C_{s,h}$. This gCST is given by the composition of the operator of analytic continuation along the Hamiltonian flow in imginary time defining the Mabuchi ray and a quantum operator, as follows.

Let $\hat h$ be the Kostant--Souriau prequantum operator associated to $h$. To $h$ we also associate a ``quantum" operator defined by
\[
\mathcal{Q}(h) : \mathcal{H}_\mathcal{P}\to \mathcal{H}_\mathcal{P}, \qquad
\mathcal{Q}(h) \sigma := h(\lambda+a) \sigma \text{ if }
\sigma \in \mathcal{H}_{\mathcal{P}}^\lambda.
\]
The gCST $C_{s,h}: \mathcal{H}_\mathcal{P}\to \mathcal{H}_{\mathcal{P}_s}$ is then defined as 
\[
C_{s,h} := e^{s\hat h} \circ e^{-s \mathcal{Q}(h)}.
\]
For each $s\geq 0$, the gCST is $G$-equivariant, so in particular preserves the isotypical components $\mathcal{H}_{\mathcal{P}_s}^\lambda$.
\begin{conjecture}\label{conj2}
 For each $\sigma\in \mathcal{H}_\mathcal{P}^\lambda$,
$$
\lim_{s\to +\infty} C_{s,h}\sigma =: \sigma_\infty \in \mathcal{H}_{\mathcal{P}_{\mu_\mathrm{inv}}}^\lambda.
$$
Moreover, the map $\mathcal{H}_\mathcal{P}^\lambda\ni\sigma \mapsto \sigma_\infty \in \mathcal{H}_{\mathcal{P}_{\mu_\mathrm{inv}}}^\lambda$ is $G$-equivariant.
\end{conjecture}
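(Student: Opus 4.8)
The plan is to prove Conjecture~\ref{conj2} by transporting the argument of Theorem~\ref{thm_convstates} to the general Hamiltonian $G$-manifold setting, using the structural output of Conjecture~\ref{conj1} and the assumed half-form bundle compatible with the degeneration in place of the explicit $KAK$-description of $T^\ast K$. Throughout one works over the dense regular set $\breve M$ on which $T_{\mathrm{inv}}$ acts freely, and interprets every limit in the weak-$\ast$ topology on distributional sections of $L\otimes\kappa_{\mathcal{P}_{\mu_{\mathrm{inv}}}}$; since $C_{s,h}$ is $G$-equivariant and hence preserves isotypical blocks, it suffices to fix one irreducible component and a section $\sigma\in\mathcal{H}_\mathcal{P}^\lambda$. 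The first step is to unwind $C_{s,h}=e^{s\hat h}\circ e^{-s\mathcal{Q}(h)}$: on $\mathcal{H}_\mathcal{P}^\lambda$ the factor $e^{-s\mathcal{Q}(h)}$ contributes only the scalar $e^{-sh(\lambda+a)}$, while $e^{s\hat h}$ is, as in the discussion of~\eqref{psit}, the analytic continuation to imaginary time $is$ of the Hamiltonian flow of $h$, carrying $\Pm_J$-holomorphic data to $\mathcal{P}_s$-holomorphic data. Writing $\sigma$ in a local frame as $f\,e^{-\kappa}\otimes\widehat\Omega^{1/2}$, one is reduced to analysing $e^{-sh(\lambda+a)}\cdot(\text{analytic continuation of }f)\cdot e^{-\kappa_s}\otimes\widehat\Omega^{1/2}_{\mathcal{P}_s}$, i.e.\ exactly a ``$\sigma^{g+sh}$-type'' section, so the statement is a direct generalization of Theorem~\ref{thm_convstates}.

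The second step is the Fourier decomposition with respect to $T_{\mathrm{inv}}$. Restricted to a fiber of $\phi$ (a single coadjoint orbit in the multiplicity-free case), $f$ is a holomorphic section of the $\phi$-ample line bundle; decomposing into $T_{\mathrm{inv}}$-harmonics $\widehat f_\nu$ as in Proposition~\ref{fourier-1}, the analytic continuation multiplies $\widehat f_\nu$ by $e^{s\langle\nu,\LL_h\circ\mu_{\mathrm{inv}}\rangle}$ up to lower-order terms, exactly as in Lemmas~\ref{fourier-2} and~\ref{lemma_flambda_conv}. The weights $\nu$ occurring lie in the intersection of the weight lattice with the Kirwan polytope of the $T_{\mathrm{inv}}$-action on the fiber, and one needs the analog of Lemma~\ref{lemma_posLegendre}: the extremal weight — the one labelling the $\phi$-ample Borel--Weil bundle — is the unique maximizer of $\nu\mapsto\langle\nu,\LL_h(\xi_+)\rangle$ on this polytope for every interior $\xi_+$. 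Factoring out $e^{s\langle\lambda,\LL_h\circ\mu_{\mathrm{inv}}\rangle}$, the holomorphic part then converges pointwise, away from the zero locus of a reference section, to the corresponding Borel--Weil section on each fiber, which takes care of the complex directions of $\mathcal{P}_{\mu_{\mathrm{inv}}}$.

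The third step treats the real directions by Laplace approximation. Combining the residual exponential $e^{-sh(\lambda+a)}e^{s\langle\lambda,\LL_h\circ\mu_{\mathrm{inv}}\rangle}$ with $e^{-\kappa_s}$ and with the Jacobian $\det\Hess_h$ that enters the normalized half-form $\widehat\Omega^{1/2}_{\mathcal{P}_s}$, one recognises — after the rescaling by the appropriate power of $s$ supplied jointly by the $\mathcal{Q}(h)$-shift and the half-form — the Gaussian profile $\sqrt{\det\tfrac{s}{2\pi}\Hess_h(\lambda+a)}\,e^{-s\psi_h\circ\mu_{\mathrm{inv}}}$ with $\psi_h$ as in~\eqref{eqn_leading}; by the Laplace approximation lemma together with the symplectic Weyl integration formula used in its proof, this converges, as a measure on $\breve M$, to a multiple of $\delta_{\mu_{\mathrm{inv}}^{-1}(\lambda+a)}$. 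Assembling the two factors, $C_{s,h}\sigma$ converges to a distributional section $\sigma_\infty$ supported on the Bohr--Sommerfeld submanifold $M_\lambda=\mu_{\mathrm{inv}}^{-1}(\lambda+a)$ whose transverse datum is the Borel--Weil section; that $\sigma_\infty$ is $\mathcal{P}_{\mu_{\mathrm{inv}}}$-polarized and lies in the block $\mathcal{H}_{\mathcal{P}_{\mu_{\mathrm{inv}}}}^\lambda$ follows because the polarized generators of $\mathcal{P}_s$ tend to those of $\mathcal{P}_{\mu_{\mathrm{inv}}}$ and covariant derivatives pass to the distributional limit, exactly as in the corollary following Theorem~\ref{thm_convstates}. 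Finally, $G$-equivariance of $\sigma\mapsto\sigma_\infty$ is automatic: each $C_{s,h}$ and each isotypical projection is $G$-equivariant, and a weak-$\ast$ limit of $G$-equivariant maps between the relevant representation spaces is again $G$-equivariant.

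I expect the main obstacle to be the \emph{non-multiplicity-free} case. There the fibers of $\phi$ are no longer single coadjoint orbits, so even the existence and uniqueness of the $\phi$-ample invariant complex structure (part of Conjecture~\ref{conj1}) is at issue, and the ``extremal weight'' governing the leading Fourier harmonic must be replaced by the combinatorics of the $T_{\mathrm{inv}}$-moment polytope of each fiber; proving the strict inequality that replaces Lemma~\ref{lemma_posLegendre}, and that the limiting half-form trivialization is non-vanishing on $M_\lambda$ (the analog of Lemma~\ref{lem:Omega tilde}(c)), will require genuinely new input beyond the $T^\ast K$ computation. A secondary difficulty is analytic: one must upgrade the pointwise fiberwise statements to convergence in the distributional topology uniformly enough to justify interchanging the Laplace limit in the $\mu_{\mathrm{inv}}$-directions with the holomorphic limit along the fibers of $\phi$.
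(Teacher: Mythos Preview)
The statement you are attempting to prove is labeled \emph{Conjecture}~\ref{conj2} in the paper, and it appears in Section~\ref{sec_conjectures}, which is explicitly an ``outlook on a more general theory'' outlining a program to be ``addressed in future work.'' The paper does \emph{not} supply a proof of this statement; it is presented as an open conjecture, conditional moreover on the preceding Conjecture~\ref{conj1} (existence of the limiting mixed polarization $\mathcal{P}_{\mu_{\mathrm{inv}}}$ in the general setting) and on the assumed existence of a half-form bundle compatible with the degeneration. There is therefore no paper proof against which to compare your proposal.

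What you have written is not a proof but a plausible heuristic outline: it transports the $T^\ast K$ argument of Theorem~\ref{thm_convstates} step by step, and you yourself correctly identify the genuine gaps at the end. The most serious one is that your ``second step'' presupposes an analog of Lemma~\ref{lemma_posLegendre} --- a strict inequality singling out a unique dominant Fourier harmonic on each fiber of $\phi$ --- which in the $T^\ast K$ case rests on the concrete root-theoretic fact that $\lambda-\nu$ is a non-negative sum of simple roots. In the general (especially non-multiplicity-free) setting there is no established substitute for this, nor for the explicit nonvanishing of the limiting half-form (Lemma~\ref{lem:Omega tilde}(c)). Your outline also tacitly assumes Conjecture~\ref{conj1} and the compatibility of half-forms, both of which the paper lists as hypotheses rather than results. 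So as a proof the proposal is circular in places and incomplete in others; as a roadmap it is in line with what the authors themselves suggest, but it does not go beyond the program they already sketch.
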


Conjecture \ref{conj2} means that the isotypical decomposition of $\mathcal{H}_\mathcal{P}$ corresponds
to the decomposition of $M$ into the set of spectral submanifolds $M_\lambda$ and, correspondingly, to the set of quantizable symplectic reductions of $M$ with respect to the action of $T_\mathrm{inv}.$
Each representation of $G$, with highest weight $\lambda$, which is initially realized in $H^0(M,L\otimes \kappa_\mathcal{P})$ in terms of holomorphic sections, will thus be related with distributional sections of $L\otimes \kappa_{\mathcal{P}_{\mu_\mathrm{inv}}}$ supported on the spectral submanifold $M_\lambda$. 

\appendix

\section{Families of polarizations and convergence of half-form corrected quantizations} \label{app_conv}

\subsection{Continuous families of polarizations}\label{par_families} Since one way of describing a polarization $\Pm$ is in terms of a Lagrangian subbundle $\Pm \subset T^\ast_{\CC} M$, the set of polarizations carries a natural topology -- a family of polarizations on some index set $I$ is a family of Lagrangian subbundles $\Pm_t$ indexed by $t\in I$, and it is continuous if it is so as a vector bundle on $M\times I$.

Equivalently, as we are interested in pointwise or locally uniform convergence we may assume that $TM=M\times V$ and $TM_\mathbb C=M\times V_\mathbb C$. Thus $\mathcal P$ is completely determined by
$$M\to \mathrm{Gr}_k(V_\mathbb C), \quad x\mapsto [\mathcal P_x],$$ 
where $\dim V=2k$. Thus, locally $\mathcal P$ is embedded into $C(M, \mathrm{Gr}_k(V_\mathbb C))$, which equips $\mathcal P$ with a natural topology. Suppose now that a family $f_t$ of $\mathcal P_t$-polarized functions (in general position) converges to a family $\{f\}$ (in general position) of $\mathcal P$-polarized functions, then the joint kernels of the $df_t$ converge to the joint kernels of the $df$. But this means that $\mathcal P_t$ converges to $\mathcal P$ in the described topology.

Since taking exterior products is continuous, it follows in particular that in this case also the relative canonical bundle $\KK_{\Phi}$ (defined on $M\times I$ with respect to the projection $\Phi:M\times I \to I$ exactly as in (\ref{def_relcan})) defines a continuous family of line bundles 
\[
\begin{tikzcd}
\KK_\Phi \ar[r, phantom, "\subset"] \ar[d] & \bigwedge^n T^\ast_\CC M \ar[d] \\
M\times I \ar[r, equals] & M\times I
\end{tikzcd}
\]
which moves inside the vector bundle (constant with respect to the base $I$ indexing the family) of complexified $n$-forms. 
Accordingly, a half-form bundle for the family $\Pm_\bullet$ is a complex line bundle $\kappa_{\Phi}$ on $M\times I$ together with an isomorphism $\kappa_{\Phi}^{\otimes 2} \cong \KK_{\Phi}$.

In the case at hand in our paper, the canonical bundles of all polarizations are trivializable, which substantially simplifies the discussion; we therefore do not pursue the more general case here.

A family of polarizations parameterized by $I$ defines a family of quantum states $\Qm$, where
\[
  \Qm_{t} \subset \HH^{-\infty}_{\Pm_t}(M) 
\]
is a family of subspaces defined by a certain finite energy condition.
If $\Pm$ is a smoothly varying family of Kähler polarizations on a compact manifold such that the dimension of the space of solutions (which is finite dimensional in this case) remains constant, then by elliptic regularity this family actually comes from a vector bundle $\Qm \to I$.

\subsection{Convergence of quantum states} The convergence of quantum states in a family of polarizations is to be understood in the terms just laid out: a family of quantum states $s_t \otimes \sqrt{\Omega_t}\in \HH_{\Pm_t}^\infty(M)$ converges to $s_\infty \otimes \sqrt{\Omega_\infty}\in \HH_{\Pm_\infty}^{-\infty}(M)$ if $M$ can be covered by open subsets $U\subset M$ on which they are polarized products
\begin{align*}
 (s_t \otimes \sqrt{\Omega_t}) \vert_U & = s_t^U \otimes \sqrt{\Omega_t^U} \in C^\infty_{\Pm_t}(U,L) \underset{C^\infty_{\Pm_t}(U,\CC)}{\otimes} C^\infty_{\Pm_t}(U,\kappa_{\Pm_t}) , \\
 (s_\infty \otimes \sqrt{\Omega_\infty}) \vert_U & = s_\infty^U \otimes \sqrt{\Omega_\infty^U} \in C^{-\infty}_{\Pm_\infty}(U,L) \underset{C^\infty_{\Pm_\infty}(U,\CC)}{\otimes} C^\infty_{\Pm_\infty}(U,\kappa_{\Pm_\infty}) ,
\end{align*}
and furthermore using the natural injection $C^\infty(L) \hookrightarrow C^{-\infty}(L)$ provided by the fixed Hermitian structure on $L$ we have
\[
\lim_{t \to \infty} s^U_t = s^U_\infty \in C^{-\infty}(U,L)
\]
as well as
\[
\lim_{t \to \infty} (\sqrt{\Omega_t^U})^{\otimes 2} = (\sqrt{\Omega_\infty^U})^{\otimes 2} \in \bigwedge^{\frac{1}{2}\dim_\RR M} C^\infty(M,T^\ast_{\CC} M) .
\]

\bigskip
\bigskip
\bigskip
{\bf Acknowledgements:} We wish to thank W.D.~Kirwin for sharing the unpublished manuscript \cite{kirwin.wu:manuscript}, where the proof of existence of the Kirwin--Wu polarization was first established, and also where its quantization and partial Bohr--Sommerfeld cycles were first discussed. The broader context of fibering polarizations with its Bohr--Sommerfeld conditions and the central role of the invariant torus action, as well as using imaginary time flow for convergence is new in our treatment. TB, JM and JN were supported by the projects UIDB/04459/2020 and UIDP/04459/2020.

\def\cftil#1{\ifmmode\setbox7\hbox{$\accent"5E#1$}\else
  \setbox7\hbox{\accent"5E#1}\penalty 10000\relax\fi\raise 1\ht7
  \hbox{\lower1.15ex\hbox to 1\wd7{\hss\accent"7E\hss}}\penalty 10000
  \hskip-1\wd7\penalty 10000\box7}
  \def\cftil#1{\ifmmode\setbox7\hbox{$\accent"5E#1$}\else
  \setbox7\hbox{\accent"5E#1}\penalty 10000\relax\fi\raise 1\ht7
  \hbox{\lower1.15ex\hbox to 1\wd7{\hss\accent"7E\hss}}\penalty 10000
  \hskip-1\wd7\penalty 10000\box7}
  \def\cftil#1{\ifmmode\setbox7\hbox{$\accent"5E#1$}\else
  \setbox7\hbox{\accent"5E#1}\penalty 10000\relax\fi\raise 1\ht7
  \hbox{\lower1.15ex\hbox to 1\wd7{\hss\accent"7E\hss}}\penalty 10000
  \hskip-1\wd7\penalty 10000\box7}
  \def\cftil#1{\ifmmode\setbox7\hbox{$\accent"5E#1$}\else
  \setbox7\hbox{\accent"5E#1}\penalty 10000\relax\fi\raise 1\ht7
  \hbox{\lower1.15ex\hbox to 1\wd7{\hss\accent"7E\hss}}\penalty 10000
  \hskip-1\wd7\penalty 10000\box7} \def\cprime{$'$}



\begin{thebibliography}{ADPW91}
\expandafter\ifx\csname url\endcsname\relax
  \def\url#1{\texttt{#1}}\fi
\expandafter\ifx\csname doi\endcsname\relax
  \def\doi#1{\burlalt{doi:#1}{http://dx.doi.org/#1}}\fi
\expandafter\ifx\csname urlprefix\endcsname\relax\def\urlprefix{URL }\fi
\expandafter\ifx\csname href\endcsname\relax
  \def\href#1#2{#2}\fi
\expandafter\ifx\csname burlalt\endcsname\relax
  \def\burlalt#1#2{\href{#2}{#1}}\fi

\bibitem[ADPW91]{axelrod.dellapietra.witten:1991}
S.~Axelrod, S.~Della~Pietra, and E.~Witten.
\newblock Geometric quantization of {C}hern--{S}imons gauge theory.
\newblock {\em J. Diff. Geom.}, 33(3):787--902, 1991.
\newblock \doi{10.4310/jdg/1214446565}.

\bibitem[AGMM94]{alekseevsky.grabowski.marmo.michor}
D.~Alekseevsky, J.~Grabowski, G.~Marmo, and P.~W.~Michor.
\newblock Poisson structures on the cotangent bundle of a {L}ie group or a principal bundle and their reductions.
\newblock {\em J. Math. Phys.}, 35(9):4909--4927, 1994.
\newblock \doi{10.1063/1.530822}.

\bibitem[BFMN11]{baier.florentino.mourao.nunes:2011}
T.~Baier, C.~Florentino, J.~Mourão, and J.~P.~Nunes.
\newblock Toric {K}\"ahler metrics seen from infinity, quantization and compact tropical amoebas.
\newblock {\em J. Diff. Geom.}, 89(3):411--454, 2011.
\newblock \doi{10.4310/jdg/1335207374}.

\bibitem[Bou05]{Bourbaki}
N.~Bourbaki.
\newblock {\em Lie {G}roups and {L}ie {A}lgebras. Chapters 7--9}. Springer, 2005.

\bibitem[DD87]{dazord.delzant:1987}
P.~Dazord, and T.~Delzant.
\newblock Le probl{\`e}me g{\'e}n{\'e}ral des variables actions-angles.
\newblock {\em J. Diff. Geom.}, 26:223--251, 1987.
\newblock \doi{10.4310/jdg/1214441368}.

\bibitem[DH82]{duistermaat.heckman:1982}
J.~J.~Duistermaat and G.~J.~Heckman.
\newblock On the variation in the cohomology of the symplectic form of the reduced phase space.
\newblock {\em Invent. Math.}, 69:259--268, 1982.
\newblock \doi{10.1007/BF01399506}.

\bibitem[Dix77]{Dixmier77}
J.~Dixmier.
\newblock {\em {$C\sp*$}-algebras}. North-Holland Mathematical Library, Vol. 15. North-Holland Publishing Co., Amsterdam-New York-Oxford, 1977.

\bibitem[Don99]{donaldson:1999}
S.~Donaldson.
\newblock Symmetric spaces, {K}\"ahler geometry and {H}amiltonian dynamics.
\newblock In: {\em Northern {C}alifornia {S}ymplectic {G}eometry {S}eminar}, {\em Amer. Math. Soc. Transl. Ser. 2}, 196:13--33. 1999.
\newblock \doi{10.1090/trans2/196/02}.

\bibitem[Duf82]{Duflo82}
M.~Duflo.
\newblock Construction de repr\'{e}sentations unitaires d'un groupe de {L}ie.
\newblock In: {\em Harmonic analysis and group representations}, 129--221, Liguori, Naples, 1982.

\bibitem[FMMN05]{florentino.matias.mourao.nunes:2005}
C.~Florentino, P.~Matias, J.~M.~Mourão, and J.~P.~Nunes.
\newblock Geometric quantization, complex structures and the coherent state transform.
\newblock {\em J. Funct. Anal.}, 221(2):303--322, 2005.
\newblock \doi{10.1016/j.jfa.2004.10.021}.

\bibitem[FMMN06]{florentino.matias.mourao.nunes:2006}
C.~Florentino, P.~Matias, J.~M.~Mourão, and J.~P.~Nunes.
\newblock On the {B}{K}{S} pairing for {K}ähler quantizations of the cotangent bundle of a {L}ie group.
\newblock {\em J. Funct. Anal.}, 234(1):180--198, 2006.
\newblock \doi{10.1016/j.jfa.2005.12.007}.

\bibitem[Fol95]{folland:1995}
G.~B.~Folland.
\newblock {\em A course in abstract harmonic analysis}.
\newblock Second Edition. CRC Press, Boca Raton, 2016.

\bibitem[GS84]{guillemin.sternberg:1984}
V.~Guillemin and S.~Sternberg.
\newblock {\em Symplectic Techniques in Physics}.
\newblock Cambridge University Press, 1984.

\bibitem[Hal02]{hall:2002}
B.~Hall.
\newblock Geometric quantization and the generalized Segal-Bargmann transform for Lie groups of compact type.
\newblock {\em Comm. Math. Phys.}, 226(2):233--268, 2002.
\newblock \doi{10.1007/s002200200607}.

\bibitem[Hal94]{hall:1994}
B.~Hall.
\newblock The {S}egal-{B}argmann ``coherent state'' transform for compact {L}ie groups.
\newblock {\em J. Funct. Anal.}, 122(1):103--151, 1994.
\newblock \doi{10.1006/jfan.1994.1064}.

\bibitem[Hue08]{Huebschmann08}
J.~Huebschmann.
\newblock Kirillov's character formula, the holomorphic {P}eter-{W}eyl theorem, and the {B}lattner-{K}ostant-{S}ternberg pairing.
\newblock {\em J. Geom. Phys.}, 58(7):833--848, 2008.
\newblock \doi{10.1016/j.geomphys.2008.02.004}.

\bibitem[HZ09]{HZ09}
J.~Hilgert and G.~Zhang.
\newblock Segal-{B}argmann and {W}eyl transforms on compact {L}ie groups.
\newblock {\em Monatsh. Math.}, 158(3):285--305, 2009.
\newblock \doi{10.1007/s00605-008-0080-0}.

\bibitem[Kir04]{Kir04}
A.~A.~Kirillov.
\newblock {\em Lectures on the Orbit Method}.
\newblock Amer.~\-Math.~\-Soc., 2004.

\bibitem[Kir94]{kirwan:1984}
F.~Kirwan.
\newblock Convexity properties of the moment mapping. III.
\newblock {\em Invent. Math.}, 77(3):547--552, 1984.
\newblock \doi{10.1007/BF01388838}.

\bibitem[KMN13a]{kirwin.mourao.nunes:2013}
W.~Kirwin, J.~Mourão, and J.~P.~Nunes.
\newblock Complex time evolution in geometric quantization and generalized coherent state transforms.
\newblock {\em J. Funct. Anal.}, 265(8):1460--1493, 2013.
\newblock \doi{10.1016/j.jfa.2013.06.021}.

\bibitem[KMN13b]{kirwin.mourao.nunes:2013b}
W.~Kirwin, J.~Mourão, and J.~P.~Nunes.
\newblock Degeneration of {K}\"{a}hler structures and half-form quantization  of toric varieties.
\newblock {\em J. Symplectic Geom.}, 11(4):603--643, 2013.
\newblock \doi{10.4310/JSG.2013.v11.n4.a4}.

\bibitem[KMN14]{kirwin.mourao.nunes:2014}
W.~Kirwin, J.~Mourão, and J.~P.~Nunes.
\newblock Coherent state transforms and the {M}ackey--{S}tone--von {N}eumann theorem.
\newblock {\em J. Math. Phys.}, 55, 102101, 2014.
\newblock \doi{10.1063/1.4896816}.

\bibitem[KMN16]{kirwin.mourao.nunes:2016}
W.~Kirwin, J.~Mourão, and J.~P.~Nunes.
\newblock Complex symplectomorphisms and pseudo-K\"ahler islands in the quantization of toric manifolds.
\newblock {\em Math. Annalen}, 364:1--28, 2016.
\newblock \doi{10.1007/s00208-015-1205-0}.

\bibitem[Kno11]{knop:2011}
F.~Knop.
\newblock Automorphisms of multiplicity free {H}amiltonian manifolds.
\newblock {\em J. Am. Math. Soc.}, 24(2):567--601, 2011.
\newblock \doi{10.1090/S0894-0347-2010-00686-8}.
 
\bibitem[Kno97]{knop:1997}
F.~Knop.
\newblock Weyl groups of {H}amiltonian manifolds, I. 1997
\newblock \url{https://arxiv.org/abs/dg-ga/9712010},

\bibitem[KW14]{kirwin.wu:manuscript}
W.~Kirwin and S.~Wu.
\newblock Momentum space for compact {L}ie groups and the {P}eter--{W}eyl theorem.
\newblock Unpublished manuscript, 2014.

\bibitem[KY68]{kajiwara.yoshida:1968}
J.~Kajiwara and M.~Yoshida.
\newblock Note on {Cauchy}-{Riemann} equation.
\newblock {\em Mem. Fac. Sci., Kyushu Univ., Ser. A}, 22:18--22, 1968.
\newblock \doi{10.2206/kyushumfs.22.18}.

\bibitem[Lan17]{lane:2017}
J.~Lane.
\newblock Convexity and Thimm's trick.
\newblock {\em Transformation Groups}, 23(4):963--987, 2017.
\newblock \doi{https://doi.org/10.1007/s00031-017-9436-7}.

\bibitem[Lip80]{Lipsman80}
R.~L.~Lipsman.
\newblock Orbit theory and harmonic analysis on {L}ie groups with co-compact nilradical.
\newblock {\em J. Math. Pures Appl. (9)}, 59(3):337--374.

\bibitem[Lis87]{Lisiecki87}
W.~Lisiecki.
\newblock Blattner-{K}ostant-{S}ternberg pairing and {F}ourier transform on symmetric spaces.
\newblock In: {\em Proceedings of the {W}inter {S}chool on {G}eometry and {P}hysics ({S}rn\'{\i}, 1987)}, {\em Rend. Circ. Mat. Palermo (2) Suppl.}, 16:173--189, 1987.

\bibitem[LM87]{liebermann.marle:1987}
P.~Libermann and C.-M.~Marle.
\newblock {\em Symplectic geometry and analytical mechanics}. Springer, Dordrecht, 1987.

\bibitem[LW22]{leung.wang:2022}
N.~C.~Leung and D.~Wang.
\newblock Geodesic rays in space of {K}\"ahler metrics with {$T$}-symmetry. 2022.
\newblock \url{https://arxiv.org/abs/2211.05324}.

\bibitem[Mab87]{mabuchi:1987}
T.~Mabuchi.
\newblock Some symplectic geometry on compact {K}\"ahler manifolds. {I}.
\newblock {\em Osaka J. Math.}, 24(2):227--252, 1987.

\bibitem[Mac80]{macdonald:1980}
I.~G.~Macdonald.
\newblock The volume of a compact {Lie} group.
\newblock {\em Invent. Math.}, 56:93--95, 1980.
\newblock \doi{10.1007/BF01392542}.

\bibitem[MN15]{mourao.nunes:2015}
J.~M.~Mourão and J.~P.~Nunes.
\newblock On complexified analytic {H}amiltonian flows and geodesics on the space of {K}\"ahler metrics.
\newblock {\em Int. Math. Res. Not.}, 20:10624--10656, 2015.
\newblock \doi{10.1093/imrn/rnv004}.

\bibitem[MNP]{mourao.nunes.pereira:2023}
J.~M.~Mourão, J.~P.~Nunes, and A.~Pereira.
\newblock Geometric quantization in mixed toric polarizations and imaginary time flow.
\newblock Work in progress.

\bibitem[Mun01]{mundet:2001}
I.~Mundet{ i Riera}.
\newblock Lifts of smooth group actions to line bundles.
\newblock {\em Bull. Lond. Math. Soc.}, 33(3):351-361, 2001.
\newblock \doi{10.1017/S0024609301007937}.

\bibitem[Nee00a]{neeb:2000a}
K.-H.~Neeb.
\newblock Representation theory and convexity.
\newblock {\em Transform. Groups}, 5(4):325--350, 2000.
\newblock \doi{10.1007/BF01234796}.

\bibitem[Nee00b]{neeb:2000b}
K.-H.~Neeb.
\newblock {\em Holomorphy and convexity in {L}ie theory}.
\newblock De Gruyter Expositions in Mathematics, 28. Walter de Gruyter \& Co., Berlin, 2000.
\newblock \doi{10.1515/9783110808148}.

\bibitem[Nir58]{nirenberg:1958}
L.~Nirenberg.
\newblock A complex {Frobenius} theorem.
\newblock {\em Sem. analytic functions} 1:172-189, 1958.

\bibitem[Ort03]{ortega:2003}
J.-P.~Ortega.
\newblock Singular dual pairs.
\newblock {\em Differ. Geom. Appl.}, 19(1):61--95, 2003.
\newblock \doi{10.1016/S0926-2245(03)00015-9}.

\bibitem[Per22]{pereira:2022}
A.~Pereira.
\newblock {\em Applications of flows in imaginary time to quantization}.
\newblock PhD thesis, IST, Lisbon, 2022.

\bibitem[Sem92]{semmes:1992}
S.~Semmes.
\newblock Complex {M}onge-{A}mp\`ere and symplectic manifolds.
\newblock {\em Amer. J. Math.}, 114(3):495--550, 1992.
\newblock \doi{10.2307/2374768}.

\bibitem[Ser59]{serre:1959}
J.-P.~Serre.
\newblock Repr{\'e}sentations lin{\'e}aires et espaces ho\-mo\-g{\`e}nes k{\"a}h\-ler\-iens des groupes de {Lie} compacts.
\newblock {\em S{\'e}min. {Bourbaki} 6 (1953/54), {Exp}. {No}. 100}, 1959.

\bibitem[Ver82]{Vergne82}
M.~Vergne.
\newblock A {P}oisson-{P}lancherel formula for semisimple {L}ie groups.
\newblock {\em Ann. of Math. (2)}, 115(3):639--666, 1982.
\newblock \doi{10.2307/2007016}.

\bibitem[Woo91]{woodhouse:1991}
N.~Woodhouse.
\newblock {\em Geometric Quantization}.
\newblock Oxford University Press, 1991.
\end{thebibliography}
\end{document}